\def\ge{\geqslant}
\def\le{\leqslant}
\def\a{\alpha}
\def\b{\beta}
\def\D{\Delta}
\def\s{\sigma}
\def\l{\lambda}
\def\i{^{-1}}
\def\<{\langle}
\def\>{\rangle}
\newcommand{\club}{{\clubsuit}}
\newcommand{\spade}{{\spadesuit}}
\newcommand{\BC}{\ensuremath{\mathbb {C}}\xspace}
\newcommand{{\BG}}{\ensuremath{\mathbb {G}}\xspace}
\newcommand{{\BK}}{\ensuremath{\mathbb {K}}\xspace}
\newcommand{\BR}{\ensuremath{\mathbb {R}}\xspace}
\newcommand{\BZ}{\ensuremath{\mathbb {Z}}\xspace}
\newcommand{\CB}{\ensuremath{\mathcal {B}}\xspace}
\newcommand{\CD}{\ensuremath{\mathcal {D}}\xspace}
\newcommand{\CP}{\ensuremath{\mathcal {P}}\xspace}
\newcommand{\id}{\ensuremath{\mathrm{id}}\xspace}
\newcommand{\Rp}{\mathbb{R}_{>0}}
\newcommand{\leJ}{ {\, {}^J \!\! \le \,} }
\newcommand{\lessJ}{ {\, {}^J \!\! < \,}}
 \newcommand{\preceqJ}{{}^J\!\! \!\preceq}
\newcommand{\Jell}{{}^{J}\ell}
\newcommand{\JCB}{{}^J\!\CB}
\newcommand{\OJCB}{{}^J\!\mathring{\CB}}
\newcommand{\OCB}{\mathring{\CB}}
\newcommand{\JHQ}{{}^J\!\hat{Q}}
\newcommand{\JQ}{{}^J\!{Q}}
\newcommand{\JB}{{}^J \! B}
\newcommand{\JU}{{}^J  U}
\newcommand{\Jc}{{}^J\!{c}}
\newcommand{\Jsigma}{{}^{J}\!\sigma}
\def\tW{\tilde W}
\newtheorem{thm}{Theorem}
\newtheorem*{theorema}{Theorem A}
\newtheorem*{theoremb}{Theorem B}
\newtheorem*{theoremc}{Theorem C}
\newtheorem*{theoremd}{Theorem D}
\newtheorem{prop}[thm]{Proposition}
\newtheorem{lem}[thm]{Lemma}
\newtheorem{cor}[thm]{Corollary}
\theoremstyle{definition}
\newtheorem{defi}[thm]{Definition}
\newtheorem{remark}[thm]{Remark}
\numberwithin{equation}{section}
\numberwithin{thm}{section}
\renewcommand{\to}{%
   \ifbool{@display}{\longrightarrow}{\rightarrow}%
   }
\let\shortmapsto\mapsto
\renewcommand{\mapsto}{%
   \ifbool{@display}{\longmapsto}{\shortmapsto}%
   }
\newlength{\olen}
\newlength{\ulen}
\newlength{\xlen}
\newcommand{\xra}[2][]{%
   \ifbool{@display}%
      {\settowidth{\olen}{$\overset{#2}{\longrightarrow}$}%
       \settowidth{\ulen}{$\underset{#1}{\longrightarrow}$}%
       \settowidth{\xlen}{$\xrightarrow[#1]{#2}$}%
       \ifdimgreater{\olen}{\xlen}%
          {\underset{#1}{\overset{#2}{\longrightarrow}}}%
          {\ifdimgreater{\ulen}{\xlen}%
             {\underset{#1}{\overset{#2}{\longrightarrow}}}
             {\xrightarrow[#1]{#2}}}}%
      {\xrightarrow[#1]{#2}}
   }
\newcommand{\xyra}[2][]{%
   \settowidth{\xlen}{$\xrightarrow[#1]{#2}$}%
   \ifbool{@display}%
      {\settowidth{\olen}{$\overset{#2}{\longrightarrow}$}%
       \settowidth{\ulen}{$\underset{#1}{\longrightarrow}$}%
       \ifdimgreater{\olen}{\xlen}%
          {\mathrel{\xymatrix@M=.12ex@C=3.2ex{\ar[r]^-{#2}_-{#1} &}}}%
          {\ifdimgreater{\ulen}{\xlen}%
             {\mathrel{\xymatrix@M=.12ex@C=3.2ex{\ar[r]^-{#2}_-{#1} &}}}
             {\mathrel{\xymatrix@M=.12ex@C=\the\xlen{\ar[r]^-{#2}_-{#1} &}}}}}%
      {\mathrel{\xymatrix@M=.12ex@C=\the\xlen{\ar[r]^-{#2}_-{#1} &}}}%
   }
\newcommand{\xla}[2][]{%
   \ifbool{@display}%
      {\settowidth{\olen}{$\overset{#2}{\longleftarrow}$}%
       \settowidth{\ulen}{$\underset{#1}{\longleftarrow}$}%
       \settowidth{\xlen}{$\xleftarrow[#1]{#2}$}%
       \ifdimgreater{\olen}{\xlen}%
          {\underset{#1}{\overset{#2}{\longleftarrow}}}%
          {\ifdimgreater{\ulen}{\xlen}%
             {\underset{#1}{\overset{#2}{\longleftarrow}}}
             {\xleftarrow[#1]{#2}}}}%
      {\xleftarrow[#1]{#2}}
   }
\newcommand{\isoarrow}{%
   \ifbool{@display}{\overset{\sim}{\longrightarrow}}{\xrightarrow\sim}%
   }
\begin{document}

\title[]{Product structure and regularity theorem for totally nonnegative flag varieties}
\author[Huanchen Bao]{Huanchen Bao}
\address{Department of Mathematics, National University of Singapore, Singapore.}
\email{huanchen@nus.edu.sg}

\author[Xuhua He]{Xuhua He}
\address{The Institute of Mathematical Sciences and Department of Mathematics, The Chinese University of Hong Kong, Shatin, N.T., Hong Kong SAR, China}
\email{xuhuahe@math.cuhk.edu.hk}
\thanks{}

\keywords{Flag varieties, Kac-Moody groups, Total positivity}
\subjclass[2010]{14M15, 20G44, 15B48}

\begin{abstract}
The totally nonnegative flag variety was introduced by Lusztig. It has enriched combinatorial, geometric, and Lie-theoretic structures. In this paper, we introduce a (new) $J$-total positivity on the full flag variety of an arbitrary Kac-Moody group, generalizing the (ordinary) total positivity. 

We show that the $J$-totally nonnegative flag variety has a cellular decomposition into totally positive $J$-Richardson varieties. Moreover, each totally positive $J$-Richardson variety admits a favorable decomposition, called a product structure. Combined with the generalized Poincare conjecture, we prove that the closure of each totally positive $J$-Richardson variety is a regular CW complex homeomorphic to a closed ball. Moreover, the $J$-total positivity on the full flag provides a model for the (ordinary) totally nonnegative partial flag variety. As a consequence, we prove that the closure of each (ordinary) totally positive Richardson variety is a regular CW complex homeomorphic to a closed ball, confirming conjectures of Galashin, Karp and Lam in \cite{GKL}.
 
\end{abstract}

\maketitle

\tableofcontents

\section{Introduction}

\subsection{Totally nonnegative flag varieties of reductive groups}

The theory of total positivity on the reductive groups $G$ and their partial flag varieties $\CP_K$ was introduced by Lusztig in the seminal work \cite{Lu94}. The totally nonnegative partial flag variety $\CP_{K, \ge 0}$ is a ``remarkable polyhedral subspace'' (cf. \cite{Lu94}). It has many nice combinatorial, geometric, and Lie-theoretic properties. And it has been used in many other areas, such as cluster algebras \cite{FZ}, the Grassmann polytopes \cite{Lam16}, the physics of scattering amplitudes \cite{AHBC16}. 

We give a quick review of the definition and some nice properties of $\CP_{K,\ge 0}$. Let $G$ be a connected reductive group, split over $\BR$ and $B^\pm=T U^\pm$ be the Borel and opposite Borel subgroups of $G$. The full flag variety $\CB=G/B^+$ admits the decompositions into Schubert cells and opposite Schubert cells, both indexed by the Weyl group $W$ of $G$. The intersection of a Schubert cell $B^+ w B^+/B^+$ with an opposite Schubert cell $B^- v  B^+ / B^+$ is called an (open) Richardson variety, and is denoted by $\CB_{v, w}$. The variety $\CB_{v, w}$ is nonempty if and only if $v \le w$, where $\le$ is the Bruhat order on $W$. 

Let $I$ be the set of simple roots in $G$. Let $P^+_K \supset B^+$ be the standard parabolic subgroup associated to a subset $K$ of $I$. For the partial flag $\CP_K=G/P^+_K$, we have the decomposition into the projected Richardson varieties $\CP_K=\bigsqcup_{\a \in Q_K} \CP_{K, \a}$. The definition and the closure relation of the projected Richardson varieties are more complicated and we skip the details in the introduction. 

Let $U^-_{\ge 0}$ be the totally nonnegative part of $U^-$. 
The totally nonnegative part $\CP_{K, \ge 0}$ of the partial flag variety $\CP_K$ is by definition, the closure of $U^-_{\ge 0} P^+_K/ P^+_K$ in $\CP_K$. In the case where $\CP_K$ is the Grassmannian, $\CP_{K, \ge 0}$ is the totally nonnegative Grassmannian \cite{Pos}. 

The totally positive projected Richardson variety $\CP_{K, \a, >0}$ is, by definition, the intersection of the totally nonnegative partial flag $\CP_{K, \ge 0}$ with the projected Richardson variety $\CP_{K, \a}$. We then have the stratification $$\CP_{K, \ge 0}=\bigsqcup_{\a \in Q_K} \CP_{K, \a, >0}.$$ 

We have many remarkable properties on the totally positive projected Richardson varieties. 

\begin{enumerate}
    \item $\CP_{K, \ge 0}$ admits a natural monoid action of $G_{\ge 0}$ and a natural duality (see \cite{Lus-1}); 
        
    \item $\CP_{\ge 0}$ admits an representation-theoretic interpretation via canonical basis (see \cite{Lus-1} and \cite{Lu98});
    
    \item $\CP_{K, \a, >0}$ is a cell and is a connected component of $\CP_{K, \a}(\BR)$ (see \cite{Ri99});
    
    \item The closure of $\CP_{K, \a, >0}$ is a union of $\CP_{K, \a', >0}$ for some $\a'$ (see \cite{Ri06}); 
    
    \item The cell decomposition $\overline{\CP_{K, \a, >0}}=\bigsqcup_{\a'} \CP_{K, \a', >0}$ is a regular CW complex. 
\end{enumerate}

The last property is called the regularity theorem of $\CP_{K,\ge 0}$. In particular, the closure $\overline{\CP_{K, \a, >0}}$ is homeomorphic to a closed ball. It was conjectured by Postnikov for totally nonnegative Grassmannian and by Williams \cite{W07} for totally nonnegative partial flag varieties of split real reductive groups. Important progress has been made in \cite{PSW09}, \cite{RW}, \cite{RW10}, \cite{GKL17}, \cite{GKL18}. It was finally established by Galashin, Karp and Lam \cite{GKL}.

\subsection{Totally nonnegative Kac-Moody flag varieties}
The theory of total positivity on the reductive groups and their flag varieties have been generalized to arbitrary Kac-Moody groups by Lusztig in a series of papers \cite{Lu-2}, \cite{Lu-positive}, \cite{Lu-flag}, \cite{Lu-par} and \cite{Lu-Spr}, and by us in \cite{BH20}. For the full flag variety of an arbitrary Kac-Moody group, we proved in \cite{BH20} that the totally nonnegative flag variety $\CB_{\ge 0}$ has a representation-theoretic interpretation, is a union of totally positive Richardson varieties, and each totally positive Richardson variety is a cell. 

However, the closure relations among the cells and the geometric structure of these closures were not established. For reductive groups, there is a natural duality coming from $B^+ \leftrightarrow B^-$, which plays a significant role in establishing geometric properties of the flag varieties. Such duality does not exist for Kac-Moody groups, which leads to extra difficulty in the study of totally nonnegative flag varieties for the general Kac-Moody groups than the reductive groups.  We shall overcome the obstacles and establish results in the general setting of $J$-total positivity using the ``product structure".

\subsection{$J$-total positivity} Unless otherwise stated, in the rest of this paper we assume that $G$ is a Kac-Moody group, split over $\BR$. We fix a subset $J$ of $I$. Let $\JB^+ \subset P^+_J$ be the Borel subgroup opposite to $B^+$ and $\JB^- \subset P^-_J$ be the Borel subgroup opposite to $B^-$. The $\JB^+$-orbits on $\CB=G/B^+$ are called the $J$-Schubert cells and $\JB^-$-orbits on $\CB=G/B^+$ are called the opposite $J$-Schubert cells, respectively. 
For $v, w \in W$, the open $J$-Richardson variety is defined to be 
$$
\JCB_{v, w}=\JB^+ w B^+/B^+ \bigcap \JB^- v B^+/B^+.
$$ 
It is known that $\JCB_{v, w} \neq \emptyset$ if and only if $v \leJ w$, where $\leJ$ is the $J$-twisted Bruhat order. 

Our motivation to study the $J$-Richardson varieties comes from the partial flag varieties. The projected Richardson varieties in a partial flag variety $\CP_K$ and their geometric structures come from the projection map $\CB \to \CP_K$. Roughly speaking, the projection map $\CB \to \CP_K$ folds the Richardson varieties in a rather complicated way, which makes the projected Richardson varieties  rather complicated to study. In \cite{BH21} we introduced an ``atlas model'' $\CP_K \dashrightarrow \tilde \CB$ of the partial flag variety and regarded the projected Richardson varieties in $\CP_K$ as certain $J$-Richardson varieties in the full flag variety $\tilde \CB$ of another (larger) Kac-Moody group.  
 
However, the ``atlas model'' $\CP_K \dashrightarrow \tilde \CB$ is not compatible with the total positivity on $\CP_K$ and the ordinary total positivity $\tilde \CB$. This should not be a surprise, as the ordinary total positivity on $\tilde \CB$ is suitable for the decomposition into the (ordinary) Richardson varieties, not the $J$-Richardson varieties. To provide a ``model'' for the total positivity on $\CP_K$, we introduce the $J$-total positivity.  The $J$-total positivity on the flag variety is ``compatible" with the stratification by $J$-Richardson varieties.  

It is worth mentioning that when $J =\emptyset$, the $J$-Schubert (resp. opposite $J$-Schubert) varieties are just the Schubert (resp. opposite Schubert) varieties;  the $J$-total positivity coincides with the ordinary total positivity. Therefore, our main results apply to the setting of the ordinary total positivity. If the Weyl group $W_J$ is finite, then the $J$-total positivity can be obtained from the ordinary total positivity by multiplying $\dot w_J$ on the left, where $w_J$ is the longest element of $W_J$. In general, the $J$-total positivity is quite different from the ordinary total positivity.

\subsection{The main results on the $J$-total positivity} We set $${}^J U^-_{\ge 0}=\{h_1 \pi_J(h_2) \i h_2; h_1 \in U^-_{J, \ge 0}, h_2 \in U^-_{\le 0}\}.$$ Here $U^-_J$ is the unipotent radical of the opposite Borel subgroup in the Levi subgroup $L_J$ of $G$ and $\pi_J$ is the projection map from the opposite parabolic subgroup $P^-_J$ to its Levi subgroup $L_J$. We define the $J$-totally nonnegative flag variety $$\JCB_{\ge 0}=\overline{{}^J U^-_{\ge 0} \cdot B^+} \subset \CB.$$ 

For any $w_1 \leJ w_2$, we set $\JCB_{w_1, w_2, >0}=\JCB_{\ge 0} \bigcap \JCB_{w_1, w_2}$. We call $\JCB_{w_1, w_2, >0}$ the totally positive $J$-Richardson variety\footnote{This should be called $J$-totally positive $J$-Richardson variety to be precise. But since we never consider the interaction between the ordinary total positivity and $J$-Richardson varieties, this should  not cause any confusion.}. Note that the definition of ${}^J U^-_{\ge 0}$ is a mixture of the totally positive and totally negative parts $U^-_{\le 0}$ of $U^-$. The $J$-total positivity is more difficult to study than the ordinary total positivity on $\CB$. Some major differences between the $J$-total positivity and the ordinary total positivity are 

\begin{itemize}
    \item the totally nonnegative flag $\CB_{\ge 0}$ admits a natural action of the totally nonnegative monoid $G_{\ge 0}$, while the $J$-totally nonnegative flag $\JCB_{\ge 0}$ only admits a natural action of totally nonnegative submonoid $L_{J, \ge 0}$; 
    
    \item the totally nonnegative flag $\CB_{\ge 0}$ has a nice representation-theoretic interpretation via Lusztig's canonical basis. In contrast, the positivity property of the canonical basis is not preserved for the $J$-total positivity. 
\end{itemize}

It is worth mentioning that the symmetry (of $G_{\ge 0}$), the representation-theoretic interpretation and the duality $B^+ \leftrightarrow B^-$ play a crucial role in the previous study of the totally nonnegative flag $\CB_{\ge 0}$ of reductive groups. However, none of these features are available for the $J$-totally nonnegative flag variety of a general Kac-Moody group. Thus we need to develop a new strategy to study the $J$-total positivity. 

Our starting point is the open covering $\CB=\bigcup_{w \in W} w U^- B^+/B^+$ and the isomorphisms 
$$
\Jc_w: w U^- B^+/B^+ \cong \JB^+ w B^+/B^+ \times \JB^- w B^+/B^+.
$$ 
The idea of such isomorphism dates back to Kazhdan and Lusztig \cite{KL}, see also \cite{KWY}. 

Our first main result on $J$-total positivity is the following.  Part (2)  is new even for the ordinary total positivity for the full flag variety of reductive groups. 

\begin{theorema}[Proposition~\ref{prop:J}, Theorem~\ref{thm:J}]\label{thmA}
Let $w_1 \leJ w_3 \leJ w_2$. Then 

(1) $\JCB_{w_1, w_2, >0} \subset w_3 U^- B^+/B^+$. 

(2) The map $\iota_{w_3}$ induces an isomorphism $$\JCB_{w_1, w_2, >0} \cong \JCB_{w_1, w_3, >0} \times \JCB_{w_3, w_2, >0}.$$
\end{theorema}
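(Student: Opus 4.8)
The plan is to build the isomorphism from the decomposition isomorphism $\Jc_{w_3}\colon w_3 U^- B^+/B^+ \cong \JB^+ w_3 B^+/B^+ \times \JB^- w_3 B^+/B^+$ and to track what the $J$-total positivity corresponds to under it. First I would use part (1)---and in fact its full strength for all three of $w_1\leJ w_3$, $w_3\leJ w_2$ and $w_1\leJ w_2$---to see that each of the three varieties $\JCB_{w_1,w_2,>0}$, $\JCB_{w_1,w_3,>0}$, $\JCB_{w_3,w_2,>0}$ sits inside the chart $w_3 U^- B^+/B^+$, so that $\iota_{w_3}$ may be applied to all of them and the proposed map makes sense. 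The two factor varieties $\JCB_{w_1,w_3,>0}$ and $\JCB_{w_3,w_2,>0}$ each contain the point $w_3 B^+$ (being totally positive $J$-Richardson varieties with $w_3$ as an endpoint of the interval), and under $\Jc_{w_3}$ the point $w_3 B^+$ goes to $(w_3 B^+, w_3 B^+)$; this pins down how the product map should be normalized.

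Next I would identify, under $\Jc_{w_3}$, the open $J$-Richardson variety $\JCB_{w_1,w_2}$ with $\JCB_{w_1,w_3}\times \JCB_{w_3,w_2}$ where here the first factor is viewed inside $\JB^+ w_3 B^+/B^+$ (the ``upper triangular'' direction, recording the relative position with $\JB^+ w B^+/B^+$ being replaced by an affine-space coordinate) and the second inside $\JB^- w_3 B^+/B^+$. This is essentially the Kazhdan--Lusztig style factorization of Richardson varieties recalled just before Theorem~A, adapted to the $J$-setting; the point is that $\Jc_{w_3}$ sends $\JB^+ w_1 B^+/B^+\cap w_3U^-B^+/B^+$ to (something like) $(\JB^+ w_1 B^+/B^+\cap \cdots)\times \JB^- w_3 B^+/B^+$ and similarly for the opposite orbit of $w_2$, so intersecting gives the claimed product at the level of varieties. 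The substantive content is then to show that this scheme-theoretic product isomorphism carries the total positivity condition correctly, i.e. that a point of $\JCB_{w_1,w_2}$ lies in $\JCB_{\ge 0}$ if and only if its two components lie in $\JCB_{w_1,w_3,\ge0}$ and $\JCB_{w_3,w_2,\ge0}$ respectively.

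For that I would work with the parametrization of ${}^J U^-_{\ge 0}\cdot B^+$ by $h_1\pi_J(h_2)\i h_2$ with $h_1\in U^-_{J,\ge0}$, $h_2\in U^-_{\le0}$, and trace how the coordinates $h_1,h_2$ (and the Weyl-group ``braid'' parameters describing position relative to $w_3$) split between the two $\Jc_{w_3}$-factors. Concretely, one expects that the $\JB^+$-direction records the ``upper'' piece of the total-positivity data and the $\JB^-$-direction records the ``lower'' piece, and the factorization is multiplicative enough that total nonnegativity decouples. I would verify the ``$\subseteq$'' direction by producing, from a point of the product of the two totally positive pieces, an explicit element of ${}^JU^-_{\ge0}\cdot B^+$ (using that the defining semigroups $U^-_{J,\ge0}$ and $U^-_{\le0}$ are closed under the relevant multiplications) mapping to it; and the ``$\supseteq$'' direction by the reverse, projecting the data. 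Then take closures: since $\iota_{w_3}$ is an isomorphism of varieties and the product of closures is the closure of the product, and since $\JCB_{w_i,w_j,>0}$ is by definition $\JCB_{\ge0}\cap\JCB_{w_i,w_j}$, the set-theoretic identity upgrades to an isomorphism of the (quasi-affine, or locally closed) varieties.

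I expect the main obstacle to be precisely this last point---matching total nonnegativity across the factorization---because, as the introduction emphasizes, there is no $B^+\leftrightarrow B^-$ duality and no canonical-basis positivity to lean on in the $J$-setting, and the defining set ${}^J U^-_{\ge 0}$ is a genuine mixture of $U^-_{J,\ge0}$ and $U^-_{\le0}$ rather than a single positive semigroup. So the crux is a careful bookkeeping lemma: one must show that the conjugation-twist $h\mapsto h_1\pi_J(h_2)\i h_2$, when restricted to (the preimages of) the chart $w_3 U^- B^+/B^+$, intertwines the product decomposition $\Jc_{w_3}$ with a product of the analogous twists for the intervals $[w_1,w_3]$ and $[w_3,w_2]$. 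A secondary technical issue is the passage to closures in the (possibly infinite-dimensional) Kac-Moody flag variety, but this should be handled exactly as in the proof of part (1) and in \cite{BH20}, using that the chart $w_3U^-B^+/B^+$ is an ind-affine-space and $\iota_{w_3}$ is a homeomorphism onto its image.
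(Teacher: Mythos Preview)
Your plan has a genuine gap at the step you yourself flag as the crux: you propose to track positivity through the factorization $\Jc_{w_3}$ by working with the explicit parametrization $h_1\pi_J(h_2)^{-1}h_2$ of points in ${}^J U^-_{\ge 0}\cdot B^+$. But that parametrization only covers the open cells $\JCB_{v,w,>0}$ with $v\in W_J$ and $w\in {}^J W$; for a general pair $w_1\leJ w_2$ the cell $\JCB_{w_1,w_2,>0}$ lies in the \emph{closure} of ${}^J U^-_{\ge 0}\cdot B^+$ and has no such explicit description. The paper is explicit about this in \S7.1: ``except for the special cases studied in \S6, it is rather difficult to understand the general stratum $\JCB_{v, w, >0}$ (the parametrization, connected components, etc.)''. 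So your proposed ``bookkeeping lemma'' cannot even be formulated, and neither inclusion $\subseteq$ nor $\supseteq$ can be proved by producing explicit elements.

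The paper's route is entirely different and avoids ever parametrizing general cells. It first proves an abstract product-structure theorem (Theorem~\ref{thm:product}): for \emph{any} connected component $Y_{v,w}$ of $\OJCB_{v,w}(\BR)$ satisfying the containment hypothesis of part~(1), the map $\Jc_u$ automatically restricts to $Y_{v,w}\cong Y_{v,u}\times Y_{u,w}$, where $Y_{v,u}$ and $Y_{u,w}$ are themselves connected components (this uses only that $\Jc_u$ sends connected components to connected components, plus limit arguments under a torus action). So once you know (i) $\JCB_{v,w,>0}$ is a connected component of $\OJCB_{v,w}(\BR)$ and (ii) the containment of part~(1), part~(2) is free. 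The hard work is therefore part~(1) together with the connected-component property, and this is established by a long indirect argument: embed $\CB$ into the flag variety $\CB^\spade$ of a larger Kac-Moody group so that arbitrary $J$-Richardson varieties become ``basic'' ones (Proposition~\ref{prop:thick}), relate those to projected Richardson varieties via the Birkhoff--Bruhat atlas (Proposition~\ref{prop:compatible}), and finally prove the containment by showing certain matrix-coefficient functions are admissible and hence nowhere-vanishing (\S\ref{sec:delta}--\S\ref{sec:pf1}). None of this tracks positivity through $\Jc_{w_3}$ directly.
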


We call the isomorphism in part (2) of Theorem A {\it the product structure} of $\JCB_{w_1, w_2, >0}$. If we fix $w_3$, but let $w_1$ and $w_2$ vary, then we obtain an isomorphism 
\begin{equation}\label{eq:star}
\tag{$\star$} \bigsqcup_{w_1 \leJ w_3 \leJ w_2} \JCB_{w_1, w_2, >0} \cong \bigsqcup_{w_1 \leJ w_3} \JCB_{w_1, w_3, >0} \times \bigsqcup_{w_3 \leJ w_2} \JCB_{w_3, w_2, >0}.
\end{equation}

We call it the {\it product structure} of $\bigsqcup_{w_1 \leJ w_3 \leJ w_2} \JCB_{w_1, w_2, >0}$. It allows us to understand $\JCB_{w_1, w_2, >0}$ and its closure inductively. We can translate a geometric/topological question of calculating the closure to an algebraic question of calculating the image under the map ${}^J c_w$. As consequences of the product structure \eqref{eq:star} we obtain  

\begin{theoremb} [Theorem~\ref{thm:J}]\label{thmB}
Let $w_1 \leJ w_2$. Then 

(1) $\JCB_{w_1, w_2, >0}$ is a cell and is a connected component of $\JCB_{w_1, w_2}(\BR)$. 

(2) The closure of $\JCB_{w_1, w_2, >0}$ is $\bigsqcup_{w_1 \leJ w'_1 \leJ w'_2 \leJ w_2} \JCB_{w'_1, w'_2, >0}$. 
\end{theoremb}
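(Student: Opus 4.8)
Throughout I may use Theorem~A together with the standard facts about (open) $J$-Richardson varieties — that $\JCB_{v,w}$ is nonempty and smooth of dimension $\Jell(w)-\Jell(v)$ precisely when $v\leJ w$, that $\leJ$ is graded by $\Jell$, and that $\overline{\JCB_{v,w}}=\bigsqcup_{v\leJ v'\leJ w'\leJ w}\JCB_{v',w'}$. The plan is to run an induction on $d=\Jell(w_2)-\Jell(w_1)$, with the product structure \eqref{eq:star} as the engine and a single \emph{rank-one} fact as the only separate input. The case $d=0$ is trivial, since $\JCB_{w_1,w_1,>0}$ is a point. The rank-one input ($d=1$), which I would establish first, by reducing through the $\iota$-parametrization to an $\SL_2$/$\PGL_2$ computation, is: for $u\lessJ u'$, the set $\JCB_{u,u',>0}$ is homeomorphic to $\Rp$, and its closure in $\CB$ equals $\JCB_{u,u',>0}\sqcup\{\JCB_{u,u,>0}\}\sqcup\{\JCB_{u',u',>0}\}$.

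For part (1), I would show $\JCB_{w_1,w_2,>0}$ is a $d$-cell by induction on $d$: for $d\le 1$ this is the rank-one fact, and for $d\ge 2$ one picks $w_3$ with $w_1\lessJ w_3\leJ w_2$ and $\Jell(w_3)=\Jell(w_1)+1$ (so $w_3\ne w_2$, using gradedness), and Theorem~A(2) gives $\JCB_{w_1,w_2,>0}\cong\JCB_{w_1,w_3,>0}\times\JCB_{w_3,w_2,>0}$, the product of a $1$-cell and (by induction) a $(d-1)$-cell, hence a $d$-cell; in particular it is nonempty and connected. Then $\JCB_{w_1,w_2}(\BR)$ is a $d$-manifold, inside which $\JCB_{w_1,w_2,>0}$ is closed (because $\JCB_{\ge 0}$ is closed in $\CB$) and open (by invariance of domain applied to the continuous injection $\BR^d\cong\JCB_{w_1,w_2,>0}\hookrightarrow\JCB_{w_1,w_2}(\BR)$), hence a connected component, which proves (1).

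For part (2), the inclusion $\overline{\JCB_{w_1,w_2,>0}}\subseteq\bigsqcup_{w_1\leJ w_1'\leJ w_2'\leJ w_2}\JCB_{w_1',w_2',>0}$ is immediate from the closure relation for $J$-Richardson varieties intersected with the closed set $\JCB_{\ge 0}$. For the reverse inclusion, the point is that everything reduces to the single assertion
\begin{equation*}
\tag{$\ast$}
\JCB_{w_2,w_2,>0}\subseteq\overline{\JCB_{w_1,w_2,>0}}\qquad\text{for all }w_1\leJ w_2 .
\end{equation*}
Granting $(\ast)$: given $w_1\leJ w_1'\leJ w_2'\leJ w_2$, apply \eqref{eq:star} with fixed middle element $w_1'$; since relative closures are ambient closures intersected with the subspace and closures in products are products of closures, the homeomorphism \eqref{eq:star} carries $\overline{\JCB_{w_1,w_2,>0}}\cap(w_1'U^-B^+/B^+)$ to $\overline{\JCB_{w_1,w_1',>0}}\times\overline{\JCB_{w_1',w_2,>0}}$ (closures inside $\bigsqcup_{a\leJ w_1'}\JCB_{a,w_1',>0}$ and $\bigsqcup_{w_1'\leJ b}\JCB_{w_1',b,>0}$) and carries $\JCB_{w_1',w_2',>0}$ to $\{\JCB_{w_1',w_1',>0}\}\times\JCB_{w_1',w_2',>0}$; hence $\JCB_{w_1',w_2',>0}\subseteq\overline{\JCB_{w_1,w_2,>0}}$ follows from $(\ast)$ for $(w_1,w_1')$ together with $\JCB_{w_1',w_2',>0}\subseteq\overline{\JCB_{w_1',w_2,>0}}$, and one more application of \eqref{eq:star} with fixed middle element $w_2'$ reduces the latter to $(\ast)$ for $(w_2',w_2)$. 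Finally $(\ast)$ is itself proved by induction on $d$: for $d\le 1$ it is the rank-one fact, and for $d\ge 2$ one picks $w_3$ with $w_1\leJ w_3\lessJ w_2$ and $w_3\ne w_1$, uses $(\ast)$ for $(w_1,w_3)$ (inductive hypothesis) via the \eqref{eq:star}-argument just described (with $w_1'=w_3$, $w_2'=w_2$) to obtain $\JCB_{w_3,w_2,>0}\subseteq\overline{\JCB_{w_1,w_2,>0}}$, and then the rank-one fact for the edge $[w_3,w_2]$ to conclude $\JCB_{w_2,w_2,>0}\subseteq\overline{\JCB_{w_3,w_2,>0}}\subseteq\overline{\JCB_{w_1,w_2,>0}}$.

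The hard part will be the rank-one base case, together with making precise the bookkeeping around \eqref{eq:star}: one must verify that \eqref{eq:star} is genuinely a homeomorphism of the indicated stratified spaces (so that it transports closures as used above), and one must analyze $\JCB_{u,u',>0}$ for $u\lessJ u'$ directly, identifying it with $\Rp$ and computing its two boundary points through the explicit $\iota$-parametrization. Once these are in place, the induction above converts all the closure statements into finitely many instances of $(\ast)$, and $(\ast)$ bootstraps from the edge case.
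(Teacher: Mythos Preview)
Your inductive strategy — reducing everything to the rank-one case via Theorem~A and the product isomorphism \eqref{eq:star}, then closing with an invariance-of-domain argument for the connected-component claim — is sound and in fact parallels the paper's own Corollary~\ref{cor:product}, which derives the cell structure and the closure relation in exactly this inductive fashion once the abstract hypotheses of Theorem~\ref{thm:product} are in place. Your use of invariance of domain is a nice organizational variation: the paper instead first proves directly that $\JCB_{w_1,w_2,>0}$ is a connected component (Proposition~\ref{prop:J}(2)) and then deduces the cell structure, whereas you go cell $\Rightarrow$ connected component.

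The genuine gap is the rank-one input. For $u \lessJ u'$ with $\Jell(u')-\Jell(u)=1$, the open $J$-Richardson variety $\OJCB_{u,u'}$ is indeed a $T$-stable curve with $\OJCB_{u,u'}(\BR)\cong\BR^\times$; but the intersection with $\JCB_{\ge 0}$ is \emph{not} governed by any rank-one subgroup of $G$. The set $\JCB_{\ge 0}$ is the closure of $\{h_1\pi_J(h_2)^{-1}h_2\cdot B^+:h_1\in U^-_{J,\ge 0},\,h_2\in U^-_{\le 0}\}$, which mixes totally positive and totally negative parts in a way that does not restrict to an $\SL_2$ or $\PGL_2$ sitting over the reflection $t$ with $u'=tu$. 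In particular, even \emph{nonemptiness} of $\JCB_{u,u',>0}$ for a general rank-one interval cannot be read off from such a subgroup. In the paper this base case is a consequence of the explicit parametrization of Proposition~\ref{prop:typeII} (valid only when the larger index lies in ${}^JW$), pushed down to arbitrary $u'$ via Lemma~\ref{lem:key2}/Corollary~\ref{cor:II}, together with the density statement Corollary~\ref{cor:dense}. That machinery, together with the admissible-function arguments of \S\ref{sec:6}--\S\ref{sec:8} that verify Proposition~\ref{prop:J}(2)--(3), \emph{is} the content that your ``$\SL_2$/$\PGL_2$ computation'' would have to replace. If you instead import Proposition~\ref{prop:typeII} (or equivalently Proposition~\ref{prop:J}(2)) as an additional input, your argument goes through, but at that point it essentially coincides with the paper's.
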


Taking $J = \emptyset$, Theorem B generalizes the works of Rietsch in \cite{Ri99, Ri06} from reductive groups to Kac-Moody groups.

We also prove that 

\begin{theoremc}[Proposition~\ref{prop:compatible}]\label{thmC}
The Birkhoff-Bruhat atlas of \cite{BH21} sends a totally positive cell $\CP_{K, \a, >0}$ isomorphically to a totally positive cell $J$-Richardson variety ${}^J \! \tilde \CB_{w_1, w_2, >0}$ for certain $J$ and $w_1, w_2 \in \tW$. 
\end{theoremc}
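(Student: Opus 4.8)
The plan is to unwind the explicit construction of the Birkhoff-Bruhat atlas of \cite{BH21} and to check, one chart at a time, that it carries the ordinary total positivity on $\CP_K$ to the $J$-total positivity on $\tilde\CB$. Recall that this atlas is a finite family of charts $\theta_\b\colon V_\b \xrightarrow{\sim} \tilde V_\b$, where $V_\b\subset\CP_K$ is an open subset (a union of projected Richardson varieties, the opposite big cell based at a $T$-fixed point), $\tilde V_\b=\tilde w_\b U^-\tilde B^+/\tilde B^+\subset\tilde\CB$ is an open chart of the flag variety of the auxiliary Kac-Moody group $\tilde G$, and $\theta_\b$ matches $\CP_{K,\a}\cap V_\b$ with a $J$-Richardson variety ${}^J \! \tilde\CB_{w_1,w_2}\cap\tilde V_\b$ for a combinatorially determined pair $w_1\leJ w_2$ in $\tW$. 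Fix $\a$ together with a chart satisfying $\CP_{K,\a}\subset V_\b$. It is then enough to establish the identity
\[
\theta_\b\bigl(\CP_{K,\ge 0}\cap V_\b\bigr)={}^J \! \tilde\CB_{\ge 0}\cap\tilde V_\b .
\]
Indeed, granting this and intersecting with the strata yields $\theta_\b(\CP_{K,\a,>0})={}^J \! \tilde\CB_{w_1,w_2,>0}\cap\tilde V_\b$, which equals ${}^J \! \tilde\CB_{w_1,w_2,>0}$ because part (1) of Theorem~A, applied inside $\tilde G$, gives ${}^J \! \tilde\CB_{w_1,w_2,>0}\subset\tilde w_\b U^-\tilde B^+/\tilde B^+$; the restriction of $\theta_\b$ is then an isomorphism of cells, the left side being a cell by \cite{Ri99} and the right side by Theorem~B.

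To prove the displayed identity I would first reduce it to a statement about dense open cells. Both $\CP_{K,\ge 0}\cap V_\b$ and ${}^J \! \tilde\CB_{\ge 0}\cap\tilde V_\b$ are closed in their ambient charts, and by definition $\CP_{K,\ge 0}=\overline{U^-_{\ge 0}P^+_K/P^+_K}$ and ${}^J \! \tilde\CB_{\ge 0}=\overline{{}^J \! \tilde U^-_{\ge 0}\cdot\tilde B^+}$; since $V_\b$ is open, $U^-_{\ge 0}P^+_K/P^+_K\cap V_\b$ is dense in $\CP_{K,\ge 0}\cap V_\b$, and likewise on the $\tilde\CB$ side. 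As $\theta_\b$ is a homeomorphism of $V_\b$ onto $\tilde V_\b$, the displayed identity is therefore equivalent to
\[
\theta_\b\bigl(U^-_{\ge 0}P^+_K/P^+_K\cap V_\b\bigr)={}^J \! \tilde U^-_{\ge 0}\cdot\tilde B^+\cap\tilde V_\b .
\]
For the base chart $\b=e$ one has $U^-_{\ge 0}P^+_K/P^+_K\subset V_e$ with image in $\tilde V_e$, so this is the clean equality $\theta_e\bigl(U^-_{\ge 0}P^+_K/P^+_K\bigr)={}^J \! \tilde U^-_{\ge 0}\cdot\tilde B^+$; a general chart is reduced to this case by the Weyl translation relating the two base points.

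The heart of the matter, and the step I expect to be the main obstacle, is this last identity, which I would prove by substituting the explicit formula for $\theta_\b$ from \cite{BH21}. That formula is assembled entirely from torus elements, the one-parameter subgroups $x_i(t),y_i(t)$ of $G$ and of $\tilde G$, and the projection $\pi_J$; the ``folding'' of Richardson varieties effected by $\CB\to\CP_K$ is exactly undone in the atlas by passing to $\tilde G$ and the $J$-twisted picture. The key observation is that the simple roots of $\tilde G$ split into two families: those inherited from $G$, which produce the genuinely nonnegative factor $h_1\in U^-_{J,\ge 0}$, and the ``new'' ones carrying out the unfolding, which produce --- with the opposite sign convention --- the factor $\pi_J(h_2)^{-1}h_2$ with $h_2\in U^-_{\le 0}$. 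Thus the mixed sign in the definition ${}^J \! \tilde U^-_{\ge 0}=\{h_1\pi_J(h_2)^{-1}h_2\}$ is precisely what makes the image of the genuinely nonnegative $U^-_{\ge 0}P^+_K/P^+_K$ equal to the $J$-totally nonnegative locus, and conversely. Making this rigorous requires (i) tracking the sign changes one-parameter subgroup by one-parameter subgroup along the atlas formula, and (ii) invoking Lusztig's positivity of the structure constants governing the commutation relations among the $x_i(t),y_i(t)$ and the behaviour of $\pi_J$, so that no spurious negative coefficients appear. Step (i) is the delicate point; it is ultimately a computation, but its success is exactly why $J$-total positivity was defined with this particular mixture of $U^-_{J,\ge 0}$ and $U^-_{\le 0}$.
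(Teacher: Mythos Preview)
Your outline has two genuine gaps and one structural misunderstanding of the atlas.

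\textbf{Circularity.} You invoke Theorem~A(1) to get ${}^J\!\tilde\CB_{w_1,w_2,>0}\subset\tilde w_\b U^-\tilde B^+/\tilde B^+$, and Theorem~B to know the right-hand side is a cell. But in the paper's logical order Theorems~A and~B (Proposition~\ref{prop:J}) are deduced \emph{from} Theorem~C via the admissibility machinery of \S\ref{sec:8}; you cannot cite them here. The paper avoids this by using only the special case of $J$-positivity established independently in \S\ref{sec:6} (Proposition~\ref{prop:typeII}), namely the case where the second index lies in ${}^{I^\flat}\tilde W$. Since $\tilde\nu(1,w')=(w')^\flat\in{}^{I^\flat}\tilde W$, that special case suffices once one has reduced to strata of the form $(1,w')$.

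\textbf{The target of the atlas.} The image of $\tilde c_r$ is the opposite $J$-Schubert cell ${}^{I^\flat}\!\!\mathring{\tilde\CB}^{\tilde\nu(r,r)}$, not a translate $\tilde w_\b U^-\tilde B^+/\tilde B^+$ of the big cell. Your displayed ``global'' identity $\theta_\b(\CP_{K,\ge 0}\cap V_\b)={}^J\!\tilde\CB_{\ge 0}\cap\tilde V_\b$ is therefore not well-posed: the right-hand side contains $J$-positive strata ${}^{I^\flat}\!\!\tilde\CB_{\tilde\nu(r,r),\tilde w,>0}$ with $\tilde w$ outside the image of $\tilde\nu$, which have no counterpart on the $\CP_K$ side. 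The atlas is only a stratified embedding, not a stratified isomorphism onto an open set of $\tilde\CB$.

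\textbf{The reduction to the base chart.} The maps $\tilde c_r$ for varying $r$ are not related by any Weyl translation; the formula $g\dot r p\mapsto(\sigma_{r,+}(g)\dot r)^\flat(g\dot r)^{\sharp,-1}$ depends on $r$ through $\sigma_{r,+}$ in an essential way. The paper does not attempt such a reduction. Instead it reduces along the \emph{other} axis: for fixed $r$ it passes from a general $(v,w)$ to $(1,w')$ with $w'$ large (using the closure relations of Proposition~\ref{prop:closure-p}(2) on the $\CP_K$ side and Proposition~\ref{prop:typeII}(2) on the $\tilde\CB$ side), and then carries out the explicit computation of $\tilde c_r$ on $U^-_{w,>0}$ for $w$ satisfying $\ell(r^{-1}w)=\ell(w)-\ell(r)$, via the decomposition \eqref{eq:GKL}. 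That computation produces an element of ${}^{I^\flat}G_{\tilde\nu(r,r),\tilde\nu(1,w),>0}\cdot\tilde B^+$ directly; a connected-component argument (available from Propositions~\ref{prop:closure-p}(3) and~\ref{prop:typeII}) then upgrades the inclusion to equality. Your intuition that the computation tracks factors $h_1\in U^-_{J,\ge 0}$ and $\pi_J(h_2)^{-1}h_2$ with $h_2\in U^-_{\le 0}$ is correct and is exactly what \eqref{eq:star1}--\eqref{eq:star4} make precise, but this must be done for each $r$, not only $r=e$.
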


Note that the partial flag variety $\CP_{K, \ge 0}$ does not have an obvious product structure as in Theorem A. This is reflected combinatorially on the lack of symmetry on the face poset of $\CP_{K, \ge 0}$. Theorem C allows us to study (inductively) the complicated totally positive projected Richardson varieties on $\CP_{K}$ using the product structure coming from totally positive $J$-Richardson varieties on $\tilde{\CB}$. This is a key ingredient in our proof of the regularity theorem for the totally positive projected Richardson varieties on $\CP_{K}$, which we will discuss in the next subsection. 

The $J$-total positivity for the full flag variety of any Kac-Moody group will also be applied to the study of the total positivity in many other spaces, such as the double flag varieties, the Bott-Samelson varieties, the double Bruhat cells and the wonderful compactifications. This will be done in future works. 

\subsection{Regularity Theorem} We establish the regularity theorem for the links of the identity in the totally positive cells in $U^-$, the (ordinary) totally positive cells in the partial flag varieties, and the totally positive $J$-Richardson varieties in the full flag varieties. 

\begin{theoremd}[Theorem~\ref{thm:CB}, Theorem~\ref{thm:CPK}, Theorem~\ref{thm:J}, Theorem~\ref{thm:lkregular}]\label{thmD}
All the following three spaces are regular CW complexes homeomorphic to closed balls: 
\begin{enumerate}
	\item  the link of the identity in $\overline{U^-_{w, >0}}$, for any $w\in W$;
	\item   the totally nonnegative projected Richardson variety $\overline{\CP_{K, \a, >0}}$;
	\item the totally nonnegative $J$-Richardson variety $\overline{\JCB_{w_1, w_2, >0}}$.
\end{enumerate}
\end{theoremd}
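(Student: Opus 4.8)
The three assertions are intertwined: assertion~(2) will be deduced from~(3) through the Birkhoff--Bruhat atlas (Theorem~C, Proposition~\ref{prop:compatible}), while assertion~(1) enters the proof of~(3) as the local model at vertices, so the heart of the matter is~(3), and the engine is the product structure of Theorem~A together with~\eqref{eq:star}. The overall mechanism is the one of Galashin, Karp and Lam~\cite{GKL}: to prove that a compact set $X$ stratified by cells $\{X_\sigma\}$ with $\overline{X_\sigma}=\bigsqcup_{\tau\le\sigma}X_\tau$ is a regular CW complex homeomorphic to a closed ball, with each closed cell $\overline{X_\sigma}$ homeomorphic to $D^{\dim\sigma}$, it suffices that every $X_\sigma$ be a cell, that the face poset be thin with spherical open intervals, and that $X$ carry a contractive flow compatible with the stratification; the generalized Poincar\'e conjecture is what converts the homotopy-level output of this scheme (boundaries are homotopy spheres, closed cells are contractible manifolds-with-boundary) into the stated homeomorphisms.

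For~(3), fix $w_1\leJ w_2$, put $P=\overline{\JCB_{w_1,w_2,>0}}$ and $d=\Jell(w_2)-\Jell(w_1)$. By Theorem~B(1) every $\JCB_{w'_1,w'_2,>0}$ is a cell, and by Theorem~B(2) the closure poset of $P$ is the interval $\{(w'_1,w'_2):w_1\leJ w'_1\leJ w'_2\leJ w_2\}$, whose open intervals are thin with spherical order complexes (these are Bruhat-type intervals and the verification is as for the ordinary Bruhat order). It remains to produce the contractive flow and to control the local structure at the strata, and this is where the product structure is used. For any $w_3$ with $w_1\leJ w_3\leJ w_2$, Theorem~A(2) gives $\JCB_{w_1,w_2,>0}\cong\JCB_{w_1,w_3,>0}\times\JCB_{w_3,w_2,>0}$, and~\eqref{eq:star} promotes this to a stratification-preserving isomorphism of the slice of $\JCB_{\ge 0}$ through $w_3$. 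Letting $w_3$ range over the stratum indices realizes a neighbourhood in $P$ of any boundary stratum as a product of lower-dimensional totally positive $J$-Richardson varieties, and, at the extreme of a boundary vertex $\JCB_{w,w,>0}$, identifies its link --- read off through the chart $\Jc_w$ --- with a join of unipotent links of the type in part~(1). An induction on $d$ then supplies the manifold-with-boundary structure of $P$, with $\partial P=\bigsqcup_{(w'_1,w'_2)\ne(w_1,w_2)}\JCB_{w'_1,w'_2,>0}$, together with the flow: one transports the $\BR_{>0}$-dilation flow on ${}^J U^-_{\ge 0}\subset U^-$ through the charts $\Jc_w$ and uses the product structure to check that it is well defined on $P$, contracts $P$, and is transverse to every stratum. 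Feeding this into the criterion of the previous paragraph shows that $P$, and hence each $\overline{\JCB_{w'_1,w'_2,>0}}$, is a regular CW complex homeomorphic to a closed ball, which is~(3).

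The principal obstacle is exactly this package --- the manifold-with-boundary structure, the spherical boundaries, and the contractive flow --- established without the three devices (the $G_{\ge 0}$-symmetry, the canonical-basis positivity, the duality $B^+\leftrightarrow B^-$) that drove the reductive case. The product structure~\eqref{eq:star} is the replacement: it both furnishes the local product model near boundary strata that makes the inductive manifold and sphere statements go through, and is stable under the recursion --- which is precisely the reason for working with $J$-total positivity instead of directly with $\CP_{K,\ge 0}$, whose face poset admits no such symmetry. A secondary difficulty is that the dilation flow is defined a priori only on the open dense chart ${}^J U^-_{\ge 0}\cdot B^+$, and must be shown to extend continuously to $P$ and to respect every $J$-Richardson stratum; this is again done chart-by-chart through $\Jc_w$ by means of the product structure.

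Finally, assertion~(1) is the base of this induction: the totally positive cells of $\overline{U^-_{w,>0}}$ are parametrized through reduced words, hence carry an evident product structure and dilation flow, and the same scheme applies. And assertion~(2) follows from~(3): by Theorem~C the Birkhoff--Bruhat atlas of~\cite{BH21} identifies $\CP_{K,\a,>0}$ with ${}^J\!\tilde\CB_{w_1,w_2,>0}$ for suitable $J$ and $w_1,w_2$, and since the atlas charts cover $\overline{\CP_{K,\a,>0}}$ and carry the projected-Richardson stratification to the $J$-Richardson stratification, this isomorphism promotes to a homeomorphism of closures matching cell decompositions; thus $\overline{\CP_{K,\a,>0}}$ inherits from~(3) the structure of a regular CW complex homeomorphic to a closed ball, confirming the conjectures of Galashin, Karp and Lam~\cite{GKL}.
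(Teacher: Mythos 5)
Your outline correctly identifies the engine of the paper's argument: the product structure of Theorem~A replacing the Fomin--Shapiro atlas, the coweight actions giving cone-over-link local models, Bj\"orner's criterion for the thin shellable boundary poset, the generalized Poincar\'e conjecture, and the Birkhoff--Bruhat atlas for deducing (2) from (3). That is essentially the paper's route. A few of your formulations, however, diverge from how the argument actually runs and would send you down blind alleys if followed literally.

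First, the ``criterion'' you state (cells, thin poset with spherical open intervals, contractive flow $\Rightarrow$ regular CW ball) is not a black-box theorem. The paper proves directly, by induction on $\Jell(w_2)-\Jell(w_1)$ in Theorem~\ref{thm:regularCW}, that each closure is a topological manifold with boundary, by combining the product structure at intermediate strata with the coweight-induced cone structure at vertices; Bj\"orner and Poincar\'e are invoked only after that. There is also no global flow that must be shown ``well defined on $P$'' or ``extended continuously'': for each vertex $v'$ one uses the $\BR^\times$-action via the coweight $v'(\mu)$, which is an automorphism of $\CB(\BR)$ defined everywhere, and one uses a \emph{different} coweight for each vertex. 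Second, the links are not joins of links of the type in part~(1). Proposition~\ref{prop:lk} decomposes $Lk_{v'}\bigcap \dot{u} U^- B^+/B^+$ as a product $Lk_{v',u}\times\mathrm{Cone}(Lk_u)$, and the regularity of links (Proposition~\ref{prop:lkregular}) is an inductive result within the same framework, not a reduction to the $J=\emptyset$ links. Third, assertion~(1) is not the base of the induction --- the base is a single point, $\Jell(w)-\Jell(v)=0$; rather (1) is a corollary of Proposition~\ref{prop:lkregular} specialized to $J=\emptyset$, $v'=1$ (after verifying the hypotheses of Theorem~\ref{thm:regularCW} via Lemma~\ref{lem-in-u} and Proposition~\ref{prop:CB}), exactly parallel to how (3) follows once Proposition~\ref{prop:J} supplies the hypotheses for $J$-total positivity. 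The core strategy you describe is right, but these three points are substantive enough that the proposal as written does not reconstruct the proof.
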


For reductive groups, the regularity of the link was first established by Hersh in \cite{Her};  the regularity of $\overline{\CP_{K, \a, >0}}$ was established by Galashin, Karp and Lam in \cite{GKL}. The generalization of regularity theorems in \cite{Her, GKL} for Kac-Moody groups was conjectured by Galashin, Karp and Lam in \cite[conjecture 10.2]{GKL}. Theorem D (1) \& (2) proves the conjectures, and part (3) is a new regularity result.

To prove regularity theorems, we follow \cite{GKL} for the use of the generalized Poincar\'e conjecture \cite{Sm61}, \cite{Fr82} and \cite{Pe02} as well as some general results on the poset topology. One then needs to show that each space $\overline{Y}$ above is a topological manifold with boundary $\overline{Y}-Y$. In the case where $Y=\CP_{K, \a, >0}$ for a reductive group, \cite{GKL} proved this result by constructing the Fomin-Shapiro atlas. Such construction relies on the affine  model \cite[\S 7]{GKL} and a detailed study of the admissible functions \cite[\S 5 \& \S 6]{GKL}. Another crucial fact is that the maps involved in the construction are the restrictions of smooth maps. Such construction of the Fomin-Shapiro atlas does not work for the ordinary total positivity for the Kac-Moody groups of infinite types nor the $J$-total positivity. 

Instead, we use the product structure to study (inductively) the open neighborhood of a point in each space $\overline{Y}$ in Theorem D. For the $J$-Richardson varieties, the product structure \eqref{eq:star} is established in Theorem A. The product structure of the links in the totally positive cells in $U^-$ is inherited from the product structure of the (ordinary) total positivity on the full flag variety, which is also established in Theorem A. 

As to the partial flag varieties, we do not have a obvious product structure. However, by Theorem C,  the ``atlas model'' $\CP_K \dashrightarrow \tilde \CB$ in \cite{BH20} translates the local structure of the totally nonnegative projected Richardson varieties in $\CP_K$ homeomorphically to the local structure of a $J$-Richardson variety in $\tilde \CB$. And thus we may use the product structure of the $J$-total positivity to understand the local structures of the totally positive projected Richardson varieties and establish the desired regularity theorem.

\vspace{.2cm}
\noindent {\bf Acknowledgement: } HB is supported by MOE grants R-146-000-294-133 and R-146-001-294-133. XH is partially supported by a start-up grant and by funds connected with Choh-Ming Chair at CUHK, and by Hong Kong RGC grant 14300221.


\section{Preliminaries}

Throughout this paper, unless stated otherwise, for any ind-scheme $X$ over $\BR$, we shall simply denote by $X$ its set of $\BC$-valued points, and denote by $X(\BR)$ its set of $\BR$-valued point. For any topological subspace $Y$ of $X(\BR)$, we denote by $\overline{Y}$ the closure with respect to the Hausdorff topology. 
\subsection{Minimal Kac-Moody groups}

Let $I$ be a finite set and $A=(a_{ij})_{i, j \in I}$ be a symmetrizable generalized Cartan matrix in the sense of \cite[\S 1.1]{Kac}. A {\it Kac-Moody root datum} associated to $A$ is a quintuple $\CD=(I, A, X, Y, (\a_i)_{i \in I}, (\a^\vee_i)_{i \in I})$, where $X$ is a free $\BZ$-module of finite rank with $\BZ$-dual $Y$, such that the elements $\a_i$ of $X$ and $\a^\vee_i$ of $Y$ satisfying $\<\a^\vee_j, \a_i\>=a_{ij}$ for $i, j \in I$. The {\it split minimal Kac-Moody group} $G$ over $\BR$ associated to the Kac-Moody root datum $\CD$ is the split group over $\BR$ generated by the split torus $T$ associated to $Y$ and the root subgroup $U_{\pm \a_i}$ for $i \in I$, subject to the Tits relations \cite{Ti87}. Let $U^+ \subset G$ (resp. $U^- \subset G$) be the subgroup generated by $U_{\a_i}$ (resp. $U_{-\a_i}$) for $i \in I$. Let $B^{\pm} \subset G$ be the Borel subgroup generated by $T$ and $U^{\pm}$, respectively.

For any $K \subset I$, let $L_K$ be the subgroup of $G$ generated by $T$ and $U_{\pm{\a_i}}$ for $i \in K$. Let $P^+_K$ be the standard parabolic subgroup of $G$ generated by $B^+$ and $U_{-\a_i}$ for $i \in K$ and $P^-_K$ be the opposite parabolic subgroup of $G$ generated by $B^-$ and $U_{\a_i}$ for $i \in K$. Let $U_{P^\pm_K}$ be the unipotent radical of $P^\pm_K$. We have the Levi decomposition $P^\pm_K=L_K \ltimes U_{P^\pm_K}$. 

For each $i \in I$, we fix isomorphisms $x_i: \BR \to U_{\a_i}$ and $y_i: \BR \to U_{-\a_i}$ such that the map
\[
\begin{pmatrix} 1 & a  \\ 0 & 1 \end{pmatrix} \mapsto x_i(a), \begin{pmatrix} b & 0  \\ 0 & b \i \end{pmatrix} \mapsto \a^\vee_i(a), \begin{pmatrix} 1 & 0  \\ c & 1 \end{pmatrix} \mapsto y_i(c)
\] defines a homomorphism $SL_2 \to G$. 

\subsection{Weyl groups} 
Let $W$ be the Weyl group of $G$. It is a Coxeter group with the set of generators $\{s_i\}_{i \in I}$. Let $\ell$ be the length function on $W$ and $\le$ be the Bruhat order on $W$. We have natural actions of $W$ on both $X$ and $Y$. 

For any $J \subset I$, let $W_J$ be the subgroup of $W$ generated by $s_j$ for $j \in J$. This is the Weyl group of the parabolic subgroup $P^+_J$ of $G$. Let $W^J$ be the set of minimal-length coset representatives of $W/W_J$ and ${}^J W$ be the set of minimal-length coset representatives of $W_J \backslash W$. The multiplication gives a natural bijection $W_J \times {}^J W \cong W$. For any $w \in W$, let $w_J$ be the unique element in $W_J$ and ${}^J w$ be the unique element in ${}^J W$ with $w=w_J \, {}^J w$. 

For any $i \in I$, we set $\dot s_i=x_i(1) y_i(-1) x_i(1) \in G(\BR)$. Let $w \in W$. By \cite[Proposition 7.57]{Mar}, for any reduced expression $w=s_{i_1} s_{i_2} \cdots s_{i_n}$ of $w$, the element $\dot s_{i_1} \dot s_{i_2} \cdots \dot s_{i_n}$ of $G$ is independent of the choice of the reduced expression. We denote this element by $\dot w$.

\subsection{The flag varieties} 

Let $\CB=G/B^+$ be the thin (full) flag variety of $G$ (see \cite{Kum}). For any $w \in W$, we set  $\mathring{\CB}_w=B^+ \dot{w} B^+ / B^+$ and $\mathring{\CB}^w=B^- \dot{w} B^+ / B^+$. We denote by ${\CB}_w$ and ${\CB}^w$ the Zariski closure of $\mathring{\CB}_w$ and $\mathring{\CB}^w$ in $\CB$, respectively. For $v, w \in W$, let $\mathring{\CB}_{v, w}=\mathring{\CB}_w \bigcap \mathring{\CB}^v$. This is an (open) Richardson variety of $\CB$. It is known that $\mathring{\CB}_{v, w} \neq \emptyset$ if and only if $v \le w$. In this case, $\dim \mathring{\CB}_{v, w}=\ell(w)-\ell(v)$. We have the decomposition $\CB=\bigsqcup_{v \le w} \mathring{\CB}_{v, w}$. Moreover, for any $v \le w$, the Zariski closure of ${\mathring{\CB}_{v, w}}$ is $\CB_w \bigcap \CB^v=\bigsqcup_{v \le v' \le w' \le w} \mathring{\CB}_{v', w'}$. 

 
\subsection{Regular CW complexes}
Let $X$ be a Hausdorff space. We call a finite disjoint union $X = \bigsqcup_{\alpha \in Q}X_{\alpha}$ a {\it regular CW complex} if it satisfies the following two properties.

\begin{enumerate}
\item For each $\alpha \in Q$, there exists a homeomorphism from a closed ball to $\overline{X}_\alpha$ mapping the interior of the ball to $X_\alpha$. 
\item For each $\alpha$, there exists $Q' \subset Q$, such that $\overline{X}_\alpha = \bigsqcup_{\beta \in Q'} X_\beta$.
 \end{enumerate}

The face poset of $X$ is the poset $(Q, \le)$, where $\beta \le \alpha$ if and only if $X_\beta \subset \overline{X}_\alpha$. 

We refer to \cite[\S 4]{BH21} and the references therein for the definitions of graded, thin, and shellable posets . We have the following result (see \cite{Bj2}).

\begin{thm}\label{thm:CW} 
Suppose that $X$ is a regular CW complex with face poset $Q$. If $Q \bigsqcup \{\hat{0}, \hat{1}\}$ (adjoining a minimum $\hat{0}$ and a maximum $\hat{1}$) is graded, thin, and shellable, then $X$ is homeomorphic to a sphere of dimension rank$(Q)-1$.
\end{thm}
 
 

 \subsection{The Poincare conjecture}
 
 Recall that an $n$-dimension topological manifold with boundary is a Hausdorff space $X$ such that every point $x \in X$ has an open neighborhood homeomorphic to either $\BR^n$, or $\BR_{\ge 0} \times \BR^{n-1}$ mapping $x$ to a point in $\{0\} \times  \BR^{n-1}$. In the latter case, we say that $x$ is on $\partial X$, the boundary of $X$.

The following theorem can be derived from the generalized poincare conjecture and Brown's collar theorem. We refer to \cite[\S 3.2]{GKL} for details and history.

 \begin{thm}\label{thm:poincare} 
 Let $X$ be a compact $n$-dimensional topological manifold with boundary, such that $\partial X$ is homeomorphic to an $(n-1)$-dimensional sphere and $X - \partial X$ is homeomorphic to an $n$-dimensional open ball. Then $X$ is homeomorphic to an $n$-dimensional closed ball $D^n$.
 \end{thm}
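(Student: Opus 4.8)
The plan is to cap off $X$ along its boundary to obtain a closed topological $n$-manifold, recognise that manifold as $S^n$ via the generalized Poincar\'e conjecture, and then recover $X$ as one of the two closed balls cut out by the bicollared sphere $\partial X$ inside $S^n$, using the generalized Schoenflies theorem.

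First I would apply Brown's collar theorem to produce a collar of the boundary, i.e.\ an embedding $c \colon \partial X \times [0,1) \hookrightarrow X$ with $c(y,0) = y$ for all $y \in \partial X$, whose image is an open neighbourhood of $\partial X$. Since $\partial X$ is homeomorphic to $S^{n-1} = \partial D^n$, I would fix such a homeomorphism $\phi$ and form the glued space $\wh X = X \cup_\phi D^n$. Splicing the collar $c$ on the $X$-side together with the standard boundary collar of $D^n$ yields an embedding $\partial X \times (-1,1) \hookrightarrow \wh X$ onto a neighbourhood of the seam; this both shows that $\wh X$ is a compact topological $n$-manifold without boundary and exhibits $\partial X$ as a \emph{bicollared} $(n-1)$-sphere inside $\wh X$. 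The cases $n \le 1$ are immediate (there $X \cong [0,1] = D^1$), so from now on I assume $n \ge 2$.

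Next I would verify that $\wh X$ is a homotopy $n$-sphere. Cover $\wh X$ by the two open sets $V = X - \partial X$, which is an open $n$-ball and hence contractible, and $U = D^n \cup c(\partial X \times [0,1/2))$, which deformation retracts onto $D^n$ and is thus contractible; their intersection $U \cap V$ is homotopy equivalent to $\partial X \simeq S^{n-1}$. A Mayer--Vietoris computation then gives $\wt H_*(\wh X) \cong \wt H_*(S^n)$, and van Kampen's theorem (using that $S^{n-1}$ is path-connected for $n \ge 2$) gives $\pi_1(\wh X) = 1$. Hence $\wh X$ is simply connected with the homology of $S^n$, so by Hurewicz and Whitehead it is homotopy equivalent to $S^n$. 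The generalized Poincar\'e conjecture --- \cite{Sm61} for $n \ge 5$, \cite{Fr82} for $n = 4$, \cite{Pe02} for $n = 3$, and the classical surface case for $n = 2$ --- then gives a homeomorphism $\wh X \cong S^n$.

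Finally, $\partial X$ is a bicollared $(n-1)$-sphere in $\wh X \cong S^n$, and $\wh X - \partial X$ is the disjoint union of the two connected open sets $X - \partial X$ and $D^n - \partial D^n$. By the generalized Schoenflies theorem (see \cite[\S 3.2]{GKL} and the references therein), the closure of each of these two components is homeomorphic to $D^n$; since $X$ is closed in $\wh X$ and every point of $\partial X$ is a limit, along the collar, of points of $X - \partial X$, the closure of $X - \partial X$ in $\wh X$ is exactly $X$, so $X \cong D^n$. The deep inputs --- the $3$-, $4$- and high-dimensional Poincar\'e conjectures and the Brown--Mazur generalized Schoenflies theorem --- are quoted rather than reproved, so the only real work is organisational; I expect the main obstacle to be the careful gluing argument that makes $\wh X$ a genuine closed manifold and, crucially, makes $\partial X$ sit inside it as a bicollared sphere, since this bicollar (supplied by Brown's collar theorem) is precisely what licenses the application of generalized Schoenflies.
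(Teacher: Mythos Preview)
Your argument is correct and is precisely the standard derivation the paper alludes to: the paper does not give its own proof of this theorem but simply states that it ``can be derived from the generalized Poincar\'e conjecture and Brown's collar theorem'' and refers to \cite[\S 3.2]{GKL} for details. Your cap-off-and-apply-Schoenflies argument is exactly that derivation, carried out carefully.
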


 
 
\section{$J$-Richardson varieties}

\subsection{The partial order $\leJ$ on $W$}\label{sec:poset}

Following \cite{BD} and \cite[\S 2.3 \& Proposition 4.6]{BH21}, we define the $J$-twisted length ${}^J \ell$ and the $J$-twisted Bruhat order $\leJ$ on $W$ as follows. For $w \in W$, $${}^J \ell(w)=\ell({}^J w)-\ell(w_J).$$ For $w, w' \in W$, $w' \leJ w$ if there exists $u \in W_J$ with $w_J \le w'_J u \i, u\, {}^Jw' \le {}^Jw$. This is a special case of the twisted Bruhat order consider in \cite{Dyer}. We say $w' \lessJ w$ if $w' \leJ w$ and $w' \neq w$. It follows from \cite[Proposition~1.7]{Dyer} that the poset $(W, \leJ)$ is graded. By \cite[Proposition~1.1]{Dyer}, if $v \leJ w$, then any maximal chain is of length $\Jell(w) - \Jell(v)$. 

We define the poset 
\[
\JQ= \{(v,w ) \in W \times W \vert v \leJ w\}, \text{ where } (v',w') \,\preceqJ (v,w), \text{ if }v \leJ v' \leJ w' \leJ w.
\]
We also define a poset $\JHQ = \JQ \bigsqcup\{\hat{0}\}$, where $\hat{0}$ is the new minimal element.
\begin{prop}\label{lem:JQ}
\begin{enumerate}
	\item The poset $(W, \leJ)$ graded, thin and shellable. In particular, any of the convex interval of $W$ is grade, thin and shellable.
	\item The poset $(\JHQ, \preceqJ)$ is graded, thin and shellable. In particular, any of the convex interval of $\JHQ$ is grade, thin and shellable.
\end{enumerate}
\end{prop}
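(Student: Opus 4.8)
The statement to prove is Proposition~\ref{lem:JQ}: the poset $(W,\leJ)$ and the poset $(\JHQ,\preceqJ)$ are each graded, thin, and shellable, and so is every convex interval in them. The strategy is to \emph{transport} all three properties from the well-known case $J=\emptyset$, where they hold for the ordinary Bruhat order $\le$ on $W$ and for the corresponding poset $\hat{Q}$ of Richardson intervals, which are proved in \cite{BH21} (and ultimately rest on the EL-shellability of Bruhat order due to Björner--Wachs and Dyer). The key observation making this work is the combinatorial description of $\leJ$ via the bijection $W_J\times {}^JW\cong W$: for $w,w'\in W$, $w'\leJ w$ iff there is $u\in W_J$ with $w_J\le w'_Ju^{-1}$ and $u\,{}^Jw'\le{}^Jw$. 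I would first record, following \cite{Dyer}, that $(W,\leJ)$ is precisely a twisted Bruhat order (twist by the biclosed set of roots $\{\alpha>0 : s_\alpha\in W_J \text{ or involves...}\}$ — more precisely the set associated to the longest-element twist on the parabolic), hence by \cite[Prop.~1.1, Prop.~1.7]{Dyer} it is graded with rank function $\Jell$, and every closed interval $[v,w]_{\leJ}$ is isomorphic as a poset to an ordinary Bruhat interval in $W$ (this is the crucial structural input). Granting that interval isomorphism, thinness and EL-shellability of each interval are immediate from the ordinary Bruhat case.

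For part (1), after citing \cite{Dyer} for gradedness and the rank function, I would argue thinness: an interval $[v,w]_{\leJ}$ of rank $2$ must — via the isomorphism to an ordinary Bruhat interval of rank $2$ — contain exactly two elements strictly between $v$ and $w$, since rank-$2$ Bruhat intervals are diamonds. For shellability I would either invoke the isomorphism with Bruhat intervals directly, or exhibit an EL-labeling of $(W,\leJ)$ by pulling back the standard reflection-order EL-labeling of Bruhat order through the identification. The "convex interval" clause then follows because a convex subposet of a thin shellable graded poset inheriting the rank function is again thin shellable — this is standard, but I would state it as a lemma (a convex interval $[v,w]$ with the induced order is itself a graded poset whose intervals coincide with intervals of $W$, hence thin and shellable).

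For part (2), the poset $\JQ$ of pairs $(v,w)$ with $v\leJ w$, ordered by $(v',w')\preceqJ(v,w)$ iff $v\leJ v'\leJ w'\leJ w$, is exactly the "order complex of intervals" construction applied to $(W,\leJ)$: an element of $\JQ$ is a closed interval of $(W,\leJ)$, and $\preceqJ$ is reverse inclusion of intervals. There is a general principle (used in \cite{BH21} for the case $J=\emptyset$, and going back to work on Bruhat order and to \cite{Bj2}-type arguments) that for a graded, thin, EL-shellable poset $P$, the poset of intervals $\{[v,w]: v\le w\}$ adjoined with $\hat0$ (the empty/degenerate interval) is again graded, thin, and shellable — indeed it is the face poset of the order complex / a "join-type" operation. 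I would invoke this principle with $P=(W,\leJ)$, using part (1). Concretely: rank of $(v,w)$ in $\JHQ$ is $\Jell(w)-\Jell(v)+1$; thinness at the bottom (covers of $\hat0$ are the singletons $(w,w)$, and the rank-$2$ elements above $\hat0$ are pairs $(v,w)$ with $\Jell(w)-\Jell(v)=1$, each covering exactly $(v,v)$ and $(w,w)$) and in the interior reduces to thinness of $(W,\leJ)$; shellability follows from the EL-labeling of $(W,\leJ)$ by the lexicographic-on-pairs construction. The convex-interval clause again follows from the general lemma about convex subposets.

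\textbf{Main obstacle.} The one genuinely non-formal point is establishing that every closed interval of $(W,\leJ)$ is poset-isomorphic to an ordinary Bruhat interval (or, failing that, directly exhibiting an EL-labeling of $(W,\leJ)$). For a general (infinite) Coxeter group the twisted Bruhat order is treated in \cite{Dyer}, but one must check that the twist relevant here — coming from the opposite Borel $\JB^\pm$ inside the parabolic, i.e. the "$w_J$-twist" when $W_J$ is finite and its natural analogue in general — falls under Dyer's framework and that his shellability results (or those of \cite{BH21}, which the paper cites precisely for "graded, thin, and shellable posets") apply verbatim. If a clean interval isomorphism is not available in the literature in the generality needed, the fallback is to reduce to $J=\emptyset$: since locally $\leJ$ only sees finitely many reflections, restrict to a suitable finite parabolic where $W_J$-part is finite and the identity "${}^J$-positivity $=$ left multiplication by $\dot w_J$" (mentioned in the introduction) makes $(W,\leJ)$ on that interval literally a translate of $(W,\le)$; then patch. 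I expect the honest proof to be short modulo correctly citing \cite{Dyer} and \cite[\S4]{BH21}.
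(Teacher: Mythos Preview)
Your proposal is correct in outline and ends up citing the same sources as the paper, but it takes an unnecessary detour for part~(1). The paper's proof is a one-liner: \cite[Propositions~1.7, 2.5, 3.9]{Dyer} directly establish that $(W,\leJ)$ is graded, thin, and EL-shellable as a twisted Bruhat order --- no reduction to $J=\emptyset$ is needed. Your ``main obstacle'' (showing that every $\leJ$-interval is poset-isomorphic to an ordinary Bruhat interval) is a statement that is not in \cite{Dyer} in that form and is not required: Dyer proves EL-shellability for twisted Bruhat orders intrinsically, via a reflection order adapted to the twist. So your fallback (``directly exhibit an EL-labeling via Dyer'') is in fact the main route, and once you take it the obstacle evaporates. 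Your second fallback --- restrict to a finite parabolic where $W_J$ is finite and use left multiplication by $\dot w_J$ --- would not work as stated for infinite $W_J$, since a given $\leJ$-interval need not be contained in any such finite piece.

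For part~(2) the two approaches are essentially the same, though the paper's packaging is slightly cleaner: rather than invoking a general ``poset of intervals'' principle, the paper observes that $\JQ$ sits as a closed interval inside $W\times W$ equipped with the product order $(v_1,w_1)\preceqJ(v_2,w_2)$ iff $v_1\leJ v_2$ and $w_2\leJ w_1$, which is graded, thin, and EL-shellable by part~(1) applied to each factor. The passage to $\JHQ$ by adjoining $\hat 0$ is then handled exactly as you suggest, by the argument of \cite{W07} (see also \cite[\S4]{BH21}).
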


\begin{proof}
It follows from \cite[Proposition~1.7 \& 2.5 \& 3.9]{Dyer} that the poset $(W, \leJ)$ is grade, thin and EL-shellable. 

We next equip $W \times W$ with the partial order $\preceqJ$ such that $(v_1, w_1) \preceq (v_2, w_2)$ if $ v_1 \leJ v_2$ and  $w_2 \leJ w_1$. It follows as a special case of $(1)$ that $(W \times W, \preceqJ)$  is grade, thin and EL-shellable. Note that poset ${}^J Q$ is a closed interval in $(W \times W, \preceq)$. Thus the poset ${}^JQ$ is graded, thin, and shellable.

It is easy to see that $(\JHQ, \preceqJ)$ is graded and thin.  The EL-shellability of $\JHQ$ can be proved similar to \cite{W07} (see also \cite[\S 4]{BH21}). 
\end{proof}

\subsection{The $J$-Richardson varieties} \label{sec:JRichardson}
We follow \cite[\S 2.3]{BH21} to introduce the $J$-Richardson varieties. Let $J \subset I$. Let $B^{\pm}_J=L_J \bigcap B^{\pm}$ and $U^{\pm}_J=L_J \bigcap U^{\pm}$. Set
\[
\JB^+ = B^-_J \ltimes U_{P^+_J}, \quad\JB^- = B^+_J \ltimes U_{P^-_J}.
\]

We set $\JU^+=U^-_J U_{P^+_J}$ and $\JU^-=U^+_J U_{P^-_J}$. Then $\JU^\pm$ is the unipotent radical of $\JB^\pm$. For $v, w \in W$, we define, respectively, the $J$-Schubert cell, the opposite $J$-Schubert cell and the open $J$-Richardson variety by 
$$
  {\OJCB}_w =\JB^+  \dot{w} B^+/B^+, \quad  \OJCB^v=\JB^-  \dot{v} B^+/B^+,\quad \OJCB_{v, w}= {\OJCB}_w  \bigcap \OJCB^v.
$$

By \cite[Proposition 2.4]{BH21},  ${\OJCB}_{v, w} \neq \emptyset$ if and only if $v \leJ w$. We have the decomposition 
$$
\CB=\bigsqcup_{v \leJ w}  {\OJCB}_{v, w}.
$$ 
By \cite[Theorem 4]{BD}, the Zariski closure of $ {\OJCB}_{v, w}$, denoted by  ${\JCB}_{v, w}$, is contained in $  \JCB_w \bigcap  \JCB^v=\bigsqcup_{v \leJ v' \leJ w' \leJ w}  {\OJCB}_{v', w'}$. We will show later in Proposition \ref{prop:Jcl} that $  {\JCB}_{v, w}$ equals $ \JCB_w \bigcap \JCB^v$. 

If $J=\emptyset$, then $v \leJ w$ if and only if $v \le w$. In this case, ${\OJCB}_{v, w}=\CB_{v, w}$. If $J=I$, then $v \leJ w$ if and only if $w \le v$. In this case, $ {\OJCB}_{v, w}=\CB_{w, v}$. 

\subsection{Some isomorphisms on ${\CB}$} \label{sec:Jcr}

For any $r \in W$, we have isomorphisms
\begin{align*}
( \dot{r} U^-  \dot{r}^{-1} \bigcap {}^{J} U^{+} )\times  ( \dot{r} U^-  \dot{r}^{-1} \bigcap {}^{J}U^{-})   &\longrightarrow   \dot{r} U^-  \dot{r}^{-1},   \quad
(g_1, g_2)  \mapsto g_1 g_2;  \\
   ( \dot{r} U^-  \dot{r}^{-1} \bigcap {}^{J} U^{-} )    \times ( \dot{r} U^-  \dot{r}^{-1} \bigcap {}^{J} U^{+} ) &\longrightarrow \dot{r} U^-  \dot{r}^{-1} ,   \quad
   (h_1, h_2)  \mapsto h_1 h_2.
\end{align*}

We define morphisms of ind-varieties
\begin{gather*}
\Jsigma_{r,-}: \dot{r} U^-  \dot{r}^{-1} \rightarrow   \dot{r} U^-  \dot{r}^{-1} \bigcap {}^{J}U^{-}, \qquad g_1 g_2  \mapsto g_2,\\
 \Jsigma_{r,+}:   \dot{r} U^-  \dot{r}^{-1} \rightarrow   \dot{r} U^-  \dot{r}^{-1} \bigcap {}^{J}U^{+},  \qquad h_1 h_2  \mapsto h_2.
\end{gather*}

We have the following isomorphism as a special case of \cite[Lemma~2.2]{KWY}:
\begin{equation}\label{eq:Jsigma}
\Jsigma_r=(\Jsigma_{r, +}, \Jsigma_{r, -}): \dot{r} U^-  \dot{r}^{-1} \xrightarrow{\sim}   (\dot{r} U^-  \dot{r}^{-1} \bigcap {}^{J} U^{+}) \times (\dot{r} U^-  \dot{r}^{-1} \bigcap {}^{J}U^{-}).
\end{equation}
  
By \eqref{eq:Jsigma}, for any $r \in W$, the map $g \dot{r} B^+/ B^+ \mapsto \Big( \Jsigma_{r,+}(g)  \dot{r}  \cdot B^+ / B^+,  \Jsigma_{r,-}(g) \dot{r} \cdot B^+ /B^+ \Big)$ for $g \in \dot{r} U^- \dot{r}^{-1}$ defines an isomorphism  
\begin{align}
\Jc_{r}=(\Jc_{r,+}, \Jc_{r,-}):  \dot{r} U^- B^+ /B^+ \xrightarrow{\sim}  {}^{J}\!\mathring{\CB}_{r} \times {}^{J}\!\mathring{\CB}^{r} \label{eq:atlasJ}.
\end{align}

The  map $\Jc_{r}$ sends $\OJCB_{v,w} \bigcap   (\dot{r} B^- B^+ /B^+)$ to ${}^{J}\!\mathring{\CB}_{v,r} \times {}^{J}\!\mathring{\CB}_{r,w}$ for any $v \leJ w$. 
The isomorphism in \eqref{eq:atlasJ} restricts to an isomorphism
\begin{equation}\label{eq:atlasJ1}
	\OJCB_{v,w} \bigcap   (\dot{r} B^- B^+ /B^+) \xrightarrow{\sim} {}^{J}\!\mathring{\CB}_{v,r} \times {}^{J}\!\mathring{\CB}_{r,w}.
\end{equation}
This also shows that 

(a) $\OJCB_{v,w} \bigcap   (\dot{r} B^- B^+ /B^+)  \neq \emptyset$ if and only if $ v \leJ r \leJ w$.

\subsection{Some general results}\label{sec:keylemma}
Note that the  map ${}^J \!c_{r}$ in \eqref{eq:atlasJ} is defined over $\BR$.

\begin{lem}\label{lem:key}
Let $ v\leJ u \leJ w$. Let $Y\subset \OJCB_{v,w}(\BR)$ with $\dot{u} B^+ /B^+ \in \overline{Y}$. Then  
\begin{enumerate}
\item for any $v \le v' \le u$, we have $\overline{Y} \bigcap \OJCB_{v',u}  = \overline{ {}^J\!c_{u,+}(Y \bigcap (\dot{u} B^- B^+ /B^+))}
 \bigcap \OJCB_{v',u} $; 
 \item for any $ u \le w' \le w$, we have $\overline{Y} \bigcap \OJCB_{u, w'}  = \overline{ {}^J\!c_{u,-}(Y \bigcap (\dot{u} B^- B^+ /B^+))}
 \bigcap \OJCB_{u, w'} $.
\end{enumerate}

\end{lem}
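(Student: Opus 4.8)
The statement is a purely topological fact about how closures behave under the isomorphism $\Jc_u$ of \eqref{eq:atlasJ}, localized near the point $\dot u B^+/B^+$. The plan is to transport the closure computation through $\Jc_u$ and then exploit the product form of the target. Fix the open neighborhood $\dot u B^- B^+/B^+$ of $\dot u B^+/B^+$ in $\CB$; since $\dot u B^+/B^+ \in \overline Y$, the set $Y^\circ := Y \cap (\dot u B^- B^+/B^+)$ is a dense-in-its-closure piece of $Y$ whose closure (inside the open set) captures all of $\overline Y$ near $\dot u B^+/B^+$, and in particular $\overline Y \cap \OJCB_{v',u}$ and $\overline Y \cap \OJCB_{u,w'}$ only see points of $\overline{Y^\circ}$ because both $\OJCB_{v',u}$ and $\OJCB_{u,w'}$ lie inside $\dot u B^- B^+/B^+$: indeed, by \S\ref{sec:Jcr}(a), $\OJCB_{v',w''} \cap (\dot u B^- B^+/B^+) \neq \emptyset$ exactly when $v' \leJ u \leJ w''$, and $\OJCB_{v',u}$, $\OJCB_{u,w'}$ are of that shape and moreover are each \emph{contained} in $\dot u B^- B^+/B^+$ because their only stratum meeting the open cell is themselves. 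So I would first reduce, cleanly, to: $\overline Y \cap \OJCB_{v',u} = \overline{Y^\circ} \cap \OJCB_{v',u}$ (and the analogous statement with $u,w'$), where all closures are taken in the open set $\dot u B^- B^+/B^+$ (equivalently in $\CB(\BR)$, since intersecting with a stratum inside the open set makes no difference).

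Next I would apply the homeomorphism $\Jc_u : \dot u B^- B^+/B^+ \xrightarrow{\sim} \OJCB_u \times \OJCB^u$, which by \eqref{eq:atlasJ1} carries $\OJCB_{v',w''} \cap (\dot u B^- B^+/B^+)$ to $\OJCB_{v',u} \times \OJCB_{u,w''}$, and in particular sends $\OJCB_{v',u}$ to $\OJCB_{v',u} \times \{\dot u B^+/B^+\}$ and $\OJCB_{u,w'}$ to $\{\dot u B^+/B^+\} \times \OJCB_{u,w'}$. Since $\Jc_u$ is a homeomorphism it commutes with closure, so $\Jc_u(\overline{Y^\circ}) = \overline{\Jc_u(Y^\circ)}$ inside the product. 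Now intersecting with $\OJCB_{v',u} \times \{\mathrm{pt}\}$ and projecting to the first factor via $\Jc_{u,+}$: the point is that $\overline{A \times B}$ for $A \subset \OJCB_u$, $B \subset \OJCB^u$ satisfies $\overline{A\times B} = \overline A \times \overline B$, hence $\overline{\Jc_u(Y^\circ)} \cap (\OJCB_{v',u}\times\{\mathrm{pt}\})$ equals $\big(\overline{\proj_+(\Jc_u(Y^\circ))}\cap \OJCB_{v',u}\big)\times\{\mathrm{pt}\}$, provided $\mathrm{pt} = \dot u B^+/B^+$ lies in the closure of the second-factor projection $\proj_-(\Jc_u(Y^\circ))$ — which is exactly the hypothesis $\dot u B^+/B^+ \in \overline Y$ translated through $\Jc_u$ (it lies in the closure of $Y^\circ$, hence both coordinate projections of its image contain $\dot u B^+/B^+$ in their closure). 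Translating back, $\proj_+ \circ \Jc_u = \Jc_{u,+}$ gives part (1), and symmetrically $\proj_- \circ \Jc_u = \Jc_{u,-}$ gives part (2).

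The one genuine subtlety — and the step I expect to be the main obstacle — is the interchange of closure with the Cartesian-product decomposition and with the non-continuous operation ``intersect with a slice.'' The identity $\overline{S} \cap (A \times \{b\}) = (\pi_1(\overline S) \text{-piece}) \times \{b\}$ is \emph{not} true for an arbitrary subset $S$ of a product; it requires that $b$ be a limit point of $\pi_2(S)$ along the relevant directions, and more precisely one needs that near $(a,b)$ the set $\overline S$ is ``saturated enough'' in the second coordinate. Here this is supplied by the fact that $Y^\circ$ is an open piece of the \emph{irreducible} (hence, in the real points, connected near generic points) $J$-Richardson variety $\OJCB_{v,w}$, together with \eqref{eq:atlasJ1} which identifies $\OJCB_{v,w}\cap(\dot u B^- B^+/B^+)$ with the honest product $\OJCB_{v,u}\times\OJCB_{u,w}$; so $\Jc_u(Y^\circ)$ sits inside an actual product variety and one can argue coordinatewise. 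I would make this rigorous by noting that for $y \in \overline{Y^\circ}$ with $\Jc_u(y) = (a,b) \in \OJCB_{v',u}\times\{\dot u B^+/B^+\}$, a sequence $y_n \in Y^\circ$ with $y_n \to y$ has $\Jc_{u,+}(y_n) \to a$ and $\Jc_{u,-}(y_n)\to \dot u B^+/B^+$ automatically, giving ``$\subseteq$''; and for the reverse inclusion, given $a \in \overline{\Jc_{u,+}(Y^\circ)}\cap\OJCB_{v',u}$, one picks $y_n\in Y^\circ$ with $\Jc_{u,+}(y_n)\to a$, and then — this is where the product structure of the target is essential — one \emph{adjusts} the second coordinate: because $\dot u B^+/B^+$ is in the closure of $\Jc_{u,-}(Y^\circ)$, and the ambient piece is literally a product, one can find (possibly a different) sequence $y_n' \in Y^\circ$ with $\Jc_{u,+}(y_n')\to a$ and $\Jc_{u,-}(y_n') \to \dot u B^+/B^+$ simultaneously, using that $Y^\circ$ is dense in its closure which is a product-type set. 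I would isolate this ``simultaneous approximation'' as the crux and prove it by appealing to \eqref{eq:atlasJ1} together with the fact (from Theorem~A, or directly from density of $\OJCB_{v,w}$ in $\JCB_{v,w}$) that $Y$ is open dense in $\OJCB_{v,w}$ on the locus under consideration.
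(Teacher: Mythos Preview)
Your strategy matches the paper's: pass to $Y' = Y \cap (\dot u B^- B^+/B^+)$, use that $\OJCB_{v',u}$ lies in this chart so $\overline{Y} \cap \OJCB_{v',u} = \overline{Y'} \cap \OJCB_{v',u}$, push through the homeomorphism $\Jc_u$, and read off the answer on each factor. The paper's proof does exactly this in three sentences, simply asserting that $\Jc_u$ carries $\overline{Y'} \cap (\dot u B^- B^+/B^+)$ onto the product $\overline{\Jc_{u,+}(Y')} \times \overline{\Jc_{u,-}(Y')}$, and then intersects with the slice $\OJCB_{v',u} \times \{\dot u B^+/B^+\}$, using that $\dot u B^+/B^+ \in \overline{\Jc_{u,-}(Y')}$ follows from the hypothesis.

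You are right that the identity $\overline{\Jc_u(Y')} = \overline{\Jc_{u,+}(Y')} \times \overline{\Jc_{u,-}(Y')}$ is the only nontrivial point and that it amounts to a ``simultaneous approximation''. But your proposed justification is not available: the lemma places no openness or density hypothesis on $Y$, and invoking Theorem~A would be circular since that theorem lies downstream of this lemma. In fact the identity fails for arbitrary $Y$: in coordinates where $\Jc_u\bigl(\OJCB_{v,w}(\BR) \cap (\dot u B^- B^+/B^+)\bigr)$ contains a copy of $\BR_{>0} \times \BR_{>0}$, the diagonal $Y = \{(t,t) : t>0\}$ satisfies $\dot u B^+/B^+ \in \overline{Y}$, yet $\overline{Y}$ meets the slice $\OJCB_{v,u} \times \{\dot u B^+/B^+\}$ only at the origin, whereas $\overline{\Jc_{u,+}(Y)} \cap \OJCB_{v,u} \supset \BR_{>0}$. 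So neither your argument nor the paper's bare assertion proves the lemma in the stated generality. In every application in the paper (Lemma~\ref{lem:key2}, Theorem~\ref{thm:product}, Lemma~\ref{lem:typeII}), however, one has $Y' = Y$ a connected component of $\OJCB_{v,w}(\BR)$ contained in the chart; then $\Jc_u(Y)$ is a connected component of the product $\OJCB_{v,u}(\BR) \times \OJCB_{u,w}(\BR)$, hence itself a product of connected components, and its closure genuinely is the product of the factor closures. With that extra hypothesis both the paper's short argument and yours go through cleanly.
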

\begin{remark}\label{remark:Zariski}
Similar results hold if the Hausdorff closure is replaced by the Zariski closure. 
\end{remark}

\begin{proof}
Set $Y'=Y \bigcap (\dot{u} B^- B^+ /B^+)$. Then $\overline{Y} \bigcap (\dot{u} B^- B^+ /B^+)=\overline{Y'} \bigcap (\dot{u} B^- B^+ /B^+)$. We have the following isomorphism via restriction
\[
	 \overline{Y'} \bigcap \dot{u} B^- B^+ /B^+ \cong \overline{{}^J\!c_{u,+}(Y')} \times \overline{{}^J\!c_{u,-}(Y')}.
\]

Since $\dot{u} B^+ /B^+ \in \overline{Y} \bigcap (\dot{u} B^- B^+ /B^+) \subset \overline{Y'}$, we must have $\dot{u} B^+ /B^+ \in  \overline{{}^J\!c_{u,+}(Y')}$ and $\dot{u} B^+ /B^+ \in \overline{{}^J\!c_{u,-}(Y')}$. Since the isomorphism is stratified, we have 
\[
	 \overline{Y'} \bigcap \OJCB_{v',u}  \cong (\overline{ {}^J\!c_{u,+}(Y')}
 \bigcap \OJCB_{v',u}) \times \dot{u} B^+ /B^+ \cong  \overline{ {}^J\!c_{u,+}(Y')}
 \bigcap \OJCB_{v',u}.
\]
The composition is actually the identity map. Now part (1) follows. Part (2) is proved in the same way. 
\end{proof}
 
\begin{lem}\label{lem:key2}
Let $ v\leJ u \leJ w$ and $Y$ be a connected component of $\OJCB_{v,w}(\BR)$. If  $Y\subset \dot{u} B^- B^+ /B^+$, then 
	\begin{enumerate}
	\item  $\dot{u} B^+ /B^+ \in \overline{Y}$;
	\item $\overline{ Y}
 \bigcap \OJCB_{v,u}  = {}^J\!c_{u,+}(Y)$ is a connected component of $\OJCB_{v,u}(\BR)$;
	\item $\overline{ Y}
 \bigcap \OJCB_{u,w}  = {}^J\!c_{u,-}(Y) $ is a connected component of $\OJCB_{u,w}(\BR)$.
	\end{enumerate}
\end{lem}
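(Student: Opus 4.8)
The plan is to derive all three parts of Lemma~\ref{lem:key2} from the product-structure isomorphism~\eqref{eq:atlasJ} together with Lemma~\ref{lem:key}, keeping track carefully of connectedness. The underlying geometric picture is that $\Jc_u$ identifies $\dot u U^- B^+/B^+$ with ${}^J\!\mathring\CB_u \times {}^J\!\mathring\CB^u$, and under this identification the point $\dot u B^+/B^+$ corresponds to $(\dot u B^+/B^+, \dot u B^+/B^+)$, which lies in the closure of $\OJCB_{v,u} \times \OJCB_{u,w}$ precisely because $\dot u B^+/B^+ \in \overline{{}^J\!\mathring\CB_{v,u}}$ and $\dot u B^+/B^+ \in \overline{{}^J\!\mathring\CB_{u,w}}$ (both are the ``identity'' points of the respective cells; equivalently $u \leJ u$ lies in the closure relations recorded in \S\ref{sec:JRichardson}).

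First I would invoke the hypothesis $Y \subset \dot u B^- B^+/B^+$ to apply~\eqref{eq:atlasJ1}, which restricts to an isomorphism $\OJCB_{v,w} \cap (\dot u B^- B^+/B^+) \isoarrow {}^J\!\mathring\CB_{v,u} \times {}^J\!\mathring\CB_{u,w}$; since $Y$ is a connected component of $\OJCB_{v,w}(\BR)$ and is contained in this open chart, $\Jc_u$ sends $Y$ isomorphically onto a connected subset of ${}^J\!\mathring\CB_{v,u}(\BR) \times {}^J\!\mathring\CB_{u,w}(\BR)$. Next I would argue that $\Jc_u(Y)$ is in fact of product form $\Jc_{u,+}(Y) \times \Jc_{u,-}(Y)$: this is because $Y$, being a connected \emph{component}, is a maximal connected subset, and the product of the two projections is connected, contains $Y$, and still lies in the same (open) Richardson variety, hence equals $Y$. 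Consequently $\Jc_{u,+}(Y)$ is a connected component of ${}^J\!\mathring\CB_{v,u}(\BR)$ and $\Jc_{u,-}(Y)$ is a connected component of ${}^J\!\mathring\CB_{u,w}(\BR)$ — for maximality one uses again that any larger connected set times $\Jc_{u,-}(Y)$ would contradict maximality of $Y$.

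For part (1), I would take any point of $Y$, write $Y = \Jc_{u,+}(Y) \times \Jc_{u,-}(Y)$ under $\Jc_u$, and observe that the closure of this product in the chart contains $(\dot u B^+/B^+, \dot u B^+/B^+)$: indeed $\dot u B^+/B^+ \in \overline{{}^J\!\mathring\CB_{v,u}(\BR)} \cap \overline{\Jc_{u,+}(Y)}$ because $\Jc_{u,+}(Y)$ is a component of the cell ${}^J\!\mathring\CB_{v,u}(\BR)$ (a cell is connected, so it has only one component, namely itself, whose closure in $\CB$ contains the base point $\dot u B^+/B^+$), and similarly on the other factor. Pulling back through $\Jc_u$ gives $\dot u B^+/B^+ \in \overline Y$. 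With (1) in hand, parts (2) and (3) follow by applying Lemma~\ref{lem:key}(1) with $(v',u) = (v,u)$ and Lemma~\ref{lem:key}(2) with $(u,w') = (u,w)$: these give $\overline Y \cap \OJCB_{v,u} = \overline{\Jc_{u,+}(Y')} \cap \OJCB_{v,u}$ and $\overline Y \cap \OJCB_{u,w} = \overline{\Jc_{u,-}(Y')} \cap \OJCB_{u,w}$ where $Y' = Y \cap (\dot u B^- B^+/B^+) = Y$; since $\Jc_{u,+}(Y) = \Jc_{u,+}(Y)$ is already closed inside the cell ${}^J\!\mathring\CB_{v,u}$ (it being the whole component, hence a clopen subset of $\OJCB_{v,u}(\BR)$), intersecting its $\CB$-closure with $\OJCB_{v,u}$ recovers it exactly, and likewise for the minus side.

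The main obstacle I anticipate is the bookkeeping around the word ``component'' versus ``closed subset'': one must be careful that $\Jc_{u,+}(Y)$ is genuinely a connected \emph{component} of the real points of the open cell (so that it is clopen there and its $\CB$-closure meets the open cell in exactly itself), rather than merely a connected subset, and this requires the product-decomposition step $Y = \Jc_{u,+}(Y) \times \Jc_{u,-}(Y)$ to be justified cleanly from maximality of $Y$ as a component. A secondary subtlety is confirming $\dot u B^+/B^+$ lies in the Hausdorff (not just Zariski) closure of each projected piece; this is where one uses that a totally positive / Richardson cell is connected and its closure in $\CB$ contains the ``base point'' indexed by the diagonal relation $u \leJ u$, which is already recorded in the closure formula of \S\ref{sec:JRichardson} and its real-point version.
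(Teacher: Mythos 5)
Parts (2) and (3) of your argument are essentially the paper's: you identify $\Jc_u(Y)$ with the product $\Jc_{u,+}(Y) \times \Jc_{u,-}(Y)$ via the homeomorphism \eqref{eq:atlasJ1}, deduce from maximality that each factor is a connected component of the respective $\OJCB_{v,u}(\BR)$ or $\OJCB_{u,w}(\BR)$ (hence clopen there), and then conclude by intersecting its $\CB$-closure with the open cell and invoking Lemma~\ref{lem:key}. That is the same route the paper takes, and it is fine.

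Part (1) has a genuine gap. Your argument hinges on the parenthetical claim that ${}^J\!\mathring\CB_{v,u}(\BR)$ ``is connected, so it has only one component, namely itself, whose closure contains the base point $\dot u B^+/B^+$.'' This is false: although the complex cell $\OJCB_{v,u}$ is connected, its real locus $\OJCB_{v,u}(\BR)$ is usually disconnected --- already in the one-codimensional case $\OJCB_{v',w'}(\BR)\cong\BR^\times$ has two components (see Corollary~\ref{cor:product}(3) and its proof). So $\Jc_{u,+}(Y)$ is a genuine proper component, and the assertion that $\dot u B^+/B^+$ lies in its Hausdorff closure is precisely the nontrivial content of part~(1); appealing to the Zariski closure formula in \S\ref{sec:JRichardson} does not give it. Worse, if you try to repair this by arguing that any connected component of $\OJCB_{v,u}(\BR)$ that lies in $\dot u U^- B^+/B^+$ has $\dot u B^+/B^+$ in its closure, that \emph{is} part~(1) applied to the smaller pair $(v,u)$, so the argument becomes circular. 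The paper instead proves (1) directly with a torus contraction: for a dominant regular coweight $\mu$, set $\mu' = u(\mu)$; then $\mu'(t)\dot u g B^+/B^+ = \dot u\,(\mu(t)g\mu(t)^{-1})B^+/B^+ \to \dot u B^+/B^+$ as $t\to 0$, and since $\mu'(\Rp)$ is a connected group preserving $\OJCB_{v,w}(\BR)$, it preserves each connected component $Y$, putting the limit $\dot u B^+/B^+$ into $\overline{Y}$. You should replace your argument for (1) with this contraction (or an equivalent one); once (1) is in hand, your (2) and (3) stand.
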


\begin{proof}
Let $\mu$ be a dominant regular coweight. Then for any $g \in U^-$, we have $ \lim_{t \to 0} \mu(t) g \mu(t)^{-1} = 1$. Set $\mu' = u(\mu)$. Thus $\lim_{t \to 0} \mu'(t) \dot u  g B^+/ B^+=\dot u B^+/ B^+$ for any $g \in U^-$. Since $Y$ is a connected component of $\OJCB_{v,w}(\BR)$, it is stable under the action of $\mu'(\Rp)$. This shows that $\dot{u} B^+ /B^+ \in \overline{Y}$. Part (1) is proved. 

Thanks to \eqref{eq:atlasJ1},  we see that $ {}^J\!c_{u,+}(Y)$ is a connected component in $ \OJCB_{v,u}(\BR) $. Therefore ${}^J\!c_{u,+}(Y)$ is closed in $ \OJCB_{v,u}  $ and thus equals to $\overline{ {}^J\!c_{u,+}(Y)} \bigcap \OJCB_{v,u} $.  Hence by Lemma~\ref{lem:key}, we have ${}^J\!c_{u,+}(Y) =\overline{ Y}
 \bigcap\OJCB_{v,u}  = \overline{ {}^J\!c_{u,+}(Y)}
 \bigcap \OJCB_{v,u} $. We proved part (2). Part (3) can be proved similarly. 
\end{proof}

Now we prove the main result of this section. 

\begin{thm}\label{thm:product}
Let $Y_{v,w} $ be a connected component  of $\OJCB_{v,w}(\BR)$.  We define 
\[
	Y_{v', w'}  = \overline{Y_{v,w}}  \bigcap \OJCB_{v', w'},  \text{ for any $ v \leJ v'\leJ w' \leJ w$}.
\]
Assume that for any $ v \leJ v'\leJ u \leJ w' \leJ w$, we have $Y_{v', w'} \subset  \dot{u} B^- B^+ /B^+$.
Then the map  ${}^J\!c_{u}$ restricts to an isomorphism 
		\[
			 {}^J\!c_{u} : Y_{v', w'} \cong Y_{v', u} \times Y_{u, w'}.
		\]
\end{thm}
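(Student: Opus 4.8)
The plan is to deduce Theorem~\ref{thm:product} by combining the local product isomorphism \eqref{eq:atlasJ1} with the inductive closure computations of Lemmas~\ref{lem:key} and~\ref{lem:key2}. First I would record the base case: since $Y_{v',w'} \subset \dot u B^- B^+/B^+$ for the relevant $u$'s, by taking $u = v'$ we in particular have $Y_{v',w'} \subset \dot{v'} B^- B^+/B^+$ and $Y_{v',w'} \subset \dot{w'} B^- B^+/B^+$, so $Y_{v',w'}$ is a connected component of $\OJCB_{v',w'}(\BR)$ contained in the big cell $\dot u B^- B^+/B^+$; this is the hypothesis needed to invoke Lemma~\ref{lem:key2}. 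Indeed, to see that $Y_{v',w'}$ is a connected component (not merely a union of components), I would argue by downward induction on $\Jell(w') - \Jell(v')$ using Lemma~\ref{lem:key2}(2),(3): starting from $Y_{v,w}$, which is a connected component by hypothesis, each step $Y_{v',w'} = \overline{Y_{v',w''}} \cap \OJCB_{v',w'}$ (or the symmetric one on the left) is again a connected component by Lemma~\ref{lem:key2}.

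Next, with $Y_{v',w'}$ a connected component of $\OJCB_{v',w'}(\BR)$ sitting inside $\dot u B^- B^+/B^+$, Lemma~\ref{lem:key2}(1) gives $\dot u B^+/B^+ \in \overline{Y_{v',w'}}$, and then parts (2) and (3) of that lemma directly yield
\[
\overline{Y_{v',w'}} \cap \OJCB_{v',u} = {}^J\!c_{u,+}(Y_{v',w'}), \qquad \overline{Y_{v',w'}} \cap \OJCB_{u,w'} = {}^J\!c_{u,-}(Y_{v',w'}),
\]
and these are connected components of $\OJCB_{v',u}(\BR)$ and $\OJCB_{u,w'}(\BR)$ respectively. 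By definition of $Y_{v',u}$ and $Y_{u,w'}$ as $\overline{Y_{v,w}} \cap \OJCB_{v',u}$ and $\overline{Y_{v,w}} \cap \OJCB_{u,w'}$, I would need to identify these with the intersections just computed; this follows because $\overline{Y_{v',w'}} \subset \overline{Y_{v,w}}$ gives one inclusion, and the reverse inclusion follows from the fact that both sides are connected components of the same space (or more carefully, $\overline{Y_{v,w}} \cap \OJCB_{v',u} \subset \overline{\overline{Y_{v,w}} \cap \OJCB_{v',w'}} = \overline{Y_{v',w'}}$ by the closure relation, using that $\OJCB_{v',w'}$ is dense in the closed stratum $\JCB_{v',w'} \supset \OJCB_{v',u}$). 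Thus $Y_{v',u} = {}^J\!c_{u,+}(Y_{v',w'})$ and $Y_{u,w'} = {}^J\!c_{u,-}(Y_{v',w'})$.

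Finally, the isomorphism itself is immediate from \eqref{eq:atlasJ1}: since $Y_{v',w'} \subset \dot u B^- B^+/B^+$, we have $Y_{v',w'} = \OJCB_{v',w'} \cap (\dot u B^- B^+/B^+) \cap Y_{v',w'}$, and \eqref{eq:atlasJ1} restricts the isomorphism $\OJCB_{v',w'} \cap (\dot u B^- B^+/B^+) \xrightarrow{\sim} \OJCB_{v',u} \times \OJCB_{u,w'}$ to ${}^J\!c_u \colon Y_{v',w'} \xrightarrow{\sim} {}^J\!c_{u,+}(Y_{v',w'}) \times {}^J\!c_{u,-}(Y_{v',w'}) = Y_{v',u} \times Y_{u,w'}$, as the map ${}^J\!c_u$ is defined over $\BR$ (stated at the beginning of Section~\ref{sec:keylemma}).

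The main obstacle I anticipate is the bookkeeping in the middle step: verifying carefully that $\overline{Y_{v,w}} \cap \OJCB_{v',u}$ really equals $\overline{Y_{v',w'}} \cap \OJCB_{v',u}$ (and not something larger), i.e.\ that passing to the closure of the smaller stratum $Y_{v',w'}$ before intersecting with $\OJCB_{v',u}$ does not lose anything. This requires knowing the closure relation $\overline{\OJCB_{v',w'}} \supseteq \OJCB_{v',u}$ for $v' \leJ u \leJ w'$ — which holds since the Zariski closure $\JCB_{v',w'} = \bigsqcup_{v' \leJ v'' \leJ w'' \leJ w'} \OJCB_{v'',w''}$ (and this will be upgraded to an equality of closures in Proposition~\ref{prop:Jcl}, though for the present argument the containment from \cite[Theorem 4]{BD} combined with Lemma~\ref{lem:key} suffices). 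One must also make sure that the connectedness claims are consistent: at each inductive stage the relevant $Y$ is a single connected component, so intersecting closures with strata behaves well, and the equality of the two candidate descriptions of $Y_{v',u}$ follows by a dimension/openness argument once one knows both are connected components of $\OJCB_{v',u}(\BR)$ and one is contained in the other.
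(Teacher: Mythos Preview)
Your overall strategy matches the paper's: reduce to the stratified isomorphism~\eqref{eq:atlasJ1}, and use Lemmas~\ref{lem:key} and~\ref{lem:key2} to track what happens on closures. The paper organises this into the chain (i)~$Y_{v,w}$, (ii)~$Y_{v',w}$, (iii)~$Y_{v,w'}$, (iv)~$Y_{v',w'}$, establishing at each stage both that the piece is a connected component and that $\overline{Y_{v',w}}\cap\OJCB_{v'',w''}=Y_{v'',w''}$, so that the full hypothesis of the theorem applies again to $Y_{v',w}$.

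The one genuine gap is in your ``middle step'': you need $\overline{Y_{v,w}}\cap\OJCB_{v',u}=\overline{Y_{v',w'}}\cap\OJCB_{v',u}$, and neither of your proposed arguments works. Density of $\OJCB_{v',w'}$ in $\JCB_{v',w'}$ says nothing about the subset $\overline{Y_{v,w}}$ intersected with these strata---a closed set can certainly meet the boundary stratum without meeting the open dense one nearby. And the connectedness fallback is circular: you only know ${}^J\!c_{u,+}(Y_{v',w'})$ is a connected component, not that $\overline{Y_{v,w}}\cap\OJCB_{v',u}$ is one, so ``both are connected components and one contains the other'' is unavailable. The fix is exactly Lemma~\ref{lem:key}, applied not to the ambient variety but to $Y=Y_{v,w}$ itself: at the point $v'$ it gives
\[
\overline{Y_{v,w}}\cap\OJCB_{v',w'}=\overline{{}^J\!c_{v',-}\bigl(Y_{v,w}\cap(\dot v'B^-B^+/B^+)\bigr)}\cap\OJCB_{v',w'}=\overline{Y_{v',w}}\cap\OJCB_{v',w'},
\]
the second equality using the hypothesis $Y_{v,w}\subset\dot v'B^-B^+/B^+$ and Lemma~\ref{lem:key2}. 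Iterating (now at $w'$ on $Y_{v',w}$) gives the desired identity and closes the gap; this is precisely how the paper proceeds.
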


\begin{remark}
	We refer to the isomorphism above as a {\it product structure}.
\end{remark}

\begin{proof}
The proof consists of the following steps  
\begin{enumerate} 
\item[(i)] for any $v \leJ u \leJ w$, we have ${}^J\!c_{u} : Y_{v, w} \cong Y_{v, u} \times Y_{u, w}$;
\item[(ii)]  for any $v \leJ v' \leJ u \leJ w$, we have ${}^J\!c_{u} : Y_{v', w} \cong Y_{v', u} \times Y_{u, w}$;
\item[(iii)] for any $v \leJ u \leJ w' \leJ w$, we have ${}^J\!c_{u} : Y_{v, w'} \cong Y_{v, u} \times Y_{u, w'}$;
\item[(iv)]  for any $v \leJ v'\leJ u \leJ  w' \leJ w$, we have ${}^J\!c_{u} : Y_{v', w'} \cong Y_{v', u} \times Y_{u, w'}$.
\end{enumerate}

Thanks to Lemma~\ref{lem:key2} (at $u$), we see 
\[
Y_{u, w} =  {}^J\!c_{u,-}(Y_{v,w}) \quad \text{ and }  \quad  Y_{v, u} =  {}^J\!c_{u,+}(Y_{v,u}), \text{ for any } v \leJ u \leJ w.
\]
Part (i) follows. We also see that $Y_{u, w}$ and $Y_{v, u}$ are connected components of $ \OJCB_{u, w}(\BR) $ and $ \OJCB_{v, u}(\BR) $, respectively. 

	Let $u = v'$ in part (i). Applying Lemma~\ref{lem:key2} (at $w'$) to $Y_{v', w}$, we obtain that 
\[
	\overline{Y_{v', w}} \bigcap \OJCB_{v',w'} =  {}^J\!c_{w',+}(Y_{v', w}).
\]

We finally apply Lemma~\ref{lem:key} (at $v'$) to obtain that 
\begin{align*}
 Y_{v',w'} &= \overline{Y_{v,w}} \bigcap  \OJCB_{v',w'}   =   \overline{{}^J\!c_{v',-}(Y_{v, w})} \bigcap  \OJCB_{v',w'} 
 =  \overline{Y_{v', w}} \bigcap \OJCB_{v',w'}  \\
 &=  {}^J\!c_{w',+}(Y_{v', w}) = {}^J\!c_{w',+}( {}^J\!c_{v',-}(Y_{v,w})).
\end{align*}
One may prove similarly that $Y_{v', w'} = {}^J\!c_{v',-}   ( {}^J\!c_{w',+} (Y_{v,w}) )$, $Y_{w', w} = {}^J\!c_{w',-} (  {}^J\!c_{v',-}(Y_{v,w}) )$ and $Y_{v, v'} = {}^J\!c_{v',+} (  {}^J\!c_{w',+}(Y_{v,w}) )$.

Now part (ii) and (iii) follow. We also see that $Y_{v', w'}$ is a connected component of $\OJCB_{v',w'}(\BR) $. 

Let $ v' \leJ v'' \leJ w'' \leJ w$. Thanks to Lemma~\ref{lem:key}, we have 
\begin{align*}
 \overline{Y_{v', w}} \bigcap  \OJCB_{v'',w''}   &=   \overline{{}^J\!c_{v'',-}(Y_{v', w})} \bigcap  \OJCB_{v'',w''} 
 =  \overline{Y_{v'', w}} \bigcap \OJCB_{v'',w''}  \\
 &=  {}^J\!c_{w'',+}(Y_{v'', w}) = {}^J\!c_{w'',+}( {}^J\!c_{v'',-}(Y_{v,w}))
\\
& = Y_{v'', w''}.
\end{align*}

Now we see that $Y_{v', w}$ satisfies the same assumptions as $Y_{v,w}$, hence (i), (ii), (iii) apply to the new space $\overline{Y_{v', w}}$. Now part (iii) for the space $\overline{Y_{v', w}}$ is the same as part (iv) for the original space $\overline{Y_{v,w}}$. In particular, we have 
\begin{equation}\label{eq:Yclosure}
\overline{Y_{v', w'}} \bigcap  \OJCB_{v'',w''}  = Y_{v'', w''}.
\end{equation}

We hence finish the proof.
\end{proof}

Let us draw some consequences from Theorem~\ref{thm:product} and its proof. 

\begin{cor}\label{cor:product}
Retain the assumptions in  Theorem~\ref{thm:product}.
	\begin{enumerate}
		\item For any $ v \leJ v'\leJ w' \leJ w$, the subspace $Y_{v', w'}$ is a connected component in $\OJCB_{v', w'}(\BR)$.
		\item For any $ v \leJ v'\leJ w' \leJ w$, we have 
\[
	\overline{Y_{v',w'}} =\bigsqcup_{v' \leJ v''\leJ w'' \leJ w'}Y_{v'',w''}.
\]
		\item We have $Y_{v', w'} \cong \Rp^{\Jell(w') - \Jell(v')}$.
	\end{enumerate}
\end{cor}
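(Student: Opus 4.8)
The plan is to derive all three statements of Corollary~\ref{cor:product} from the product structure isomorphism established in Theorem~\ref{thm:product}, together with equation~\eqref{eq:Yclosure} from its proof. The three parts are closely linked: (1) and (3) are essentially by-products of the inductive argument used to prove the theorem, while (2) is the combinatorial consequence of the product structure applied to closures.

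For part (1), I would observe that the proof of Theorem~\ref{thm:product} already records that $Y_{v', w'}$ is a connected component of $\OJCB_{v', w'}(\BR)$; indeed, this was noted after establishing parts (ii) and (iii) of the proof. So part (1) is just extracting that statement. For part (3), I would argue by induction on $\Jell(w') - \Jell(v')$ using the product structure isomorphism ${}^J\!c_{u}\colon Y_{v', w'} \cong Y_{v', u} \times Y_{u, w'}$. The base case $v' = w'$ gives a single point $\dot{v'} B^+/B^+$, which is $\Rp^0$. For the inductive step, pick any $u$ with $v' \lessJ u \lessJ w'$ (which exists whenever $\Jell(w') - \Jell(v') \ge 2$ since $(W, \leJ)$ is graded, by Proposition~\ref{lem:JQ}); if $\Jell(w') - \Jell(v') = 1$ then $\OJCB_{v', w'}$ is a one-dimensional variety and $Y_{v', w'}$, being a connected component of its real points that has $\dot{v'}B^+/B^+$ in its closure (by \eqref{eq:Yclosure} or Lemma~\ref{lem:key2}), is homeomorphic to $\Rp$. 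Then by induction $Y_{v', u} \cong \Rp^{\Jell(u) - \Jell(v')}$ and $Y_{u, w'} \cong \Rp^{\Jell(w') - \Jell(u)}$, and since $\Jell$ is additive along chains (the poset $(W, \leJ)$ being graded, any maximal chain from $v'$ to $w'$ through $u$ has the stated length), the product has the right dimension.

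For part (2), I would use \eqref{eq:Yclosure}, which says $\overline{Y_{v', w'}} \cap \OJCB_{v'', w''} = Y_{v'', w''}$ for all $v' \leJ v'' \leJ w'' \leJ w'$. Combined with the stratification $\CB = \bigsqcup_{v'' \leJ w''} \OJCB_{v'', w''}$ and the fact (from the closure relations of $J$-Richardson varieties recalled in \S\ref{sec:JRichardson}, namely $\JCB_{v', w'} \subseteq \bigsqcup_{v' \leJ v'' \leJ w'' \leJ w'} \OJCB_{v'', w''}$, applied with the Hausdorff closure contained in the Zariski closure) that $\overline{Y_{v', w'}}$ meets only strata $\OJCB_{v'', w''}$ with $v' \leJ v'' \leJ w'' \leJ w'$, we get $\overline{Y_{v', w'}} = \bigsqcup_{v' \leJ v'' \leJ w'' \leJ w'} (\overline{Y_{v', w'}} \cap \OJCB_{v'', w''}) = \bigsqcup_{v' \leJ v'' \leJ w'' \leJ w'} Y_{v'', w''}$.

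The main obstacle, I expect, is making sure the ``base case'' of the dimension count in part (3) is handled cleanly — specifically, verifying that a connected component $Y_{v', w'}$ of $\OJCB_{v', w'}(\BR)$ with $\Jell(w') - \Jell(v') = 1$ really is homeomorphic to $\Rp$ rather than to $\BR$ or a circle. This should follow from the existence of the limit point $\dot{v'}B^+/B^+$ in $\overline{Y_{v', w'}}$ (Lemma~\ref{lem:key2}(1), or \eqref{eq:Yclosure} with $v'' = w'' = v'$) together with the fact that the one-dimensional variety $\OJCB_{v', w'}$ is isomorphic to $\mathbb{G}_m$ or $\mathbb{G}_a$, so that its real points have connected components homeomorphic to $\Rp$ or $\BR$, and the presence of exactly one limit point pins it down to $\Rp$. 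Alternatively, one can sidestep this by noting that everything reduces via ${}^J\!c_u$ to the case $J = \emptyset$ handled in the literature, or by invoking that $U^-_{w, >0}$-type cells are known to be cells; but the self-contained inductive argument via the product structure is cleaner and is the intended route.
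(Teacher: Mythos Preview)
Your proposal is correct and follows essentially the same route as the paper: part (1) is extracted from the proof of Theorem~\ref{thm:product}, part (2) is obtained by combining \eqref{eq:Yclosure} with the stratification from \S\ref{sec:JRichardson}, and part (3) is proved by induction on $\Jell(w')-\Jell(v')$ using the product isomorphism.

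The only substantive difference is in the base case of (3). You argue that when $\Jell(w')-\Jell(v')=1$ the connected component $Y_{v',w'}$ must be $\Rp$ by analyzing possible one-dimensional varieties and counting limit points; this is a bit delicate (you yourself flag it as the main obstacle), and the claim that $\OJCB_{v',w'}$ is necessarily $\mathbb{G}_m$ or $\mathbb{G}_a$ is not justified. The paper instead invokes \cite[Theorem~5 \& Corollary~6]{BD} to conclude directly that $\OJCB_{v',w'}(\BR)\cong\BR^\times$ whenever $\Jell(w')-\Jell(v')=1$; then any connected component is automatically $\Rp$, and no further argument about limit points is needed. Citing this result closes the gap you identified cleanly.
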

\begin{proof}
(1) has been establish in the proof of Theorem \ref{thm:product}. By \S \ref{sec:JRichardson} and \eqref{eq:Yclosure}, we have
\[
\overline{Y_{v',w'}} = \!\!\! \!\!\! \bigsqcup_{v' \leJ v''\leJ w'' \leJ w'} \!\!\!\!\!\!  (\overline{Y_{v',w'}}  \bigcap \OJCB_{v'',w''})  =\!\!\!  \!\!\! \bigsqcup_{v' \leJ v''\leJ w'' \leJ w'}\!\!\! \!\!\! Y_{v'',w''}.
\]
Part (2) is proved.

We show (3). It follows from  \cite[Theorem 5 \& Corollary 6]{BD} that 
	\[
		 \OJCB_{v', w'}(\BR) \cong \BR^\times, \text{ if } v' \leJ w' \text{ and } \Jell(w') - \Jell(v') =1.
	\]
	In this case, by part (1) we have $Y_{v', w'} \cong \Rp$. Recall \S \ref{sec:poset} that the poset $(W, \leJ)$ is graded. Now the general case follows from Theorem~\ref{thm:product} by induction on $\Jell(w') - \Jell(v')$. 
\end{proof}
 

\section{A regular CW complex}
 
The main result of this section is the following theorem.

\begin{thm}\label{thm:regularCW} 
We fix $v \leJ w$. Let $Y_{v,w} $ be a connected component  of $\OJCB_{v,w}(\BR)$.  We define 
\[
	Y_{v', w'}  = \overline{Y_{v,w}}  \bigcap \OJCB_{v', w'}(\BR) \text{ for any $ v \leJ v'\leJ w' \leJ w$}.
\]
Assume that for any $v \leJ v'\leJ u \leJ w' \leJ w$, we have $Y_{v', w'} \subset  \dot{u} B^- B^+ /B^+$. 
Then $\overline{Y_{v,w}}=\bigsqcup_{(v', w')} Y_{v', w'}$ is a regular CW complex homeomorphic to a closed ball of dimension $\Jell(w) - \Jell(v)$.  
\end{thm}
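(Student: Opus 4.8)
The plan is to combine the poset-theoretic input from Proposition~\ref{lem:JQ}, the product structure established in Theorem~\ref{thm:product} and Corollary~\ref{cor:product}, and the Poincaré-conjecture package (Theorem~\ref{thm:poincare}) together with the CW-sphere criterion (Theorem~\ref{thm:CW}), proceeding by induction on $d = \Jell(w) - \Jell(v)$. The base case $d \le 1$ is immediate: by Corollary~\ref{cor:product}(3) each $Y_{v',w'}$ is homeomorphic to an open cell $\Rp^{\Jell(w')-\Jell(v')}$, and for $d=1$ the space $\overline{Y_{v,w}} = Y_{v,w} \sqcup Y_{v,v} \sqcup Y_{w,w}$ is a half-open interval together with its two endpoints, i.e.\ a closed $1$-ball, and the decomposition is visibly a regular CW complex.

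For the inductive step, I would first record from Corollary~\ref{cor:product} that $\overline{Y_{v,w}} = \bigsqcup_{v \leJ v' \leJ w' \leJ w} Y_{v',w'}$ with $\overline{Y_{v',w'}} = \bigsqcup_{v' \leJ v'' \leJ w'' \leJ w'} Y_{v'',w''}$, so the face poset of the decomposition is exactly the interval $[(w,v),(v,w)]$ in $\JQ$ (equivalently an interval in $\JHQ$), which is graded, thin and shellable by Proposition~\ref{lem:JQ}(2). Hence, \emph{provided} we already know $\overline{Y_{v,w}}$ is a regular CW complex, Theorem~\ref{thm:CW} applies to its boundary sphere and Theorem~\ref{thm:poincare} upgrades homeomorphism-to-a-sphere-on-the-boundary plus open-ball-in-the-interior to homeomorphism-to-a-closed-ball. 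So the real content is to show $\overline{Y_{v,w}}$ is a regular CW complex, i.e.\ that each closure $\overline{Y_{v',w'}}$ is homeomorphic to a closed ball of the right dimension with interior mapped to $Y_{v',w'}$; equivalently, by induction it suffices to treat the top cell $(v',w')=(v,w)$ and show $\overline{Y_{v,w}}$ is a topological manifold with boundary $\overline{Y_{v,w}} \smallsetminus Y_{v,w}$.

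To establish the manifold-with-boundary property I would use the product structure locally. Fix a point $p \in \overline{Y_{v,w}}$, lying in some stratum $Y_{v',w'}$ with $v \leJ v' \leJ w' \leJ w$. Choose an element $u$ with $v' \leJ u \leJ w'$ and $\dot u B^+/B^+$ near $p$ in the relevant chart — concretely, using Lemma~\ref{lem:key2}(1) and the $\mu'(\Rp)$-contraction argument there, one sees $p$ can be connected within the chart $\dot u B^- B^+/B^+$ to analyze a neighborhood. By Theorem~\ref{thm:product}, on the open set $\overline{Y_{v,w}} \cap \dot u B^- B^+/B^+$ the map $\Jc_u$ gives a homeomorphism onto $\overline{Y_{v,u}} \times \overline{Y_{u,w}}$ restricted appropriately, where $\overline{Y_{v,u}}$ and $\overline{Y_{u,w}}$ both have strictly smaller dimension, so by the inductive hypothesis each is a closed ball which is a regular CW complex. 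A neighborhood of $p$ in $\overline{Y_{v,w}}$ is then homeomorphic to a neighborhood of a point in a product of two closed balls, hence to $\BR^d$ if $p$ is interior and to $\BR_{\ge0}\times\BR^{d-1}$ if $p$ lies on the boundary — and the boundary locus is precisely $\overline{Y_{v,w}}\smallsetminus Y_{v,w}$ since $Y_{v,w}$ is the open cell $\Rp^d$ by Corollary~\ref{cor:product}(3). This reduces the manifold claim to the elementary fact that a product of two closed balls is a manifold with boundary; one must be slightly careful at points $p$ not lying in any chart $\dot u B^- B^+/B^+$ for an admissible $u$, but the hypothesis $Y_{v'',w''} \subset \dot u B^- B^+/B^+$ for all $v' \leJ v'' \leJ u \leJ w'' \leJ w'$ together with \S\ref{sec:Jcr}(a) guarantees every stratum of $\overline{Y_{v,w}}$ does lie in such a chart, so coverage is complete.

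The main obstacle I anticipate is the boundary identification: showing that the points of $\overline{Y_{v,w}}$ which the product-structure charts present as lying in $\partial(\text{closed ball}\times\text{closed ball})$ are exactly the points of the lower strata, and that the manifold-with-boundary local models glue consistently over the overlaps of the various charts $\dot u B^- B^+/B^+$ (the cocycle compatibility of the $\Jc_u$, which follows formally from \eqref{eq:atlasJ} being induced by the single open covering $\CB = \bigcup_r \dot r U^- B^+/B^+$, but needs to be spelled out). Once the manifold-with-boundary structure is in hand, the regular-CW and closed-ball conclusions follow mechanically from Theorems~\ref{thm:CW}, \ref{thm:poincare} and Proposition~\ref{lem:JQ}, closing the induction.
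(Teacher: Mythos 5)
Your overall architecture matches the paper's: induct on $d=\Jell(w)-\Jell(v)$, show $\overline{Y_{v,w}}$ is a topological manifold with boundary $\overline{Y_{v,w}}\smallsetminus Y_{v,w}$, apply Proposition~\ref{lem:JQ} and Theorem~\ref{thm:CW} to get that the boundary is a sphere, and finish with Theorem~\ref{thm:poincare}. Your treatment of the intermediate cells is also essentially right once one observes that $\overline{Y_{v,w}}\bigcap \dot u B^-B^+/B^+$ is the product of $\overline{Y_{v,u}}\bigcap\OJCB_u$ with $\overline{Y_{u,w}}\bigcap\OJCB^u$, and that these two factors are open subsets of $\overline{Y_{v,u}}$ and $\overline{Y_{u,w}}$, which are closed balls by the inductive hypothesis whenever $v\lessJ u \lessJ w$. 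However, there is a genuine gap at the two extreme cells $Y_{v,v}=\{\dot v B^+/B^+\}$ and $Y_{w,w}=\{\dot w B^+/B^+\}$. The point $\dot v B^+/B^+$ lies in the chart $\dot u B^-B^+/B^+$ only for $u=v$ (since $\dot u^{-1}\dot v\in B^-B^+$ forces $u=v$), and there the product decomposition degenerates to $\{\mathrm{pt}\}\times\big(\overline{Y_{v,w}}\bigcap\OJCB^v\big)$: the factor of smaller dimension that the induction is supposed to control is a single point, and the other factor is the full cone at $v$, which has the same dimension $d$ and cannot be reached by the induction. To show this cone is homeomorphic to $\BR^{d-1}\times\BR_{\ge 0}$, the paper needs its separate link result, Proposition~\ref{prop:lkregular}, proved by a parallel induction using the link-product structure of Proposition~\ref{prop:lk} together with \S\ref{sec:link}(b); your proposal never mentions links and has no mechanism to handle these two corner points. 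Since your base case stops at $d\le1$, you cannot simply absorb them there either.

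A secondary issue: the ``main obstacle'' you identify --- cocycle compatibility of the various charts $\Jc_u$ --- is not actually an obstacle at all. Being a topological manifold with boundary is a purely local property: one only needs, for each point, \emph{some} neighborhood homeomorphic to $\BR^n$ or $\BR^{n-1}\times\BR_{\ge 0}$; there is no transition-map coherence to verify, unlike in the smooth or PL categories. The boundary identification you worry about is also immediate from Corollary~\ref{cor:product}(3): a point of $\overline{Y_{v,w}}\bigcap\dot u B^-B^+/B^+$ is interior iff both coordinates under $\Jc_u$ lie in the top cells $Y_{v,u}$ and $Y_{u,w}$, which happens exactly for the open stratum $Y_{v,w}$. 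So the real work your proof is missing is the regularity of the link $Lk_v(\overline{Y_{v,w}})$ (and dually $Lk^w$), which the paper supplies in \S4.
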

 
\subsection{Links}\label{sec:link} 
Links can be defined for arbitrary Whitney stratified spaces; see \cite[Definition~4.25]{Her}. We shall not consider this abstract definition here, but follow \cite[Theorem~1.2]{FS} and \cite[\S 3.1]{GKL} instead.  
 
We denote by $X^{++}$ the set of dominant regular  weights of $G$. For $\lambda \in {X}^{++}$, we denote by ${{V}}^\lambda$ the highest weight simple ${G}$-module over $\BC$ with highest weight $\lambda$. Let ${\eta}_{\lambda}$ be a highest weight vector. We denote by ${{V}}^\lambda (\BR)$ the $\BR$-subspace of ${{V}}^\lambda$ spanned by the canonical basis.

For any $v' \in W$ with $(v', w) \in \JQ$, we consider the embedding 
 \begin{equation}\label{eq:lk}
 	 \OJCB^{v'}(\BR) \xrightarrow{(\dot{v}')^{-1} \cdot - }  U^- B^+ /B^+ \cong U^- \xrightarrow{u \mapsto u \cdot \eta_{\lambda}} {{V}}^\lambda.
 \end{equation}

The image of $ \OJCB^{v'}(\BR)  \bigcap  \JCB_{v,w}$ lies in a finite-dimensional subspace $L^{v'} \subset  {{V}}^\lambda (\BR)$; cf. \cite[Theorem~5]{BD}. We identify  $\OJCB^{v'}(\BR) $ with the image. We equip $L^{v'}$ with the standard Euclidean norm with respect to the canonical basis.  We define the links\footnote{In fact, the definition of links is independent of the choice of  $\lambda \in X^{++}$ up to a stratified homeomorphism. We do not use this fact in this paper.} $Lk_? (\overline{Y_{v,w}})$  (via the embedding above) by
\begin{align*}
	 Lk_{v',w'}(\overline{Y_{v,w}}) &= Y_{v', w'} \bigcap \{ || x|| =1 \vert x \in L^{v'} \}, \text{ for any $w'$ such that }  v' \lessJ w' \leJ w, \\
	 Lk_{v'}(\overline{Y_{v,w}}) &= \bigsqcup_{v' \lessJ w' \leJ w}Lk_{v',w'}(\overline{Y_{v,w}}) = \overline{Y_{v,w}}    \bigcap \{ || x|| =1 \vert x \in L^{v'} \}. 
\end{align*}

We simply write $Lk_? = Lk_? (\overline{Y_{v,w}})$ if there is no confusing. 

For any dominant regular coweight $\mu$, we consider the natural $\BR^\times$-action on $\CB(\BR)$ via the coweight $v'(\mu)$. Note that the action is compatible with the  $\BR^\times$-action on $ {{V}}^\lambda(\BR)$ via the  dominant regular coweight  $\mu$ through the embedding \eqref{eq:lk}. We shall abuse notations and denote both actions by $\vartheta_\mu$.

It is clear $\OJCB_{v', w'}(\BR)$  is stable under the action of $\vartheta_{\mu}$ and any connected component of $\OJCB_{v',w'}(\BR)$ is stable under the action of $\Rp$. This defines a contractive flow on the space $L^{v'}$ in the sense of \cite[Definition~2.2]{GKL}, since $\mu$ is dominant regular. 

The following results are proved in \cite[Lemma~3.4 \& Proposition~3.5]{GKL}.

	(a) For any $x \in \overline{Y_{v,w}} \bigcap  \OJCB^{v'}(\BR)$ such that $x \not \in Y_{v',v'}=\dot v' B^+/B^+$, there is a unique $t_1(x) \in \Rp$ such that $\vartheta_\mu(t_1(x))x \in Lk_{v'}$. Moreover, the map $x \rightarrow t_1(x) $ is continuous. 
	
	(b) We have a stratified isomorphism $\overline{Y_{v,w}} \bigcap  \OJCB^{v'}(\BR)  \cong \mathrm{Cone}(Lk_{v'})$ such that $Y_{v',v'}$ maps to the cone point and $Y_{v', w'}  \cong Lk_{v',w'} \times \Rp$, $x \mapsto (\vartheta_\mu(t_1(x))x, 1/t_1(x) )$ for $w' \neq v'$. 
Here for any topological space $A$, the cone over $A$ is defined by $\mathrm{Cone}(A) = (A \times \BR_{\ge 0}) / (A \times \{0\})$. 

We denote by $D^n$ the closed ball of dimension $n$. Note that $\mathrm{Cone}(D^n) \cong \BR^{n} \times \BR_{\ge 0}$.

 \begin{prop}\label{prop:lk}
	For any $ v' \, {}^J\!\!\!<u \leJ w$, we have stratified isomorphisms
	\[
		 Lk_{v'} \bigcap \dot{u} U^- B^+/B^+ = \!\!\! \bigsqcup_{u \leJ w' \leJ w}\!\!\! Lk_{v',w'} \cong Lk_{v', u} \times  (Y \bigcap  \OJCB^{u}(\BR)) \cong Lk_{v', u} \times  \mathrm{Cone}(Lk_{u}).
	\] 
 \end{prop}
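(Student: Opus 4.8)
The statement asserts a product decomposition of the portion of the link $Lk_{v'}$ lying in the big cell $\dot{u}U^-B^+/B^+$, together with the identification $Y \cap \OJCB^u(\BR) \cong \mathrm{Cone}(Lk_u)$. The last identification is exactly part (b) recalled above (applied at $u$ in place of $v'$), so nothing new is needed there; the real content is the first two isomorphisms. The plan is to combine the product structure of Theorem~\ref{thm:product} (equivalently, the map ${}^J\!c_u$) with the cone/contractive-flow description in parts (a)--(b). I would first note that since $v' \lessJ u$, the set $\{w' : u \leJ w' \leJ w\}$ is exactly the set of $w'$ with $v' \lessJ w' \leJ w$ and $Y_{v',w'} \cap (\dot u B^- B^+/B^+) \neq \emptyset$; this gives the set-theoretic decomposition $Lk_{v'} \cap \dot u U^- B^+/B^+ = \bigsqcup_{u \leJ w' \leJ w} Lk_{v',w'}$, using that $Lk_{v',w'} \subset Y_{v',w'}$ and that $Y_{v',w'}$ meets the big cell $\dot u B^- B^+/B^+$ iff $v' \leJ u \leJ w'$ by \S\ref{sec:Jcr}(a).

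\textbf{Key steps.} The main step is to produce the middle isomorphism $Lk_{v'} \cap \dot u U^- B^+/B^+ \cong Lk_{v',u} \times (Y \cap \OJCB^u(\BR))$. Here I would apply Theorem~\ref{thm:product} with $u$ as the intermediate element: for each $w'$ with $u \leJ w' \leJ w$, the map ${}^J\!c_u$ restricts to an isomorphism $Y_{v',w'} \cong Y_{v',u} \times Y_{u,w'}$, and these glue (over the stratification by $w'$) to an isomorphism $\bigsqcup_{u \leJ w' \leJ w}(Y_{v',w'} \cap \dot u B^- B^+/B^+) \cong Y_{v',u} \times \bigsqcup_{u \leJ w' \leJ w} Y_{u,w'} = Y_{v',u} \times (Y \cap \OJCB^u(\BR))$, where the last equality uses Corollary~\ref{cor:product}(2) for $\overline{Y_{u,w}}$ together with $\OJCB^u \cap \overline{Y_{v,w}} = \bigsqcup_{u \leJ w'} Y_{u,w'}$. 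The point is then to check this isomorphism carries the norm-one locus to the norm-one locus on the first factor: the $\vartheta_\mu$-action via the coweight $v'(\mu)$ acts on $L^{v'} \subset V^\lambda(\BR)$, and under ${}^J\!c_u$ the first factor $Y_{v',u}$ sits inside $\OJCB^{v'}(\BR) \cap \OJCB_{v',u}$, on which the embedding \eqref{eq:lk} and the norm $\|\cdot\|$ on $L^{v'}$ restrict compatibly; since ${}^J\!c_u$ fixes the ${}^J\!c_{u,+}$-component while collapsing the ${}^J\!c_{u,-}$-direction to the base point $\dot u B^+/B^+$, and the $\vartheta_\mu$-flow contracts along the second factor, the norm-one condition on $Y_{v',w'}$ transfers to the norm-one condition on $Y_{v',u}$, i.e.\ to $Lk_{v',u}$. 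This yields $Lk_{v',w'} \cong Lk_{v',u} \times Y_{u,w'}$ compatibly in $w'$, hence $Lk_{v'} \cap \dot u U^- B^+/B^+ \cong Lk_{v',u} \times (Y \cap \OJCB^u(\BR))$. Finally, substituting the stratified homeomorphism $Y \cap \OJCB^u(\BR) \cong \mathrm{Cone}(Lk_u)$ from part (b) gives the last isomorphism.

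\textbf{Main obstacle.} The delicate point is the compatibility of the product structure ${}^J\!c_u$ with the \emph{metric} data (the Euclidean norm on $L^{v'}$ and the $\vartheta_\mu$-flow) used to define the links. The map ${}^J\!c_u$ is an algebraic isomorphism, not an isometry, so one cannot literally intersect with $\{\|x\|=1\}$ on both sides; instead one must argue that the norm-one locus in $Y_{v',w'}$ corresponds, under ${}^J\!c_u$, to $Lk_{v',u} \times Y_{u,w'}$ by re-describing $Lk_{v',w'}$ intrinsically as the quotient of $Y_{v',w'} \setminus \{\text{cone point}\}$ by the contractive $\Rp$-flow $\vartheta_\mu$ (using parts (a)--(b) at $v'$), and then observing that this flow is the product of the flow on $Y_{v',u}$ with the trivial flow on $Y_{u,w'}$ under ${}^J\!c_u$ — because $\vartheta_\mu$ acts through $v'(\mu)$, and on the big cell $\dot u B^- B^+/B^+$ the conjugation by $v'(\mu)$ respects the decomposition $\dot u U^- \dot u^{-1} = ({}^JU^+ \cap -)\cdot({}^JU^- \cap -)$ only after one checks it preserves each factor; this last check reduces to a weight computation showing $v'(\mu)$ is dominant regular for the relevant root subgroups, which follows since $v' \leJ u$ forces the sign conditions needed. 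Once this flow-compatibility is in hand, the rest is bookkeeping with the stratification.
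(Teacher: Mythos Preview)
Your overall strategy --- use the product decomposition ${}^J\!c_u$ from Theorem~\ref{thm:product} together with the cone description of \S\ref{sec:link}(b) --- is the same as the paper's, and the first equality and the last isomorphism are handled correctly. The gap is in the middle step, and it lies precisely in the ``main obstacle'' paragraph.

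Your key claim, that under ${}^J\!c_u$ the flow $\vartheta_\mu$ becomes ``the product of the flow on $Y_{v',u}$ with the \emph{trivial} flow on $Y_{u,w'}$'', is false. The action $\vartheta_\mu$ is the left $T$-action on $\CB$ via the coweight $v'(\mu)$, and ${}^J\!c_u$ is $T$-equivariant (both factors ${}^J\!\sigma_{u,\pm}$ are given by conjugation, which commutes with the $T$-conjugation action). Hence the flow acts \emph{diagonally} on $Y_{v',u}\times Y_{u,w'}$, not trivially on the second factor. In particular the norm-one locus $Lk_{v',w'}$ is \emph{not} carried by ${}^J\!c_u$ itself to $Lk_{v',u}\times Y_{u,w'}$; your weight argument (``$v'(\mu)$ dominant regular for the relevant root subgroups because $v'\leJ u$'') does not establish triviality on the ${}^JU^-$ side and in fact no such triviality holds.

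What the paper does instead is exactly to account for this diagonal action. It writes down explicit mutually inverse continuous maps
\[
\alpha:\; Lk_{v',u}\times (Y\cap\OJCB^u(\BR))\;\hookrightarrow\; Y_{v',u}\times(Y\cap\OJCB^u(\BR))\;\xrightarrow{({}^J\!c_u)^{-1}}\;\textstyle\bigsqcup_{w'} Y_{v',w'}\;\xrightarrow{\pi}\;\textstyle\bigsqcup_{w'} Lk_{v',w'},
\]
\[
\beta:\; \textstyle\bigsqcup_{w'} Lk_{v',w'}\;\hookrightarrow\;\textstyle\bigsqcup_{w'} Y_{v',w'}\;\xrightarrow{{}^J\!c_u}\;Y_{v',u}\times(Y\cap\OJCB^u(\BR))\;\xrightarrow{\phi}\;Lk_{v',u}\times(Y\cap\OJCB^u(\BR)),
\]
where $\pi(z)=\vartheta_\mu(t_1(z))z$ and $\phi(x,y)=\bigl(\vartheta_\mu(t_1(x))x,\;\vartheta_\mu(t_1(x))y\bigr)$. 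The crucial point is that $\phi$ rescales \emph{both} coordinates by the same factor $t_1(x)$; this is what is forced by the $T$-equivariance of ${}^J\!c_u$, and it is what makes $\alpha\circ\beta=\mathrm{id}$ and $\beta\circ\alpha=\mathrm{id}$ go through (using the uniqueness in \S\ref{sec:link}(a)). Your sketch would be repaired by replacing the ``trivial flow'' claim with this diagonal rescaling, which amounts to choosing $Lk_{v',u}\times Y_{u,w'}$ as a \emph{section} of the diagonal $\Rp$-action rather than as a fixed-point set.
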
	
 
 \begin{proof}We write  $\vartheta$ for  $\vartheta_{\mu}$.  Recall the assumption that $Y_{v', w' } \subset \dot{u} U^- B^+/B^+$ for $v' \leJ u \leJ w'$. Hence $Lk_{v'} \bigcap \dot{u} U^- B^+/B^+ =  \bigsqcup_{u \leJ w' \leJ w} Lk_{v',w'} $. The last isomorphism follows from \S\ref{sec:link} (b). We construct the second isomorphism. 
 
We first define a morphism $\alpha$ as the following composition 
\[
Lk_{v',u} \times (\overline{Y_{v,w}} \bigcap  \OJCB^{u}(\BR)) \hookrightarrow Y_{v',u} \times (\overline{Y_{v,w}} \bigcap  \OJCB^{u}(\BR)) \xrightarrow{\Jc_u^{-1} }\!\!\! \bigsqcup_{u \leJ w' \leJ w}\!\!\! Y_{v',w'}  \xrightarrow{\pi} \!\!\! \bigsqcup_{u \leJ w' \leJ w}\!\!\! Lk_{v',w'}.
\] Here the second map comes from Theorem~\ref{thm:product}. We next construct the inverse. 
 We then define a morphism $\beta$ as follows 
 \[
 	 \!\!\! \bigsqcup_{u \leJ w' \leJ w}\!\!\! Lk_{v',w'} \hookrightarrow  \!\!\! \bigsqcup_{u \leJ w' \leJ w}\!\!\! Y_{v',w'} \xrightarrow{\Jc_u} Y_{v',u} \times (\overline{Y_{v,w}} \bigcap  \OJCB^{u}(\BR)) \xrightarrow{\phi}  Lk_{v',u} \times (\overline{Y_{v,w}} \bigcap  \OJCB^{u}(\BR)),
 \] where $\phi(x, y)=(\vartheta(t_1(x))x, \vartheta(t_1(x)) y)$. 
 We claim $\alpha$ and $\beta$ give the desired isomorphism. The compatibility with the stratification is clear. We show that they are inverse to each other. 
 
 We first show $\beta \circ \alpha = \id$. Let $(x,y) \in Lk_{v',u} \times (\overline{Y_{v,w}} \bigcap  \OJCB^{u}(\BR))$. Then let $z = \Jc^{-1}_u((x,y))$. Then $\alpha(x,y) = \pi(z)=\vartheta(t_1(z))z$. Since $\Jc_u$ is $\Rp$-equivariant, we have $\Jc_u( \vartheta(t_1(z))z) = (\vartheta(t_1(z))   x,\vartheta(t_1(z))y)$. By the uniqueness in  \S\ref{sec:link} (a), we have $ \vartheta(t_1( \vartheta(t_1(z))   x)) x = x \in Lk_{v',u}$, hence $t_1( \vartheta(t_1(z))   x) = t_1(z)^{-1}$. Therefore we have $\phi(\vartheta(t_1(z))   x,\vartheta(t_1(z))y) = \beta(z)= (x,y)$.  
 
 We next show $\alpha \circ \beta = \id$. Let $z \in \! \bigsqcup_{u \leJ w' \leJ w}\! Lk_{v',w'} $ and $(x,y) = \Jc_u(z)$. Then $\beta(z) = (\vartheta(t_1(x))x, \vartheta(t_1(x)) y)$. Then since $\Jc_u$ is $T$-equivariant, we obtain $\Jc_u^{-1} (\beta(z) ) = \vartheta(t_1(x)) z$. Then by the uniqueness in  \S\ref{sec:link} (a), we must have $\pi(\vartheta(t_1(x)) z) = z$, that is $\alpha \circ \beta(z) = z$.
 
 We finish the proof.
 \end{proof}

 \begin{cor}\label{cor:lkcell}
Let $v' \,{}^J\!\!\!< w'$. We have 
 \[
 	 Lk_{v',w'} \cong \Rp^{\Jell(w') - \Jell(v') -1}.
 \]
 \end{cor}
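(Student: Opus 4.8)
The plan is to induct on the quantity $\Jell(w') - \Jell(v')$, using Proposition~\ref{prop:lk} as the engine. The base case is $\Jell(w') - \Jell(v') = 1$: here the link $Lk_{v',w'}$ sits inside $Y_{v',w'}$, which by Corollary~\ref{cor:product}(3) is isomorphic to $\Rp$, and intersecting the one-dimensional contractive flow orbit with the unit sphere $\{\|x\| = 1\}$ leaves a single point, i.e.\ $\Rp^0$. More precisely, by \S\ref{sec:link}(a) every point of $Y_{v',w'}\setminus Y_{v',v'}$ flows to a unique point of $Lk_{v'}$ under $\vartheta_\mu$, so $Lk_{v',w'}$ is a single $\vartheta_\mu$-orbit cross-section, hence a point.

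For the inductive step, fix $v' \lessJ w'$ with $\Jell(w') - \Jell(v') = d \ge 2$, and choose any $u$ with $v' \lessJ u \leJ w'$ and $\Jell(u) - \Jell(v') = 1$ (possible since $(W,\leJ)$ is graded, \S\ref{sec:poset}). Applying Proposition~\ref{prop:lk} with this $u$ gives a stratified isomorphism
\[
Lk_{v'} \cap \dot u U^- B^+/B^+ \cong Lk_{v',u} \times \mathrm{Cone}(Lk_u),
\]
and restricting to the stratum indexed by $w'$ on both sides yields
\[
Lk_{v',w'} \cong Lk_{v',u} \times \bigl(Lk_{u} \cap \text{(the } w'\text{-stratum})\bigr) \cong Lk_{v',u} \times Lk_{u,w'},
\]
where the last identification uses that under $\mathrm{Cone}(Lk_u) \cong \overline{Y_{u,w}} \cap {}^J\mathring\CB^{u}(\BR)$ the stratum $Y_{u,w'}$ corresponds to $Lk_{u,w'} \times \Rp$ (see \S\ref{sec:link}(b)). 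By the base case, $Lk_{v',u} \cong \Rp^0$ is a point; by the inductive hypothesis applied to the pair $(u,w')$, which has $\Jell(w') - \Jell(u) = d-1 < d$, we get $Lk_{u,w'} \cong \Rp^{d-2}$. Hence $Lk_{v',w'} \cong \Rp^0 \times \Rp^{d-2} = \Rp^{d-1} = \Rp^{\Jell(w')-\Jell(v')-1}$, as desired.

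The one point that needs care — and is the main obstacle — is the bookkeeping of strata in the isomorphism of Proposition~\ref{prop:lk}: one must check that the factor $Lk_u \cap \dot{w'}U^-B^+/B^+$ appearing after restricting the cone to the $w'$-piece is precisely $Lk_{u,w'}$, and that the product decomposition respects this. This follows by iterating the stratum-tracking already built into Theorem~\ref{thm:product} and \S\ref{sec:link}(b), together with the observation (from \S\ref{sec:Jcr}(a)) that the $w'$-stratum is nonempty exactly when $v' \leJ u \leJ w'$, so the choice of $u$ above is legitimate. Everything else is a routine dimension count using the gradedness of $(W,\leJ)$.
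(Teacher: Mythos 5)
Your overall strategy — a base case when $\Jell(w')-\Jell(v')=1$ plus an induction driven by Proposition~\ref{prop:lk} — is exactly the paper's. But the inductive step contains a genuine bookkeeping error in the factor you call ``the $w'$-stratum,'' and an arithmetic slip that hides it. Restricting the stratified isomorphism of Proposition~\ref{prop:lk} to the $w'$-stratum gives
\[
Lk_{v',w'}\;\cong\; Lk_{v',u}\times Y_{u,w'},
\]
not $Lk_{v',u}\times Lk_{u,w'}$. As you yourself note, under the cone identification $Y_{u,w'}$ corresponds to $Lk_{u,w'}\times\Rp$ (from \S\ref{sec:link}(b)), so the correct factorization is $Lk_{v',w'}\cong Lk_{v',u}\times Lk_{u,w'}\times\Rp$. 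You then drop the $\Rp$ and write $Lk_{v',w'}\cong \Rp^0\times\Rp^{d-2}$, and claim this equals $\Rp^{d-1}$; but $\Rp^0\times\Rp^{d-2}=\Rp^{d-2}$, which is off by one. The two mistakes happen to cancel in the final answer, but the chain of isomorphisms as written is broken.

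The repair is immediate and you already have all the pieces: either write $Lk_{v',w'}\cong Lk_{v',u}\times Lk_{u,w'}\times\Rp$ and carry the $\Rp$ through the induction, or use directly that $Y_{u,w'}\cong\Rp^{\Jell(w')-\Jell(u)}$ (Corollary~\ref{cor:product}(3)), so that $Lk_{v',w'}\cong Lk_{v',u}\times Y_{u,w'}\cong\Rp^0\times\Rp^{d-1}=\Rp^{d-1}$ with no induction on the second factor needed at all. Your base case argument is fine.
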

 \begin{proof}
 Thanks to \S\ref{sec:link} (b) and Proposition~\ref{prop:lk}, it suffices to show $Lk_{v',w'}$ is a point when $\Jell(w') - \Jell(v') =1$. The latter statement follows from Theorem~\ref{thm:product} and direct computation. 
 \end{proof}
 
 \begin{prop}\label{prop:lkregular}
 For $v \leJ v' \lessJ w$, $Lk_{v'}=\sqcup_{v' \lessJ w' \leJ w} Lk_{v', w'}$ is a regular CW complex homeomorphic to a closed ball of dimension $\Jell(w) - \Jell(v') -1$. 
 
 \end{prop}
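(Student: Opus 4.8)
The plan is to argue by induction on $N = \Jell(w) - \Jell(v')$, proving along the way the slightly more general assertion: whenever $Z$ is a connected component of some $\OJCB_{a,b}(\BR)$ satisfying the hypotheses of Theorem~\ref{thm:product}, then for every $c$ with $a \leJ c \lessJ b$ the link $Lk_c(\overline{Z})$ (formed as in \S\ref{sec:link}, with $\overline{Z}$ in place of $\overline{Y_{v,w}}$) is a regular CW complex homeomorphic to a closed ball of dimension $\Jell(b)-\Jell(c)-1$. Proposition~\ref{prop:lkregular} is the case $Z = Y_{v,w}$, $c = v'$. The point of this formulation is that, by Corollary~\ref{cor:product} and~\eqref{eq:Yclosure}, each $Y_{v',w'} = \overline{Y_{v,w}} \cap \OJCB_{v',w'}$ is again a connected component satisfying the hypotheses of Theorem~\ref{thm:product}, with the same closure strata as inside $\overline{Y_{v,w}}$; so the inductive hypothesis may be invoked both for $\overline{Y_{v,w}}$ with a larger bottom element $u$ (for $v'\lessJ u\leJ w$) and for the smaller component $\overline{Y_{v',w'}}$ with bottom $v'$, in each case with a strictly smaller value of $N$. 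When $N = 1$ we have $\{x \in W : v'\lessJ x\leJ w\} = \{w\}$, so $Lk_{v'} = Lk_{v',w}$ is a single point by Corollary~\ref{cor:lkcell}, the desired closed $0$-ball.

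Assume $N \ge 2$ and set $d = \Jell(w)-\Jell(v')-1$. The first and main step is to show that $Lk_{v'}$ is a compact topological $d$-manifold with boundary, with $Lk_{v'}\setminus\partial Lk_{v'} = Lk_{v',w}$ and $\partial Lk_{v'} = \bigsqcup_{v'\lessJ w'\lessJ w} Lk_{v',w'}$. Compactness is immediate, $Lk_{v'}$ being a closed subset of the unit sphere of the finite-dimensional space $L^{v'}$. The containment hypothesis gives $Lk_{v',w'} \subset \dot{w'} U^- B^+/B^+$ for each $w'$, so the open sets $\dot u U^- B^+/B^+$ with $v'\lessJ u\leJ w$ cover $Lk_{v'}$; on each of them Proposition~\ref{prop:lk} yields
\[
Lk_{v'}\cap \dot u U^- B^+/B^+ \ \cong\ Lk_{v',u}\times \mathrm{Cone}(Lk_u).
\]
When $u = w$ the cone factor is a point and the chart is $Lk_{v',w}\cong\Rp^d$, an open $d$-ball. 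When $u \lessJ w$, the inductive hypothesis (applied to $\overline{Y_{v,w}}$ with bottom $u$) gives $Lk_u \cong D^{\Jell(w)-\Jell(u)-1}$, hence $\mathrm{Cone}(Lk_u)\cong\BR^{\Jell(w)-\Jell(u)-1}\times\BR_{\ge 0}$, while $Lk_{v',u}\cong\Rp^{\Jell(u)-\Jell(v')-1}$ by Corollary~\ref{cor:lkcell}; the product is then homeomorphic to $\BR^{d-1}\times\BR_{\ge 0}$, a $d$-dimensional half-space. Tracking cells through the cone homeomorphism of \S\ref{sec:link}(b) and the product-structure isomorphism $\Jc_u$ of Theorem~\ref{thm:product} identifies, inside each such chart, the boundary half-space with $\bigsqcup_{u\leJ w'\lessJ w} Lk_{v',w'}$ and shows that points of $Lk_{v',w}$ are interior; assembling over the cover and using invariance of the manifold boundary gives the stated description.

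For the second step I would identify $\partial Lk_{v'}$ as a sphere. For each $w'$ with $v'\lessJ w'\lessJ w$, the inductive hypothesis applied to $\overline{Y_{v',w'}}$ shows that $\overline{Lk_{v',w'}}$ — which equals $\bigsqcup_{v'\lessJ w''\leJ w'}Lk_{v',w''} = Lk_{v'}(\overline{Y_{v',w'}})$, by~\eqref{eq:Yclosure} and \S\ref{sec:link}(b) — is a closed ball with interior $Lk_{v',w'}$. Hence $\partial Lk_{v'} = \bigsqcup_{v'\lessJ w'\lessJ w}Lk_{v',w'}$ is a regular CW complex whose face poset is the open interval $(v',w) = \{x\in W : v'\lessJ x\lessJ w\}$ of $(W,\leJ)$. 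Adjoining $\hat 0$ and $\hat 1$ to this poset recovers the interval $[v',w]$ in $(W,\leJ)$, which is graded, thin and shellable by Proposition~\ref{lem:JQ}(1); since a maximal chain of $(v',w)$ has $N-1 = d$ elements, Theorem~\ref{thm:CW} gives $\partial Lk_{v'}\cong S^{d-1}$. Now $Lk_{v'}$ is a compact $d$-manifold whose boundary is a $(d-1)$-sphere and whose interior $Lk_{v',w}\cong\Rp^d$ is an open $d$-ball, so Theorem~\ref{thm:poincare} yields $Lk_{v'}\cong D^d$. Finally, each closed cell $\overline{Lk_{v',w'}}$ is a closed ball with the correct interior (again by the inductive hypothesis), so the decomposition $Lk_{v'} = \bigsqcup_{v'\lessJ w'\leJ w}Lk_{v',w'}$ is a regular CW complex, which closes the induction.

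The hard part is the middle step: checking that $Lk_{v'}$ is genuinely a manifold with boundary and pinning down exactly which of its points are boundary points. Concretely, one must trace the cells of $Lk_{v'}$ through the cone homeomorphism of \S\ref{sec:link}(b) and the isomorphism $\Jc_u$ precisely enough to see that, in the chart $Lk_{v',u}\times\mathrm{Cone}(Lk_u)\cong\BR^{d-1}\times\BR_{\ge 0}$, the cone point of $\mathrm{Cone}(Lk_u)$ together with the cone over $\partial Lk_u$ land in the boundary half-space, while the cone over the interior of $Lk_u$ is interior, and that these identifications are compatible on overlaps. The remaining ingredients — Corollary~\ref{cor:lkcell}, the poset properties of Proposition~\ref{lem:JQ}, Theorem~\ref{thm:CW}, and the generalized Poincar\'e conjecture (Theorem~\ref{thm:poincare}) — are then applied essentially as in \cite{GKL}.
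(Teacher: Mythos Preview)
Your proof is correct and follows essentially the same route as the paper's: induction on $\Jell(w)-\Jell(v')$, using Proposition~\ref{prop:lk} and the inductive hypothesis for $Lk_u$ to get local half-space charts, then the inductive hypothesis for $Lk_{v'}(\overline{Y_{\ast,w'}})$ together with Proposition~\ref{lem:JQ} and Theorem~\ref{thm:CW} to identify $\partial Lk_{v'}$ as a sphere, and finally Theorem~\ref{thm:poincare}. Your explicit formulation of the strengthened inductive statement (for arbitrary $Z$ satisfying the hypotheses of Theorem~\ref{thm:product}) and your use of $\overline{Y_{v',w'}}$ rather than $\overline{Y_{v,w'}}$ in the boundary step are cosmetic variations---the relevant strata coincide by~\eqref{eq:Yclosure}---and do not constitute a genuinely different argument.
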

 
 \begin{proof}
 	We prove by induction on $\Jell(w) - \Jell(v')$. When $\Jell(w) - \Jell(v') =1$,  $Lk_{v'}$ is a point by Corollary~\ref{cor:lkcell}. In the induction process, we shall consider $Lk_? (\overline{Y_{v,w'}})$ for $w' \leJ w$ as well. Note that $Lk_{v'} (\overline{Y_{v,w'}})$ is a subspace of $Lk_{v'} (\overline{Y_{v,w}})$ and $Lk_{v', w''} (\overline{Y_{v,w'}}) = Lk_{v', w''} (\overline{Y_{v,w'}})$ for $v' {}^J\!\!\!< w'' \leJ w'$.
	
	We first show
	
	(a) $Lk_{v'}$ is a topological manifold with boundary  $\partial Lk_{v'} = \bigsqcup_{v' {}^J\!< w' {}^J\!< w} Lk_{v', w'}$.

	We have $Lk_{v', w} = Lk_{v'} \bigcap \dot{w} U^-B^+/B^+ \cong \Rp^{\Jell(w) - \Jell(v') -1}$. Now for any $u$ with $v' \lessJ u \lessJ w$, we apply the stratified isomorphism in Proposition~\ref{prop:lk}. We have 
	\begin{align*}
		 Lk_{v'} \bigcap \dot{u} U^- B^+/B^+ &= \!\!\! \bigsqcup_{u \leJ w' \leJ w}\!\!\! Lk_{v',w'} \cong Lk_{v', u} \times  \mathrm{Cone}(Lk_{u})\\
		 &\cong \Rp^{\Jell(u) - \Jell(v') -1} \times \mathrm{Cone}(D^{\Jell(w) - \Jell(u) -1}) \\
		 &\cong  \Rp^{\Jell(w) - \Jell(v') -2} \times \BR_{\ge 0}.
	\end{align*}
	 Here $Lk_{u} \cong D^{\Jell(w) - \Jell(u) -1}$ is obtained via the induction hypothesis since $\Jell(w) - \Jell(u) < \Jell(w) - \Jell(v')$. This shows that $Lk_{v'} $ is a topological manifold with boundary and $Lk_{v', u} $ lies on the boundary for $u \neq w'$. This proves (a).
	
	We next prove 
	
	(b) $ \partial Lk_{v'}= \bigsqcup_{v' {}^J\!< w' {}^J\!< w} Lk_{v', w'}$ is a regular CW complex homeomorphic to a sphere of dimension $\Jell(w) - \Jell(v') -2$. 
	
	 By induction hypothesis $Lk_{v'} (\overline{Y_{v,w'}})$ is a regular CW complex homeomorphic to a closed ball of dimension $\Jell(w) - \Jell(v') -1$, for any $v'  \lessJ w' \lessJ w$. Therefore  $\partial Lk_{v'} $ is a regular CW complex with the face poset $(\{ w' \vert v' \lessJ w' \lessJ w \}, \leJ)$. It is clear after adding a new minimal $\hat{0}$ and $\hat{1}$, the poset is $\{ w' \vert v'  \leJ w' \leJ w \}$, which is graded, thin, and shellable by Proposition~\ref{lem:JQ}. Hence by Theorem~\ref{thm:CW}, $ \partial Lk_{v'}$ is homeomorphic to a sphere of dimension $\Jell(w) - \Jell(v') -2$. This proves (b).
	
	The statement now follows from (a), (b) and Theorem~\ref{thm:poincare}.
 \end{proof}

\subsection{Proof of Theorem \ref{thm:regularCW}} Set $Y=\overline{Y_{v,w}}$. The outline of the proof is similar to the proof of Proposition~\ref{prop:lkregular}. We prove by induction on $\Jell(w) - \Jell(v)$. The base case then $\Jell(w) - \Jell(v) =0$ is trivial, since $Y_{v, v}=\dot v B^+/B^+$ is a single point.
 
We first show that 

(a) $Y$ is a topological manifold with boundary $\partial Y = \displaystyle\bigsqcup_{v \leJ v' \leJ w' \leJ w, (v,w) \neq (v',w')} Y_{v',w'}$.
	
The proof of the claim is divided into several cases depending on $(v',w')$. 
	\begin{enumerate}
		\item[(i)] It follows from Corollary~\ref{cor:product} that $Y_{v,w} \cong \Rp^{\Jell(w) - \Jell(v)}$ is the open cell. 
			\item [(ii)]	We consider the case $(v',w') = (v,v)$. By Proposition~\ref{prop:lkregular} and \S\ref{sec:link} (b), we have 
			\[
				Y \bigcap \dot{v}B^- B^+/B^+ =  Y \bigcap  \OJCB^{v} \cong \mathrm{Cone}(D^{\Jell(w) - \Jell(v) -1}), \quad Y_{v,v} \mapsto \text{ cone point}.
			\]
			This shows that $Y_{v,v}$ lies on the boundary. 
			\item [(iii)] We consider the case $(v',w') = (w,w)$. Similar to (ii), we can establish (via a variation of Proposition~\ref{prop:lkregular} and \S\ref{sec:link} (b)) 
			\[
				Y \bigcap \dot{w}B^- B^+/B^+ =  Y \bigcap  \OJCB_{w} \cong \mathrm{Cone}(D^{\Jell(w) - \Jell(v) -1}), \quad Y_{w,w} \mapsto \text{ cone point}.
			\]
			This shows that $Y_{w,w}$ lies on the boundary.
		\item[(iv)] We next consider the case when $v' \neq v$. We further assume $v' \neq w$, otherwise we are done by (iii). We apply the stratified isomorphism in Theorem~\ref{thm:product} to obtain
	\[
	Y \bigcap \dot{v}' B^-B^+/B^+ = \!\!\!  \!\!\! \bigsqcup_{v \leJ v'' \leJ v' \leJ w'' \leJ w}  \!\!\!   \!\!\! Y_{v'', w''} \cong   \!\!\! \bigsqcup_{v \leJ v'' \leJ v'} \!\!\!  Y_{v'', v'} \times   \!\!\!  \bigsqcup_{v' \leJ w'' \leJ w} \!\!\!  Y_{v', w''}
	\]
	Now induction applies to the spaces $\overline{ Y_{v'', v'}}$ and $\overline{Y_{v', w''}}$. In particular, they are both topological manifolds with the expect boundaries. Therefore 
	\begin{align*} 
	Y \bigcap \dot{v}' B^-B^+/B^+ &\cong \mathrm{Cone}(D^{\Jell(v') - \Jell(v) - 1}) \times \mathrm{Cone}(D^{\Jell(w) - \Jell(v') - 1}) \\
	&\cong  (\Rp^{\Jell(v') - \Jell(v) - 1} \times \BR_{\ge 0}) \times (\Rp^{\Jell(w) - \Jell(v') - 1} \times \BR_{\ge 0})  \\
	&\cong \Rp^{\Jell(w) - \Jell(v) - 1} \times \BR_{\ge 0}
	\end{align*}
     with $Y_{v',w'}$ lying on the boundary.
	\item [(v)] The final case $w' \neq w$ is similar to (iv).
	\end{enumerate}
	Now we finish the proof of (a).
	
	We next show that 
	
	(b) $\partial Y$ is a regular CW complex homeomorphic to a sphere of dimension $\Jell(w) - \Jell(v) -1$. 
	
	It follows by induction hypothesis that $\overline{Y_{v',w'}}$ is a regular CW complex homeomorphic to a closed ball of dimension $\Jell(w') - \Jell(v')$ if ${v \leJ v' \leJ w' \leJ w}$ and ${(v,w) \neq (v',w')}$. Hence $\partial Y$ is a regular CW complex with the face poset $\{ (v',w') \vert  { (v',w')\,  \preceqJ  (v,w), (v',w')  \neq (v,w) }  \} \subset \JQ$.
	By adding a new maximal element and a new minimal element we obtain the new poset $\{ (v',w') \in \JQ \vert  {(v',w')  \, \preceqJ (v,w)}  \} \bigsqcup \{\hat{0}\} \subset \JHQ$. 
	Thanks to Proposition~\ref{lem:JQ}, this is graded, thin and shellable. By Theorem~\ref{thm:CW}, $\partial Y$ is a regular CW complex homeomorphic to a sphere of dimension $\Jell(w) - \Jell(v) -1$. This finishes the proof of (b).
	
	The statement now follows from (a), (b) and Theorem~\ref{thm:poincare}.

 
\section{Main results}
 
We collect the main results of this paper. We shall first prove the main results on the ordinary totally nonnegative flag variety  $\CB_{\ge 0}$ in \S\ref{sec:classical}.  In \S\ref{sec:partial} and \S\ref{sec:JTP}, we will prove the main results on the  totally nonnegative partial flag variety and the $J$-totally nonnegative flag variety, respectively. The proofs rely on Proposition~\ref{prop:CPmanifold} and Proposition~\ref{prop:J} (both marked with $\club$) respectively, which will be established in \S\ref{sec:6} to \S\ref{sec:8}. 
 
\subsection{Ordinary total positivity}\label{sec:classical}
\subsubsection{Totally nonnegative part of $G$}\label{Rp-BH} 
We follow \cite{Lus-1} and \cite{Lu-2}. The generalization to Kac-Moody groups is straightforward. 
Let $U^+_{\ge 0}$ be the submonoid of $G$ generated by $x_i(a)$ for $i \in I$ and $a \in \Rp$ and $U^-_{\ge 0}$ be the submonoid of $G$ generated by $y_i(a)$ for $i \in I$ and $a \in \Rp$. Let $T_{>0}$ be the identity component of $T(\BR)$. Let $G_{\ge 0}$ be the submonoid of $G$ generated by $U^{\pm}_{\ge 0}$ and $T_{>0}$. By \cite[\S 2.5]{Lu-2}, $G_{\ge 0}=U^+_{\ge 0} T_{>0} U^-_{\ge 0}=U^-_{\ge 0} T_{>0} U^+_{\ge 0}$.

Let $w \in W$ and $w=s_{i_1} s_{i_2} \cdots s_{i_n}$ be a reduced expression of $w$. Set 
\begin{gather*}
U^+_{w, >0}=\{x_{i_1}(a_1) x_{i_2}(a_2) \cdots x_{i_n}(a_n) \vert a_1, a_2, \ldots, a_n \in \Rp\}; \\
U^-_{w, >0}=\{y_{i_1}(a_1) y_{i_2}(a_2) \cdots y_{i_n}(a_n) \vert a_1, a_2, \ldots, a_n \in \Rp\}. 
\end{gather*}

By \cite[Lemma 2.3 (b)]{Lu-2}, $U^{\pm}_{w, >0}$ is independent of the choice of reduced expressions of $w$. Moreover, by \cite[\S 2.5 (d) \& (e)]{Lu-2}, we have $U^{\pm}_{\ge 0}=\bigsqcup_{w \in W} U^{\pm}_{w, >0}$.

\subsubsection{Totally nonnegative flag varieties}
Let $\CB_{\ge 0}=\overline{U^-_{\ge 0} \cdot B^+}$ be the closure of $U^-_{\ge 0} \cdot B^+$ in $\CB$ with respect to the Hausdorff topology. For any $v \le w$, let 
\[
\CB_{v, w, >0}=\mathring{\CB}_{v, w} \bigcap \CB_{\ge 0}.
\] Hence $\CB_{\ge 0}=\bigsqcup_{v \le w} \CB_{v, w, >0}$. 

Let ${\bf w}=s_{i_1} s_{i_2} \cdots s_{i_n}$ be a reduced expression of $w \in W$. A subexpression of $\bf w$ is $t_{i_1} t_{i_2} \cdots t_{i_n}$, where $t_{i_j}=1$ or $s_{i_j}$ for any $j$. For any $v \le w$, there exists a unique positive subexpression ${\bf v}_+=t_{i_1} t_{i_2} \cdots t_{i_n}$ for $v$ in ${\bf w}$ in the sense of \cite[Lemma 3.5]{MR}. Following  \cite[Definition~5.1]{MR}, we set 
\[
 G_{\bf v_+, \bf w, >0}=\{g_1 g_2 \cdots g_n\vert  g_j=\dot s_{i_j}, \text{ if } t_{i_j}=1; \text{ and } g_j \in y_{i_j}(\Rp), \text{ if } t_{i_j}=s_{i_j}\}.
\]
Note that the obvious map $\Rp^{\ell(w) - \ell(v)} \rightarrow  G_{\bf v_+, \bf w, >0}$
is a homeomorphism.

By \cite[Theorem~11.3]{MR} for reductive groups and  \cite[Theorem~4.10]{BH20} for Kac-Moody groups, we have the following parametrization result. 

(a) Let $v \le w$. For any reduced expression ${\bf w}$ of $w$, the  map $g \mapsto g \cdot B^+$ gives a homeomorphism 
 \begin{equation}\label{eq:5.1}
 G_{\bf v_+, \bf w, >0} \cong \CB_{v, w, >0}.
   \end{equation}
In particular, $\CB_{v, w, >0} \cong \Rp^{\ell(w)-\ell(v)}$ is a topological cell.

\smallskip

We recall the monoid actions $\ast, \circ_l$ and $\circ_r$ of $W$ in \cite[\S 5]{BH20}. We have $s_i\ast w = \max\{w, s_iw\}$,  $s_i \circ_l w = \min \{w, s_iw\}$, and $w \circ_r s_i = \min \{w, ws_i\}$ for any simple reflection $s_i$.

\begin{lem}\label{lem-in-u}
Let $v \le r \le w$ in $W$. We have 
\[
	\CB_{v,w, >0} \subset \dot{r} B^- B^+/ B^+. 
\]
\end{lem}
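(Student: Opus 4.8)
The statement is that the totally positive Richardson cell $\CB_{v,w,>0}$ lands inside the big cell $\dot r B^- B^+/B^+$ whenever $v \le r \le w$. My plan is to reduce this to the explicit Marsh--Rietsch parametrization \eqref{eq:5.1}. Fix a reduced expression ${\bf w} = s_{i_1}\cdots s_{i_n}$ of $w$ chosen so that it passes through $r$: that is, choose ${\bf w}$ so that $r$ has a positive subexpression in ${\bf w}$ which is an initial segment, say ${\bf r}_+ = t_{i_1}\cdots t_{i_m}$ with the first $m$ letters, and ${\bf w}$ restricted to the last $n-m$ letters is a reduced expression for $r^{-1}w$. (Such a reduced expression exists because $r \le w$, by standard subword/subexpression properties of Bruhat order.) Then any $g \in G_{{\bf v}_+, {\bf w}, >0}$ can be written as $g = g' g''$ where $g'$ is a product of the first $m$ factors and $g''$ a product of the last $n-m$ factors; here $g'' \in U^-_{\ge 0}\cdot$(Weyl lifts), so $g'' \in B^- \cdot B^-$-type object, and in any case $g'' B^+ /B^+$ and more importantly $g'' \in \dot{r}^{-1}\dot{w}\cdot(\text{stuff in }B^-)$.

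The cleaner route, which I would actually carry out, is this: it suffices to show $\dot r^{-1} \cdot \CB_{v,w,>0} \subset B^- B^+/B^+$, equivalently that every point of $\CB_{v,w,>0}$ lies in the open Bruhat cell opposite to $\dot r$. Using \eqref{eq:5.1}, a point of $\CB_{v,w,>0}$ is $g\cdot B^+$ with $g \in G_{{\bf v}_+,{\bf w},>0}$, and $g = g_1\cdots g_n$ with $g_j \in y_{i_j}(\Rp)$ or $g_j = \dot s_{i_j}$. With the reduced expression chosen to factor through $r$ as above, write $g = g^{\le m} g^{>m}$. Then $g^{\le m}\cdot B^+ \in \CB_{v_{(m)}, r, >0}$ where $v_{(m)} \le r$ is the Demazure-product-type truncation, and one checks $g^{\le m} \in B^- \dot r B^+$; meanwhile $g^{>m} \in U^-_{\ge 0}\cdot(\text{Weyl lifts for } r^{-1}w)$ so $g^{>m} \in \dot r^{-1}\dot w B^+$ lies in $P$-type piece, and crucially $\dot r^{-1} g^{\le m} g^{>m} B^+$ — hmm, I need $g \in \dot r B^- B^+$, i.e. $\dot r^{-1} g \in B^- B^+$. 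Since $g^{\le m} \in \dot r \cdot (\dot r^{-1} B^- \dot r \cap U^-)\cdots$, a direct computation with the factorization shows $\dot r^{-1} g^{\le m} \in B^-$, and then $\dot r^{-1} g = (\dot r^{-1} g^{\le m}) g^{>m}$ with the first factor in $B^-$ and $g^{>m} \in U^-_{\ge 0}\cdot \text{Weyl lifts} \subset G_{\ge 0}$, so $\dot r^{-1} g \in B^- G_{\ge 0}$; one then invokes $G_{\ge 0} = U^-_{\ge 0} T_{>0} U^+_{\ge 0}$ to conclude $\dot r^{-1} g \in B^- B^+$.

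Alternatively — and this may be the slickest approach, avoiding the bookkeeping — I would use the monoid actions $*, \circ_l, \circ_r$ together with the characterization of $\dot r B^- B^+/B^+ = \bigsqcup_{v' \not\ge r} \mathring{\CB}^{v'}$... no, rather $\dot r B^- B^+/B^+$ is the set of flags $F$ with $F$ in relative position $\ge$-compatible-with-$r$; concretely $gB^+ \in \dot r B^- B^+/B^+$ iff the projection of $g$ to the appropriate Schubert datum is nonzero. In terms of the parametrization: by \cite[Theorem~11.3]{MR}/\cite[Theorem~4.10]{BH20}, $\CB_{v,w,>0}$ is the image of $G_{{\bf v}_+,{\bf w},>0}$, and Marsh--Rietsch show directly that each such point lies in $\mathring{\CB}^{v'}$ for the $v'$ read off the positive subexpression. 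The condition $gB^+ \in \dot r B^- B^+/B^+$ for all such $g$ amounts to: the ``leading term'' with respect to $r$ is nonvanishing, which holds because all the parameters $a_j$ are \emph{positive} (this is exactly where total positivity enters — a generic point of the Richardson variety would be in the big cell, but positivity guarantees \emph{every} point is). \textbf{The main obstacle} I anticipate is making precise the claim ``$\dot r^{-1} g^{\le m} \in B^-$'' and the nonvanishing of the leading coefficient — this requires either a careful induction on $\ell(r)$ pushing one simple reflection at a time through the factorization (tracking how $y_i(a)$ and $\dot s_i$ interact with $B^-$ and $B^+$), or citing the relevant positivity/nonvanishing lemma from \cite{MR} (around their analysis of the cells $G_{{\bf v}_+,{\bf w},>0}$). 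I expect the one-simple-reflection induction, combined with the standard fact that multiplication by $G_{\ge 0}$ preserves the relevant positivity, to close the argument cleanly.
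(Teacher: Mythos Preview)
Your main line of attack has a genuine gap: you assume you can choose a reduced expression ${\bf w}$ of $w$ whose first $m$ letters give a reduced expression for $r$ (equivalently, $\ell(r^{-1}w) = \ell(w) - \ell(r)$). But $r \le w$ in Bruhat order only guarantees that $r$ appears as a \emph{subword} of some reduced expression of $w$, not as a \emph{prefix}. For instance, in $S_3$ take $r = s_1$ and $w = s_2 s_1$: then $r \le w$, yet $r^{-1}w = s_1 s_2 s_1$ has length $3$, so no reduced expression of $w$ begins with one for $r$. Consequently the factorization $g = g^{\le m} g^{>m}$ with $g^{\le m}$ responsible for $r$ and $g^{>m}$ a Marsh--Rietsch element for $r^{-1}w$ simply does not exist in general, and the argument ``$\dot r^{-1} g^{\le m} \in B^-$, then multiply by $g^{>m} \in G_{\ge 0}$'' cannot get started.

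The fallback you mention at the very end --- a one-simple-reflection induction on $\ell(w)$ --- is exactly what the paper does, and it is not as routine as you suggest. One strips off the leftmost letter $s_{i_1}$ of ${\bf w}$, sets $w' = s_{i_1} w$, $v' = s_{i_1} \circ_l v$, $r' = s_{i_1} \circ_l r$, checks $v' \le r' \le w'$ (this uses a lemma on the $\circ_l$-monoid action), and then splits into three cases according to whether $s_{i_1} r > r$ or $s_{i_1} r < r$, and in the latter case whether the first factor $g_1$ is $\dot s_{i_1}$ or lies in $y_{i_1}(\Rp)$. The delicate case is $s_{i_1} r < r$ with $g_1 = y_{i_1}(a)$: here one uses the identity $\dot s_{i_1}^{-1} y_{i_1}(a) = \alpha_{i_1}^\vee(a^{-1}) y_{i_1}(-a) x_{i_1}(a^{-1})$ together with the fact that left multiplication by $x_{i_1}(a^{-1})$ sends $G_{{\bf v_+}, {\bf w'}, >0} \cdot B^+$ into $G_{{\bf v'_+}, {\bf w'}, >0} \cdot B^+$ (a nontrivial positivity statement from \cite{BH20}). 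None of this is captured by the slogan ``$G_{\ge 0}$ preserves positivity''; the actual mechanism is the interaction of $\dot s_i^{-1}$ with $y_i(\Rp)$ and the resulting $x_i(\Rp)$-action on Marsh--Rietsch cells.
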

\begin{proof}
We argue  by induction on $\ell(w)$.  Let $w' = s_{i_1} w$ with the reduced expression ${\bf w'} = s_{i_2} \cdots s_{i_n}$. Set $ v' = s_{i_1} \circ_l v$ and $r' = s_{i_1} \circ_l r$. It follows from \cite[Lemma~2]{He-0hecke} that $v' \le r' \le w'$.  
We divide the computation into several cases.

\begin{itemize}
	\item If $r \le s_{i_1}r$ or $r' =r$, then $v \le r \le s_{i_1}w$. In this case $g_1 \in y_{i_1}(\Rp)$ and $ \dot{r}^{-1} g_1   \dot{r} \in U^-(\BR)$.
	Hence $\dot{r}^{-1}  G_{\bf v_+, \bf w, >0}  \subset  U^-(\BR)  \dot{r}^{-1}  G_{{\bf v_+}, {\bf w'}, >0}$. 
	\item If $r \ge s_{i_1}r$ and $g_1 = \dot{s}_{i_1}$, then $v' = s_{i_1}v$. Therefore $\dot{r}^{-1}  G_{\bf v_+, \bf w, >0}  =  \dot{r}^{', -1}G_{{\bf v'_+}, {\bf w'}, >0}$.
	\item Assume $r \ge s_{i_1}r$ and $g_1 \in y_{i_1}(\Rp)$. Then $G_{\bf v_+, \bf w, >0} = y_{i_1}(\Rp) G_{{\bf v_+}, {\bf w'}, >0}$. For any $a \in \Rp$, we have $\dot{s}^{-1} _{i_1}y_{i_1}(a) =  \alpha_{i_1}^\vee(a^{-1}) y_{i_1}(-a) x_{i_1}(a^{-1})$. Then we have $x_{i_1}(a^{-1}) G_{{\bf v_+}, {\bf w'}, >0} \subset G_{{\bf v'_+}, {\bf w'}, >0}  B^+$ by \cite[Proposition~5.2]{BH20}. Therefore $\dot{r}^{-1}  G_{\bf v_+, \bf w, >0} \subset   B^-  (\dot{r}')^{-1}G_{{\bf v'_+}, {\bf w'}, >0} B^+$.
\end{itemize}

The statement then follows from inductive hypothesis on $w'$. 
\end{proof}

\subsubsection{Main results on $\CB_{\ge 0}$}

We apply results in Theorem~\ref{thm:product} and Theorem~\ref{thm:regularCW} for the case $J = \emptyset$ to prove the main result for $\CB_{\ge 0}$.

\begin{prop}\label{prop:CB} 
Let $v, w \in W$ with $v \le w$. Then
\begin{enumerate}
\item $\CB_{v, w, > 0 } $ is a connected component of $\OCB_{v, w}(\BR)$.
	\item We have 
$
	\CB_{v, w, \ge 0 } = \overline{ \CB_{v, w, > 0 }}= \bigsqcup_{v \le v' \le w' \le w} \CB_{v', w', >0}$.
\end{enumerate}

\end{prop}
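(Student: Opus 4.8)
The strategy is to reduce the statement to the ``product structure'' machinery of Section~3 (specialized at $J=\emptyset$), for which we must verify the single geometric hypothesis of Theorem~\ref{thm:product} and Theorem~\ref{thm:regularCW}: that a connected component of $\OCB_{v,w}(\BR)$, together with all the strata in its closure, sits inside the affine chart $\dot u B^- B^+/B^+$ whenever $v\le u\le w$. This is exactly the content of Lemma~\ref{lem-in-u}: for $v\le r\le w$ we have $\CB_{v,w,>0}\subset \dot r B^- B^+/B^+$. So the first order of business is to identify $\CB_{v,w,>0}$ as a connected component of $\OCB_{v,w}(\BR)$; once that is in hand, Lemma~\ref{lem-in-u} shows the component and all its lower strata lie in the relevant charts, and then Theorem~\ref{thm:product} (product structure), Corollary~\ref{cor:product} (closure is the union of $\CB_{v',w',>0}$, each a cell $\cong\Rp^{\ell(w')-\ell(v')}$) and Theorem~\ref{thm:regularCW} (regular CW structure on the closure) all apply verbatim.

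For part (1), the plan is: by the parametrization \eqref{eq:5.1}, $\CB_{v,w,>0}\cong G_{\mathbf v_+,\mathbf w,>0}\cong\Rp^{\ell(w)-\ell(v)}$, so $\CB_{v,w,>0}$ is connected, and it clearly lies in $\OCB_{v,w}(\BR)$. It remains to show it is \emph{closed} in $\OCB_{v,w}(\BR)$ (a connected, open, nonempty, closed subset of a manifold is a connected component; openness will follow once we know it has full dimension $\ell(w)-\ell(v)=\dim\OCB_{v,w}$, e.g. via invariance of domain applied to \eqref{eq:5.1}, since $\OCB_{v,w}(\BR)$ is a smooth manifold of that dimension). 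For closedness: if a sequence in $\CB_{v,w,>0}$ converges in $\OCB_{v,w}(\BR)\subset\OCB_{v,w}$, the limit lies in $\overline{U^-_{\ge0}\cdot B^+}=\CB_{\ge0}$ by definition, hence in $\CB_{\ge0}\cap\OCB_{v,w}=\CB_{v,w,>0}$. So $\CB_{v,w,>0}$ is simultaneously open and closed in $\OCB_{v,w}(\BR)$, nonempty and connected — hence a connected component. (Alternatively, one invokes the $J=\emptyset$ case of Lemma~\ref{lem:key2} or the known result that each totally positive Richardson cell is a connected component, but the self-contained argument above suffices.)

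For part (2), I would first establish $\CB_{v,w,\ge0}:=\overline{\CB_{v,w,>0}}=\CB_{\ge0}\cap\overline{\OCB_{v,w}}$ — the inclusion $\subseteq$ is clear, and $\supseteq$ follows because $\CB_{\ge0}=\bigsqcup_{v'\le w'}\CB_{v',w',>0}$ meets $\overline{\OCB_{v,w}}=\bigsqcup_{v\le v'\le w'\le w}\OCB_{v',w'}$ precisely in $\bigsqcup_{v\le v'\le w'\le w}\CB_{v',w',>0}$, and each such $\CB_{v',w',>0}$ is forced into $\overline{\CB_{v,w,>0}}$ by the product-structure/closure statement of Corollary~\ref{cor:product}(2) applied to the component $Y_{v,w}=\CB_{v,w,>0}$ (whose hypotheses hold by part (1) and Lemma~\ref{lem-in-u}). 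Thus $\overline{\CB_{v,w,>0}}=\bigsqcup_{v\le v'\le w'\le w}\CB_{v',w',>0}$, and this equals $\CB_{v,w,\ge0}$.

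The main obstacle is purely the bookkeeping of part (1): verifying that $\CB_{v,w,>0}$ is genuinely a \emph{connected component} — i.e. that it is open (full-dimensional, via invariance of domain on \eqref{eq:5.1} and the fact that $\OCB_{v,w}(\BR)$ is a manifold of dimension $\ell(w)-\ell(v)$) and closed (the limit-point argument using the very definition of $\CB_{\ge0}$ as a Hausdorff closure). Everything after that is a direct citation of Theorem~\ref{thm:product}, Corollary~\ref{cor:product} and Theorem~\ref{thm:regularCW} with $J=\emptyset$, whose one nontrivial input — containment in the affine charts $\dot r B^- B^+/B^+$ — is already supplied by Lemma~\ref{lem-in-u}.
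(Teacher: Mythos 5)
Your approach to part (1) takes a genuinely different route from the paper's.  The paper first handles the base case $v=1$ by citing Lusztig (\cite[\S 6.3]{Lu3}) for the fact that $U^-_{w,>0}$ is a connected component of $U^-(\BR)\cap B^+\dot w B^+$ --- the ambient chart there is $\OCB_{1,w}\cong\BA^{\ell(w)}$, so the manifold structure is free --- and then \emph{propagates} the connected-component property to arbitrary $(v_1,w_1)$ via Theorem~\ref{thm:product} and Corollary~\ref{cor:product}(1) applied to $Y_{1,w}=\CB_{1,w,>0}$, concluding $\overline{\CB_{1,w,>0}}\cap\OCB_{v_1,w_1}=\CB_{v_1,w_1,>0}$ because a connected component sitting inside the connected set $\CB_{v_1,w_1,>0}$ must equal it.  You instead prove part (1) directly by showing $\CB_{v,w,>0}$ is open and closed.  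Your closedness argument is clean and correct (indeed cleaner than one might expect: $\CB_{\ge0}$ is a Hausdorff closure, hence closed, so $\CB_{v,w,>0}=\CB_{\ge0}\cap\OCB_{v,w}(\BR)$ is immediately closed in $\OCB_{v,w}(\BR)$).  But your openness step leans on invariance of domain, which needs the a priori fact that $\OCB_{v,w}(\BR)$ is a topological manifold of dimension $\ell(w)-\ell(v)$, i.e.\ that the open Richardson variety is smooth over $\BR$.  That is true, but it is an extra input nowhere stated or cited in the paper, and the whole point of the paper's bootstrap from $v=1$ is to avoid needing it for general $(v,w)$.  So your version trades one citation (Lusztig) for another unstated one (smoothness of $\OCB_{v,w}$); neither is wrong, but the dependency should be acknowledged.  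Two smaller remarks: your parenthetical alternatives do not actually bypass this --- Lemma~\ref{lem:key2} takes ``connected component'' as a \emph{hypothesis}, and Rietsch's reductive-group result is precisely what Proposition~\ref{prop:CB}(1) is generalizing here, so neither can be cited in its place.  And in part (2) you should spell out the identification $\overline{\CB_{v,w,>0}}\cap\OCB_{v',w'}=\CB_{v',w',>0}$: Corollary~\ref{cor:product}(1) only yields that the left-hand side is \emph{some} connected component of $\OCB_{v',w'}(\BR)$; one still needs to note it lies inside the connected set $\CB_{v',w',>0}$, hence equals it, which is exactly the step the paper makes explicit.
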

\begin{proof}
We first consider the $v=1$ case. We have a commutative diagram 
\[
\xymatrix{
U^-_{w, >0} \ar[r] \ar[d]^-\cong & U^- \bigcap B^+ \dot w B^+ \ar[d]^-\cong \\
\CB_{1, w, >0} \ar[r] & \OCB_{1, w}.
}
\]
By \cite[\S 6.3]{Lu3}, $U^-_{w, >0}$ is a connected component of $U^-(\BR) \bigcap B^+ \dot w B^+$. Thus $\CB_{1, w, >0}$ is a connected component of $\OCB_{1, w}(\BR)$. 

Let $v_1 \le w_1 \le w$. By Lemma~\ref{lem-in-u}, $\overline{\CB_{1, w, >0}} \bigcap \OCB_{v_1, w_1} \subset \CB_{v_1, w_1, >0} \subset \dot u U^- B^+/B^+$ for any $u \in W$ with $v_1 \le u \le w_1$. Hence the assumption in Theorem~\ref{thm:product} is satisfied for $Y_{1, w}=\CB_{1, w, >0}$. By Corollary~\ref{cor:product}, $\overline{\CB_{1, w, >0}} \bigcap \OCB_{v_1, w_1} \subset \CB_{v_1, w_1, >0}$ is a connected component of $\OCB_{v_1, w_1}(\BR)$. By \eqref{eq:5.1}, $\CB_{v_1, w_1, >0}$ is connected. Hence $\CB_{v_1, w_1, >0}=\overline{\CB_{1, w, >0}} \bigcap \OCB_{v_1, w_1}$ and it is a connected component of $\OCB_{v_1, w_1}(\BR)$. Moreover, part (2)  for $v=1$ now follows from Theorem~\ref{thm:product}, Theorem~\ref{thm:regularCW} and
Corollary~\ref{cor:product} for $Y_{1, w}=\CB_{1, w, >0}$. 

Now we consider the general case. We have already prove that $Y_{v, w}=\CB_{v, w, >0}$ is a connected component of $\OCB_{v, w}(\BR)$. By Lemma~\ref{lem-in-u} and Corollary~\ref{cor:product}, for $v_1, w_1 \in W$ with $v \le v_1 \le w_1 \le w$, $\overline{\CB_{v, w, >0}} \bigcap \OCB_{v_1, w_1} \subset \CB_{v_1, w_1, >0}$ is a connected component of $\OCB_{v_1, w_1}(\BR)$. Hence $\overline{\CB_{v, w, >0}} \bigcap \OCB_{v_1, w_1}=\CB_{v_1, w_1, >0}$. Now part (2)  for arbitrary $v \in W$ now follows from Theorem~\ref{thm:product}, Corollary~\ref{cor:product} for $Y_{v, w}=\CB_{v, w, >0}$.  
\end{proof}

We have verified the assumption in Theorem~\ref{thm:product} for $Y_{v, w}=\CB_{v, w, >0}$. The following theorem follows from Theorem~\ref{thm:product}, Theorem~\ref{thm:regularCW}, Corollary~\ref{cor:product}, and Proposition~\ref{prop:lkregular} for  $Y_{v, w}=\CB_{v, w, >0}$.

\begin{thm}\label{thm:CB}  Let $v, w \in W$ with $v \le w$.   
\begin{enumerate}
	\item For any $u \in W$ with $v \le u \le w$, the map $c_u$ restricts to an isomorphism 
	\[
	 c_u: \CB_{v, w, > 0 }  \cong \CB_{v, u, >  0 }  \times \CB_{u, w,> 0 }.
	\]
	\item $\CB_{v, w, \ge 0 }$ is a regular CW complex homeomorphic to a closed ball of dimension $\ell(w) - \ell(v)$.
\end{enumerate}
\end{thm}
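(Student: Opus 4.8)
The plan is to deduce Theorem~\ref{thm:CB} directly from the general machinery already developed for $J$-Richardson varieties by specializing to $J=\emptyset$. Recall that when $J=\emptyset$ the $J$-twisted Bruhat order $\leJ$ is the ordinary Bruhat order, $\OJCB_{v,w}=\OCB_{v,w}$, the isomorphism $\Jc_r$ becomes the classical map $c_r$ of Kazhdan--Lusztig type on $\dot r U^- B^+/B^+$, and $\Jell=\ell$. Thus Theorem~\ref{thm:product}, Corollary~\ref{cor:product}, Theorem~\ref{thm:regularCW} and Proposition~\ref{prop:lkregular} all apply verbatim to the space $Y_{v,w}=\CB_{v,w,>0}$, \emph{provided} we can check their single running hypothesis: that for every $v\le v'\le u\le w'\le w$ one has $Y_{v',w'}\subset \dot u\,B^-B^+/B^+$, where $Y_{v',w'}=\overline{\CB_{v,w,>0}}\cap \OCB_{v',w'}$.

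The first step is precisely this verification. By Proposition~\ref{prop:CB} (which is proved just before, using Lemma~\ref{lem-in-u} and the parametrization \eqref{eq:5.1}), we know that $\CB_{v,w,>0}$ is a connected component of $\OCB_{v,w}(\BR)$ and that $\overline{\CB_{v,w,>0}}\cap \OCB_{v',w'}=\CB_{v',w',>0}$ for all $v\le v'\le w'\le w$. Hence $Y_{v',w'}=\CB_{v',w',>0}$, and Lemma~\ref{lem-in-u} (applied with the triple $v'\le u\le w'$) gives exactly $\CB_{v',w',>0}\subset \dot u\,B^-B^+/B^+$. So the hypothesis of Theorem~\ref{thm:product} holds for $Y_{v,w}=\CB_{v,w,>0}$.

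With the hypothesis in hand, part (1) is immediate: Theorem~\ref{thm:product} (applied at $u$, with $v'=v$, $w'=w$) yields the isomorphism $c_u\colon \CB_{v,w,>0}\xrightarrow{\sim} Y_{v,u}\times Y_{u,w}=\CB_{v,u,>0}\times \CB_{u,w,>0}$, where the identification of the factors again uses Proposition~\ref{prop:CB}(2). For part (2), first note $\CB_{v,w,\ge 0}=\overline{\CB_{v,w,>0}}$ by Proposition~\ref{prop:CB}(2); then Theorem~\ref{thm:regularCW}, whose hypotheses we have just checked, asserts that $\overline{Y_{v,w}}=\bigsqcup_{(v',w')}Y_{v',w'}$ is a regular CW complex homeomorphic to a closed ball of dimension $\ell(w)-\ell(v)$. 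This is exactly the claim.

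The only genuine content beyond bookkeeping is the verification of the product-structure hypothesis, i.e. the containment $\CB_{v,w,>0}\subset \dot r\,B^-B^+/B^+$ for $v\le r\le w$; but this is Lemma~\ref{lem-in-u}, already established by an induction on $\ell(w)$ using the subexpression parametrization of Marsh--Rietsch together with \cite[Proposition~5.2]{BH20}. So there is no real obstacle here: the proof is a clean specialization of the $J$-theory to $J=\emptyset$, and the ``hard part'' has been front-loaded into Lemma~\ref{lem-in-u}, Theorem~\ref{thm:product}, and Theorem~\ref{thm:regularCW}.
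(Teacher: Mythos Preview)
Your proposal is correct and follows essentially the same approach as the paper: the paper's own proof is a one-sentence remark that the hypothesis of Theorem~\ref{thm:product} has been verified for $Y_{v,w}=\CB_{v,w,>0}$ (via Lemma~\ref{lem-in-u} and Proposition~\ref{prop:CB}), so the statement follows from Theorem~\ref{thm:product}, Corollary~\ref{cor:product}, Theorem~\ref{thm:regularCW}, and Proposition~\ref{prop:lkregular} specialized to $J=\emptyset$. Your write-up simply spells this out in more detail.
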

\begin{remark}
Part (2) of Theorem~\ref{thm:CB} proves \cite[Conjecture~10.2 (2)]{GKL}.  
\end{remark}

 
  
\subsection{Totally nonnegative partial flag varieties}\label{sec:partial}
  
\subsubsection{Partial flag varieties}
Let $K \subset I$ and $\CP_K=G/P^+_K$ be the partial flag variety. 
Let $Q_K=\{(v, w) \in W \times W^K \vert v \le w\}$. Define the partial order $\preceq$ on $Q_K$ by $(v', w') \preceq (v, w) $ if there exists $u \in W_K$ with $v \le v' u \le w' u \le w$. For any $(v, w) \in Q_K$, set 
$$\mathring{\CP}_{K, (v, w)}=pr_K(\OCB_{v, w}) \text{ and } \CP_{K, (v, w)}=pr_K(\CB_{v, w}),$$ where $pr_K: \CB \to \CP_K$ is the projection map. Then $\CP_{K, (v, w)}$ is the (Zariski) closure of $\mathring{\CP}_{K, (v, w)}$ in $\CP_K$. We call $\mathring{\CP}_{K, (v, w)}$ an {\it open projected Richardson variety} and $\CP_{K, (v, w)}$ a {\it closed projected Richardson variety}. Note that $pr_K: \OCB_{v, w} \rightarrow \mathring{\CP}_{K, (v, w)}$ is an isomorphism for $(v,w) \in Q_K$.

By \cite[Proposition 3.6]{KLS}, we have 
\begin{equation}\label{eq:Richardson}
	\CP_K=\bigsqcup_{(v, w) \in Q_K} \mathring{\CP}_{K, (v,w)} \quad \text{ and }\quad  {\CP}_{K, (v,w)}=\bigsqcup_{{(v', w') \in Q_K,}{ (v', w') \preceq (v, w)}}\mathring{\CP}_{K, (v',w')}.
\end{equation} 
\subsubsection{Total positivity on $\CP_K$} Let $\CP_{K, \ge 0}=\overline{U^-_{\ge 0} P^+_K/P^+_K}$ be the closure of $U^-_{\ge 0} P^+_K/P^+_K$ in $\CP_K$ with respect to the Hausdorff topology. It is easy to see that $\CP_{K, \ge 0} = pr_K(\CB_{\ge 0})$. For $(v,w) \in Q_K$, we further define $\CP_{K, (v,w), > 0} = \CP_{K, \ge 0} \bigcap \mathring{\CP}_{K, (v,w)}$.

\begin{prop}\label{prop:closure-p}
	Let $(v,w) \in Q_K$. Then we have 
	\begin{enumerate}
		\item $\CP_{K, (v, w), >0}=pr_K(\CB_{v, w, >0})$; 
		
		\item $\overline{ \CP_{K, (v,w), > 0}} = \bigsqcup_{(v', w') \preceq (v,w) \text{ in } Q_K}\CP_{K, (v',w'), > 0}$;
			\item $\CP_{K, (v,w), > 0}$ is a connected component of $\mathring{\CP}_{K, (v,w)}(\BR)$;
			\item we have
	\[
	\begin{cases}
		\CP_{K, (v,w), > 0} \subset \dot{r}U^- P^+_K / P^+_K, &\text{if } (r,r) \preceq (v,w) \in Q_K;\\
		\CP_{K, (v,w), > 0} \bigcap \dot{r}U^- P^+_K / P^+_K = \emptyset, &\text{otherwise. }
	\end{cases}
	\]

	\end{enumerate}
\end{prop}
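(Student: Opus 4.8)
The plan is to transfer everything from the full flag variety $\CB_{\ge 0}$, where the structure is already understood from Proposition~\ref{prop:CB} and Theorem~\ref{thm:CB}, along the projection map $pr_K \colon \CB \to \CP_K$. The key geometric input is that $pr_K$ restricts to an isomorphism $\OCB_{v,w} \isoarrow \mathring{\CP}_{K,(v,w)}$ for $(v,w) \in Q_K$ (and more generally is compatible with the stratifications via \eqref{eq:Richardson}), together with the fact that $pr_K$ is a continuous, closed, surjective map of the real points.

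First I would prove (1). Since $\CP_{K,\ge0} = pr_K(\CB_{\ge 0})$ and $\CB_{\ge0} = \bigsqcup_{v' \le w'} \CB_{v',w',>0}$ by Proposition~\ref{prop:CB}(2), one has $\CP_{K,(v,w),>0} = pr_K(\CB_{\ge0}) \cap \mathring\CP_{K,(v,w)}$; the point is that among all $\CB_{v',w',>0}$ only those with $(v',w')\in Q_K$ and $(v',w')$ mapping into $\mathring\CP_{K,(v,w)}$ contribute, and by the stratification of $pr_K$ the dominant contribution is $pr_K(\CB_{v,w,>0})$, with the lower strata landing in the closure but not in the open cell $\mathring\CP_{K,(v,w)}$. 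I would make this precise using that $pr_K$ sends $\OCB_{v',w'}$ into $\mathring\CP_{K,(v,w)}$ only when $(v',w')$ and $(v,w)$ have the same image, which forces $v',w'$ to lie in the same $W_K$-cosets appropriately; this is exactly the bookkeeping in \cite{KLS}. For (3), since $pr_K$ restricts to a homeomorphism $\OCB_{v,w}(\BR) \isoarrow \mathring\CP_{K,(v,w)}(\BR)$, and $\CB_{v,w,>0}$ is a connected component of $\OCB_{v,w}(\BR)$ by Proposition~\ref{prop:CB}(1), its homeomorphic image $\CP_{K,(v,w),>0}$ is a connected component of $\mathring\CP_{K,(v,w)}(\BR)$.

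For (2), I would combine (1) with Proposition~\ref{prop:CB}(2) and the closure formula in \eqref{eq:Richardson}. One has $\overline{\CP_{K,(v,w),>0}} = \overline{pr_K(\CB_{v,w,>0})} = pr_K(\overline{\CB_{v,w,>0}})$ because $pr_K$ is a closed map on real points (the fibers being compact, $\CP_K$ being the thin flag variety quotient by a parabolic); then $pr_K(\overline{\CB_{v,w,>0}}) = pr_K\big(\bigsqcup_{v\le v'\le w'\le w}\CB_{v',w',>0}\big) = \bigcup_{v\le v'\le w'\le w} pr_K(\CB_{v',w',>0})$. Each $pr_K(\CB_{v',w',>0})$ equals some $\CP_{K,(v'',w''),>0}$ with $(v'',w'')\in Q_K$, and a standard argument (the image of a Richardson variety under $pr_K$ is a projected Richardson variety, with the $W_K$-cosets determining $(v'',w'')$) shows the resulting index set is precisely $\{(v'',w'')\in Q_K \mid (v'',w'')\preceq (v,w)\}$; one must check this is a disjoint union, which follows from \eqref{eq:Richardson}. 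Finally for (4): the inclusion $\CB_{v',w',>0}\subset \dot r B^- B^+/B^+$ for $v'\le r\le w'$ is Lemma~\ref{lem-in-u}, and applying $pr_K$ gives $\CP_{K,(v,w),>0}\subset \dot r U^- P^+_K/P^+_K$ whenever there is a representative $r$ with $(r,r)\preceq(v,w)$; conversely, if no such $r$ exists then $\mathring\CP_{K,(v,w)}$ misses the big cell $\dot r U^- P^+_K/P^+_K$ already at the level of (projected) Richardson varieties, using part (a) at the end of \S\ref{sec:Jcr} translated through the projection, so the intersection is empty.

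The main obstacle I anticipate is the combinatorial/stratification bookkeeping in parts (1) and (2): carefully identifying which pairs $(v',w')$ with $v\le v'\le w'\le w$ map under $pr_K$ to which pair $(v'',w'')\in Q_K$, and verifying that the union $\bigcup_{(v'',w'')\preceq(v,w)}\CP_{K,(v'',w''),>0}$ is genuinely disjoint and exhausts the closure — i.e.\ that no collapsing occurs beyond what \eqref{eq:Richardson} predicts. The topological facts (closedness of $pr_K$ on real points, homeomorphism on open strata, preservation of connected components) are comparatively routine given the thin flag variety setup, and (3) and (4) are then quick consequences. I would organize the write-up so that (1) and (3) come first, (4) is deduced from Lemma~\ref{lem-in-u}, and (2) is assembled last from (1), Proposition~\ref{prop:CB}(2), and \eqref{eq:Richardson}.
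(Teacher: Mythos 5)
Your overall strategy — transferring everything from $\CB_{\ge 0}$ through $pr_K$ and then using Proposition~\ref{prop:CB}, \eqref{eq:Richardson}, and Lemma~\ref{lem-in-u} — is exactly the paper's, and your treatment of parts (3) and (4) is essentially what the paper does. But there are two genuine gaps in (1) and (2).

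For part (1), invoking ``the bookkeeping in \cite{KLS}'' does not suffice: \cite{KLS} identifies the image of an \emph{algebraic} Richardson variety under $pr_K$, but says nothing about where the positive cell $\CB_{v',w',>0}$ goes. The crucial and nontrivial step is that for arbitrary $v'\le w'$ one has $pr_K(\CB_{v',w',>0})=pr_K(\CB_{v'_1,(w')^K,>0})$, where $w'=(w')^K w'_K$ and $v'_1=(w'_K)^{-1}\circ_r v'$. The paper proves this by choosing a reduced expression of $w'$ of the form $\mathbf{(w')^K}\,\mathbf{w'_K}$, checking that the positive subexpression for $v'$ factors compatibly as $(\mathbf{v'_1})_+(\mathbf{v'_2})_+$, and observing that the $G_{(\mathbf{v'_2})_+,\mathbf{w'_K},>0}$ factor lies in $L_K$ and is absorbed into $P^+_K$. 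Without this MR-parametrization computation you cannot conclude that $pr_K(\CB_{v',w',>0})$ lands inside a single $\mathring{\CP}_{K,\alpha}$, which is what makes the disjointness argument work; the same computation also drives the ``$\subseteq$'' direction of (2). For part (2), your justification that $pr_K(\overline{\CB_{v,w,>0}})=\overline{pr_K(\CB_{v,w,>0})}$ appeals to $pr_K$ being a closed map because its fibers $P^+_K/B^+$ are compact — but in a Kac--Moody group with $K$ of infinite type these fibers are thin flag varieties that are \emph{not} compact, so this reasoning fails. What actually saves the equality is compactness of the \emph{source}: $\overline{\CB_{v,w,>0}}$ sits inside the finite-dimensional projective Schubert variety $\CB_w$, so $\overline{\CB_{v,w,>0}}$ is compact, hence its image under the continuous map $pr_K$ is compact and therefore closed. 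Finally, the ``$\supseteq$'' direction of the index-set identification in (2) is not just KLS bookkeeping either; the paper uses the Demazure product $*$ and the monoid $\circ$ to produce, for any $(v',w')\preceq(v,w)$ in $Q_K$, a pair $v\le v'u'\le w'u'\le w$ with $pr_K(\CB_{v'u',w'u',>0})=\CP_{K,(v',w'),>0}$. You would need this combinatorial argument too.
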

 \begin{proof}
Let $v', w' \in W$ with $v' \le w'$. We write $w'=(w')^K \, w'_K$ with $(w')^K \in W^K$ and $w'_K \in W_K$. Set $v'_1=(w'_K) \i \circ_r v'$. Then $v'_1 \le (w')^K$. Set $v'_2=(v'_1) \i v'$. We fix a reduced expression ${\bf (w')^K}$ of $(w')^K$ and a reduced expression ${\bf w'_K}$ of $w'_K$. Then ${\bf (w')^K} {\bf w'_K}$ is a reduced expression of $w'$. Let $({\bf v'_1})_+$ be the positive subexpression of $v'_1$ in ${\bf (w')^K}$ and $({\bf v'_2})_+$ be the positive subexpression of $v'_2$ in ${\bf w'_K}$. It is easy to see that $({\bf v'_1})_+ ({\bf v'_2})_+$ is the positive subexpression of $v'$ in ${\bf (w')^K} {\bf w'_K}$. By definition, \begin{align*} pr_K(\CB_{v', w', >0}) & =pr_K(G_{({\bf v'_1})_+ ({\bf v'_2})_+, {\bf (w')^K} {\bf w'_K}, >0} \cdot B^+)=pr_K(G_{({\bf v'_1})_+, {\bf (w')^K}, >0} \cdot B^+) \\ &=pr_K(\CB_{v'_1, (w')^K, >0}) \subset \CP_{K, (v'_1, (w')^K)}. \end{align*}
	
In particular, $pr_K(\CB_{\ge 0})=\bigcup_{v' \le w' \text{ in } W} pr_K(\CB_{v', w', >0})=\bigcup_{v' \le w' \text{ in } W} pr_K(\CB_{v'_1, (w')^K, >0})$. Since $(v'_1, (w')^K) \in Q_K$, we have $pr_K(\CB_{\ge 0})=\bigcup_{(v', w') \in Q_K} pr_K(\CB_{(v', w'), >0})$. For any $(v', w') \in Q_K$, $pr_K(\CB_{(v', w'), >0}) \subset \CP_{K, (v', w')}$. Thus the union $\bigcup_{(v', w') \in Q_K} pr_K(\CB_{(v', w'), >0})$ is a disjoint union and $pr_K(\CB_{(v', w'), >0})=\CP_{K, (v', w'), >0}$ for any $(v', w') \in Q_K$.
	
Part (1) is proved. 
	
We have 
	\[
	\overline{ \CP_{K, (v,w), > 0}} =pr_K(\overline{\CB_{v, w, >0}})=  \bigsqcup_{v \le v'' \le w'' \le w} pr_K(\CB_{v'', w'', >0}).
	\]
	
Let $(v', w') \in Q_K$ with $(v', w') \preceq (v, w)$. Then there exists $u \in W_K$ such that $v \le v' u \le w' u \le w$. Let $u' \le u$ with $v' \ast u=v' u'$. Then $v \le v' u \le v' \ast u=v' u' \le w' u' \le w' u \le w$. We have $pr_K(\CB_{v' u', w' u', >0})=pr_K(\CB_{v', w', >0})=\CP_{K, (v', w'), >0}$. Thus $\CP_{K, (v', w'), >0} \subset \overline{\CP_{K, (v, w), >0}}$.  
	
On the other hand, for any $v' \le w'$ in $W$ with $v \le v' \le w' \le w$, we have $v \le v'=v'_1 v'_2 \le (w')^K v'_2 \le (w')^K w'_K=w' \le w$, where $v'_1$ and $v'_2$ are defined above. Thus $(v'_1, (w')^K) \preceq (v, w)$ and $pr_K(\CB_{v', w', >0})=\CP_{v'_1, (w')^K, >0}$. Part (2) is proved. 
	
Finally part (3) and part (4) follow from Theorem~\ref{thm:CB} and Lemma~\ref{lem-in-u}.
\end{proof}

\subsubsection{Main results on $\CP_{K, \ge 0}$}
We collect the main results on $\CP_{K, \ge 0}$ in this subsection. The proof relies on the following result, which will be proved in \S \ref{sec:pf2}.

\begin{prop}[$\club$]\label{prop:CPmanifold}
Let $(v,w) \in Q_K$. Then $\overline{ \CP_{K, (v,w), > 0}}$ is a topological manifold with boundary $\partial  \overline{ \CP_{K, (v,w), > 0}} = \bigsqcup_{(v',w') < (v,w) \in Q_K} \CP_{K, (v',w'), > 0}$.
\end{prop}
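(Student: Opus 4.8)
The strategy is to transport the local topological structure of the totally nonnegative projected Richardson variety $\overline{\CP_{K, (v,w), > 0}}$ to that of a totally positive $J$-Richardson variety on a larger Kac-Moody flag variety $\tilde{\CB}$, via the Birkhoff-Bruhat atlas of \cite{BH21}, and then invoke the manifold-with-boundary statement already available there. Concretely, Theorem~C (Proposition~\ref{prop:compatible}) provides, for each point $x \in \overline{\CP_{K, (v,w), > 0}}$, a choice of $J$ and a chart sending an open neighborhood of $x$ in $\CP_{K, \ge 0}$ homeomorphically onto an open neighborhood of a point $\tilde x$ in some ${}^J\!\tilde{\CB}_{w_1, w_2, \ge 0}$, carrying the stratification by totally positive projected Richardson varieties to the stratification by totally positive $J$-Richardson varieties. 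Since the property of being a topological manifold with boundary is local, and since the boundary is detected by the stratification, it suffices to know that $\overline{{}^J\!\tilde{\CB}_{w_1, w_2, > 0}}$ is a topological manifold with boundary $\bigsqcup_{(w_1', w_2') \lessJ (w_1,w_2)} {}^J\!\tilde{\CB}_{w_1', w_2', > 0}$.

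The latter fact is exactly what Theorem~\ref{thm:regularCW} (together with its proof, in particular step (a)) delivers, provided the hypothesis of Theorem~\ref{thm:product}/Theorem~\ref{thm:regularCW} is verified for the relevant connected component $Y_{w_1, w_2}$ of ${}^J\!\mathring{\tilde{\CB}}_{w_1, w_2}(\BR)$ --- namely, that $Y_{w_1', w_2'} \subset \dot u B^- B^+/B^+$ whenever $w_1 \leJ w_1' \leJ u \leJ w_2' \leJ w_2$. This containment is the $J$-analogue of Lemma~\ref{lem-in-u}, and its verification is the content of Proposition~\ref{prop:J} (marked $\club$), which is assumed here. So the argument assembles as follows: (i) reduce, by locality, to understanding a neighborhood of an arbitrary point $x$; (ii) apply Theorem~C to obtain a stratification-preserving homeomorphism from a neighborhood of $x$ onto a neighborhood of $\tilde x$ in a totally positive $J$-Richardson variety; (iii) apply Theorem~\ref{thm:regularCW}(a) (using Proposition~\ref{prop:J} to check its hypothesis) to conclude that this neighborhood is either $\BR^n$ or $\BR_{\ge 0} \times \BR^{n-1}$, with $\tilde x$ on the boundary precisely when it lies in a proper stratum; (iv) transport back to conclude $x$ has the required neighborhood, with $x \in \partial \overline{\CP_{K, (v,w), > 0}}$ precisely when $x$ lies in some $\CP_{K, (v',w'), > 0}$ with $(v',w') < (v,w)$.

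I expect the main obstacle to be bookkeeping rather than a conceptual leap: one must be careful that the dimension count matches on both sides of the atlas chart, i.e. that $\ell(w) - \ell(v)$ (or rather the dimension of the projected Richardson variety, which by the isomorphism $pr_K \colon \OCB_{v,w} \to \mathring{\CP}_{K,(v,w)}$ equals $\ell(w)-\ell(v)$) agrees with $\Jell(w_2) - \Jell(w_1)$ for the image $J$-Richardson variety, so that the collar directions correspond correctly; and that the face poset identification of Theorem~C is order-preserving in the strong sense needed to match ``boundary stratum'' on one side with ``boundary stratum'' on the other. A secondary subtlety is that Theorem~C is stated for cells, so one should confirm that it assembles into a homeomorphism of the closures compatible with stratifications on a neighborhood of $x$ --- this should follow from the fact that the atlas charts are open embeddings of honest varieties and the stratifications are locally finite, but it deserves an explicit sentence. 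Once these compatibilities are in place, the proof is a direct transport of Theorem~\ref{thm:regularCW}(a).
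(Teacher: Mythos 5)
Your proposal is correct and is essentially the paper's argument: the paper covers $\overline{\CP_{K,(v,w),>0}}$ by the open sets $Y_r = \overline{\CP_{K,(v,w),>0}} \bigcap \dot r U^- P^+_K/P^+_K$ for $r \in W^K$ with $(r,r) \preceq (v,w)$, transfers each $Y_r$ via the stratified open embedding $\tilde{c}_r$ to an open subset of the closure of a totally positive $J$-Richardson variety in $\tilde\CB$, and invokes Theorem~\ref{thm:J} (which rests on Proposition~\ref{prop:J}) together with the poset isomorphism from \cite[Proposition~4.2]{BH21} to read off the boundary strata. The bookkeeping concerns you flag are resolved exactly by this explicit open cover and by Proposition~\ref{prop:closure-p}(4), which guarantees $\CP_{K,(v',w'),>0}\subset \dot r U^- P^+_K/P^+_K$ precisely when $(r,r)\preceq(v',w')$, so that each chart sees a full neighborhood of its strata.
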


Now we prove the main result. 

\begin{thm}\label{thm:CPK}
Let $(v,w) \in Q_K$. Then $ \overline{ \CP_{K, (v,w), > 0}}= \bigsqcup_{(v', w') \preceq (v,w) \text{ in } Q_K}\CP_{K, (v',w'), > 0}$ is a regular CW complex homeomorphic to a closed ball of dimension $\ell(w)- \ell(v)$.
\end{thm}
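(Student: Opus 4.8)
The plan is to deduce Theorem~\ref{thm:CPK} from the already-proved regularity theorem for $J$-Richardson varieties (Theorem~\ref{thm:regularCW}) together with the ingredients collected in \S\ref{sec:partial}. The two main tools are: Proposition~\ref{prop:CPmanifold}, which tells us $\overline{\CP_{K,(v,w),>0}}$ is a topological manifold with the expected boundary; and Proposition~\ref{prop:closure-p}, which identifies the stratification and its closure relations, realizes each stratum as a cell $\CP_{K,(v',w'),>0}\cong\Rp^{\ell(w')-\ell(v')}$ via $pr_K$ (using Theorem~\ref{thm:CB}), and gives the face poset as the interval below $(v,w)$ in $Q_K$.

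First I would set up the induction on $\ell(w)-\ell(v)$, exactly as in the proof of Theorem~\ref{thm:regularCW}: the base case $\ell(w)-\ell(v)=0$ is a point. For the inductive step, I would argue:
\begin{enumerate}
\item[(a)] By Proposition~\ref{prop:CPmanifold}, $\overline{\CP_{K,(v,w),>0}}$ is a compact topological manifold with boundary $\partial\,\overline{\CP_{K,(v,w),>0}}=\bigsqcup_{(v',w')<(v,w)\text{ in }Q_K}\CP_{K,(v',w'),>0}$, and by Proposition~\ref{prop:closure-p}(1) its interior $\CP_{K,(v,w),>0}\cong\Rp^{\ell(w)-\ell(v)}$ is an open ball.
\item[(b)] By Proposition~\ref{prop:closure-p}(2), $\partial\,\overline{\CP_{K,(v,w),>0}}$ is a finite disjoint union of cells $\CP_{K,(v',w'),>0}$, each of which (by induction hypothesis, since $\ell(w')-\ell(v')<\ell(w)-\ell(v)$ for $(v',w')<(v,w)$) has closure a regular CW complex homeomorphic to a closed ball; hence $\partial\,\overline{\CP_{K,(v,w),>0}}$ is a regular CW complex whose face poset is the open interval $\{(v',w')\in Q_K \mid (v',w')\prec(v,w)\}$.
\item[(c)] Adjoining $\hat 0$ (and noting the top element is $(v,w)$), this poset is graded, thin and shellable. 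This is where I would have to invoke the poset-topology input: for $J=\emptyset$ the analogue is Proposition~\ref{lem:JQ}, but for the partial-flag case one needs the corresponding statement for $Q_K$, which follows from the Birkhoff--Bruhat atlas of \cite{BH21} identifying $Q_K$ with an interval in a $J$-twisted poset $\JQ$ (Theorem~\ref{thmC}/Proposition~\ref{prop:compatible}), or directly from \cite{W07} / \cite[\S 4]{BH21}. Then Theorem~\ref{thm:CW} gives that $\partial\,\overline{\CP_{K,(v,w),>0}}$ is homeomorphic to a sphere of dimension $\ell(w)-\ell(v)-1$.
\item[(d)] Applying Theorem~\ref{thm:poincare} (generalized Poincar\'e) to the compact manifold-with-boundary from (a), whose boundary is a sphere and whose interior is an open ball, we conclude $\overline{\CP_{K,(v,w),>0}}$ is homeomorphic to a closed ball of dimension $\ell(w)-\ell(v)$.
\item[(e)] Finally, the closure-compatibility $\overline{\CP_{K,(v',w'),>0}}\subset\overline{\CP_{K,(v,w),>0}}$ for $(v',w')\preceq(v,w)$ together with (d) applied to each stratum verifies the two axioms of a regular CW complex, so $\overline{\CP_{K,(v,w),>0}}=\bigsqcup_{(v',w')\preceq(v,w)}\CP_{K,(v',w'),>0}$ is a regular CW complex homeomorphic to a closed ball.
\end{enumerate}

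\textbf{Main obstacle.} The substantive geometric content — that $\overline{\CP_{K,(v,w),>0}}$ is a manifold with the correct boundary — is quarantined in Proposition~\ref{prop:CPmanifold}, which I am allowed to assume here; modulo that, the present theorem is a formal consequence. So within this proof the only genuine point requiring care is step (c): ensuring that the face poset of the boundary (the open interval below $(v,w)$ in $Q_K$) is graded, thin, and shellable. I expect this to be handled either by transporting the question through the Birkhoff--Bruhat atlas to the $J$-twisted poset $\JHQ$ of Proposition~\ref{lem:JQ} (via Theorem~\ref{thmC}), or by citing the EL-shellability of the poset of projected Richardson varieties from \cite{W07, BH21}. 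Everything else is a direct application of Theorem~\ref{thm:poincare}, Theorem~\ref{thm:CW}, and the induction hypothesis, mirroring the final paragraph of the proof of Theorem~\ref{thm:regularCW}.
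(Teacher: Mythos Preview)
Your proposal is correct and follows essentially the same argument as the paper: induction on $\ell(w)-\ell(v)$, using Proposition~\ref{prop:CPmanifold} for the manifold-with-boundary structure, the induction hypothesis to make the boundary a regular CW complex, a poset-topology citation (the paper uses \cite[Theorem~4.1]{BH21}) plus Theorem~\ref{thm:CW} to identify the boundary as a sphere, and then Theorem~\ref{thm:poincare} to conclude. Your step (c) is exactly right---the paper invokes the shellability result for $Q_K$ directly from \cite{BH21} rather than transporting through the atlas.
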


\begin{remark}
This proves \cite[Conjecture~10.2 (3)]{GKL}.
\end{remark}

\begin{proof}
The proof is similar to the proof of Theorem~\ref{thm:regularCW}. We prove by induction on $\ell(w) - \ell(v)$. The base case when $\ell(w) - \ell(v) =0$ is trivial. 
		
It follows by induction that   $\overline{ \CP_{K, (v',w'), > 0}}$ is a regular CW complex homeomorphic to a closed ball of dimension $\ell(w') - \ell(v')$ for any $(v',w') \in Q_K$ with $\ell(w') - \ell(v') < \ell(w) - \ell(v)$. Therefore by Proposition~\ref{prop:CPmanifold}, $\partial  \overline{ \CP_{K, (v,w), > 0}}=\bigsqcup_{(s,t) < (v,w) \in Q_K} \CP_{K, (s,t), > 0}$ is a regular CW complex. Its face poset is $\{(v',w') \vert (v',w') < (v,w)\} \subset Q_K$. By adding the maximal element $(v, w)$ and the minimal element $\{\hat{0}\}$, we obtain the new poset $\{(v',w') \vert (v',w') \le  (v,w)\} \bigsqcup \{\hat{0}\} \subset Q_K \bigsqcup \{\hat{0}\}$. 

By \cite[Theorem~4.1]{BH21}, this poset is graded, thin, and shellable. By Theorem~\ref{thm:CW},  $\partial \overline{ \CP_{K, (v,w), > 0}}$  is a regular CW complex homeomorphic to a sphere of dimension $\ell(w) - \ell(v) -1$. Now the theorem follows from Proposition~\ref{prop:CPmanifold} and Theorem~\ref{thm:poincare}.
\end{proof}


\subsection{$J$-total positivity}\label{sec:JTP}

\subsubsection{The totally nonpositive part $\CB_{\le 0}$}\label{sec:negativeg}

Let $\iota: G \rightarrow G$ be the unique group automorphism that is identity on $T$ and maps $x_{i}(a)$ to $x_{i}(-a)$ and $y_{i}(a)$ to $y_{i}(-a)$ for any $i \in I$, $a \in \BR$. Let $U^-_{\le 0}=(U^-_{\ge 0}) \i = \iota(U^-_{\ge 0})$ be the submonoid of $U^-$ generated by $y_i(a)$ for $i \in I$ and $a \in \BR_{<0}$. Since $B^+$ is stable under $\iota$, we denote the induced automorphism on $\CB$ by $\iota$ as well. 

Similar to the definition of $\CB_{\ge 0}$, let $\CB_{\le 0}$ be the closure of $U^-_{\le 0} B^+/B^+$ with respect to the Hausdorff topology. For any $v \le w$, we set $\CB_{v, w, <0}=\CB_{\le 0} \bigcap \OCB_{v, w}$. 
It is clear that we have isomorphisms $\iota: \CB_{\ge 0} \cong \CB_{\le 0}$, and $\iota:  \CB_{v, w, > 0}\cong \CB_{v, w, > 0}$.

We fix a reduced expression $\bf w$. Let $\bf v_+$ be the unique positive subexpression for $v$ in $\bf w$. We define $G_{{\bf v_+}, {\bf w}, <0}$ in the similar way as $G_{{\bf v_+}, {\bf w}, >0}$ in \S\ref{Rp-BH}, but using $y_i(\BR_{<0})$ instead of $y_i(\BR_{>0})$ and $\dot s_i \i$ instead of $\dot s_i$. It is clear that $\iota (G_{{\bf v_+}, {\bf w}, <0}) = G_{{\bf v_+}, {\bf w}, >0}$ and $G_{{\bf v_+}, {\bf w}, <0} B^+/ B^+ = \CB_{v, w, <0}$. 

\subsubsection{$J$-total nonnegative flag varieties}Let $J \subset I$. Let $U^-_{J, \ge 0}$ be the submonoid of $U^-$ generated by $y_i(a)$ for $i \in J$ and $a \in \Rp$. Since $U^-_{\ge 0}=\bigsqcup_{w \in W} U^-_{w, >0}$ and $U^-_{J, \ge 0}=\bigsqcup_{w \in W_J} U^-_{w, >0}$, we have $U^-_{J, \ge 0}=U^-_{\ge 0} \bigcap L_J$. Moreover, let $\pi_J: P^-_J \to L_J$ be the projection map. Then we have $\pi_J(U^-_{\ge 0})=U^-_{J, \ge 0}$ and $\pi_J(U^-_{\le 0}) \i=U^-_{J, \ge 0}$. Set $${}^J U^-_{\succeq 0}=\{h_1 \pi_J(h_2) \i h_2 \vert h_1 \in U^-_{J, \ge 0}, h_2 \in U^-_{\le 0}\}.$$ If $J=I$, then ${}^J U^-_{\succeq 0}=U^-_{\ge 0}$. If $J=\emptyset$, then ${}^J U^-_{\succeq 0}=U^-_{\le 0}$. 

For $v \in W_J$ and $w \in {}^J W$, we set $${}^J U^-_{v, w, >0}=\{h_1 \pi_J(h_2) \i h_2 \vert h_1 \in U^-_{v, >0}, h_2 \in U^-_{w, <0}\}.$$ Then ${}^J U^-_{v, w, >0} \cong U^-_{v, >0} \times U^-_{w, <0} \cong \Rp^{\ell(v)+\ell(w)}$ is a cell and ${}^J U^-_{\succeq 0}=\bigsqcup_{v \in W_J, w \in {}^J W} {}^J U^-_{v, w, >0}$. One may also see that each cell ${}^J U^-_{v, w, >0}$ is locally closed in $U^-$ and thus ${}^J U^-_{\succeq 0}$ is a constructible subset of $U^-$. It is worth pointing out that ${}^J U^-_{\succeq 0}$, in general, is not closed in $U^-$. 

We define the $J$-totally nonnegative flag variety $\JCB_{\ge 0}$ to be the closure of ${}^J U^-_{\succeq 0}B^+ / B^+$ in $\CB$ with respect to the Hausdorff topology. Note that if $v_1 \le v_2$, $w_1 \le w_2$, then ${}^J U^-_{v_1, w_1, >0}$ is contained in the Hausdorff closure of ${}^J U^-_{v_2, w_2, >0}$. Thus 
$$
\JCB_{\ge 0}=\bigcup_{v \in W_J, w \in {}^J W} \overline{{}^J U^-_{v, w, >0}B^+/ B^+}.
$$

For $w_1 \leJ w_2$, let $\JCB_{w_1, w_2, >0}=\OJCB_{w_1, w_2} \bigcap \JCB_{\ge 0}$. We call $\JCB_{w_1, w_2, >0}$ the {\em totally positive $J$-Richardson variety}. Then $\JCB_{\ge 0}=\bigsqcup_{w_1 \leJ w_2} \JCB_{w_1, w_2, >0}$. 

If $J=\emptyset$, then $\JCB_{\ge 0}=\CB_{\le 0}$ and $\JCB_{w_1, w_2, >0}=\CB_{w_1, w_2, <0}$. If $J=I$, then $\JCB_{\ge 0}=\CB_{\ge 0}$ and $\JCB_{w_1, w_2, >0}=\CB_{w_2, w_1, >0}$.

\subsubsection{Main results on $\JCB_{\ge 0}$}  
We collect the main results on $\JCB_{\ge 0}$ in this subsection. The proof relies on the following result, which will be proved in \S\ref{sec:pf1}.

\begin{prop}[$\club$]\label{prop:J}
Let $v \leJ w$ in $W$.  We have 
\begin{enumerate}
\item $\JCB_{v, w, \ge 0 } = \overline{ \JCB_{v, w, > 0 }}= \bigsqcup_{v \le v' \le w' \le w} \JCB_{v', w', >0}$. 
\item $\JCB_{v, w, > 0 } $ is a connected component of $\OJCB_{v, w}(\BR)$.
\item For any $u \in W$ with $v \leJ u \leJ w$, we have $\JCB_{v, w, >0} \subset \dot u U^- \cdot B^+$.
\end{enumerate}
\end{prop}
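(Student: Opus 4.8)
\emph{Plan.} The plan is to feed the family $Y_{v,w}=\JCB_{v,w,>0}$ into Theorem~\ref{thm:product}, exactly as the case $J=\emptyset$ is handled in Proposition~\ref{prop:CB}. The first observation is that part~(1) is a formal consequence of parts~(2) and~(3). Indeed, $\JCB_{\ge 0}$ is Hausdorff-closed and contains $\JCB_{v,w,>0}$, so for $v\leJ v'\leJ w'\leJ w$ we have $\overline{\JCB_{v,w,>0}}\cap\OJCB_{v',w'}\subset\JCB_{\ge 0}\cap\OJCB_{v',w'}=\JCB_{v',w',>0}$, and part~(3) applied to the pair $(v',w')$ then puts this intersection inside $\dot uU^-B^+/B^+$ whenever $v'\leJ u\leJ w'$; this is precisely the hypothesis of Theorem~\ref{thm:product} for $Y_{v,w}$, granting part~(2), which says $Y_{v,w}$ is a connected component of $\OJCB_{v,w}(\BR)$. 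Theorem~\ref{thm:product} and Corollary~\ref{cor:product} then show $Y_{v',w'}=\overline{\JCB_{v,w,>0}}\cap\OJCB_{v',w'}$ is a connected component of $\OJCB_{v',w'}(\BR)$; being contained in the connected set $\JCB_{v',w',>0}$ (part~(2) for $(v',w')$), it equals it, which yields $\overline{\JCB_{v,w,>0}}=\bigsqcup_{(v',w')\preceqJ(v,w)}\JCB_{v',w',>0}$, and Theorem~\ref{thm:regularCW} further makes this a regular CW ball, as needed downstream. So everything reduces to two direct inputs: a parametrization showing $\JCB_{v,w,>0}$ is a connected cell which is a full connected component of $\OJCB_{v,w}(\BR)$ (part~(2)), and the containment of part~(3).

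\emph{Parametrization and part~(2).} I would establish these chart by chart, using the open cover $\CB=\bigcup_r\dot rU^-B^+/B^+$ and the isomorphisms $\Jc_r$ of \eqref{eq:atlasJ}, together with the fact that, directly from the definition,
\[
{}^J U^-_{\succeq 0}=U^-_{J,\ge 0}\cdot\bigl\{\pi_J(h)^{-1}h \mid h\in U^-_{\le 0}\bigr\},
\]
with the two factors varying independently and $\pi_J(h)^{-1}h$ lying in $U_{P^-_J}$. Applied through $\Jc_1$, this presents ${}^J U^-_{\succeq 0}B^+/B^+$ as a product inside $\OJCB_1\times\OJCB^1$: the first coordinate sweeps out the ``big-cell'' part of the \emph{ordinary} totally nonnegative flag variety of the Levi $L_J$, and the second a ``totally nonpositive'' piece built from $U^-_{\le 0}B^+/B^+\subset\CB_{\le 0}$ of \S\ref{sec:negativeg}. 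Each of these is a disjoint union of cells, and of connected components of ordinary (opposite) Richardson varieties: the first by \eqref{eq:5.1} and Proposition~\ref{prop:CB} applied to $L_J$, the second by their images under the involution $\iota$ of \S\ref{sec:negativeg}. Since $\Jc_r$ is $T$-equivariant and stratification-preserving, as in \eqref{eq:atlasJ1}, and a product of connected components of Richardson varieties is again one, transporting these facts through the charts $\Jc_r$ for all $r$ should give $\JCB_{v,w,>0}\cong\Rp^{\Jell(w)-\Jell(v)}$ together with the assertion that it is a full connected component of $\OJCB_{v,w}(\BR)$.

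\emph{Part~(3), the main obstacle.} The containment $\JCB_{v,w,>0}\subset\dot uU^-B^+/B^+$ for $v\leJ u\leJ w$ is the $J$-analogue of Lemma~\ref{lem-in-u}, and I expect it to be the hard step. I would try to mimic the inductive proof of Lemma~\ref{lem-in-u}: peel a simple reflection off a reduced-word parametrization of the cell and use $\dot s_i^{-1}y_i(a)=\alpha_i^\vee(a^{-1})y_i(-a)x_i(a^{-1})$ to absorb the correction terms into $B^\pm$, reducing to Lemma~\ref{lem-in-u} for $L_J$ and to its $\iota$-image for $U^-_{\le 0}$ via the decomposition above. The obstruction to simply invoking the $J=\emptyset$ case is precisely the feature the authors emphasize: ${}^J U^-_{\succeq 0}$ glues the totally \emph{positive} part of $L_J$ to the totally \emph{negative} part of $U^-$ through the $\pi_J$-twist, so the positivity-of-structure-constants input (e.g.\ \cite[Proposition~5.2]{BH20}) that drives the $J=\emptyset$ argument is no longer available directly; moreover ${}^J U^-_{\succeq 0}$ is not closed in $U^-$, so one must control the Hausdorff closure $\overline{{}^J U^-_{\succeq 0}B^+/B^+}$ separately and verify that the reduced-word and $\Jc_r$ bookkeeping genuinely identifies it with $\bigsqcup_{w_1\leJ w_2}\JCB_{w_1,w_2,>0}$. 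Making this bookkeeping work uniformly across all of the charts $\dot rU^-B^+/B^+$ is, I expect, where essentially all of the effort lies.
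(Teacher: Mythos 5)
Your reduction of part~(1) to parts~(2) and~(3) via Theorem~\ref{thm:product} and Corollary~\ref{cor:product} is correct and matches the paper (which also notes that the special case $w\in{}^JW$ of part~(1) is Corollary~\ref{cor:II}). From there, however, your proposal diverges from the paper's proof and, by your own admission, does not close.

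For part~(3) you propose a direct induction on reduced words, mimicking Lemma~\ref{lem-in-u}, and you correctly identify the obstruction: the $\pi_J$-twist in ${}^J U^-_{\succeq 0}$ glues the totally \emph{positive} part of $L_J$ to the totally \emph{negative} part of $U^-$, so the positivity input (e.g.\ \cite[Proposition~5.2]{BH20}) that powers the $J=\emptyset$ induction is not directly available. You do not resolve this, and the paper does not take this route at all. There is also an unflagged circularity in the chart-by-chart approach to part~(2): to apply $\Jc_r$ for an intermediate $r$ you must already know $\JCB_{v,w,>0}\subset\dot rU^-B^+/B^+$, which is precisely part~(3). The paper does handle the special case $w\in{}^JW$ directly (Proposition~\ref{prop:typeII}, Lemmas~\ref{lem:typeIinU}--\ref{lem:typeII}, Corollary~\ref{cor:II}), which is close in spirit to your parametrization sketch, but this does not extend by itself.

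What the paper actually does for parts~(2) and~(3) in general is qualitatively different and relies on three ingredients absent from your proposal. First, Proposition~\ref{prop:compatible2} (together with the construction of \S\ref{sec:thick}) embeds $\OJCB_{v,w}$ as a \emph{basic} $J$-Richardson variety inside the flag variety of the auxiliary group $G^\spade$, so that it suffices to prove (2) and (3) for basic $J$-Richardson varieties, which via the Birkhoff-Bruhat atlas (Proposition~\ref{prop:compatible}) are identified with totally positive projected Richardson varieties for another Kac-Moody group. Second, Corollary~\ref{cor:dense} shows $\JCB_{v,w,>0}$ is Zariski dense in $\OJCB_{v,w}$; together with \S\ref{sec:Jcr}~(a) this guarantees $\Delta_{\lambda,u}$ is not identically $0$ there. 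Third, the admissible-function machinery of \S\ref{sec:delta}--\ref{sec:8} (Lemmas~\ref{lem:1}--\ref{lem:4}, Corollary~\ref{cor:rrinU}) shows that $\Delta_{\lambda,u}$ composed with the parametrizations factors through an admissible function, hence by \S\ref{sec:adm}~(a) is either identically $0$ or nowhere $0$ on the cell; combined with the density this yields the containment of part~(3). Part~(2) is then obtained for $s=r$ from Propositions~\ref{prop:closure-p}(3) and \ref{prop:compatible}, and for general $s$ from Lemma~\ref{lem:key2} once Corollary~\ref{cor:rrinU} is in place. In short, the missing idea in your proposal is the transfer-plus-admissibility argument; without it the inductive reduced-word approach stalls exactly where you flagged it would.
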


Combining Proposition~\ref{prop:J} with Theorem~\ref{thm:product}, Theorem~\ref{thm:regularCW} and Proposition~\ref{prop:lkregular}, we have the main result for the $J$-total positivity. 

\begin{thm} \label{thm:J}
Let $v \leJ w$. Then 
\begin{enumerate}
	\item For any $u \in W$ with $v \leJ u \leJ w$, the  map ${}^J c_u$ restricts to an isomorphism 
	\[
	 {}^J c_u: \JCB_{v, w, > 0 }  \cong \JCB_{v, u, >  0 }  \times \JCB_{u, w,> 0 }.
	\]
	\item $\JCB_{v, w, \ge 0 }$ is a regular CW complex homeomorphic to a closed ball of dimension $\Jell(w) - \Jell(v)$.
\end{enumerate}

\end{thm}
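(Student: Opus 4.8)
\emph{Proof proposal.} The strategy is to obtain both parts as formal consequences of the general results of \S\ref{sec:keylemma}, namely Theorem~\ref{thm:product} and Theorem~\ref{thm:regularCW}, once their common hypothesis has been verified for the family $Y_{v,w} := \JCB_{v,w,>0}$. The substantive input is Proposition~\ref{prop:J} (the statement marked $\club$), which I would take as given.

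First I would unwind Proposition~\ref{prop:J} into the language of Theorem~\ref{thm:product}. Part~(2) of that proposition says $\JCB_{v,w,>0}$ is a connected component of $\OJCB_{v,w}(\BR)$, so it is an admissible choice of $Y_{v,w}$. Part~(1) then identifies the induced strata: $Y_{v',w'} := \overline{Y_{v,w}} \cap \OJCB_{v',w'}(\BR) = \JCB_{v',w',>0}$ for all $v \leJ v' \leJ w' \leJ w$. It remains to check the containment hypothesis of Theorem~\ref{thm:product}: for every $v \leJ v' \leJ u \leJ w' \leJ w$ one needs $Y_{v',w'} \subset \dot u B^- B^+/B^+$. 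Since $B^- B^+ = U^- B^+$, this is the inclusion $\JCB_{v',w',>0} \subset \dot u U^- \cdot B^+$; and because Proposition~\ref{prop:J} holds for an arbitrary pair in the $\leJ$-order, I would simply apply its part~(3) to the pair $(v',w')$, for which $v' \leJ u \leJ w'$, and conclude exactly this.

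With the hypothesis in place, part~(1) of the theorem is precisely the conclusion of Theorem~\ref{thm:product}, giving the isomorphism ${}^J c_u:\JCB_{v,w,>0}\cong\JCB_{v,u,>0}\times\JCB_{u,w,>0}$. For part~(2), the same verification feeds Theorem~\ref{thm:regularCW}, which yields that $\overline{\JCB_{v,w,>0}}=\bigsqcup_{v\leJ v'\leJ w'\leJ w}\JCB_{v',w',>0}$ is a regular CW complex homeomorphic to a closed ball of dimension $\Jell(w)-\Jell(v)$; applying Proposition~\ref{prop:J}(1) once more identifies this Hausdorff closure with $\JCB_{v,w,\ge 0}$, so the statement is as claimed. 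Feeding the same family into Proposition~\ref{prop:lkregular} additionally gives the regularity of the links, which is the form needed for the companion assertions of Theorem~D about $\overline{U^-_{w,>0}}$.

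Because Proposition~\ref{prop:J} is assumed here, the argument above has no real obstacle of its own — it is a matching of notation against the hypotheses of the two general theorems. The genuine difficulty is deferred to the proof of Proposition~\ref{prop:J} in \S\ref{sec:pf1}: one must establish the closure formula and the connected-component property for the $J$-total positivity without any of the tools available in the ordinary/reductive setting — the $B^+\leftrightarrow B^-$ duality, the $G_{\ge 0}$-symmetry, or the positivity of the canonical basis — and in the presence of the ``mixed'' definition of ${}^J U^-_{\succeq 0}$ as a blend of the totally positive and totally negative parts of $U^-$.
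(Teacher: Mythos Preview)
Your proposal is correct and is essentially the paper's own argument: the paper states just before Theorem~\ref{thm:J} that it follows by combining Proposition~\ref{prop:J} with Theorem~\ref{thm:product}, Theorem~\ref{thm:regularCW} and Proposition~\ref{prop:lkregular}, which is exactly the verification you carry out. Your unwinding of how parts (1)--(3) of Proposition~\ref{prop:J} feed into the hypotheses of Theorem~\ref{thm:product} (including applying part~(3) to the intermediate pair $(v',w')$) is accurate.
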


\subsection{Links} 

In this subsection, we consider the link of the identity in subspaces of $U^-$. The cases we considered here can be regarded as the special cases of the links of some totally positive (ordinary, $J$-, projected) Richardson varieties in the previous subsections.

 \subsubsection{}
Let $\lambda \in X^{++}$ and $w \in W$. We consider the embedding $U^- \xrightarrow{u \mapsto u \cdot \eta_{\lambda}} {{V}}^\lambda$. We identify $U^-(\BR)$ with its image.  The image of $U^-(\BR) \bigcap B^+ \dot{w} B^+$ is contained in a finite dimensional subspace $L \subset {{V}}^\lambda(\BR)$. We equip $L$ with the standard Euclidean norm. For any $1<w' \le w$, we define 
$$
Lk(U^-_{w', >0})= U^-_{w', >0}  \bigcap \{ || x|| =1 \vert x \in L\}.
$$

Let $Lk( \overline{U^-_{w, >0}} )=  \overline{U^-_{w, >0}} \bigcap \{|| x|| =1 \vert x \in L\}$. Since the $\overline{U^-_{w, >0}}=\bigsqcup_{w' \le w} U^-_{w', >0}$, we have $Lk( \overline{U^-_{w, >0}} )=\bigsqcup_{1<w' \le w} Lk(U^-_{w', >0})$.

\begin{thm} \label{thm:lkregular}
$Lk( \overline{U^-_{w, >0}} )=\bigsqcup_{1<w' \le w}Lk(U^-_{w', >0})$ is a regular CW complex homeomorphic to a closed ball of dimension $\ell(w) -1$.
\end{thm}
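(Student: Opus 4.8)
The plan is to identify $Lk(\overline{U^-_{w,>0}})$ with the link $Lk_{1}(\overline{\CB_{1,w,>0}})$ of \S\ref{sec:link}, specialized to $J=\emptyset$ and $v=v'=1$, and then to invoke Proposition~\ref{prop:lkregular}. Recall that the orbit map $U^-\to\CB$, $u\mapsto uB^+$, is an open immersion onto the big cell $\mathring{\CB}^{1}=B^-B^+/B^+$, and that for $v'=1$ the embedding in \eqref{eq:lk} is, since $\dot 1=1$, exactly the composite $\mathring{\CB}^{1}(\BR)\cong U^- \xrightarrow{u\mapsto u\cdot\eta_\lambda} V^\lambda$ used to define $Lk(\overline{U^-_{w,>0}})$. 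So the first step is to match the two families of cells under this open immersion: by the parametrization \eqref{eq:5.1} with $v=1$ we have $U^-_{w',>0}B^+/B^+=\CB_{1,w',>0}$ for every $w'\le w$, and hence, using Proposition~\ref{prop:CB}(2) together with the fact that the strata $\CB_{v'',w'',>0}$ of $\overline{\CB_{1,w,>0}}$ with $v''\neq 1$ lie in $\mathring{\CB}^{v''}$ and are thus disjoint from $\mathring{\CB}^{1}$, the map $u\mapsto uB^+$ carries $\overline{U^-_{w,>0}}$ onto $\overline{\CB_{1,w,>0}}\cap\mathring{\CB}^{1}=\bigsqcup_{w'\le w}\CB_{1,w',>0}$. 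Intersecting with the unit sphere then gives $Lk(U^-_{w',>0})=Lk_{1,w'}(\overline{\CB_{1,w,>0}})$ for $1<w'\le w$ and $Lk(\overline{U^-_{w,>0}})=Lk_{1}(\overline{\CB_{1,w,>0}})$; the identity point $1\in\overline{U^-_{w,>0}}$ corresponds to the cone point $\dot 1 B^+/B^+$ of $\overline{\CB_{1,w,>0}}$, which is excluded from $Lk_{1}$ in \S\ref{sec:link}.

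Next I would check that Proposition~\ref{prop:lkregular} applies to $Y_{v,w}=\CB_{v,w,>0}$. Its standing hypothesis---that $\overline{\CB_{1,w,>0}}\cap\mathring{\CB}_{v',w'}\subset\dot uB^-B^+/B^+$ for all $1\le v'\le u\le w'\le w$---is exactly the assumption that was verified in \S\ref{sec:classical}, combining Lemma~\ref{lem-in-u} with Proposition~\ref{prop:CB}(2). Assuming $w\neq 1$ (the case $w=1$ is trivial: $\overline{U^-_{1,>0}}=\{1\}$ and the link is empty), Proposition~\ref{prop:lkregular} applied with $v=v'=1$ shows that $Lk_{1}(\overline{\CB_{1,w,>0}})=\bigsqcup_{1<w'\le w}Lk_{1,w'}(\overline{\CB_{1,w,>0}})$ is a regular CW complex homeomorphic to a closed ball of dimension $\ell(w)-\ell(1)-1=\ell(w)-1$. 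Transporting this back through the identification of the first step yields the theorem.

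The only point that is not routine bookkeeping---and therefore the main (if minor) obstacle---is to confirm that the ambient finite-dimensional subspace used here to form the link, namely the span $L\subset V^\lambda(\BR)$ of the image of $U^-(\BR)\cap B^+\dot wB^+$, agrees with the subspace $L^{1}$ used in \S\ref{sec:link}, which is the span of the image of $\mathring{\CB}^{1}(\BR)\cap\CB_{1,w}$; the latter involves the Zariski closure $\CB_{1,w}$, an a priori larger set. This holds because the weights $\mu'\le\lambda$ of $V^\lambda$ occurring in the vectors $u\cdot\eta_\lambda$ are the same as $u$ ranges over either set---they are governed by the inversion set of $w$, cf. \cite[Theorem~5]{BD}---so $L=L^{1}$, the two Euclidean unit spheres coincide, and the stratified homeomorphism of the first step is legitimate. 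Everything else then follows formally from \S\ref{sec:keylemma}, \S\ref{sec:link} and Proposition~\ref{prop:lkregular}.
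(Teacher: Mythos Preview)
Your proposal is correct and follows essentially the same route as the paper: identify $Lk(\overline{U^-_{w,>0}})$ with $Lk_1(\overline{\CB_{1,w,>0}})$ via the open immersion $U^-\hookrightarrow\CB$, observe that the hypothesis of Proposition~\ref{prop:lkregular} was verified for $Y_{1,w}=\CB_{1,w,>0}$ in Lemma~\ref{lem-in-u} and Proposition~\ref{prop:CB}, and apply Proposition~\ref{prop:lkregular} with $J=\emptyset$, $v=v'=1$. Your extra paragraph on $L$ versus $L^{1}$ is harmless but unnecessary: since in both places the norm is the standard Euclidean norm with respect to the canonical basis of $V^\lambda(\BR)$, the unit sphere does not depend on which finite-dimensional ambient subspace one chooses, so the links coincide automatically.
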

\begin{remark}
This proves \cite[Conjecture~10.2 (1)]{GKL}, and generalizes the main result in \cite{Her}.
\end{remark}

\begin{proof}
We have a stratified isomorphism $Lk(\overline{ U^-_{w', >0}}) \cong Lk_{1}(\overline{ \CB_{1,w', >0}})$ as defined in \S\ref{sec:link}. We have verified the assumptions in Theorem~\ref{thm:regularCW} for $Y_{1, w}=\CB_{1, w, >0}$ in Lemma~\ref{lem-in-u} and Proposition~\ref{prop:CB}. Hence the theorem follows by Proposition~\ref{prop:lkregular}.
\end{proof}

\subsubsection{} Let $K \subset I$. We consider the map  
$$
pr_K: U^- \to U^-/U^-_K \cong U_{P^-_K}, \quad g \mapsto g \pi_K(g) \i.
$$ 
Then $pr_K(U^-_{w, >0})=pr_K(U^-_{w^K, >0})$, for any $w \in W$. 
We define 
$$
Lk(\overline{ pr_K(U^-_{w, > 0})})=\overline{pr_K(U^-_{w, >0})} \bigcap \{|| x|| =1 \vert x \in L\}.
$$
We have $Lk(\overline{ pr_K(U^-_{w, > 0})}) \cong Lk_1(\overline{\CP_{K, (1, w^K), > 0}})$, where $Lk_1(\overline{\CP_{K, (1, w^K), > 0}})$ can be defined entirely similar to \S\ref{sec:link} using a singular dominant weight $\lambda$. Therefore, via the compatibility in Proposition~\ref{prop:compatible}, we actually have a stratified isomorphism $Lk(\overline{ pr_K(U^-_{w, > 0})})  \cong Lk_1(\overline{\CP_{K, (1, w^K), > 0}})$. Note that $\overline{pr_K(U^-_{w, >0})} \supsetneqq \bigsqcup_{w' \in W^K, w' \le w} pr_K(U^-_{w', >0})$. The remaining stratum of $\overline{pr_K(U^-_{w, >0})}$ corresponds to $\CP_{K, (v, w'), >0}$ for $v \in W_J$, $w' \in W^K$ with $w' v \le w^K$.

Thanks to Proposition~\ref{prop:lkregular} and Theorem~\ref{thm:J}, we conclude that  

(a) $Lk(\overline{ pr_K(U^-_{w, > 0})})$ (with the stratification arising arising from $Q_K$) is a regular CW complex homeomorphic to a closed ball of dimension $\ell(w)-1$.

\subsubsection{}
Let $J \subset I$ and $\lambda \in X^{++}$. Let $v \in W_J$ and $w \in {}^J W$. By \cite[Theorem~5]{BD}, the image of  $U^-(\BR) \bigcap \JB^- \dot{v} B^+ \bigcap \JB^+ \dot{w} B^+$ under the embedding  $U^- \xrightarrow{u \mapsto u \cdot \eta_{\lambda}} {{V}}^\lambda$ lies in a finite-dimensional subspace $L \subset  {{V}}^\lambda (\BR)$. 

We define 
\[
Lk(\overline{ {}^J U^-_{v, w, >0}})=\overline{{}^J U^-_{v, w, >0}} \bigcap \{|| x|| =1 \vert x \in L\}
\]

We have $Lk(\overline{ {}^J U^-_{v, w, >0}}) \cong Lk_1(\overline{\JCB_{v, w, >0}})$. Note that $\overline{{}^J U^-_{v, w, >0}} \supsetneqq \bigsqcup_{v' \le v, w' \le w} {}^J U^-_{v', w', >0}$. The remaining pieces of $\overline{{}^J U^-_{v', w', >0}}$ corresponds to certain $J$-Richardson varieties arising from the twisted Bruhat order $\leJ$.  

Thanks to Proposition~\ref{prop:lkregular} and Theorem~\ref{thm:J}, we conclude that 

(a) $Lk(\overline{ {}^J U^-_{v, w, >0}})$ (with the stratification arising from $\leJ$) is a regular CW complex homeomorphic to a closed ball of dimension $\ell(vw)-1$.


\section{$J$-totally positivity on $\CB$}\label{sec:6}

\subsection{The main result} The main purpose of this section is to give an explicit description of $\JCB_{u, w, >0}$ in the special case where $w \in {}^J W$. 

Let $ w \in {}^JW$. Note that $u \leJ w$ if and only if ${}^J u \le w$. We fix a reduced expression $\bf w$. Let $\bf {}^J u_+$ be the unique positive subexpression for ${}^J u$ in $\bf w$. Set 
$$
{}^J G_{u, \bf w, >0}=\{h_1 \pi_J(h_2 {}^J \dot u \i) \i h_2 \vert h_1 \in U^-_{u_J, >0}, h_2 \in G_{\bf {}^J u_+, \bf w, <0}\} \cong U^-_{u_J, >0} \times G_{\bf {}^J u_+, \bf w, <0}.
$$

Note that ${}^J G_{u, \bf w, >0} \cdot B^+/B^+$ is connected and ${}^J G_{u, \bf w, >0} \cdot B^+/B^+ \subset \OJCB_{u, w}$.  Now we state the main result of this section. 

\begin{prop}\label{prop:typeII}
Let $w \in {}^J W$ and $u \in W$ with $u \leJ w$. Then 

(1) ${}^J G_{u, \bf w, >0} \cdot B^+/B^+ $ is a connected component of $\OJCB_{u, w}(\BR)$. 

(2) For any $ w' \in W$ with  $u   \leJ w' \leJ w$, we have 
	$\JCB_{u, w', >0} = \overline{\JCB_{u, w, >0}} \bigcap \OJCB_{u, w'}$.
	
(3) Let $\bf w$ be a reduced expression of $w$. Then the following map is an isomorphism: $${}^J G_{u, \bf w, >0}  \to \JCB_{u, w, >0}, \qquad g \mapsto g \cdot B^+/B^+.$$
\end{prop}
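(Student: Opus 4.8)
The plan is to prove the three parts in a coordinated way, with part (3) the ultimate goal, but relying on (1) and (2) as stepping stones. First I would unwind the definitions: since $w\in{}^JW$, we have $u\leJ w$ iff ${}^Ju\le w$, and the factorization $u=u_J\,{}^Ju$ splits the data into a ``$W_J$-part'' living in $L_J$ and a ``${}^JW$-part'' captured by a positive-subexpression-like gadget, but now using the \emph{negative} part $G_{\bf {}^J u_+,\bf w,<0}$ because the $J$-total positivity mixes $U^-_{J,\ge 0}$ with $U^-_{\le 0}$. The first real step is to show that the map $g\mapsto g\cdot B^+$ sends ${}^JG_{u,\bf w,>0}$ into $\OJCB_{u,w}(\BR)$ and is injective; injectivity should follow from the semidirect product decomposition $P^-_J=L_J\ltimes U_{P^-_J}$ together with the Marsh--Rietsch-type uniqueness of positive subexpressions (\cite{MR}, as used via \cite{BH20, BH21} earlier), since the element $h_1\pi_J(h_2\,{}^J\dot u^{-1})^{-1}h_2$ has a controlled image under $\pi_J$ and under the quotient $U^-\to U^-/U^-_J$.

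For part (1), I would argue that ${}^JG_{u,\bf w,>0}\cdot B^+$ is a connected (being a continuous image of $\Rp^{\ell(u_J)+\ell(w)-\ell({}^Ju)}$, hence of a cell) locally closed subset of $\OJCB_{u,w}(\BR)$ of the correct dimension $\Jell(w)-\Jell(u)=\ell(w)-\ell({}^Ju)+\ell(u_J)$, and that it is open and closed in $\OJCB_{u,w}(\BR)$. Openness should come from the parametrization being a homeomorphism onto its image together with a dimension count; closedness is the more delicate point and I expect it to be \emph{the main obstacle}. The natural tool is the contracting $\BR_{>0}$-action: choosing a dominant regular coweight $\mu$ and twisting by ${}^Ju$ (or by an appropriate element adapted to the $J$-structure, as in the flow arguments of \S\ref{sec:link} and Lemma~\ref{lem:key2}), one gets a flow whose limit points force a point in the closure to again lie in the parametrized family; one must check the mixed sign conventions (the $y_i(\BR_{<0})$ and $\dot s_i^{-1}$ appearing in $G_{\bf {}^J u_+,\bf w,<0}$) are compatible with this flow. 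Alternatively, one can transport the known statement for the ordinary total positivity: when $W_J$ is finite the $J$-positivity is obtained from the ordinary one by left multiplication by $\dot w_J$, and in general one can try to reduce to the Levi $L_J$ and the quotient by $U_{P^+_J}$; but the honest Kac--Moody case probably needs the flow argument directly.

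Granting (1), part (2) is the statement that within a fixed $J$-Schubert cell ${}^J\mathring{\CB}_w$ the closure of the totally positive piece, intersected with a smaller opposite $J$-Schubert cell of the \emph{same} $u$, is exactly the totally positive piece for $(u,w')$. Here I would invoke Lemma~\ref{lem:key} and Lemma~\ref{lem:key2} with the isomorphism $\Jc_u$ of \eqref{eq:atlasJ}: since ${}^JG_{u,\bf w,>0}\cdot B^+\subset \dot u U^- B^+/B^+$ (this containment should follow from the explicit form, as $h_1$ and $\pi_J(\cdots)^{-1}$ lie in $L_J$ and $h_2\in\dot u U^-\cdots$, cf.\ Lemma~\ref{lem-in-u} and its $J$-analogue), the hypotheses of those lemmas are met, and the stratified isomorphism pins down $\overline{\JCB_{u,w,>0}}\cap\OJCB_{u,w'}$ as a connected component of $\OJCB_{u,w'}(\BR)$, which by part (1) applied to $w'$ must be ${}^JG_{u,\bf w',>0}\cdot B^+={}\JCB_{u,w',>0}$. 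Finally, part (3) will follow by identifying $\JCB_{u,w,>0}=\OJCB_{u,w}\cap\JCB_{\ge 0}$ with the connected component ${}^JG_{u,\bf w,>0}\cdot B^+$: one inclusion is that ${}^JG_{u,\bf w,>0}\subset{}^JU^-_{\succeq 0}\cdot(\text{something in }B^+)$ hence lands in $\JCB_{\ge 0}$ by definition of the latter as a closure (using that $U^-_{u_J,>0}\subset U^-_{J,\ge 0}$ and the $<0$ factor sits inside the closure of $U^-_{\le 0}$-type strata), and the reverse inclusion uses that $\JCB_{\ge 0}\cap\OJCB_{u,w}$ is contained in one connected component because $\JCB_{\ge 0}$ is a closure of the connected set ${}^JU^-_{\succeq 0}\cdot B^+$ and the decomposition into cells forces it to meet $\OJCB_{u,w}(\BR)$ in a single component, which must then be the one we have parametrized. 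I would close by remarking that the homeomorphism $\Rp^{\Jell(w)-\Jell(u)}\cong{}^JG_{u,\bf w,>0}$ is manifest from the product description, so (3) indeed exhibits $\JCB_{u,w,>0}$ as a cell.
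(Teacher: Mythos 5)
Your skeleton — prove (1), then (2), then (3), using the $\Jc_u$-isomorphism and Lemmas~\ref{lem:key}/\ref{lem:key2}, splitting off the $W_J$-factor from the ${}^JW$-factor — matches the broad organization of the paper, but there are three genuine gaps. For part (1), openness in $\OJCB_{u,w}(\BR)$ is not a consequence of a dimension count, and the ``contracting flow'' idea does not by itself give closedness of the parametrized image; the paper instead characterizes ${}^J G_{u, \bf w, >0}\cdot B^+/B^+$ explicitly as $\{z\in\OJCB_{u,w}\mid \phi_{w,J}(z)\in\CB_{{}^Ju,w,<0},\ \pi(z)\in\CB_{1,u_J,>0}\}$, from which openness is direct, and proves closedness by a sequential argument resting on Lemma~\ref{lem:limit} (if $h_{1,n}h_{2,n}\in U^-_{\ge0}$ converges then $h_{1,n}$, $h_{2,n}$ admit convergent subsequences), which is a different and stronger tool than a flow.

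For part (2), you write that the stratified isomorphism ``pins down $\overline{\JCB_{u,w,>0}}\cap\OJCB_{u,w'}$ as a connected component \dots\ which by part (1) applied to $w'$ must be ${}^JG_{u,\bf w',>0}\cdot B^+=\JCB_{u,w',>0}$.'' This quietly invokes the identification ${}^J G_{u,\bf w,>0}\cdot B^+/B^+=\JCB_{u,w,>0}$, which is exactly part (3), so the argument is circular. The real difficulty you skip is that $\JCB_{u,w,>0}$ is defined as $\OJCB_{u,w}\cap\JCB_{\ge0}$ with $\JCB_{\ge0}$ a union of Hausdorff closures $\overline{{}^J U^-_{v',w',>0}\,B^+/B^+}$; the paper's Lemma~\ref{lem:typeII} runs a careful exhaustion argument (using Lemma~\ref{lem:typeIinU} together with Lemmas~\ref{lem:key} and~\ref{lem:key2}) to show this union meets $\OJCB_{u,w}$ in a single connected component, and part (2) is then Corollary~\ref{cor:II}, independent of part (1). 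For part (3), the forward inclusion as you state it glosses over the fact that $G_{\bf{}^Ju_+,\bf w,<0}$ is not contained in $U^-$ once ${}^Ju\neq1$, so ${}^J G_{u,\bf w,>0}\not\subset{}^J U^-_{\succeq0}$; and the reverse inclusion ``$\JCB_{\ge0}\cap\OJCB_{u,w}$ lies in one component because $\JCB_{\ge0}$ is the closure of a connected set'' is not a valid topological inference (the closure of a connected set can meet a stratum in many components). The paper handles (3) by first treating $u\in{}^JW$ via Lemma~\ref{lem:Jrw}, which transports everything through $\phi_{w,J}$ to the ordinary $\CB_{u,w,<0}$, then pulls in the $u_J$-factor using the monoid action of $U^-_{J,\ge0}$ on $\JCB_{\ge0}$, and finally concludes equality because both sides are already known to be connected components of $\OJCB_{u,w}(\BR)$ by part (1) and Corollary~\ref{cor:II}.
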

This proposition proves a special case of Proposition~\ref{prop:J}. We outline our strategy of the proof.  Part (1) follows from \S \ref{prop:CC}. Part (2) follows from Corollary~\ref{cor:II}. We remark that \S \ref{sec:keylemma} plays a crucial role in the proof. Finally, Part (3) is proved in \S\ref{sec:pf3}.
 

\subsection{Connected components}\label{sec:conn}
Recall that $\JB^+=U^-_J \ltimes T U_{P^+_J}$ and $B^-=U^-_J \ltimes T U_{P^-_J}$. Let $p_{+, J}: \JB^+ \to U^-_J$  be the projection map. 
For any $w \in {}^J W$, we define 
$$
\phi_{w, J}: \JB^+ \dot w \cdot B^+/B^+ \to B^+ \dot w \cdot B^+/B^+, \qquad b \dot w \cdot B^+/B^+ \mapsto p_{+, J}(b) \i b \dot w \cdot B^+/B^+.
$$ 
We have the isomorphism $\JB^+ \dot w \cdot B^+/B^+ \xrightarrow{\sim} U^-_J \times  (B^+ \dot w \cdot B^+/B^+)$. For any $u \in {}^J W$, we define 
$$
\psi^{u, J}: B^- \dot u \cdot B^+/B^+ \to \JB^- \dot u \cdot B^+/B^+, \qquad b \dot u \cdot B^+/B^+ \mapsto \pi_J(b) \i b \dot u \cdot B^+/B^+.
$$

It is easy to see that the maps $\phi_{w, J}$ and $\psi^{u, J}$ are well-defined. In the special case where $u, w \in {}^J W$ with $u \leJ w$, $\phi_{w, J}: \OJCB_{u, w}  \to \OCB_{u, w}$ is inverse to 
$\psi^{u, J}: \OCB_{u, w} \to \OJCB_{u, w}$ and hence we have an isomorphism $\OJCB_{u, w} \cong \OCB_{u, w}$. 

\begin{lem}\label{lem:limit}
Let $h_{1,n}, h_{2,n}  \in U^-_{\ge 0}$ be two sequences such that $\lim_{n \to \infty} h_{1,n} h_{2,n}$ exits. Then we have convergent subsequences  $\{h_{1,n_i}\} $ and $\{h_{2,n_j}\}$. 
\end{lem}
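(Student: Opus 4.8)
The plan is to transport the statement into the highest--weight modules and use the positivity of Lusztig's canonical basis to show that the coordinates of $h_{1,n}$ and of $h_{2,n}$ are dominated by those of $h_{1,n}h_{2,n}$, and then to extract a limit by a finite--dimensionality argument. Fix $\lambda\in X^{++}$, let $\{v_\mu\}$ be the canonical basis of $V^\lambda(\BR)$ normalised so that $v_\lambda=\eta_\lambda$, and for $h\in U^-$ write $h\cdot\eta_\lambda=\sum_\mu c^\lambda_\mu(h)\,v_\mu$; as $h$ is unipotent, $c^\lambda_\lambda(h)=1$. By the positivity of the canonical basis and its Kac--Moody form (cf. \cite{Lu-2,BH20}), for $h\in U^-_{\ge 0}$ all $c^\lambda_\mu(h)\ge 0$ and the matrix $(M_{\mu\nu})$ of the action of $h$ on $V^\lambda(\BR)$ in the basis $\{v_\mu\}$ has nonnegative entries; moreover $M_{\mu\mu}=1$ since $h$ lowers weights. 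Writing $h_1h_2\cdot\eta_\lambda=\sum_\nu c^\lambda_\nu(h_2)\,h_1 v_\nu$ and isolating the terms $\nu=\lambda$ (resp. $\nu=\mu$), using $c^\lambda_\lambda(h_2)=1$ and $M_{\mu\mu}=1$, one obtains for all $h_1,h_2\in U^-_{\ge 0}$ and all $\mu,\lambda$
\[
0\le c^\lambda_\mu(h_1)\le c^\lambda_\mu(h_1h_2),\qquad 0\le c^\lambda_\mu(h_2)\le c^\lambda_\mu(h_1h_2).
\]

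First I would apply these inequalities at the extremal weights $\mu=v\lambda$ ($\lambda$ regular): the support $\{v\in W:c^\lambda_{v\lambda}(h_1)>0\}$, which for $h_1\in U^-_{w_1,>0}$ equals $\{v\le w_1\}$ by the cell description of $U^-_{\ge 0}$ (equivalently the $\ast$--monoid structure, \cite{Lu-2}; see also \cite{MR}), is contained in that of $h_1h_2$, so that $\mathrm{cell}(h_1)\le\mathrm{cell}(h_1h_2)$, and likewise $\mathrm{cell}(h_2)\le\mathrm{cell}(h_1h_2)$. Now $U^-$ is an increasing union of finite--dimensional closed subvarieties (e.g.\ $U^-\cap\CB_w$, $w\in W$) and its Hausdorff topology is the corresponding colimit topology, so every convergent sequence in $U^-(\BR)$ lies, from some index on, inside one of these subvarieties. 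Applying this to $h_{1,n}h_{2,n}\to g$, we get $\ell(\mathrm{cell}(h_{1,n}h_{2,n}))\le m$ for a fixed $m$ and all large $n$; hence, by the previous paragraph, $h_{1,n}$ and $h_{2,n}$ eventually lie in a fixed finite--dimensional closed subvariety $Z\subset U^-$ (one containing every cell closure $\overline{U^-_{u,>0}}$ with $\ell(u)\le m$). Choose a finite set $F\subset X^{++}$ of dominant weights whose matrix coefficients $c^\lambda_\mu$ generate the coordinate ring of $Z$; then $h\mapsto(h\cdot\eta_\lambda)_{\lambda\in F}$ embeds $Z(\BR)$ as a closed subvariety of the finite--dimensional real vector space $\prod_{\lambda\in F}V^\lambda_Z(\BR)$, where $V^\lambda_Z\subset V^\lambda$ is a fixed finite--dimensional subspace containing $h\cdot\eta_\lambda$ for all $h\in Z$. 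By the left inequalities above, $\|h_{1,n}\cdot\eta_\lambda\|\le\|h_{1,n}h_{2,n}\cdot\eta_\lambda\|\to\|g\cdot\eta_\lambda\|$ for each $\lambda\in F$, so $\{h_{1,n}\}$ is bounded in this finite--dimensional space, hence precompact in $Z(\BR)\subset U^-(\BR)$; passing to a subsequence, $h_{1,n}\to h^\ast_1\in U^-(\BR)$. Along the same subsequence $h_{2,n}=h_{1,n}^{-1}\,(h_{1,n}h_{2,n})\to(h^\ast_1)^{-1}g$ in $U^-(\BR)$ by continuity of multiplication and inversion on the fixed finite--dimensional stage, which finishes the proof.

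The step I expect to be the main obstacle is the passage from ``bounded canonical--basis coordinates'' to ``convergent subsequence in $U^-(\BR)$'': since $U^-$ is an infinite--dimensional ind--variety and (in infinite type) $U^-_{\ge 0}$ is not closed in $U^-$, one cannot appeal to compactness of an ambient space, and indeed there are sequences in $U^-_{\ge 0}$ whose images $h_n\cdot\eta_\lambda$ converge in every $V^\lambda$ while $h_n$ itself has no limit in $U^-(\BR)$. What rescues the argument is precisely the combination of the two monotonicity inequalities of the first paragraph --- which confine $h_{1,n}$ and $h_{2,n}$ to the Hausdorff closure of the single cell of $h_{1,n}h_{2,n}$ --- with the fact that in the colimit (Hausdorff) topology a convergent sequence cannot leave a fixed finite--dimensional stratum; after that everything is finite--dimensional real analysis. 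As a consistency check, when $g=1$ the inequalities force $c^\lambda_\mu(h_{1,n})\to 0$ for $\mu\neq\lambda$, so $h_{1,n}\to 1$ outright, matching the degenerate case.
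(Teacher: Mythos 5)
Your proof is correct, and it is a genuinely different argument from the paper's. The paper's proof is essentially a one-liner: after reducing (by peeling off a single simple factor $y_i(a_n)$ at a time) to the case $h_{1,n}=y_i(a_n)$, it uses the abelianization homomorphism $r_i\colon U^-\to\BR$, $y_i(a)\mapsto a$, $y_j(b)\mapsto 0$ for $j\neq i$: this is a \emph{group} homomorphism, it is nonnegative on $U^-_{\ge 0}$, so $a_n=r_i(h_{1,n})\le r_i(h_{1,n})+r_i(h_{2,n})=r_i(h_{1,n}h_{2,n})$ is bounded. No representation theory, let alone canonical-basis positivity, is invoked. You replace this elementary input by the full positivity of the canonical basis acting on $V^\lambda$, which yields the stronger monotonicity $c^\lambda_\mu(h_1),\,c^\lambda_\mu(h_2)\le c^\lambda_\mu(h_1h_2)$ for every matrix coefficient; that is a much heavier tool for the same conclusion. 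What the extra strength buys you is a self-contained treatment of the ind-variety issues that the paper's terse proof leaves implicit: applying the inequalities at the extremal weight vectors $v\lambda$ bounds $\mathrm{cell}(h_{1,n})$ and $\mathrm{cell}(h_{2,n})$ by $\mathrm{cell}(h_{1,n}h_{2,n})$, and it is exactly this cell-bounding that legitimizes confining both sequences to a fixed finite-dimensional stage before running a finite-dimensional compactness argument — i.e.\ what makes the paper's opening phrase ``it suffices to prove the statement for $h_{1,n}=y_i(a_n)$'' work. One small correction to your commentary: $U^-_{\ge 0}$ \emph{is} closed in $U^-(\BR)$ in the Hausdorff (colimit) topology even in infinite type — this is the content of the remark following the lemma, and follows because the intersection of $U^-_{\ge 0}$ with any finite-dimensional piece is a finite union of cell closures $\overline{U^-_{w,>0}}$; the obstruction you actually overcome is not non-closedness but non-compactness, namely that bounded matrix coefficients alone do not give convergence in an ind-variety unless one first pins the sequence to a finite-dimensional stage, which your cell-bounding step correctly supplies.
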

\begin{remark}
Note that all limits are in $ U^-_{\ge 0}$, provided they exist. 
\end{remark}
\begin{proof}
It suffices to prove the statement for $h_{1,n} = y_{i}(a_n)$ for some $i \in I$ and $a_{n} \in \BR_{\ge 0}$. It suffices to prove $\{a_n\}$ is bounded above. Consider the group morphism $r_i: U^-  \rightarrow \BR$ mapping $y_{i}(a)$ to $a$, and $y_j(b)$ to $0$ for $j \neq i$. Then it is clear if $\{a_n\}$ is unbounded, then $r_{i}(h_{1,n} h_{2,n}) \ge a_n$ would diverge.
\end{proof}

\subsubsection{Proof of Proposition \ref{prop:typeII} (1)}\label{prop:CC}

Let $$\pi: U^+_J \dot{u}_J U^-_{P^-_J} {}^J \dot{u} \cdot B^+ / B^+ =\OJCB^u  \rightarrow \OCB_{u_J} = U^+  \dot{u}_J  \cdot B^+ / B^+$$ be the projection map. We show that

(a) ${}^J G_{u, \bf w, >0} \cdot B^+/B^+= \{ z \in \OJCB_{u, w} \vert \phi_{w,J}(z) \in \CB_{{}^Ju, w, <0}, \pi(z) \in \CB_{1,u_J, >0}\}$.

For any $z \in {}^J G_{u, \bf w, >0} \cdot B^+/B^+$, $\phi_{w,J}(z) \in \CB_{{}^Ju, w, <0}$ and $\pi(z) \in \CB_{1,u_J, >0}$. Now let $z \in \OJCB_{u, w}$ with $\phi_{w,J}(z) \in \CB_{{}^Ju, w, <0}$ and $\pi(z) \in \CB_{1,u_J, >0}$. Since $\phi_{w,J}(z) \in \CB_{{}^Ju, w, <0}$, we have $z = hg B^+/B^+$ for some $h \in U^-_J$ and $g  \in G_{{}^Ju, w, <0}$. Then $\pi(z) = h \pi_J(g{}^J\dot{u}^{-1}) B^+/B^+ \in \CB_{1,u_J, >0}$. Since $h \pi_J(g{}^J\dot{u}^{-1}) \in U^-_J$, we have $h \pi_J(g{}^J\dot{u}^{-1})   \in U^-_{u_J, >0}$. This shows that $z \in {}^J G_{u, \bf w, >0}$. (a) is proved. 

We then show that 

(b) ${}^J G_{u, \bf w, >0} \cdot B^+/B^+$ is open in $\OJCB_{u, w}(\BR)$. 

The image of $\OJCB_{u, w}$ under the isomorphism $ \JB^+ \dot w \cdot B^+/B^+ \xrightarrow{\sim}  U^-_J \times  (B^+ \dot w \cdot B^+/B^+)$ is in $U^-_J  \times  (B^+ \dot w \cdot B^+ / B^+ \bigcap U^-_J \cdot \OJCB^u)$, hence in $U^-_J  \times  (B^+ \dot w \cdot B^+ /B^+ \bigcap P^-_J \,\, {}^J\dot{u}\cdot \OCB )$. 
Note that $U^-_J  \times   \OCB_{{}^Ju, w} $ is open in $U^-_J  \times  (B^+ \dot w \cdot B^+ /B^+ \bigcap P^-_J \,\, {}^J\dot{u}\cdot \OCB )$ and  $U^-_J(\BR) \times  \CB_{{}^Ju, w, <0}$ is open in  $U^-_J(\BR)  \times   \OCB_{{}^Ju, w}(\BR) $. Similarly we see that $\CB_{1,u_J, >0}$ is open in $U^+(\BR)  \dot{u}_J  \cdot B^+ / B^+$. (b) is proved. 

Finally we show that 

(c) ${}^J G_{u, \bf w, >0} \cdot B^+/B^+$ is closed in $\OJCB_{u, w}(\BR)$.

Let $z \in \overline{{}^J G_{u, \bf w, >0} \cdot B^+/B^+} \bigcap \OJCB_{u,w}$. We assume $z = \lim_{n \to \infty} h_{1,n} h_{2,n} h_{3,n} B^+/B^+$,
where $h_{1,n} \in U^-_{u_J, >0}, h_{3,n} \in  G_{{}^Ju, w, <0}$ and $h_{2,n}= \pi_J(h_{3,n}{}^J\dot{u}^{-1})^{-1} \in U^-_{J, \ge 0}$.
 
Thanks to the isomorphism $\JB^+ \dot w \cdot B^+/B^+ \xrightarrow{\sim}  U^-_J \times  (B^+ \dot w \cdot B^+/B^+)$,  $\lim_{n \to \infty} h_{1,n} h_{2,n}$  and $\lim_{n \to \infty}   h_{3,n} B^+/B^+ $ exist. Moreover, $\lim_{n \to \infty} h_{3,n} B^+/B^+ \in \CB_{\ge 0} \bigcap \OCB_w \bigcap P^-_J \, {}^J\dot{u} B^+/B^+$. Thus $\lim_{n \to \infty} h_{3,n} B^+/B^+= h B^+/B^+$ for some $h \in G_{v\, {}^Ju, {\bf w}, <0}$ with $v \in W_J$. Thanks to Lemma~\ref{lem:limit}, we can find convergent subsequences of  $  h_{1,n}  $ and  $  h_{2,n}$. Without loss of generality, we can assume both $ \lim_{n \to \infty} h_{1,n}  $ and $ \lim_{n \to \infty} h_{1,n}$ exist and hence converge in $U^-_{J, \ge 0}$.  

Hence $ \lim_{n \to \infty} h_{2,n} h_{3,n} B^+/B^+$ exists and is in $ \overline{ U^-_{P^-_J}{}^J\dot{u} B^+/B^+} \bigcap P^-_J \dot{u} B^+/B^+$. Since ${}^J u \in {}^J W$,  $U^-_{P^-_J} {}^J\dot{u} B^+/B^+$ is closed in $P^-_J \dot{u} B^+/B^+$.  So $\lim_{n \to \infty}   h_{2,n} h_{3,n} B^+/B^+ \in U^-_{P^-_J} {}^J\dot{u} B^+/B^+$. There shows that $v = 1$ and $\lim_{n \to \infty} h_{2,n} = \pi_J(h{}^J\dot{u}^{-1})^{-1}$. Finally note that 
\begin{align*} \pi(z) &= \lim_{n \to \infty}\pi(h_{1,n} h_{2,n} h_{3,n} B^+/B^+) \\ &= \lim_{n \to \infty} h_{1,n} B^+ /B^+ \in U^-_{J,\ge 0} B^+/B^+ \bigcap U^+  \dot{u}_J  \cdot B^+ / B^+.
\end{align*} We have $ \lim_{n \to \infty} h_{1,n} \in U^-_{u_J, >0}$. We conclude that $z \in {}^J G_{u, \bf w, >0} \cdot B^+/B^+$.

\subsection{Inside open subspaces} \label{sec:GKL}

We have the following simple equalities on the product of $x_i$ with $y_i$ for $i \in I$. Let $a, b, c>0$ and $i \neq j$ in $I$. Then
\begin{align}
\notag x_i(a) y_j(\pm b) &= y_j(\pm b) x_i(a),\\
x_i(a) y_i(b+c) &= y_i(\frac{b+c}{a(b+c)+1}) \alpha_i^\vee (a(b+c)+1) x_i(\frac{a}{a(b+c)+1}), \label{eq:ad}\\
\notag x_i(\frac{a}{a(b+c)+1}) y_i(-c) &= y_i(\frac{-c(a(b+c)+1)}{ab+1}) \alpha_i^\vee (\frac{a(b)+1}{a(b+c)+1}) x_i(\frac{a}{ab+1}). 
\end{align}

By direct calculation using the equalities \eqref{eq:ad}, we have for any $i \in I$, $a>0$ and $g_1 \in U^-_{w, <0}$, $x_i(a) \pi_J(g) \i g \in \pi_J(g_2) \i g_2 B^+$ for some $g_2 \in U^-_{w, <0}$. Thus

(a) Let $w \in W$. Then for any $g \in U^-_{w, <0}$ and $b \in B^+_{\ge 0}$,  $b \pi_J(g) \i g \in  \pi_J(g') \i g'  t U^+$ for some $g' \in U^-_{w, <0}$ and $t \in T_{>0}$. 

We also have some results on the product of totally nonnegative part of $U^\pm$ with certain Weyl group elements. 
Let $w, w_1, w_2 \in W$ be such that $w_1 w_2 = w$ and $\ell(w_1) + \ell(w_2) =\ell(w)$ and $h \in U^-_{w, >0}$, $b\in U^+_{w^{-1},>0}$. By \cite[Lemma~5.6]{GKL} and its proof, we have 
\begin{align}
\notag \dot{w}^{-1}  h &\in (U^- \bigcap \dot{w}^{-1} U^+ \dot{w})  U^{+}_{w^{-1}, >0}  T_{>0};\\
 \dot{w_1}^{-1}  h &\in (U^- \bigcap \dot{w_1}^{-1} U^+ \dot{w_1})  U^{-}_{w_2, >0}   U^{+}_{w_1^{-1}, >0}  T_{>0}; \label{eq:GKL}\\
 \notag \dot{w}  b &\in  (U^+ \bigcap \dot{w} U^- \dot{w}^{-1} )  U^{-}_{w, >0}   T_{>0};\\
  \notag \dot{w_2}  b &\in (U^+ \bigcap \dot{w_2} U^- \dot{w_2})  U^{+}_{w_1^{-1}, >0}   U^{-}_{w_2, >0}   T_{>0}. 
\end{align} 

More generally, we have

(b) if $w_1 \le w$, then $ \dot{w_1}^{-1}  h  \in  U^-  U^{+}_{w_1^{-1}, >0}  T_{>0}$.

\begin{lem}\label{lem:typeIinU}
Let $v \in W_J$, $w \in {}^JW$ and $u \in W$ with $v\leJ u \leJ w$. Then we have 
\[
{}^J U^-_{v, w, >0} \subset \dot{u} U^- B^+.
\]
\end{lem}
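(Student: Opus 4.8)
The plan is to verify the containment by an explicit computation in $G$, in the spirit of the proof of Lemma~\ref{lem-in-u}, using the factorization $\dot u=\dot u_J\cdot{}^J\!\dot u$ (valid since $\ell(u)=\ell(u_J)+\ell({}^J u)$) to decouple a ``Levi part'', handled inside $L_J$, from a ``parabolic part''. Write a general point of ${}^J U^-_{v,w,>0}$ as $g=h_1\,\pi_J(h_2)\i\,h_2$ with $h_1\in U^-_{v,>0}$ and $h_2\in U^-_{w,<0}$; since $v\in W_J$ we have $h_1,\pi_J(h_2)\i\in U^-_{J,\ge 0}$, so $\pi_J(h_2)\i h_2\in U_{P^-_J}$ and $g\in U^-$. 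The assertion $g\in\dot u U^-B^+$ is equivalent to $({}^J\!\dot u)\i\,\dot u_J\i\,g\in U^-B^+$.

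For the parabolic part: from $u\leJ w$ and $w\in{}^JW$ one gets ${}^Ju\le w$, so property (b) of \S\ref{sec:GKL} (in its sign-reversed form, obtained by applying the automorphism $\iota$) applied to $h_2\in U^-_{w,<0}$ with $w_1={}^Ju$ gives $({}^J\!\dot u)\i h_2\in U^-\,U^+\,T_{>0}\subset U^-B^+$. For the Levi part: the element $h_1\pi_J(h_2)\i$ lies in a single cell $U^-_{z,>0}$ with $z\in W_J$, and, using the combinatorial description of $\leJ$ (there is $u'\in W_J$ with $u_J\le v(u')\i$ and $u'\le{}^Ju$) together with the multiplicative behaviour of the cells $U^-_{?,>0}$ of $U^-_J$, one checks $u_J\le z$; then Lemma~\ref{lem-in-u} applied inside $L_J$ (equivalently, property (b) of \S\ref{sec:GKL} for $L_J$) gives $\dot u_J\i\bigl(h_1\pi_J(h_2)\i\bigr)\in U^-_J\cdot B^+_{\ge 0}$.

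It then remains to combine the two reductions. Since ${}^Ju\in{}^JW$ we have $({}^Ju)\i(\alpha_j)>0$ for all $j\in J$, so conjugation by $({}^J\!\dot u)\i$ carries $U^-_J$ into $U^-$ and carries $B^+_J$ (in particular $B^+_{\ge 0}\cap L_J$) into $B^+$. Hence, after commuting $({}^J\!\dot u)\i$ past the Levi factors produced above, the task reduces to rewriting a product of the shape $(\text{element of }B^+_{\ge 0})\cdot\pi_J(h_2)\i h_2$ back into the form $\pi_J(h_2')\i h_2'\cdot(\text{element of }B^+)$ with $h_2'\in U^-_{w,<0}$; this is exactly property (a) of \S\ref{sec:GKL} (proved there from the identities \eqref{eq:ad}). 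Feeding the new $h_2'$ into the parabolic step then places the whole expression in $U^-B^+$, as required.

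The main obstacle I anticipate is precisely this last reorganization: a naive composition of the Levi and parabolic reductions only yields membership in $U^-B^+U^-B^+$, which is strictly larger than $U^-B^+$, so the two halves must be interleaved so that the surviving totally nonnegative Borel factor is absorbed through $\pi_J(h_2)\i h_2$ via \S\ref{sec:GKL}(a) \emph{before} it ever meets a genuine $U^-$-factor; getting the order of operations right is the delicate part. A secondary, purely combinatorial, point is the inequality $u_J\le z$ for the $W_J$-cell $z$ of $h_1\pi_J(h_2)\i$, which is where the hypotheses $v\leJ u$ and $w\in{}^JW$ are used essentially; consistently with \S\ref{sec:Jcr}(a), the statement indeed has no chance of holding unless $v\leJ u\leJ w$.
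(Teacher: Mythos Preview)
Your overall architecture is exactly that of the paper's proof, and every ingredient you name (\S\ref{sec:GKL}(a), \S\ref{sec:GKL}(b), the conjugation facts for ${}^J u\in{}^JW$) is the right one. But there is a bookkeeping inconsistency between your Levi step and your combination step, and it matters.

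You apply the Levi reduction to the \emph{product} $h_1\pi_J(h_2)^{-1}$, writing $\dot u_J^{-1}\bigl(h_1\pi_J(h_2)^{-1}\bigr)=p\cdot b$ with $p\in U^-_J$ and $b\in B^+_{J,\ge 0}$. After that, what sits to the right of $b$ is $h_2$ alone: the factor $\pi_J(h_2)^{-1}$ has already been absorbed. So the product you must now push through is $b\cdot h_2$, not $b\cdot\pi_J(h_2)^{-1}h_2$ as you write. Property~(a) of \S\ref{sec:GKL} does not apply to the bare $b\cdot h_2$: the pair of identities in~\eqref{eq:ad} is designed precisely so that the positive Levi factor $\pi_J(g)^{-1}$ protects the denominators. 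For a naked $x_i(a)y_i(-c)$ with $a,c>0$, the Gauss decomposition has torus part $\alpha_i^\vee(1-ac)$, which can vanish or become negative; so you cannot expect $b\cdot h_2\in U^-_{w,<0}\,B^+$ in general, and there is no evident constraint on the specific $b$ produced by your Levi step that would rescue this.

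The fix is a one-line change, and it is what the paper does: apply the Levi reduction to $h_1$ \emph{only}. Since $v\in W_J$, the hypothesis $v\leJ u$ already gives $u_J\le v$ (so your detour through $z=v\ast z'$ with $u_J\le z$ is unnecessary), and \S\ref{sec:GKL}(b) inside $L_J$ yields $\dot u_J^{-1}h_1=h'\,b_1 t_1$ with $h'\in U^-_J$, $b_1\in U^+_{J,\ge 0}$, $t_1\in T_{>0}$. Now the factor to the right of $b_1 t_1$ is exactly $\pi_J(h_2)^{-1}h_2$, so \S\ref{sec:GKL}(a) applies verbatim and gives $b_1 t_1\,\pi_J(h_2)^{-1}h_2\in\pi_J(h_2')^{-1}h_2'\,B^+$ with $h_2'\in U^-_{w,<0}$. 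Finally $({}^J\dot u)^{-1}$ conjugates the combined $U^-_J$-factor $h'\cdot\pi_J(h_2')^{-1}$ into $U^-$, and the parabolic step $({}^J\dot u)^{-1}h_2'\in U^-B^+$ (your sign-reversed \S\ref{sec:GKL}(b)) finishes. This is precisely the ordering you flagged as ``the delicate part''; the point is just that the interleaving only works if $\pi_J(h_2)^{-1}$ is left available for \S\ref{sec:GKL}(a).
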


\begin{proof}
Since $v\leJ u \leJ w$, we have $u_J \le v$ and ${}^J u \le w$. Let $h  \in U^-_{v, >0}$ and $g \in U^-_{w, <0}$.  We have $\dot{u}_J^{-1} h = h_1 b_1 t_1$ for some $h_1 \in U^-_J$, $b_1 \in U^+_{J, \ge 0}$ and $t_1 \in T_{>0}$. By \S\ref{sec:GKL} (a), $b_1 t_1 \pi_J(g) \i g \in \pi_J(g_1) \i g_1 B^+$ for some $g_1 \in U^-_{w, <0}$. 

Via a variation of \S\ref{sec:GKL} (b) (for $g_1 \in U^-_{\le 0}$), we have
${}^J\dot{u}^{-1} g_1 \in U^- B^+$. 
Thus 
\begin{align*}
	\dot{u}^{-1} h \pi_J(g) \i g &=   {}^J\dot{u}^{-1} h_1  b_1 t_1 \pi_J(g) \i g  \in {}^J\dot{u}^{-1}  \pi_J(g_1) \i g_1  B^+ \subset U^-  B^+.
\end{align*}
This finishes the proof.
\end{proof}

\begin{lem}\label{lem:typeII}
Let $u \in W$ and $w \in {}^JW$ with $u \leJ w$. Let $v \in W_J$ with $u_J \le v $. Then 
\[
	 \Jc_{u,-} ({}^J U^-_{v, w, >0} \cdot B^+/B^+) = \JCB_{u,w, >0}.
\]

In particular, ${}^J U^-_{v, w, >0} \cdot B^+/B^+ = \JCB_{v,w, >0}$.
\end{lem}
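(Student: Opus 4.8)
The plan is to derive both assertions from the description of $S:={}^J U^-_{v,w,>0}\cdot B^+/B^+$ as a connected component of an open $J$-Richardson variety, combined with the local-structure results of \S\ref{sec:keylemma}. First I would record that the hypotheses put us in the situation $v\leJ u\leJ w$: since $u_J\le v$ and $v\in W_J$, taking the element $1\in W_J$ in the definition of $\leJ$ gives $v\leJ u$, and $u\leJ w$ is assumed. Hence Lemma~\ref{lem:typeIinU} applies and yields $S\subseteq\dot u U^- B^+/B^+=\dot u B^- B^+/B^+$, so $\Jc_u=(\Jc_{u,+},\Jc_{u,-})$ is defined on $S$. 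Moreover, specializing Proposition~\ref{prop:typeII}(1) (proved in \S\ref{prop:CC}) to the case where the running index ``$u$'' there is our element $v\in W_J$ --- in which case, as one checks directly (or from the description in \S\ref{prop:CC}(a) via the map $\psi^{1,J}$), ${}^J G_{v,{\bf w},>0}\cdot B^+/B^+$ coincides with $S$ --- one obtains that $S$ is a connected component of $\OJCB_{v,w}(\BR)$; in particular $S\subseteq\OJCB_{v,w}$.

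Next I would apply Lemma~\ref{lem:key2}(3) with $Y=S$: since $S$ is a connected component of $\OJCB_{v,w}(\BR)$ contained in $\dot u B^- B^+/B^+$, it follows that $\overline S\cap\OJCB_{u,w}=\Jc_{u,-}(S)$ and that this is a connected component of $\OJCB_{u,w}(\BR)$. On the other hand $S\subseteq {}^J U^-_{\succeq 0}\cdot B^+/B^+\subseteq\JCB_{\ge 0}$ and $\JCB_{\ge 0}$ is closed, so $\overline S\subseteq\JCB_{\ge 0}$; combining, $\Jc_{u,-}(S)=\overline S\cap\OJCB_{u,w}\subseteq\JCB_{\ge 0}\cap\OJCB_{u,w}=\JCB_{u,w,>0}$. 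This proves the inclusion ``$\subseteq$'' and exhibits $\Jc_{u,-}(S)$ as one connected component of $\OJCB_{u,w}(\BR)$ sitting inside $\JCB_{u,w,>0}$.

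For the reverse inclusion I would show that $\JCB_{u,w,>0}$ consists of this single component. Given $x\in\JCB_{u,w,>0}=\OJCB_{u,w}\cap\JCB_{\ge 0}$, write $\JCB_{\ge 0}=\bigcup_{v''\in W_J,\,w''\in{}^J W}\overline{{}^J U^-_{v'',w'',>0}\cdot B^+/B^+}$, so that $x\in\overline{Y''}$ with $Y'':={}^J U^-_{v'',w'',>0}\cdot B^+/B^+$ for some $(v'',w'')$; comparing with the decomposition of the Zariski closure $\JCB_{v'',w''}$ in \S\ref{sec:JRichardson} forces $v''\leJ u\leJ w\leJ w''$, so $Y''$ again satisfies the hypotheses of Proposition~\ref{prop:typeII}(1) and of Lemma~\ref{lem:typeIinU} with $u$ (resp.\ $w$) in the role of the intermediate element. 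Using Lemma~\ref{lem:key2}(1) to see that $\dot u B^+/B^+\in\overline{Y''}$ and then Lemma~\ref{lem:key}(2) at $u$, we obtain $x\in\overline{Y''}\cap\OJCB_{u,w}=\overline{\Jc_{u,-}(Y'')}\cap\OJCB_{u,w}$, which reduces the matter to comparing the connected components $\Jc_{u,-}(Y'')$ and $\Jc_{u,-}(S)$. The crucial remaining ingredient is a group-theoretic computation --- using the identities \eqref{eq:ad}, \eqref{eq:GKL} and \S\ref{sec:GKL}(a),(b) --- showing that the connected component $\Jc_{u,-}({}^J U^-_{v'',w'',>0}\cdot B^+/B^+)\cap\OJCB_{u,w}$ is unchanged when $v''$ is replaced by $v$ and $w''$ by $w$; concretely, $\Jc_{u,-}$ does not see the factor $h_1\in U^-_{v'',>0}$ once one has landed in $\OJCB_{u,w}$, only the coordinate $h_2$. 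Granting this, $x\in\Jc_{u,-}(S)$ and the reverse inclusion follows.

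Finally, the ``in particular'' statement is the case $u=v$. Here $v\leJ v\leJ w$; since $\OJCB_{v,v}$ is a single point and $\OJCB_{v,w}\subseteq\dot v B^- B^+/B^+$, the restricted isomorphism \eqref{eq:atlasJ1} at $r=v$ reads $\OJCB_{v,w}\xrightarrow{\sim}\OJCB_{v,v}\times\OJCB_{v,w}$ via $\Jc_v$, so $\Jc_{v,-}$ is the identity on $\OJCB_{v,w}$; hence $\Jc_{v,-}(S)=S$, and the first part gives $S=\JCB_{v,w,>0}$. I expect the main obstacle to be exactly the reverse inclusion of the first assertion --- equivalently, that $\JCB_{u,w,>0}$ is a single connected component of $\OJCB_{u,w}(\BR)$ --- which rests on the explicit unipotent-group computation that $\Jc_{u,-}$ ignores the $L_J$-factor $h_1$, together with the bookkeeping needed to check that every stratum of $\JCB_{\ge 0}$ meeting $\OJCB_{u,w}$ contributes the same component.
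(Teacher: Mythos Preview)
Your setup and the forward inclusion $\Jc_{u,-}(S)\subset\JCB_{u,w,>0}$ are correct and match the paper's argument, as does your treatment of the ``in particular'' statement.

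The reverse inclusion has a genuine gap. You correctly reduce to showing that, for every pair $(v'',w'')$ arising from the decomposition of $\JCB_{\ge 0}$, the connected component $\overline{\OJCB'_{v'',w'',>0}}\cap\OJCB_{u,w}$ of $\OJCB_{u,w}(\BR)$ coincides with $\Jc_{u,-}(S)$, where $\OJCB'_{x,y,>0}:={}^J U^-_{x,y,>0}\cdot B^+/B^+$. But the ``group-theoretic computation'' you invoke --- that $\Jc_{u,-}$ ignores the factor $h_1$ --- is not available by direct inspection: although $h_1\in U^-_J\subset{}^J U^+$, in general $h_1\notin\dot u U^-\dot u^{-1}$ when $u_J\neq 1$, so it cannot simply be absorbed into the $\Jsigma_{u,+}$-part of the decomposition $g=g_1g_2$. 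You also do not address how to pass from $w''$ to $w$.

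The paper avoids any such computation. The key move is to choose $(v'',w'')$ dominating \emph{both} the arbitrary pair $(v',w')$ and the fixed pair $(v,w)$: that is, $v\le v''$, $w\le w''$ as well as $v'\le v''$, $w'\le w''$, and $u_J\le v''$, $u\leJ w''$. The argument then proceeds in two steps. First, at $w$: since $v''\leJ w\leJ w''$ one has $\OJCB'_{v'',w'',>0}\subset\dot w U^-B^+/B^+$, and Lemma~\ref{lem:key2} gives that $\overline{\OJCB'_{v'',w'',>0}}\cap\OJCB_{v'',w}=\Jc_{w,+}(\OJCB'_{v'',w'',>0})$ is a connected component of $\OJCB_{v'',w}(\BR)$. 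Since $w\le w''$, the closure relation among the cells of ${}^J U^-_{\succeq 0}$ gives $\OJCB'_{v'',w,>0}\subset\overline{\OJCB'_{v'',w'',>0}}\cap\OJCB_{v'',w}$; both sides are connected components of $\OJCB_{v'',w}(\BR)$, hence equal. Lemma~\ref{lem:key} then yields $\overline{\OJCB'_{v'',w'',>0}}\cap\OJCB_{u,w}=\overline{\OJCB'_{v'',w,>0}}\cap\OJCB_{u,w}$. Second, at $u$: the same ``one connected component contained in another'' trick, now using $v\le v''$, gives $\Jc_{u,-}(\OJCB'_{v'',w,>0})=\Jc_{u,-}(\OJCB'_{v,w,>0})=\Jc_{u,-}(S)$. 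No elementwise computation of $\Jc_{u,-}$ is needed.
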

\begin{proof}
For $x \in W_J$ and $y \in {}^J W$ with $x \leJ y$, we simply write $\OJCB'_{x, y, >0}$ for ${}^J U^-_{x, y, >0} \cdot B^+/B^+$. Recall that $\JCB_{u,w, >0} = \bigcup_{v'\in W_J, w' \in {}^JW} (\overline{\OJCB'_{v', w', >0}} \bigcap \OJCB_{u,w})$. By Lemma~\ref{lem:key2} and Lemma \ref{lem:typeIinU} we have  
\[
\Jc_{u,-} (\OJCB'_{v, w, >0}) = \overline{\OJCB'_{v, w, >0}} \bigcap \OJCB_{u,w} \subset  \JCB_{u,w, >0}. 
\]
In particular, $\Jc_{u,-} (\OJCB'_{v, w, >0}) $ is a connected component of $\OJCB_{u,w}(\BR)$. 
Note that for any $v'\in W_J, w' \in {}^JW$, we can always find $v''\in W_J, w'' \in {}^JW$ such that  $v' \le v''$, $ w' \le w''$, $u_J \le v'' $,  $u \leJ w''$. In particular, $\OJCB'_{v', w', >0} \subset \overline{\OJCB'_{v'', w'', >0}}$.  Without loss of generality, we can further assume $v\le v''$ and $ w \le w''$.
Let us fix such $v''$ and $w''$. Then it remains to the show:
\begin{itemize}	
	\item[(a)] $\overline{\OJCB'_{v'', w'', >0}} \bigcap \OJCB_{u,w} = \overline{\OJCB'_{v'', w, >0}} \bigcap \OJCB_{u,w}$;
	\item[(b)] $\overline{\OJCB'_{v'', w, >0}} \bigcap \OJCB_{u,w} = \overline{\OJCB'_{v, w, >0}} \bigcap \OJCB_{u,w}$.
\end{itemize}
Note that $\OJCB'_{v'', w'', >0} \subset \dot{w}U^-B^+/B^+$ and $\OJCB'_{v'', w, >0} \subset \dot{u}U^-B^+/B^+$. By Lemma~\ref{lem:key2} and Lemma \ref{lem:typeIinU}, we have $\overline{\OJCB'_{v'', w'', >0}} \bigcap \OJCB_{v'',w} = \Jc_{w,+}(\OJCB'_{v'', w'', >0})$. Since $w \le w''$, we have $\OJCB'_{v'', w, >0} \subset \overline{\OJCB'_{v'', w'', >0}} \bigcap \OJCB_{v'',w}$. Since both sides are connected components of $\OJCB_{v'', w}(\BR)$, we must have $\OJCB'_{v'', w, >0} = \Jc_{w,+}(\OJCB'_{v'', w'', >0}) =\overline{\OJCB'_{v'', w'', >0}} \bigcap \OJCB_{v'',w}$. Now (a) follows by Lemma~\ref{lem:key}.

We then obtain $\overline{\OJCB'_{v'', w'', >0}} \bigcap \OJCB_{u,w}= \overline{\OJCB'_{v'', w, >0}}  \bigcap \OJCB_{u,w}=  \Jc_{u,-} (\OJCB'_{v'', w, >0})$. Since $v \le v''$, we obtain that $\overline{\OJCB'_{v, w, >0}}  \bigcap \OJCB_{u,w} \subset \overline{\OJCB'_{v'', w, >0}}  \bigcap \OJCB_{u,w}$. Since both sides are connected components of $\OJCB_{u, w}(\BR)$, we have 
\[
	 \overline{\OJCB'_{v'', w'', >0}} \bigcap \OJCB_{u,w}  =  \Jc_{u,-} (\OJCB'_{v'', w, >0}) = \Jc_{u,-} (\OJCB'_{v, w, >0}).
\]
This finishes the proof. 
\end{proof}

Combining Lemma \ref{lem:typeII} with Lemma~\ref{lem:key} \& \ref{lem:key2}, we have the following consequences.

\begin{cor}\label{cor:II} Let $u \in W$ and $w \in {}^JW$ with $u \leJ w$. Then

(1) $\JCB_{u,w, >0}=\overline{\JCB_{v,w, >0}} \bigcap \OJCB_{u,w}$ is a connected component in $ \OJCB_{u,w}(\BR)$.

(2) For any $w' \in W$ with $u \leJ w' \leJ w$, $\JCB_{u,w', >0}=\overline{\JCB_{u,w, >0}} \bigcap \OJCB_{u,w'}$.
\end{cor}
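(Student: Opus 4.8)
The plan is to fix an element $v\in W_J$ with $u_J\le v$ (so that $v\leJ u\leJ w$) and to route everything through the explicit model $\JCB_{v,w,>0}={}^J U^-_{v,w,>0}\cdot B^+/B^+$ supplied by the ``in particular'' clause of Lemma~\ref{lem:typeII}. By Proposition~\ref{prop:typeII}(1) this set is a connected component of $\OJCB_{v,w}(\BR)$, and by Lemma~\ref{lem:typeIinU} it is contained in $\dot r B^- B^+/B^+$ for every $r$ with $v\leJ r\leJ w$ (and likewise with $w$ replaced by any $w''\in{}^J W$, since then automatically $v\leJ w''$). For part (1) I would apply Lemma~\ref{lem:key2} to $Y=\JCB_{v,w,>0}$ at $u$: it gives $\dot u B^+/B^+\in\overline{\JCB_{v,w,>0}}$ and shows $\overline{\JCB_{v,w,>0}}\cap\OJCB_{u,w}=\Jc_{u,-}(\JCB_{v,w,>0})$ is a connected component of $\OJCB_{u,w}(\BR)$; Lemma~\ref{lem:typeII} identifies this component with $\JCB_{u,w,>0}$, which is the assertion.

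\textbf{Part (2): two rewritings.} Fix $w'$ with $u\leJ w'\leJ w$, and more generally let $w''\in{}^J W$ with $w'\leJ w''$. I would rewrite $\overline{\JCB_{v,w'',>0}}\cap\OJCB_{u,w'}$ in two ways. First, Lemma~\ref{lem:key}(2) at the pivot $u$, together with $\Jc_{u,-}(\JCB_{v,w'',>0})=\JCB_{u,w'',>0}$ (Lemma~\ref{lem:typeII}), gives $\overline{\JCB_{v,w'',>0}}\cap\OJCB_{u,w'}=\overline{\JCB_{u,w'',>0}}\cap\OJCB_{u,w'}$. Second, Lemma~\ref{lem:key}(1) at the pivot $w'$ gives $\overline{\JCB_{v,w'',>0}}\cap\OJCB_{u,w'}=\overline{Z_{w''}}\cap\OJCB_{u,w'}$, where $Z_{w''}:=\Jc_{w',+}(\JCB_{v,w'',>0})=\overline{\JCB_{v,w'',>0}}\cap\OJCB_{v,w'}$, which by Lemma~\ref{lem:key2}(2) is a connected component of $\OJCB_{v,w'}(\BR)$. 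The hypotheses needed for these applications --- that $\dot u B^+/B^+$ and $\dot{w'}B^+/B^+$ lie in $\overline{\JCB_{v,w'',>0}}$, and that $\JCB_{v,w'',>0}$ meets $\dot u B^-B^+/B^+$ and $\dot{w'}B^-B^+/B^+$ in all of itself --- are furnished by Lemma~\ref{lem:key2}(1) and Lemma~\ref{lem:typeIinU}. Combining the two rewritings, $\overline{\JCB_{u,w'',>0}}\cap\OJCB_{u,w'}=\overline{Z_{w''}}\cap\OJCB_{u,w'}$ for every such $w''$.

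\textbf{Part (2): the components $Z_{w''}$ coincide.} The crux of the argument is that $Z_{w''}$ does not depend on $w''$. Indeed, if $w''\leJ w'''$ with both in ${}^J W$ above $w'$, then $w''\le w'''$ in the ordinary Bruhat order (for elements of ${}^J W$ the two orders agree), so $U^-_{w'',<0}\subseteq\overline{U^-_{w''',<0}}$, hence $\overline{\JCB_{v,w'',>0}}\subseteq\overline{\JCB_{v,w''',>0}}$ and $Z_{w''}\subseteq Z_{w'''}$; two connected components of $\OJCB_{v,w'}(\BR)$ one of which is contained in the other must be equal. Since $\{w''\in{}^J W:w'\leJ w''\}$ has a least element --- namely ${}^J w'$, because for $w''\in{}^J W$ one has $w'\leJ w''\iff{}^J w'\le w''$ --- all the $Z_{w''}$ equal $Z_{{}^J w'}$. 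In particular, taking $w''=w$, we get $\overline{\JCB_{u,w,>0}}\cap\OJCB_{u,w'}=\overline{Z_{{}^J w'}}\cap\OJCB_{u,w'}$.

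\textbf{Part (2): conclusion, and the main obstacle.} It remains to identify this common set with $\JCB_{u,w',>0}=\JCB_{\ge0}\cap\OJCB_{u,w'}$. The inclusion $\subseteq$ is immediate from $\overline{\JCB_{u,w,>0}}\subseteq\overline{\JCB_{v,w,>0}}\subseteq\JCB_{\ge0}$. For $\supseteq$, take $z\in\JCB_{\ge0}\cap\OJCB_{u,w'}$; writing $\JCB_{\ge0}=\bigcup_{\hat v\in W_J,\,\hat w\in{}^J W}\overline{\JCB_{\hat v,\hat w,>0}}$ (Lemma~\ref{lem:typeII}), and using the monotonicity $\overline{\JCB_{\hat v,\hat w,>0}}\subseteq\overline{\JCB_{\hat v',\hat w,>0}}$ for $\hat v\le\hat v'$ and the upward directedness of Bruhat order on $W_J$, one may assume $u_J\le\hat v$; then $\hat v\leJ u\leJ w'\leJ\hat w$, and the rewritings above (applied with $\hat v$ in place of $v$ in the first, and with $v$ in the second) combine to put $z\in\overline{\JCB_{u,\hat w,>0}}\cap\OJCB_{u,w'}=\overline{Z_{\hat w}}\cap\OJCB_{u,w'}=\overline{Z_{{}^J w'}}\cap\OJCB_{u,w'}$. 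Hence $\JCB_{u,w',>0}=\overline{\JCB_{u,w,>0}}\cap\OJCB_{u,w'}$, completing part (2). The only genuinely non-formal ingredient is the ``nested connected components are equal'' observation; I expect the bookkeeping of which hypotheses of Lemmas~\ref{lem:key} and~\ref{lem:key2} are in force at each pivot to be the main place where care is needed, but no induction or new geometric input is required beyond Lemmas~\ref{lem:key}, \ref{lem:key2}, \ref{lem:typeIinU} and~\ref{lem:typeII}.
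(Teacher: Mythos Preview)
Your argument is correct and follows exactly the route the paper indicates (``Combining Lemma~\ref{lem:typeII} with Lemma~\ref{lem:key} \& \ref{lem:key2}''); the paper simply omits the bookkeeping you have supplied. In particular, your two-pivot rewriting and the ``nested connected components coincide'' observation for the $Z_{w''}$ are precisely what is implicit in the proof of Lemma~\ref{lem:typeII} (see items (a) and (b) there), now repurposed to handle general $w'$ rather than only $w'\in{}^JW$.
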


We remark that Proposition~\ref{prop:typeII} (2) follows now.

 \subsection{Positivity} \label{sec:typeII2}

\begin{lem}\label{lem:Jrw}
Let $u, w \in {}^J W$ with $u \leJ w$. Then the isomorphism $\phi_{w, J}: \OJCB_{u, w} \cong \OCB_{u, w}$ restricts to an isomorphism $\JCB_{u, w, >0}  \cong \CB_{u,w, <0}$. Moreover, ${}^J G_{u, \bf w, >0} \cdot B^+/B^+ =  \JCB_{u, w, >0}$.
\end{lem}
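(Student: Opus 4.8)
The plan is to reduce the lemma to the already established results of Section~\ref{sec:6}, exploiting the special feature here that $u$ lies in ${}^J W$ (so $u_J = 1$ and ${}^J u = u$). First I would unwind the definitions: when $u \in {}^J W$ we have $U^-_{u_J, >0} = \{1\}$, so by construction ${}^J G_{u, \bf w, >0} = G_{\bf {}^J u_+, \bf w, <0} = G_{\bf u_+, \bf w, <0}$ and hence ${}^J G_{u, \bf w, >0} \cdot B^+/B^+ = \CB_{u, w, <0}$ sits inside $B^- \dot w B^+/B^+$ (indeed inside $\OCB_{u,w}$). The map $\phi_{w,J}$ on $\OJCB_{u,w}$ is, by the discussion in \S\ref{sec:conn}, the inverse of the isomorphism $\psi^{u,J}: \OCB_{u,w} \xrightarrow{\sim} \OJCB_{u,w}$ (valid since both $u, w \in {}^J W$). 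So the content of the first assertion is that $\psi^{u,J}$ carries $\CB_{u,w,<0}$ onto $\JCB_{u,w,>0}$, i.e. onto ${}^J\!\OCB_{u,w} \cap \JCB_{\ge 0}$.

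Next I would argue the inclusion $\psi^{u,J}(\CB_{u,w,<0}) \subset \JCB_{\ge 0}$. A point of $\CB_{u,w,<0}$ has the form $g \cdot B^+/B^+$ with $g \in G_{\bf u_+, \bf w, <0}$; to see its image $\pi_J(g) \i g \cdot B^+/B^+$ lies in $\JCB_{\ge 0}$, it suffices to exhibit it in the closure of ${}^J U^-_{\succeq 0} B^+/B^+$. Taking $h_1 = 1 \in U^-_{J,\ge 0}$ and $h_2 = g$ would require $g \in U^-_{\le 0}$, which fails in general because of the $\dot s_i \i$ factors; instead one uses the same limiting/closure argument as in \S\ref{prop:CC}, approximating the $\dot s_i \i$ factors by elements $y_i(a) \in U^-_{\le 0}$ with $a \to -\infty$ after suitable rescaling, exactly as in the proof of the parametrization \eqref{eq:5.1} via \cite[Theorem~11.3]{MR}/\cite[Theorem~4.10]{BH20}. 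This shows ${}^J G_{u, \bf w, >0}\cdot B^+/B^+ \subset \JCB_{\ge 0}$, hence $\subset \JCB_{u,w,>0}$.

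For the reverse inclusion and the equality ${}^J G_{u, \bf w, >0}\cdot B^+/B^+ = \JCB_{u,w,>0}$, I would invoke Proposition~\ref{prop:typeII}(1) together with Lemma~\ref{lem:typeII}: by Proposition~\ref{prop:typeII}(1), ${}^J G_{u, \bf w, >0}\cdot B^+/B^+$ is a connected component of $\OJCB_{u,w}(\BR)$; we have just shown it is contained in $\JCB_{u,w,>0}$, which is itself contained in $\OJCB_{u,w}(\BR)$; and by Lemma~\ref{lem:typeII} (applied with $v = 1 \in W_J$, legitimate since $u_J = 1 \le 1$) the set $\JCB_{u,w,>0}$ equals ${}^J U^-_{1,w,>0}\cdot B^+/B^+$, which by definition is ${}^J G_{u, \bf w, >0}\cdot B^+/B^+$ again (using $u_J = 1$, so the $h_1$-factor is trivial and the subexpression for ${}^J u = u$ in $\bf w$ is the relevant one). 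Thus all three sets coincide, and transporting back through $\phi_{w,J} = (\psi^{u,J})\i$ gives the isomorphism $\JCB_{u,w,>0} \cong \CB_{u,w,<0}$. The main obstacle I anticipate is the closure argument in the previous paragraph: one must verify carefully that the rescaled approximations of the $\dot s_i \i$ factors stay inside ${}^J U^-_{\succeq 0} B^+/B^+$ and converge to the right limit, which is where the interplay between $U^-_{\le 0}$, the projection $\pi_J$, and the Bruhat cell structure is delicate — though it is structurally identical to arguments already carried out in \S\ref{prop:CC} and in the Marsh--Rietsch parametrization.
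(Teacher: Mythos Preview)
There are two genuine gaps.

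First, the identification ${}^J G_{u, \bf w, >0} = G_{\bf u_+, \bf w, <0}$ is incorrect. Even with $u_J=1$ (so $h_1=1$), the definition still reads
\[
{}^J G_{u, \bf w, >0} = \{\pi_J(h_2\,\dot u^{-1})^{-1} h_2 \mid h_2 \in G_{\bf u_+, \bf w, <0}\},
\]
and the correction factor $\pi_J(h_2\,\dot u^{-1})^{-1}\in U^-_J$ is not $1$ in general. Thus ${}^J G_{u, \bf w, >0}\cdot B^+/B^+$ lies in $\OJCB_{u,w}$, not in $\OCB_{u,w}$, and cannot literally equal $\CB_{u,w,<0}$. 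What \S\ref{prop:CC}(a) does give, once you specialize $u_J=1$ so that the $\pi$--condition becomes vacuous, is ${}^J G_{u, \bf w, >0}\cdot B^+/B^+ = \phi_{w,J}^{-1}(\CB_{u,w,<0})$: the two sets correspond under $\phi_{w,J}$, they are not equal.

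Second, your reading of Lemma~\ref{lem:typeII} is off. With $v=1$ the ``in particular'' clause yields ${}^J U^-_{1,w,>0}\cdot B^+/B^+ = \JCB_{1,w,>0}$, not $\JCB_{u,w,>0}$; the main statement gives $\Jc_{u,-}\bigl({}^J U^-_{1,w,>0}\cdot B^+/B^+\bigr) = \JCB_{u,w,>0}$, which is a different assertion. Nor is ${}^J U^-_{1,w,>0}\cdot B^+/B^+$ the same as ${}^J G_{u, \bf w, >0}\cdot B^+/B^+$: in the former $h_2$ ranges over $U^-_{w,<0}$, in the latter over $G_{\bf u_+, \bf w, <0}$. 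So your final paragraph does not close the argument.

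The paper sidesteps both problems by first treating $u=1$, where there are no $\dot s_i^{-1}$ factors: every point of $\CB_{1,w,<0}$ is $h\,B^+/B^+$ with $h\in U^-_{w,<0}\subset U^-$, and one computes directly that $\psi^{1,J}(h\,B^+/B^+)=\pi_J(h)^{-1}h\,B^+/B^+\in {}^J U^-_{1,w,>0}\,B^+/B^+\subset \JCB_{\ge 0}$, so the limiting argument you worry about is unnecessary. For general $u\in{}^J W$ one then uses $\CB_{u,w,<0}\subset\overline{\CB_{1,w,<0}}$ and continuity to obtain $\phi_{w,J}^{-1}(\CB_{u,w,<0}) \subset \overline{\JCB_{1,w,>0}}\cap\OJCB_{u,w}\subset \JCB_{u,w,>0}$, and concludes equality because both sides are connected components of $\OJCB_{u,w}(\BR)$ (Corollary~\ref{cor:II} for the right side, Proposition~\ref{prop:CB} together with $\iota$ for the left). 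The ``moreover'' then follows from \S\ref{prop:CC}(a).
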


\begin{proof}We first prove the statement for $ u = 1$, that is, $  \JCB_{1, w, >0}  \cong \CB_{1,w, <0}$. 
 Let $ h \in U^-_{1, w, <0}$. Then $h B^+/B^+ = g \dot{w} B^+/B^+$ for some $g \in U^+$. Since $w \in {}^JW$, we may further assume that $g \in U_{P^+_J}$. By definition, $\pi_J(h)^{-1}  g \in \JB^+$ and $p_{+, J}(\pi_J(h)^{-1}  g)=\pi_J(h)^{-1}$. Thus
\begin{align*}
\phi_{w, J} (\pi_J(h)^{-1} h B^+/B^+) &= \phi_{w, J} (\pi_J(h)^{-1}  g \dot{w} B^+/B^+) =g \dot w B^+/B^+=h B^+/B^+  \in \CB_{1,w, <0}.
\end{align*}
On the other hand, $\psi^{1, J} (hB^+/B^+) = \pi_J(h)^{-1} h B^+/B^+ \in {}^J U^-_{1, w, >0}  \cdot B^+/B^+$. This finishes the proof.

Now we consider the general case. Since $\CB_{u,w, >0}  \subset \overline{\CB_{1,w, <0}}$ and $\JCB_{u,w, >0}  \subset \overline{\JCB_{1,w, <0}}$, we have $  \phi_{w, J}^{-1} (\CB_{u,w, <0}) \subset \JCB_{u,w, >0}$. Thanks to Corollary~\ref{cor:II}, $\JCB_{u,w, >0}$ is a connected component of $\OJCB_{u,w}(\BR)$. Thanks to Theorem~\ref{thm:CB}, $\CB_{u,w, >0} $ is a connected component of $ \OCB_{u,w}(\BR)$. Since isomorphism sends connected components to connected components, we must have $\phi_{w, J}^{-1} (\CB_{u,w, <0}) = \JCB_{u,w, >0}$.
\end{proof}

\subsection{Proof of Propositon \ref{prop:typeII} (3)}\label{sec:pf3}

By Lemma~\ref{lem:Jrw}, ${}^J G_{{}^J u, \bf w, >0} \cdot B^+/B^+ =  \JCB_{{}^J u, w, >0}$. Since $ \JCB_{\ge 0}$ is stable under the action of $U^-_{J, \ge 0}$, we have
\[
	 {}^J G_{u, \bf w, >0} \cdot B^+/B^+ = U^-_{u_J, > 0} {}^J G_{{}^Ju, \bf w, >0} \cdot B^+/B^+\subset  \JCB_{\ge 0} \bigcap \JCB_{u, w}={}^J  \CB_{u, w, >0}.
\]
Both sides are connected components of $\OJCB_{u, w} (\BR)$ by Proposition~\ref{prop:typeII} (1) and Corollary~\ref{cor:II}. Thus ${}^J G_{u, \bf w, >0} \cdot B^+/B^+ =  \JCB_{u, w, >0}$.

On the other hand, ${}^J G_{u, \bf w, >0} \cdot B^+/B^+ \subset U^-_J B^+ \dot w B^+/B^+$. Since $w \in {}^J W$, we have an isomorprhism $i: U^-_J B^+ \dot w B^+/B^+ \cong U^-_J \times B^+ \dot w B^+/B^+$. By definition, for $h_1 \in U^-_{u_J, >0}$ and $h_2 \in G_{{}^J u, \bf w, <0}$, $$i(h_1 \pi_J(h_2 {}^J \dot u \i) \i h_2 \cdot B^+/B^+)=(h_1 \pi_J(h_2 {}^J \dot u \i), h_2 \cdot B^+/B^+).$$ Thus we may recover $h_1$ and $h_2$ from $h_1 \pi_J(h_2 {}^J \dot u \i) \i h_2 \cdot B^+/B^+$. This finishes the proof. 
 


\section{Basic $J$-Richardson varieties}

\subsection{Motivation}
The totally nonnegative partial flag variety $\CP_{K, \ge 0}$ has a cellular decomposition and each cell admits an explicit parametrization. However, $\CP_{K, \ge 0}$ does not have an obvious product structure. On the other hand, for the $J$-total positivity, it is natural to expect that the map ${}^J c_u$ gives the product structure on the $J$-total positivity (this is what we will eventually prove). However, except for the special cases studied in \S\ref{sec:6}, it is rather difficult to understand the general stratum $\JCB_{v, w, >0}$ (the parametrization,  connected components, etc.) 

In this section, we will introduce the basic $J$-Richardson varieties. This family of special $J$-Richardson varieties serves as models for both the projected Richardson varieties and the $J$-Richardson varieties. We will establish such connections in this section. Finally, in the last section, we will show that the totally positive part of the basic $J$-Richardson varieties are compatible with both the totally positive projective Richardson varieties and the general total positive $J$-Richardson varieties. Such compatibility will allow us to bring together the information we have obtained on the totally positive projective Richardson varieties and the general total positive $J$-Richardson varieties and finish the proof of our main results. 

\subsection{The larger Kac-Moody group}\label{sec:tildeG}
Let $G'$ be a Kac-Moody group  associated to the Kac-Moody root datum $(I', A', X', Y', \ldots)$. Let $K' \subset I'$. Following \cite[\S 3]{BH21}, we associate a new Kac-Moody group $\tilde{G'}$ of adjoint type to $G'$. The Dynkin diagram of $\tilde{G'}$ is obtained by glueing two copies of the Dynkin diagram of $G'$ along the subdiagram $K'$.  

We denote by $\tilde{I'}$ the set of simple roots of $\tilde{G'}$. We denote by $(I')^\flat$ and $(I')^\sharp$ the two copies of $I'$. The elements in $(I')^\flat$ (resp. $(I')^\sharp$) are denoted by $i^\flat$ (resp. $i^\sharp$) for $i \in I'$. Then $\tilde I'=(I')^\flat \bigcup (I')^\sharp$ with $(I')^\flat \bigcap (I')^\sharp=\{k^\flat=k^\sharp \vert k \in K'\}$. 

Let $W'$ be the Weyl group of $G'$ and $\tilde W'$ be the Weyl group of $\tilde G'$. We have natural identifications $W' \to \tilde W'_{(I')^\flat}, w \mapsto w^\flat$ and $W' \to \tilde W'_{(I')^\sharp}, w \mapsto w^\sharp$. For $w \in W'_{K'}$, $w^\flat=w^\sharp$. Similarly, we have natural maps $G' \to \tilde L_{(I')^\flat}, g \mapsto g^\flat$ and $G' \to \tilde L_{(I')^\sharp}, g \mapsto g^\sharp$. For $g \in L_{K'}$, $g^\flat=g^\sharp$. 

Let $\tilde \CB'$ be the flag variety of $\tilde G'$. Let $Q_{K'}=\{(v, w) \in W' \times (W')^{K'} \vert v \le w\}$. Define
$$
\tilde \nu: Q_{K'} \to \tilde W', \qquad (v, w) \mapsto (w)^\flat (v \i)^\sharp.
$$
By \cite[Proposition 4.2 (1)]{BH21}, $\tilde \nu$ is compatible with the partial order $\preceq$ on $Q_{K'}$ and the partial order ${}^{(I')^\flat}\!\!\le$ on $\tilde W'$.

\begin{defi}
A $J$-Richardson variety $\OJCB_{v,w}$ is called {\it basic} (with respect to $G'$) if it is of the form ${}^{{I'}^\flat}\!\!\mathring{\tilde \CB}'_{\tilde \nu(\alpha), \tilde \nu(\beta)}$ for some triple $(K', \a, \b)$, where $K'$ is a subset of the simple roots in $G'$ and $\a \preceq \b$ in $Q_{K'}$.
\end{defi}

\subsection{The Birkhoff-Bruhat atlas on $\CP_K$}\label{sec:BBatlas}
The technical definition of basic $J$-Richardson varieties arises from the Birkhoff-Bruhat atlas introduced in \cite{BH21}, which relates the projected Richardson varieties for $G$ with the $J$-Richardson varieties for $\tilde G$.

Let $r \in W^K$. The isomorphism $\s_r: \dot r U^-  \dot r \i \to (\dot r U^-  \dot r \i \bigcap U^+) \times (\dot r U^- \dot r\i \bigcap U^-)$ in \eqref{eq:Jsigma} is compatible with Levi decompositions. The restriction of $\sigma_r$ gives the isomorphism $\dot r U_{P^-_K}  \dot r \i \to (\dot r U_{P^-_K}  \dot  r \i \bigcap U^+) \times (\dot r U_{P^-_K}  \dot r\i \bigcap U^-)$. We define 
$$f_r: \dot r U^- P^+_K \to \tilde G, \quad g \dot r p \mapsto (\s_{r, +} (g) \dot{ r })^{\flat} (g \dot{ r })^{\sharp, -1} \text{ for }g \in \dot r U_{P^-_K} \dot r \i, p \in P^+_K.$$ The map $f_r$ factors through $\dot r B^- P^+_K/P^+_K \subset \CP_{K}$ and induces a morphism $$\tilde c_r: \dot r B^- P^+_K/P^+_K \to \tilde \CB.$$



Let $(v,w) \in Q_K$ and $r \in W^K$ with $(r, r) \preceq (v, w)$ in $Q_K$. By \cite[Theorem 3.2]{BH21}\footnote{In loc.cit, we use $(\s_{r, +} (g) \dot{ r })^{\flat} (\s_{r, -} (g) \dot{ r })^{\sharp, -1}$ instead. However, it differs from $f(g \dot r)$ by multiplying an element in $(U^+)^\sharp$ on the right, the induced maps  to $\tilde \CB$ coincide.} we have the following commutative diagram
\[
\xymatrix{
\mathring{\CP}_{K, (v ,w)} \bigcap \dot r U^- P^+_K/P^+_K \ar[r]^-{\tilde{c}_r}_-{\cong} \ar@{^{(}->}[d] & {}^{I^\flat}\!\! \mathring{\tilde{\CB}}_{\tilde{\nu}(r,r), \tilde{\nu}(v,w)} \ar@{^{(}->}[d] \\
\dot r B^- P^+_K/P^+_K \ar@{^{(}->}[r]^-{\tilde{c}_r} & {}^{I^\flat}\!\! \tilde \CB.
}
\]

In other words, $\tilde{c}_r$ gives an isomorphism from the stratified space $$\dot r B^- P^+_K/P^+_K=\bigsqcup_{(v, w) \in Q_K; (r, r) \preceq (v, w)} \Big(\mathring{\CP}_{K, (v ,w)} \bigcap \dot r U^- P^+_K/P^+_K\Big)$$ into its image in ${}^{I^\flat}\!\! \tilde \CB$ stratified by the ${I^\flat}$-Richardson varieties. 


\subsection{Basic $J$-Richardson varieties in $\CB^\spade$} \label{sec:thick}
Our next goal is to show any $J$-Richardson variety for the Kac-Moody group $G$ can be realized as a basic one with respect to a different Kac-Moody group. 


Set $I^{!}=I \bigsqcup \{0\}$. The generalized Cartan matrix $A^{!}=(a^!_{i, j})_{i, j \in I^{!}}$ is defined by 
\begin{itemize}
	\item for $i, j \in I$, $a^!_{i, j}=a_{i, j}$; 
	
	\item for $i \in I$, $a^!_{i, 0}=a^!_{0, i}=-2$; 
	
	\item $a^!_{0, 0}=2$.  
\end{itemize}

Let $W^{!}$ be the Weyl group associated to $(I^{!}, A^{!})$. Then we have the natural identification $W=W^{!}_I$. Moreover, for any $i \in I$, $s_0 s_i$ is of infinite order in $W^{!}$ and $s_0 x \in {}^I (W^{!})$ for all $x \in W$. 
Now for the triple $(I^!, A^!, J)$, we construct a triple $(\widetilde{I^!}, \widetilde{\,A^!\,}, I^{!^\flat})$ following the construction in \S\ref{sec:tildeG}. We write $I^\spade=\widetilde{I^!}$ and $A^\spade=\widetilde{\,A^!\,}$. Let $G^\spade$ be the minimal Kac-Moody group of adjoint type associated to $(I^\spade, A^\spade)$. Let $W^\spade$ be the Weyl group associated to $G^\spade$ and $\CB^{\spade}$ be the flag variety of $G^{\spade}$. 

\begin{prop}\label{prop:thick}
Let $x \in W$. For any $g \in G$, define $i^\spade_x(g B^+)=g^\sharp (\dot s_0 \dot x)^\sharp (B^\spade)^+$. Then for any $v \leJ w$, we have the following commutative diagram
\[
\begin{tikzcd}
\OJCB_{v, w} \ar[r, "\cong" below, "i^\spade_x" above] \ar[d, hook] & {}^{{I^!}^\flat}\!\! \OCB^\spade_{v^\sharp (s_0 x)^\sharp, w^\sharp (s_0 x)^\sharp}  \ar[d,  hook]   \\
 \CB \ar[r, "i^\spade_x",  hook] & \CB^\spade.
\end{tikzcd}
\] 
\end{prop}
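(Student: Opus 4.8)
The plan is to reduce Proposition~\ref{prop:thick} to the Birkhoff--Bruhat atlas isomorphism already available for the partial flag variety $\CP_K^!=G^!/P^+_{I}$, where $G^!$ is the Kac--Moody group attached to $(I^!,A^!)$. The key observation is that $G$ sits inside $G^!$ as the Levi $L_I$, and the open cell $\dot s_0 \dot x \cdot U^- B^+/B^+$ of the relevant flag-type variety, after projecting to $\CP_K^!$, is precisely a chart $\dot r B^- P^+_I/P^+_I$ to which the commutative diagram of \S\ref{sec:BBatlas} applies. Thus I would first set $r = s_0 x \in (W^!)^{I}$ and verify, using that $s_0 x\in {}^I(W^!)$ and $s_0 x\cdot y\in {}^I(W^!)$ for $y\in W$ with the length-additivity $\ell^!(s_0 x y)=\ell^!(s_0 x)+\ell(y)$, that the projection $pr_I:\CB^!\to\CP_I^!$ restricts to an isomorphism from the piece of $\CB^!$ lying over the $J$-Richardson varieties of $G$ onto an open projected Richardson stratum inside $\dot r B^- P^+_I/P^+_I$.

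Next I would unwind the definitions so that the map $i^\spade_x$ is literally the composite of this projection with the atlas map $\tilde c_r$ of \S\ref{sec:BBatlas} applied to the larger group $\widetilde{G^!}=G^\spade$. Concretely: the map $g\mapsto g^\sharp(\dot s_0\dot x)^\sharp$ factors through $\dot r U_{P^-_I}\dot r^{-1}$ after applying $\sigma_r$, and $(\sigma_{r,+}(g)\dot r)^\flat(g\dot r)^{\sharp,-1}$ is exactly $f_r(g\dot r)$ up to the right $(U^+)^\sharp$-ambiguity noted in the footnote of \S\ref{sec:BBatlas}. One then has to match the index $\tilde\nu(r,r),\tilde\nu(v,w)$ coming from the atlas with the index $v^\sharp(s_0 x)^\sharp,\; w^\sharp(s_0 x)^\sharp$ appearing in the statement: since $\tilde\nu(\alpha,\beta)=(\beta)^\flat((\alpha)^{-1})^\sharp$ and since under the glueing $(s_0 x)^\flat=(s_0 x)^\sharp$ on the subdiagram $I^{!^\flat}$-side, a short Weyl-group computation gives $\tilde\nu(v,w)= w^\flat\cdots$, which after translating back through the correspondence $W=W^!_I\subset W^\spade$ becomes $w^\sharp(s_0 x)^\sharp$ as claimed. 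This is where the twisted Bruhat order $\leJ$ must be matched with ${}^{I^{!^\flat}}\!\!\le$ on $W^\spade$; Proposition~\ref{lem:JQ}, \cite[Proposition 4.2]{BH21}, and the identity $u\leJ w\iff {}^J u\le w$ (when one of them lies in ${}^J W$) will be the combinatorial inputs.

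I would then check the commutativity of the square: the bottom map $i^\spade_x:\CB\to\CB^\spade$ is the restriction of the atlas embedding $\dot r B^- P^+_I/P^+_I\hookrightarrow{}^{I^\spade}\CB^\spade$ precomposed with $\CB=L_I/B^+_I\hookrightarrow\CP_I^!$ via $gB^+\mapsto g^\flat\cdot(\text{something})$, and the vertical inclusions are the evident ones; commutativity is then immediate from the corresponding square in \S\ref{sec:BBatlas}. Finally, that $i^\spade_x$ restricts to an \emph{isomorphism} $\OJCB_{v,w}\xrightarrow{\sim}{}^{{I^!}^\flat}\!\OCB^\spade_{v^\sharp(s_0x)^\sharp,\,w^\sharp(s_0x)^\sharp}$ follows because the corresponding map $\mathring\CP_{I,(r,r)}\cap\dot r U^- P^+_I/P^+_I\to{}^{I^\spade}\!\mathring{\tilde\CB}_{\tilde\nu(r,r),\tilde\nu(v,w)}$ is an isomorphism by \cite[Theorem 3.2]{BH21}, and the source is identified with $\OJCB_{v,w}$ through $pr_I$ being an isomorphism onto its image on each projected Richardson stratum (as recalled in \S\ref{sec:partial}).

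The main obstacle I anticipate is the bookkeeping in the second step: carefully identifying the two copies $(I')^\flat,(I')^\sharp$ of the doubled Dynkin diagram, tracking which of $\flat$ and $\sharp$ gets used where (the statement uses $g\mapsto g^\sharp$ and $(\dot s_0\dot x)^\sharp$, whereas the atlas of \S\ref{sec:BBatlas} is written with $(\,\cdot\,)^\flat$ on the "positive" factor), and reconciling the resulting Weyl-group labels so that $\tilde\nu$ of the atlas produces exactly $v^\sharp(s_0x)^\sharp$ and $w^\sharp(s_0x)^\sharp$. This is essentially a compatibility-of-conventions computation — not deep, but the kind of thing where a dropped inverse or a swapped $\flat/\sharp$ breaks the statement — so I would be especially careful to pin down the isomorphism $W=W^!_I$, the element $s_0 x$, and the glueing identifications before doing anything else.
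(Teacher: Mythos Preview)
Your proposal has a concrete error that makes the approach fail as written. You propose to apply the Birkhoff--Bruhat atlas of \S\ref{sec:BBatlas} to the partial flag $\CP_K^!=G^!/P^+_{I}$, i.e.\ with $K'=I\subset I^!$. But the group $G^\spade$ is constructed in \S\ref{sec:thick} by doubling $G^!$ along $K'=J$, not along $I$; so the doubled group $\widetilde{G^!}$ you obtain with $K'=I$ is \emph{not} $G^\spade$ (indeed it has $|I|+2$ simple roots, whereas $G^\spade$ has $2|I|+2-|J|$), and your map cannot land in $\CB^\spade$. A second symptom of the same mix-up: you assert $r=s_0x\in(W^!)^{I}$, but for $x\in W$ one has $s_0x\in(W^!)^I$ only when $x=1$, since $\ell^!(s_0xs_i)=1+\ell(xs_i)$ and $W^I$-minimality forces $xs_i>x$ for all $i\in I$.

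If you correct to $K'=J$ (so that $\widetilde{G^!}=G^\spade$), then the atlas does relate basic ${I^!}^\flat$-Richardson varieties in $\CB^\spade$ to projected Richardson varieties in $\CP^!_J$; this is exactly the content of Remark~\ref{rem:thick}(2) and is used later in the paper (proof of Proposition~\ref{prop:Jcl}). However, that route only produces the isomorphism for those $x$ satisfying $v_J,w_J\le x^{-1}$ (the chart condition $(r,r)\preceq\cdots$ in $Q^!_J$), and you would still need a separate argument to embed the full flag $\CB=G/B^+$ into a single chart of $\CP^!_J$ compatibly with the $J$-Richardson stratification.

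By contrast, the paper's proof avoids the atlas entirely and is much shorter. It works with the Levi $L^\spade_{\diamondsuit}\subset G^\spade$ for $\diamondsuit=\spadesuit\setminus\{0^\sharp\}$: first it shows that the target ${I^!}^\flat$-Richardson variety lies in $L^\spade_{\diamondsuit}(\dot s_0\dot x)^\sharp(B^\spade)^+/(B^\spade)^+$, giving an isomorphism with ${}^{{I^!}^\flat}\!\!\OCB^{\diamondsuit}_{v^\sharp,w^\sharp}$ in the Levi flag $\CB^\diamondsuit$; then it observes that the latter already sits inside $(P^\diamondsuit_{I^\sharp})^+/(B^\diamondsuit)^+\cong\CB$, where it is identified with $\OJCB_{v,w}$. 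This two-step Levi reduction works for arbitrary $x\in W$ and requires no combinatorial matching of $\tilde\nu$.
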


\begin{remark}\label{rem:thick}
(1) In other words, $i^\spade_x$ gives an isomorphism from the stratified space $\CB=\bigsqcup_{v \leJ w} \OJCB_{v, w}$ into its image in $\CB^\spade$ stratified by the ${I^!}^\flat$-Richardson varieties.

(2) Note that $v^\sharp (s_0 x)^\sharp=v_J^\flat ({}^J v \, s_0 x)^\sharp$ and $({}^J v \, s_0 x)^\sharp \in {}^{{I^!}^\flat} \!  W^\spade$. Thus by definition, if $v_J, w_J \le x \i$, then ${}^{{I^!}^\flat} \!  \OCB^\spade_{v^\sharp (s_0 x)^\sharp, w^\sharp (s_0 x)^\sharp}$ is a basic $J$-Richardson variety. 
\end{remark}

\begin{proof} 
Set $\diamondsuit=\spadesuit-\{0^\sharp\}$. Let $\CB^{\diamondsuit}$ be the flag variety of $L^\spade_{\diamondsuit}$. We have 
\begin{align*} {}^{{I^!}^\flat} \!\!  \OCB^\spade_{v^\sharp (s_0 x)^\sharp, w^\sharp (s_0 x)^\sharp} & \subset (P^\spade_{\diamondsuit})^- (\dot s_0 \dot x)^\sharp (B^\spade)^+/(B^\spade)^+ \bigcap (P^\spade_{\diamondsuit})^+ (\dot s_0 \dot x)^\sharp (B^\spade)^+/(B^\spade)^+ \\ &=L^\spade_{\diamondsuit} (\dot s_0 \dot x)^\sharp (B^\spade)^+/(B^\spade)^+.
\end{align*}

The map $g \mapsto g (\dot s_0 \dot x)^\sharp  (B^\spade)^+$ for $g \in L^\spade_{\diamondsuit}$ induces the following Cartesian diagram 
\[
\xymatrix{
{}^{{I^!}^\flat}\!\!  \OCB^{\diamondsuit}_{v^\sharp, w^\sharp} \ar[d] \ar[r]^-{\cong} & {}^{{I^!}^\flat}  \!\!  \OCB^\spade_{v^\sharp (s_0 x)^\sharp, w^\sharp (s_0 x)^\sharp} \ar[d] \\ 
L^\spade_{\diamondsuit}/(L^\spade_{\diamondsuit} \bigcap (B^\spade)^+) \ar[r]^-{\cong} &  L^\spade_{\diamondsuit} (\dot s_0 \dot x)^\sharp (B^\spade)^+/(B^\spade)^+.
}
\]

Moreover ${}^{{I^!}^\flat} \!\! \OCB^{\diamondsuit}_{v^\sharp, w^\sharp} \subset {}^{{I^!}^\flat} \!\!(B^{\diamondsuit})^- (P^{\diamondsuit}_{I^\sharp})^+/(B^\diamondsuit)^+ \bigcap {}^{{I^!}^\flat} \!\!(B^{\diamondsuit})^+ (P^{\diamondsuit}_{I^\sharp})^+/(B^\diamondsuit)^+=(P^{\diamondsuit}_{I^\sharp})^+/(B^\diamondsuit)^+$. The isomoprhism $\CB \to (P^{\diamondsuit}_{I^\sharp})^+/(B^\diamondsuit)^+$, $g B^+ \mapsto g^\sharp (B^\diamondsuit)^+$ induces an isomorphism $\OJCB_{v, w} \cong {}^{{I^!}^\flat} \!\!  \OCB^{\diamondsuit}_{v^\sharp, w^\sharp}$. The proposition is proved. 
\end{proof}

\subsection{Some consequences on the $J$-Richardson varieties}
We combine the results on the projected Richardson varieties and the $J$-Richardson varieties to prove the following result. 

\begin{prop}\label{prop:Jcl}
Let $v \leJ w$. Then 

(1) the $J$-Richardson variety $\OJCB_{v, w} $ is irreducible of dimension $\Jell(w) - \Jell(v)$.

(2) the Zariski closure $\JCB_{v, w}$ of $\OJCB_{v, w}$ equals $\JCB_w \bigcap \JCB^v=\bigsqcup_{v \leJ v' \leJ w' \leJ w} \OJCB_{v', w'}$.
\end{prop}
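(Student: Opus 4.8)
The idea is to deduce everything from Proposition~\ref{prop:thick}, which realizes $\OJCB_{v,w}$ as a basic $J$-Richardson variety ${}^{{I^!}^\flat}\!\!\OCB^\spade_{v^\sharp(s_0x)^\sharp, w^\sharp(s_0x)^\sharp}$ inside $\CB^\spade$ for a suitable $x\in W$ (any $x$ with $v_J, w_J \le x\i$, e.g. $x = (v_J w_J \cdots)\i$ or simply $x = w_0$-type long enough element; such $x$ exists since $W_J$-elements have bounded length), and then from the Birkhoff--Bruhat atlas of \S\ref{sec:BBatlas}, which identifies a neighborhood of a basic $J$-Richardson variety with (an open piece of) a projected Richardson variety $\mathring{\CP}_{K',(\a,\b)}$ in the partial flag variety of a larger Kac-Moody group $G'$. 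Since $pr_{K'}\colon \OCB^{G'}_{\a,\b} \to \mathring{\CP}_{K',(\a,\b)}$ is an isomorphism and ordinary (open) Richardson varieties $\OCB_{\a,\b}$ in any Kac-Moody flag variety are known to be irreducible of dimension $\ell(\b)-\ell(\a)$ with Zariski closure $\CB_\b \cap \CB^\a = \bigsqcup_{\a\le\a'\le\b'\le\b}\OCB_{\a',\b'}$ (recalled in \S\ref{sec:4} of the preliminaries on the flag varieties), part~(1) and the first equality of part~(2) follow by transport of structure along $i^\spade_x$; the key point is that $i^\spade_x$ is an isomorphism of stratified ind-varieties onto a locally closed subset of $\CB^\spade$, so it identifies $\OJCB_{v,w}$ with a locally closed subset of an ordinary open Richardson variety in $\CB^\spade$ whose closure we understand.

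\textbf{Key steps, in order.} First, I would fix $x\in W$ with $v_J, w_J \le x\i$ and invoke Remark~\ref{rem:thick}~(2) and Proposition~\ref{prop:thick} to get the commutative square identifying $\OJCB_{v,w}$ with the basic $J$-Richardson variety ${}^{{I^!}^\flat}\!\!\OCB^\spade_{v^\sharp(s_0x)^\sharp,\,w^\sharp(s_0x)^\sharp}$, compatibly with the full stratification of $\CB^\spade$ by ${I^!}^\flat$-Richardson varieties. Second, I would recall that a basic $J$-Richardson variety is by definition of the form ${}^{(I')^\flat}\!\!\mathring{\tilde\CB}'_{\tilde\nu(\a),\tilde\nu(\b)}$, and then apply the Birkhoff--Bruhat atlas (the Cartesian diagram in \S\ref{sec:BBatlas}) together with the fact that $\tilde\nu$ is a poset isomorphism onto its image (\cite[Proposition 4.2]{BH21}) and that $pr_{K'}$ is an isomorphism onto the open projected Richardson variety: this transports irreducibility and the dimension formula $\ell(\b)-\ell(\a)$ from the ordinary Richardson variety $\OCB^{G'}_{\a,\b}$, and one checks $\ell(\b)-\ell(\a) = \Jell(w)-\Jell(v)$ using the length identities of \S\ref{sec:poset} (this is essentially \cite[Proposition 4.6]{BH21}, or follows from $\Jell$ being the pullback of $\ell$ under $\tilde\nu$). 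Third, for the closure statement in part~(2): the Zariski closure of $\OCB^{G'}_{\a,\b}$ in the partial flag $\CP_{K'}$ is $\CP_{K',(\a,\b)} = \bigsqcup_{(\a',\b')\preceq(\a,\b)}\mathring{\CP}_{K',(\a',\b')}$ by \cite[Proposition 3.6]{KLS} (recalled as \eqref{eq:Richardson}); pulling this back through $\tilde\nu$ and then through $i^\spade_x$ — both of which match the respective stratifications — gives $\overline{\OJCB_{v,w}} = \bigsqcup_{v\leJ v'\leJ w'\leJ w}\OJCB_{v',w'}$. Finally, one already knows from \cite[Theorem~4]{BD} (quoted in \S\ref{sec:JRichardson}) that this closure is contained in $\JCB_w\cap\JCB^v = \bigsqcup_{v\leJ v'\leJ w'\leJ w}\OJCB_{v',w'}$; since we have now shown the closure equals the full union on the right, we get $\JCB_{v,w} = \JCB_w\cap\JCB^v$, completing part~(2).

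\textbf{Main obstacle.} The routine-looking but genuinely delicate point is the \emph{global} compatibility: Proposition~\ref{prop:thick} and the Birkhoff--Bruhat atlas give isomorphisms only onto \emph{locally closed} subsets, and the atlas chart $\tilde c_r$ covers only $\dot r B^- P^+_{K'}/P^+_{K'}$, a Birkhoff cell. To compute the Zariski closure of $\OJCB_{v,w}$ I need to know that the closure is captured correctly in a single chart, or else patch together the closure data from several charts. The clean way around this is to note that $\OJCB_{v,w}$ is irreducible (established in step two), so its closure is irreducible, and any ${I^!}^\flat$-Richardson variety meeting the open dense subset $\OJCB_{v,w}$ of its closure must be a stratum in the closure; combined with the a priori upper bound from \cite[Theorem~4]{BD} this pins down the closure without needing to leave the image of $i^\spade_x$. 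One must also take care that ``closure'' is taken inside the appropriate ambient space and that passing between $\CB^\spade$, the $\diamondsuit$-flag variety $\CB^\diamondsuit$, and the partial flag of $G'$ does not change which strata appear — this is exactly what the Cartesian squares in Proposition~\ref{prop:thick} and \S\ref{sec:BBatlas} are designed to guarantee, so the argument is really a matter of assembling them carefully rather than proving anything new.
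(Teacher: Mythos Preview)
Your overall strategy---realize $\OJCB_{v,w}$ as a basic $J$-Richardson variety in $\CB^\spade$ via Proposition~\ref{prop:thick}, then invoke the Birkhoff--Bruhat atlas---is the paper's strategy for part~(1), but you have missed a genuine step. The atlas map $\tilde c_r$ of \S\ref{sec:BBatlas} produces exactly the $J$-Richardson varieties ${}^{(I^!)^\flat}\!\mathring{\tilde\CB}_{\tilde\nu(r,r),\,\tilde\nu(\beta)}$, i.e.\ those whose \emph{first} index is of the special form $\tilde\nu(r,r)$. The variety you need, ${}^{(I^!)^\flat}\!\OCB^\spade_{v^\sharp(s_0x)^\sharp,\,w^\sharp(s_0x)^\sharp}$, has first index $\tilde\nu(v_J,\,x^{-1}s_0({}^Jv)^{-1})$, which is not of the form $\tilde\nu(r,r)$ unless $v_J=x^{-1}s_0({}^Jv)^{-1}$---generically false. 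So the atlas alone does not hand you irreducibility of this variety. The paper fills this gap by first applying the atlas at $r=1$ to show ${}^{(I^!)^\flat}\!\OCB^\spade_{1,\,w^\sharp(s_0x)^\sharp}$ and ${}^{(I^!)^\flat}\!\OCB^\spade_{1,\,v^\sharp(s_0x)^\sharp}$ are irreducible of the expected dimensions (here \cite{Kum1} is invoked for irreducibility of ordinary Richardson varieties in the Kac--Moody setting), and then using the product factorization \eqref{eq:atlasJ1} at $u=v^\sharp(s_0x)^\sharp$ to write a nonempty open piece of the former as the product of the latter with ${}^{(I^!)^\flat}\!\OCB^\spade_{v^\sharp(s_0x)^\sharp,\,w^\sharp(s_0x)^\sharp}$, from which irreducibility and dimension of the last factor follow.

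For part~(2) your proposed route does not close. Pulling back the closure formula \eqref{eq:Richardson} through a single chart $\tilde c_r$ only tells you about closures inside that chart, and your ``irreducibility plus the upper bound from \cite{BD}'' workaround yields nothing new: irreducibility of the closure says it is a union of strata, but the upper bound is the containment $\JCB_{v,w}\subset\bigsqcup\OJCB_{v',w'}$, and you still need the reverse inclusion. The sentence ``any ${I^!}^\flat$-Richardson variety meeting the open dense subset $\OJCB_{v,w}$ of its closure must be a stratum in the closure'' is vacuous, since the only stratum meeting $\OJCB_{v,w}$ is $\OJCB_{v,w}$ itself. The paper instead proves part~(2) by a direct and self-contained argument inside $\CB$, not using the atlas or $i^\spade_x$ at all: for each $u$ with $v\leJ u\leJ w$, one picks $z\in\OJCB_{v,w}\cap\dot u B^-B^+/B^+$ (nonempty by \S\ref{sec:Jcr}(a)), uses the torus action to degenerate $T\cdot z$ to $\dot u B^+/B^+$, and then applies Lemma~\ref{lem:key} (in its Zariski form, Remark~\ref{remark:Zariski}) to conclude that $\OJCB_{v,u}$ and $\OJCB_{u,w}$ lie in $\JCB_{v,w}$; a short induction then gives every stratum.
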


\begin{remark}
	In the case of reductive groups,  one may deduce both statements for (ordinary) Richardson varieties easily from the transversal intersections of  $B^+$-orbits and  $B^-$-orbits on $\CB$. In the case of Kac-Moody groups, both statements for (ordinary) Richardson varieties are established recently in \cite{Kum1}. Our proof for $J$-Richardson varieties is based on \cite{Kum1}. 
\end{remark} 

\begin{proof}
(1) Set $Q_J^!=\{(a, b) \in W^! \times (W^!)^J \vert a \le b\}$. Let $\nu^!: Q^{!}_J \rightarrow W^\spade$ be the map sending $(a, b)$ to $a^\flat (b^{-1})^\sharp$. Let $x \in W$ with $v_J, w_J \le x \i$. Then we have $$(1, 1) \preceq (v_J, x \i s_0 ({}^J v) \i) \preceq (w_J, x \i s_0 ({}^J w) \i).$$ Moreover, $\nu^!(v_J, x \i s_0 ({}^J v) \i)=v^\sharp (s_0 x)^\sharp$ and $\nu^!(w_J, x \i s_0 ({}^J w) \i)=w^\sharp (s_0 x)^\sharp$. Thus 
\begin{equation}\label{eq:a}
    1 \, {}^{(I^!)^\flat}\!\!\! \le v^\sharp (s_0 x)^\sharp \, {}^{(I^!)^\flat} \!\!\! \le w^\sharp (s_0 x)^\sharp.
\end{equation}

By \cite[Proposition~6.6]{Kum1}, $\mathring{\CB^!}_{w_J, x \i s_0 ({}^J w) \i}$ is irreducible of dimension $$\ell(x \i s_0 ({}^J w) \i)-\ell(w_J)=\ell(x)+1+\Jell(w).$$ 

We have $\mathring{\CP^!}_{J, (w_J, x \i s_0 ({}^J w) \i)} \cong \mathring{\CB^!}_{w_J, x \i s_0 ({}^J w) \i}$. Since $(1, 1) \preceq (w_J, x \i s_0 ({}^J w) \i)$, we have $\mathring{\CP^!}_{J, (w_J, x \i s_0 ({}^J w) \i)} \bigcap (U^!)^- (P^!_J)^+/(P^!_J)^+ \neq \emptyset$. By \S\ref{sec:BBatlas}, 
$$
{}^{{I^!}^\flat}\!\! \OCB^\spade_{1, w^\sharp (s_0 x)^\sharp} \cong \mathring{\CP^!}_{J, (w_J, x \i s_0 ({}^J w) \i)} \bigcap (U^!)^- (P^!_J)^+/(P^!_J)^+
$$
 is also irreducible of dimension $\ell(x)+1+\Jell(w)$. Similarly, ${}^{{I^!}^\flat} \!\!  \OCB^\spade_{1, v^\sharp (s_0 x)^\sharp}$ is irreducible of dimension $\ell(x)+1+\Jell(v)$.

By \eqref{eq:a} and \S\ref{sec:Jcr} (b), ${}^{{I^!}^\flat}\!\!  \OCB^\spade_{1, w^\sharp (s_0 x)^\sharp} \bigcap \dot v^\sharp (\dot s_0 \dot x)^\sharp (U^\spade)^- (B^\spade)^+/(B^\spade)^+ \neq \emptyset$ and we have 
$$
{}^{{I^!}^\flat} \!\! \OCB^\spade_{1, w^\sharp (s_0 x)^\sharp} \bigcap \dot v^\sharp (\dot s_0 \dot x)^\sharp (U^\spade)^- (B^\spade)^+/(B^\spade)^+ \cong {}^{{I^!}^\flat}\!\!  \OCB^\spade_{1, v^\sharp (s_0 x)^\sharp} \times {}^{{I^!}^\flat} \!\! \OCB^\spade_{v^\sharp (s_0 x)^\sharp, w^\sharp (s_0 x)^\sharp}.
$$ 
Since ${}^{{I^!}^\flat} \!\! \OCB^\spade_{1, v^\sharp (s_0 x)^\sharp}$ is irreducible of dimension $\ell(x)+1+\Jell(v)$, we have ${}^{{I^!}^\flat} \!\! \OCB^\spade_{v^\sharp (s_0 x)^\sharp, w^\sharp (s_0 x)^\sharp}$ is irreducible of dimension $\Jell(w)-\Jell(v)$. Now part (1) follows from Proposition \ref{prop:thick}. 

(2) We have $\JCB_{v, w} \subset \JCB_w \bigcap \JCB^v$. By \cite[Theorem 4]{BD}, $$\JCB_w \bigcap \JCB^v=\bigsqcup_{v \leJ v' \leJ w' \leJ w} \OJCB_{v', w'}.$$ 

Let $u \in W$ with $v \leJ u \leJ w$. Set $Y=\OJCB_{v,w}  \bigcap (\dot{u} B^- B^+ /B^+)$. By \S\ref{sec:Jcr} (a), $Y \neq \emptyset$. Let $z \in Y$. Then $T \cdot z \subset \OJCB_{v,w}  \bigcap (\dot{u} B^- B^+ /B^+)$. By the proof of Lemma \ref{lem:key2}, the closure of $T \cdot z$ contains $\dot u B^+/B^+$. Therefore, $\dot u B^+/B^+$ is contained in the Zariski closure of $Y$. We may apply Lemma \ref{lem:key} and the Remark \ref{remark:Zariski} to the Zariski closure of $Y$. By \eqref{eq:atlasJ1} and the remark \ref{remark:Zariski}, $\OJCB_{v, u}={}^J c_{u, +}(Y)$ and $\OJCB_{u, w}={}^J c_{u, -}(Y)$ are contained in the Zariski closure of $Y$, and hence in $\JCB_{v, w}$. 

Now let $v', w' \in W$ with $v \leJ v' \leJ w' \leJ w$. If $v' \neq v$, then we have $\OJCB_{v', w} \subset \JCB_{u, w}$ and $\OJCB_{v', w'} \subset \JCB_{v', w}$. So $\OJCB_{v', w'} \subset \JCB_{u, w}$. If $v'=v$, then we have $\OJCB_{v, w'} \subset \JCB_{v, w}$. Part (2) is proved. 
\end{proof}

\begin{cor}\label{cor:dense}
Let $v \leJ w$. Then $\JCB_{v, w, >0}$ is Zariski dense in $\OJCB_{v, w}$.
\end{cor}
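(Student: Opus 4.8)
The plan is to deduce Corollary~\ref{cor:dense} from Proposition~\ref{prop:Jcl}(1), which tells us that $\OJCB_{v,w}$ is irreducible of dimension $\Jell(w)-\Jell(v)$, together with the dimension and connectedness information already assembled for $\JCB_{v,w,>0}$. Recall that by Proposition~\ref{prop:thick} we may embed $\OJCB_{v,w}$ into a Schubert-type stratum of the larger group $G^\spade$; more directly, the strategy is to produce a single point of $\JCB_{v,w,>0}$ together with a piece of it that is manifestly half-dimensional in the ind-variety sense, so that its Zariski closure, being an irreducible closed subvariety of $\OJCB_{v,w}$ of full dimension, must equal all of $\OJCB_{v,w}$.

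The key steps, in order, are as follows. First, I would reduce to showing that some nonempty open subset of $\OJCB_{v,w}$ meets $\JCB_{v,w,>0}$ in a subset of dimension $\Jell(w)-\Jell(v)$: since $\OJCB_{v,w}$ is irreducible (Proposition~\ref{prop:Jcl}(1)) and finite-dimensional, any constructible subset of full dimension is Zariski dense. Second, pick $u$ with $v \leJ u \leJ w$ and work inside the open chart $\dot u U^- B^+/B^+$, where by \eqref{eq:atlasJ1} the map $\Jc_u$ identifies $\OJCB_{v,w}\cap(\dot u B^- B^+/B^+)$ with $\OJCB_{v,u}\times\OJCB_{u,w}$. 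Third, using the product structure of the totally positive part — Proposition~\ref{prop:J}(3) gives $\JCB_{v,w,>0}\subset \dot u U^- B^+$, and the product structure (Theorem~\ref{thm:product}/Theorem~\ref{thm:J}, or in the reduced-expression form of Proposition~\ref{prop:typeII}) identifies $\JCB_{v,w,>0}$ with $\JCB_{v,u,>0}\times\JCB_{u,w,>0}$ compatibly with $\Jc_u$ — I would reduce the density statement to the two ``boundary'' cases $v \leJ u = w$ and $v = u \leJ w$, i.e.\ to the case where $v$ or $w$ is as in the explicitly parametrized situations of \S\ref{sec:6}. Fourth, in those cases, one has the explicit homeomorphism of Proposition~\ref{prop:typeII}(3) (or \eqref{eq:5.1} when $J=\emptyset$), exhibiting the totally positive cell as the image of ${}^J G_{u,\bf w,>0}\cong\Rp^{\Jell(w)-\Jell(v)}$; hence its Zariski closure is an irreducible subvariety of $\OJCB_{v,w}$ whose dimension is at least $\Jell(w)-\Jell(v)$, forcing equality with $\OJCB_{v,w}$ by Proposition~\ref{prop:Jcl}(1). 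Finally, one assembles the general case via the product decomposition: the product of two Zariski-dense subsets inside the product chart is Zariski dense, and Zariski density is detected on the open chart $\dot u B^- B^+/B^+$, which is dense in $\OJCB_{v,w}$.

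A cleaner alternative, which I would actually prefer to write, is to transport the statement through Proposition~\ref{prop:thick} to the larger group $G^\spade$: there $\OJCB_{v,w}$ becomes a piece of an (ordinary) opposite Schubert cell, its totally positive part becomes (by Proposition~\ref{prop:compatible}, or directly by Theorem~\ref{thm:CB}) a totally positive cell of the form $\CB^\spade_{\ldots,>0}$, and for the ordinary total positivity of the full flag variety the density of $\CB_{v,w,>0}$ in $\OCB_{v,w}$ follows from \eqref{eq:5.1} combined with irreducibility of $\OCB_{v,w}$ (e.g.\ via \cite{Kum1}). Since all the relevant maps in Proposition~\ref{prop:thick} and in \S\ref{sec:BBatlas} are algebraic isomorphisms onto locally closed subvarieties, Zariski density is preserved, and the corollary follows immediately.

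The main obstacle I anticipate is the bookkeeping needed to guarantee that ``full-dimensional constructible implies Zariski dense'' is legitimate in the ind-variety setting: $\OJCB_{v,w}$ is a locally closed sub-ind-variety of $\CB$, so one must be slightly careful that it is genuinely a finite-dimensional irreducible variety (which is exactly the content of Proposition~\ref{prop:Jcl}(1)) and that $\JCB_{v,w,>0}$, being the real points of a semialgebraic cell of the correct real dimension sitting inside it, has Zariski closure of the full dimension. Once irreducibility and the dimension count are in hand, the density is essentially formal; the only real work is citing the right parametrization (Proposition~\ref{prop:typeII}(3) or \eqref{eq:5.1}) in the base cases and checking compatibility with $\Jc_u$, both of which are already available above.
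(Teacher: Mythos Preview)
Your first approach has a genuine circularity problem. You invoke Proposition~\ref{prop:J}(3) to get $\JCB_{v,w,>0}\subset\dot u U^-B^+/B^+$ and then Theorem~\ref{thm:J} (or Theorem~\ref{thm:product} with its hypotheses verified) for the product decomposition. But in the logical order of the paper, Proposition~\ref{prop:J} is marked $\club$ and is only proved in \S\ref{sec:pf1}; the strategy for part~(3), spelled out in \S\ref{sec:symmetric}, explicitly \emph{uses} Corollary~\ref{cor:dense}. So you cannot appeal to Proposition~\ref{prop:J} or Theorem~\ref{thm:J} here. At this point in the paper, the only parametrization available is Proposition~\ref{prop:typeII}(3), which applies only when $w\in{}^JW$, and the only chart-containment available for general $w$ is Lemma~\ref{lem:typeIinU}, which again has $w\in{}^JW$. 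Your ``boundary cases'' reduction therefore does not go through: the product structure at a general $u$ with $v\leJ u\leJ w$ is simply not yet established.

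Your second approach does not reduce the problem either. Proposition~\ref{prop:thick} sends $\OJCB_{v,w}$ to an ${I^!}^\flat$-Richardson variety in $\CB^\spade$, not to an ordinary Richardson variety; and (by Proposition~\ref{prop:compatible2}, not Proposition~\ref{prop:compatible}) the image of $\JCB_{v,w,>0}$ is the ${I^!}^\flat$-totally positive part, not the ordinary one. So you land in exactly the same statement for a larger group, and Theorem~\ref{thm:CB} is of no help.

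The paper's argument avoids circularity as follows. First establish the special case $w\in{}^JW$ directly from Proposition~\ref{prop:typeII}(3): the semialgebraic cell $\JCB_{v,{}^Jw,>0}\cong\Rp^{\Jell({}^Jw)-\Jell(v)}$ has real dimension equal to $\dim\OJCB_{v,{}^Jw}$ (Proposition~\ref{prop:Jcl}(1)), hence is Zariski dense. For general $w$, observe $v\leJ w\leJ{}^Jw$; by the special case and Zariski openness of the chart at $w$, the set $X=\JCB_{v,{}^Jw,>0}\cap(\dot wU^-B^+/B^+)$ is nonempty and open in $\JCB_{v,{}^Jw,>0}$, so $\dim_\BR X=\Jell({}^Jw)-\Jell(v)$. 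Now apply only the isomorphism~\eqref{eq:atlasJ} and Lemma~\ref{lem:key} (no product structure of totally positive parts is needed): ${}^Jc_{w,+}(X)\subset\JCB_{v,w,>0}$ and ${}^Jc_{w,-}(X)\subset\JCB_{w,{}^Jw,>0}$, so a real-dimension count gives $\dim_\BR\JCB_{v,w,>0}\ge\dim_\BR X-\dim_\BR\JCB_{w,{}^Jw,>0}\ge\Jell(w)-\Jell(v)$, and Zariski density follows from irreducibility (Proposition~\ref{prop:Jcl}(1)).
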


\begin{proof}We denote by $dim_{\BR}(\cdot)$ the $\BR$-dimension of a real semi-algebraic variety (see \cite[\S2.8]{BCR}). We remark that all spaces considered here are semi-algebraic.

By Proposition \ref{prop:typeII} (3), we have an semi-algebraic homeomorphism $\JCB_{v, {}^J w, >0} \cong \Rp^{\Jell({}^J w)-\Jell(v)}$. Therefore the Zariski closure of $\JCB_{v, {}^J w, >0}$ in $\OJCB_{v, {}^J w}$ is irreducible and of dimension $\Jell({}^J w)-\Jell(v)$. Thus by Proposition \ref{prop:Jcl} (1), $\JCB_{v, {}^J w, >0}$ is Zariski dense in $\OJCB_{v, {}^J w}$. 

By definition, $v \leJ w \leJ {}^J w$. By \S\ref{sec:Jcr} (a), $\JCB_{v, {}^J w, >0} \bigcap \dot w U^- B^+/B^+ \neq \emptyset$. 

Set $X=\JCB_{v, {}^J w, >0} \bigcap \dot w U^- B^+/B^+$. By \eqref{eq:atlasJ}, we have $X \cong {}^J c_{w, +}(X) \times {}^J c_{w, -}(X)$. By Lemma \ref{lem:key}, ${}^J c_{w, +}(X)=\overline{X} \bigcap \OJCB_{v, w} \subset \JCB_{v, w, >0}$ and ${}^J c_{w, -}(X)=\overline{X} \bigcap \OJCB_{w, {}^J w} \subset \JCB_{w, {}^J w, >0}$. In particular, $\dim_{\BR}({}^J c_{w, -}(X)) \le \Jell({}^J w)-\Jell(w)$. So $$\dim_{\BR}(\JCB_{v, w, >0}) \ge \dim_{\BR}({}^J c_{w, +}(X))= \dim_{\BR}(X)-\dim_{\BR}({}^J c_{w, -}(X))) \ge \Jell(w)-\Jell(v).$$

By Proposition \ref{prop:Jcl} (1), $\dim_{\BR}(\JCB_{v, w, >0})=\dim_{\BR}({}^J c_{w, +}(X))=\Jell(w)-\Jell(v)$ and $\JCB_{v, w, >0}$ is Zariski dense in $\OJCB_{v, w}$. 
\end{proof}
 

\section{Final part of the proofs}\label{sec:8}

\subsection{Compatibility of total positivities} 

In this subsection we show the compatibility among the total positivity on the projected Richardson varieties, the $J$-total positivity on the basic $J$-Richardson varieties, and the $J$-total positivity on arbitrary $J$-Richardson varieties.  

\begin{prop}\label{prop:compatible}
Let $(v, w) \in Q_K$ and $r \in W^K$ with $(r, r) \preceq (v, w)$ in $Q_K$. Then the isomoprhism $\tilde{c}_r: \mathring{\CP}_{K, (v ,w)} \bigcap \dot r U^- P^+_K/P^+_K \cong  {}^{I^\flat}\!\! \mathring{\tilde{\CB}}_{\tilde{\nu}(r,r), \tilde{\nu}(v,w)}$ restricts to an isomorphism
\[
\tilde{c}_r : \CP_{K, (v ,w), >0} \rightarrow  {}^{I^\flat}\!\! {\tilde{\CB}}_{\tilde{\nu}(r,r), \tilde{\nu}(v,w), >0}.
\]
\end{prop}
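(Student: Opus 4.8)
The plan is to leverage the parametrization of the totally positive cells on both sides and to chase the explicit formula for $f_r$ (hence $\tilde c_r$). Concretely, write an arbitrary point of $\mathring{\CP}_{K,(v,w),>0} \cap \dot r U^- P^+_K/P^+_K$ as $pr_K(g\cdot B^+)$ with $g \in G_{\mathbf{v}_+,\mathbf{w},>0}$, where $\mathbf{w}$ is a reduced word for $w$; by Proposition~\ref{prop:closure-p}(1) such points exhaust $\CP_{K,(v,w),>0}$. Using the factorization $w = (w)^K w_K$ with $(w)^K \in W^K$, and the refinement of the positive subexpression along $\mathbf{(w)^K}\mathbf{w_K}$ carried out in the proof of Proposition~\ref{prop:closure-p}, one reduces to the case $w \in W^K$, i.e.\ $w = (w)^K$; this is harmless since the $W_K$-factor is absorbed by the projection $pr_K$. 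So I may assume $g \in \dot r\, U_{P^-_K}\dot r^{-1}\cdot (\text{stuff in }P^+_K)$ and only the $\dot r U_{P^-_K}\dot r^{-1}$-part matters for $f_r$.

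Next I would compute $\tilde c_r(pr_K(g\cdot B^+)) = (\sigma_{r,+}(g)\dot r)^\flat (g\dot r)^{\sharp,-1}\cdot (B^{\tilde G})^+$ directly. The point is that the total-positivity structure is built from the elements $y_i(a)$ with $a>0$, the torus elements $\alpha_i^\vee(t)$ with $t>0$, and the Weyl lifts $\dot s_i$; each of these maps under $(-)^\flat$ and $(-)^\sharp$ to the corresponding generator of $\tilde G$ for $(\tilde I)^\flat$ resp.\ $(\tilde I)^\sharp$, and for $i \in K$ the two images coincide. Therefore a positive expression for $g$ relative to a reduced word of $w$ pushes forward to a positive expression relative to $\tilde\nu(r,r) \to \tilde\nu(v,w)$ in $\tilde\CB$, after accounting for the twist by $(\dot r)^\flat$ on the left and $(g\dot r)^{\sharp,-1}$ on the right. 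Matching up the positive subexpression $\mathbf{v}_+$ of $v$ in $\mathbf w$ with the positive subexpression of $\tilde\nu(v,w)$ in a reduced word for $\tilde\nu(r,r)^{-1}\cdot(\text{something})$ — this is exactly the combinatorial compatibility of $\tilde\nu$ with $\preceq$ and ${}^{I^\flat}\!\le$ recorded in \S\ref{sec:tildeG} — one gets that $\tilde c_r$ sends $G_{\mathbf v_+,\mathbf w,>0}$-points into ${}^{I^\flat}\tilde\CB_{\tilde\nu(r,r),\tilde\nu(v,w),>0}$.

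For the reverse inclusion I would argue that $\tilde c_r$ already is an isomorphism of the ambient open subsets and of the strata (by \S\ref{sec:BBatlas}), that both $\CP_{K,(v,w),>0}$ and ${}^{I^\flat}\tilde\CB_{\tilde\nu(r,r),\tilde\nu(v,w),>0}$ are connected components of the real points of the respective strata — the former by Proposition~\ref{prop:closure-p}(3), the latter by Proposition~\ref{prop:J}(2) applied with $J = I^\flat$ (which is available since $\tilde c_r$ is built out of the same $y_i,\alpha_i^\vee,\dot s_i$ and $f_r$ is defined over $\BR$) — and that an isomorphism carries a connected component onto a connected component. Since I have shown $\tilde c_r(\CP_{K,(v,w),>0})$ lands inside ${}^{I^\flat}\tilde\CB_{\tilde\nu(r,r),\tilde\nu(v,w),>0}$ and both are connected components of $({}^{I^\flat}\mathring{\tilde\CB}_{\tilde\nu(r,r),\tilde\nu(v,w)})(\BR)$, equality follows. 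The main obstacle I anticipate is the bookkeeping in the middle step: tracking how the left twist by $(\dot r)^\flat$ and the right twist by $(g\dot r)^{\sharp,-1}$ interact with the $\flat/\sharp$-images of a positive expression, and verifying that the resulting expression is precisely a positive subexpression for $\tilde\nu(v,w)$ over $\tilde\nu(r,r)$ rather than merely a nonnegative one; this is where the identity $\tilde\nu(r,r)\preceq\tilde\nu(v,w)$ and the reduced-word compatibility from \cite[Proposition~4.2]{BH21} have to be used with care, and where one must invoke that $f_r$ restricted to $U_{P^-_K,\le 0}$-type data produces genuinely positive (not just nonzero) parameters after the sign conventions in $\iota$ are untangled.
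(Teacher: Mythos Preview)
Your proposal has a \emph{genuine circularity}. In the final step you appeal to Proposition~\ref{prop:J}(2) to conclude that ${}^{I^\flat}\tilde\CB_{\tilde\nu(r,r),\tilde\nu(v,w),>0}$ is a connected component of the real points of its stratum. But in the paper's logical order Proposition~\ref{prop:J} is one of the two results marked $(\clubsuit)$: it is \emph{proved in} \S\ref{sec:pf1} \emph{using} Proposition~\ref{prop:compatible} (via Proposition~\ref{prop:compatible2}, Corollary~\ref{cor:rrinU}, and Lemmas~\ref{lem:1}--\ref{lem:4}, all of which rely on the explicit computation in the proof of Proposition~\ref{prop:compatible}). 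So you cannot invoke it here. The only connected-component result available at this point is the special case in Proposition~\ref{prop:typeII}/Corollary~\ref{cor:II}, which requires the second index to lie in ${}^{I^\flat}\tilde W$; for a general $(v,w)\in Q_K$ the element $\tilde\nu(v,w)=w^\flat(v^{-1})^\sharp$ does \emph{not} satisfy this (e.g.\ already $\tilde\nu(1,w)=w^\flat\in\tilde W_{I^\flat}$), so the special case does not apply directly to your setup.

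This is precisely why the paper does \emph{not} work with a general $(v,w)$: it first reduces to $(1,w')$ for $w'$ large via closure relations (using only Proposition~\ref{prop:closure-p}(2) and Proposition~\ref{prop:typeII}(2), both available), and then carries out an explicit computation of $f_r$ on $U^-_{w,>0}$ for $w$ with $\ell(r^{-1}w)=\ell(w)-\ell(r)$, obtaining concrete factorizations (equations~\eqref{eq:star1}--\eqref{eq:star4}) that place the image inside ${}^{I^\flat}G_{\tilde\nu(r,r),\cdot,>0}\cdot\tilde B^+$. Your ``middle step'' of pushing positive expressions through $(-)^\flat,(-)^\sharp$ is a gesture toward this computation but not a substitute for it; the interaction of $\sigma_{r,+}$ with positivity (handled via \eqref{eq:GKL} and \S\ref{sec:GKL}(a)) is not automatic, as you yourself flag. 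Also note that your first ``reduction to $w\in W^K$'' is vacuous: by definition of $Q_K$ one already has $w\in W^K$.
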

\begin{proof}
We claim it suffices to prove the statement for $(1, w')$ with sufficiently large $w'$. 

Note that for any $(v, w) \in Q_K$, there exists $(1, w')$ with $(v, w) \preceq (1, w')$. By Proposition \ref{prop:closure-p} (2), $\overline{\CP_{K, (1 ,w'), >0}} \bigcap \mathring{\CP}_{K, (v, w)}=\CP_{K, (v, w), >0}$. By Proposition \ref{prop:typeII} (2), $$\overline{{}^{I^\flat}\!\! {\tilde{\CB}}_{\tilde{\nu}(r,r), \tilde{\nu}(1,w'), >0}} \bigcap {}^{I^\flat}\!\! \mathring{\tilde{\CB}}_{\tilde{\nu}(r,r), \tilde{\nu}(v,w)}={}^{I^\flat}\!\! {\tilde{\CB}}_{\tilde{\nu}(r,r), \tilde{\nu}(v,w), >0}.$$ Suppose that the statement holds for $(1, w')$. Then the statement for $(v, w)$ follows from the following commutative diagram.
\[
\xymatrix{
\CP_{K, (1 ,w'), >0} \ar[r] \ar[d]^-{\cong} & \overline{\CP_{K, (1 ,w'), >0}} \ar[d]^-{\cong} & \overline{\CP_{K, (1 ,w'), >0}} \bigcap \mathring{\CP}_{K, (v, w)} \ar[l] \ar[d]^-{\cong} \\
{}^{I^\flat}\!\! {\tilde{\CB}}_{\tilde{\nu}(r,r), \tilde{\nu}(1,w'), >0} \ar[r] & \overline{{}^{I^\flat}\!\! {\tilde{\CB}}_{\tilde{\nu}(r,r), \tilde{\nu}(1,w'), >0}} & {}^{I^\flat}\!\! {\tilde{\CB}}_{\tilde{\nu}(r,r), \tilde{\nu}(1,w'), >0} \bigcap {}^{I^\flat}\!\! \mathring{\tilde{\CB}}_{\tilde{\nu}(r,r), \tilde{\nu}(v,w)}. \ar[l]
}
\]

The statement for $(1, w)$ with sufficiently large $w$ is proved by direct computation. Note that for any $w \in W$, $U^-_{w, >0} P^+_K=U^-_{w^K, >0} P^+_K \subset \dot r U^- P^+_K$ by Lemma \ref{lem-in-u}. For computational purpose, we can further relax the condition and consider the image of $U^-_{w, >0} \subset \dot r U^- P^+_K$ under the map $f_r$ for sufficiently large elements in $W$ instead in $W^K$. Let $w \in W$ be such that $\ell(r^{-1}w) = \ell(w) - \ell(r)$. We write $u = r^{-1} w$. The rest of the proof consists of direct computation of the map $f_r$.

Let $h \in U^-_{w, >0}$. By \eqref{eq:GKL}, we have 
\begin{equation}\label{eq:star1}
\dot{r}^{-1} h \in (U^- \bigcap \dot{r}^{-1} U^+ \dot{r}) h_1 b_1 \quad \text{for some } h_1 \in U^{-}_{u^{-1}, >0} \text{ and } b_1 \in B^+_{\ge 0}.
\end{equation}
We also have 
\begin{equation}\label{eq:star2}
h_1 b_1=b_2 h_2, \quad  \text{for some }h_2 \in U^{-}_{u^{-1}, >0}, b_2 \in B^+_{\ge 0}.
\end{equation}
Set $g=h b_1 \i \pi_K(h_1) \i \dot r \i$. Since $r \in W^K$,  we have $U^- \bigcap \dot{r}^{-1} U^+ \dot{r} \subset U_{P^-_K}$ and thus $g \in \dot r U_{P^-_K} \dot r \i$. We  have $(g \dot r) \i \in \pi_K(h_1) b_1 h \i U^+=\pi_K(h_1) h_1 \i \dot r \i U^+$. 

By Theorem \ref{thm:CB} (3), $\s_{r, +}(g) \dot r \in h_3 B^+$ for some $h_3 \in U^-_{r, >0}$.  Since $h_1 b_1=b_2 h_2$, we have $\pi_J(h_1) b_1=b_2 \pi_J(h_2)$. Then $\s_{r, +}(g) \dot r \in U^- g \dot r=U^- h b_1 \i \pi_K(h_1) \i=U^- b_2 \i$. Thus $\s_{r, +}(g) \dot r=h_3 b_2 \i$. We have 
 
\begin{equation}\label{eq:star3}
h_1 \pi_K(b_1) = b_4 h_4 \quad  \text{for some }h_4 \in U^-_{u \i, >0} \text{ and } b_4 \in B_{K, \ge 0}^+. 
\end{equation}
Then $\pi_K(h_1) \pi_K(b_1)=\pi_K(b_2) \pi_K(h_2) = \pi_K(b_4) \pi_K(h_4)$. So $\pi_K(h_2)=\pi_K(h_4)$ and $$\pi_K(b_2) \i \pi_K(h_1) h_1 \i=\pi_K(h_4) \pi_K(b_1) \i h_1 \i \in \pi_K(h_4) h_4 \i b_4^{-1}.$$ Now we have 
\begin{align}
\notag 	 (\s_{r, +}(g) \dot r )^\flat (\dot r \i \s_{r, -}(g) \i)^\sharp \tilde{U}^+ =& (h_3 b_2 \i)^\flat (\pi_K(h_1) h_1 \i  \dot r \i )^\sharp \tilde{U}^+ \\
\notag	 = & h_3^\flat (\pi_K(b_2) \i \pi_K(h_1) h_1 \i \dot r \i)^\sharp \tilde{U}^+ \\
\label{eq:star4}	 = & h_3^\flat ( \pi_K(h_4) h_4 \i b_4^{-1} \dot r \i)^\sharp \tilde{U}^+.
\end{align}

Hence $\tilde c_r(h \cdot P_K^+/P_K^+) =h_3^\flat (\pi_K(h_4) h_4 \i \dot r \i)^\sharp \tilde B^+ \in {}^{I^\flat}G_{\tilde{\nu}(r,r), \tilde{\nu}(1,w), >0} \cdot \tilde{B}^+$.

By Proposition~\ref{prop:closure-p} (3), $\CP_{K, (v, w), >0}$ is a connected component of $\mathring{\CP}_{K, (v, w)}(\BR) \bigcap \dot r U^-  P^+_K/P^+_K$. By Proposition~\ref{prop:typeII}, ${}^{I^\flat} G_{\tilde{\nu}(r,r), \tilde{\nu}(1,w), >0} \cdot \tilde{B}^+/\tilde{B}^+ =  {}^{I^\flat}\!\! \mathring{\tilde{\CB}}_{\tilde{\nu}(r,r), \tilde{\nu}(1,w), >0}$ is a connected component of ${}^{I^\flat}\!\! \mathring{\tilde{\CB}}_{\tilde{\nu}(r,r), \tilde{\nu}(v,w)}(\BR)$. 

Since the isomorphism $\tilde{c}_r$ sends the connected components of $\mathring{\CP}_{K, (v ,w)}(\BR) \bigcap \dot r U^-(\BR) P^+_K/P^+_K$ to the connected components of ${}^{I^\flat}\!\! \mathring{\tilde{\CB}}_{\tilde{\nu}(r,r), \tilde{\nu}(v,w)}(\BR)$, we have $\tilde{c}_r(\CP_{K, (v ,w), >0})={}^{I^\flat}\!\! {\tilde{\CB}}_{\tilde{\nu}(r,r), \tilde{\nu}(v,w), >0}$. 
\end{proof}

\begin{prop} \label{prop:compatible2}
Let $v \leJ w$ and $x \in W$. Then the isomorphism $i^{\spade}_x: \OJCB_{v, w} \cong {}^{{I^!}^\flat} \!\!\OCB^\spade_{v^\sharp (s_0 x)^\sharp, w^\sharp (s_0 x)^\sharp}$ restricts to an isomorphism $$i^{\spade}_x: \JCB_{v, w, >0}  \cong {}^{{I^!}^\flat} \!\!\CB^\spade_{v^\sharp (s_0 x)^\sharp, w^\sharp (s_0 x)^\sharp, >0}.$$
\end{prop}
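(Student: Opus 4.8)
Proposition~\ref{prop:compatible2} asserts that the isomorphism $i^\spade_x$ from Proposition~\ref{prop:thick} identifies the totally positive $J$-Richardson variety $\JCB_{v,w,>0}$ with the totally positive ${I^!}^\flat$-Richardson variety ${}^{{I^!}^\flat}\!\CB^\spade_{v^\sharp(s_0x)^\sharp, w^\sharp(s_0x)^\sharp, >0}$ in the thick flag variety $\CB^\spade$. The plan is to reduce this to the compatibility of total positivities already established for the Birkhoff--Bruhat atlas in Proposition~\ref{prop:compatible}, using the construction of $G^\spade$ as $\widetilde{G^!}$ (gluing two copies of the Kac--Moody group $G^!$ along $I^{!^\flat}$).

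First I would fix $x \in W$ large enough that $v_J, w_J \le x^{-1}$; by Remark~\ref{rem:thick}(2) and the reduction already used in the proof of Proposition~\ref{prop:Jcl}, the general case of arbitrary $x$ follows once the statement is known for one sufficiently large $x$, via the closure formulas in Proposition~\ref{prop:closure-p}(2) and Proposition~\ref{prop:typeII}(2) together with a commutative diagram of inclusions and closures exactly as in the first paragraph of the proof of Proposition~\ref{prop:compatible}. (Both $\JCB_{v,w,>0}$ and its image are obtained as intersections of the closure of a fixed "large" totally positive stratum with the relevant open stratum.) So I would concentrate on proving $i^\spade_x(\JCB_{v,w,>0}) = {}^{{I^!}^\flat}\!\CB^\spade_{v^\sharp(s_0x)^\sharp, w^\sharp(s_0x)^\sharp, >0}$ for a single convenient $x$.

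Next, the key observation is that the embedding $\CB \hookrightarrow \CB^\spade$, $gB^+ \mapsto g^\sharp(\dot s_0\dot x)^\sharp(B^\spade)^+$, realizes $\CB$ (via the identification $\CB \cong (P^\diamondsuit_{I^\sharp})^+/(B^\diamondsuit)^+$ from the proof of Proposition~\ref{prop:thick}, where $\diamondsuit = \spade - \{0^\sharp\}$) as a partial flag variety $\CP_{K^!}$ of $G^!$ sitting inside $\tilde\CB^! = \CB^\spade$, and that under this identification the $J$-Richardson variety $\OJCB_{v,w}$ for $G$ corresponds to an open projected Richardson variety $\mathring{\CP^!}_{J, \alpha}$ for $G^!$ intersected with an appropriate $\dot r U^-P^+$-chart --- this is precisely the content of the diagram chases in the proof of Proposition~\ref{prop:Jcl}(1). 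Thus $i^\spade_x$ is, up to the identification $\CB \cong \CP_{K^!}$ and up to restricting to a chart, the Birkhoff--Bruhat atlas map $\tilde c_r$ of \S\ref{sec:BBatlas} for the group $G^!$. The total positivity on $\CP_K$ in \S\ref{sec:partial} was defined via $pr_K(\CB_{\ge 0})$; I would check that under the isomorphism $\CB \cong (P^\diamondsuit_{I^\sharp})^+/(B^\diamondsuit)^+$ the $J$-total nonnegativity $\JCB_{\ge 0}$ (defined via ${}^J U^-_{\succeq 0}$, a mixture of $U^-_{J,\ge 0}$ and $U^-_{\le 0}$) matches the projected total nonnegativity $\CP^!_{K^!, \ge 0}$ --- here one uses that $U^-_{J,\ge 0}$ corresponds to the positive part of the Levi $L_{K^!}$ and $U^-_{\le 0}$, twisted by $\pi_J$, corresponds to the part transverse to $P^+_{K^!}$, together with the relation $\pi_J(U^-_{\le 0})^{-1} = U^-_{J,\ge 0}$ noted in \S\ref{sec:JTP}. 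Given this matching of totally nonnegative parts, Proposition~\ref{prop:compatible} applied to $G^!$ transports the positive cell $\CP^!_{K^!, \alpha, >0}$ to the positive ${I^!}^\flat$-Richardson cell, which is exactly ${}^{{I^!}^\flat}\!\CB^\spade_{v^\sharp(s_0x)^\sharp, w^\sharp(s_0x)^\sharp, >0}$.

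The main obstacle I anticipate is the bookkeeping in the previous paragraph: carefully verifying that the two different "total positivity transport" constructions --- the embedding $i^\spade_x$ of \S\ref{sec:thick} and the Birkhoff--Bruhat atlas $\tilde c_r$ of \S\ref{sec:BBatlas}, both ultimately built from $\sigma_r$-type factorizations --- genuinely agree on the relevant chart, and that the mixed totally-nonnegative set ${}^J U^-_{\succeq 0}$ is carried to $U^-_{\ge 0}$-data in the $G^!$ picture rather than to some other sign pattern. An alternative, and perhaps cleaner, route that avoids invoking Proposition~\ref{prop:compatible} entirely is to argue directly as in the proof of Proposition~\ref{prop:compatible}: use that $i^\spade_x$ is an isomorphism of stratified ind-varieties defined over $\BR$ (Proposition~\ref{prop:thick}), hence sends connected components of $\OJCB_{v,w}(\BR)$ to connected components of ${}^{{I^!}^\flat}\!\mathring{\tilde\CB}^\spade_{\ldots}(\BR)$; then it suffices to show $i^\spade_x$ maps one point of $\JCB_{v,w,>0}$ into the positive ${I^!}^\flat$-Richardson cell, which by Proposition~\ref{prop:typeII}(3) (the explicit parametrization of $\JCB_{u,w,>0}$ when $w \in {}^JW$, combined with the $L_{J,\ge0}$-action reduction in \S\ref{sec:pf3}) reduces to a single explicit computation of $i^\spade_x$ on the parametrized cell ${}^J G_{u,\bf w, >0}\cdot B^+$, entirely parallel to the computation \eqref{eq:star1}--\eqref{eq:star4} in the proof of Proposition~\ref{prop:compatible}. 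I would pursue this second route, since the connected-component argument is robust and localizes the real work to one positivity-preserving matrix identity.
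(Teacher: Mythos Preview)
Your second route is close to the paper's, but the connected-component step for general $w$ is circular: that $\JCB_{v,w,>0}$ is a connected component of $\OJCB_{v,w}(\BR)$ is Proposition~\ref{prop:J}(2), whose proof in \S\ref{sec:pf1} \emph{uses} Proposition~\ref{prop:compatible2}; the same circularity arises on the $\CB^\spade$-side. Your pointer to ``the $L_{J,\ge 0}$-action reduction in \S\ref{sec:pf3}'' does not resolve this, since that argument reduces in the first index (from $u$ to ${}^Ju$) while still requiring $w \in {}^J W$. Nor does the paper reduce to ``large $x$'': different $x$ land in different ${I^!}^\flat$-Richardson varieties of $\CB^\spade$ and there is no evident comparison between them. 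The paper's reduction is in $w$, not $x$: since $v \leJ w \leJ {}^J w$ with ${}^J w \in {}^J W$, Proposition~\ref{prop:typeII}(2) (which \emph{is} already available) gives $\JCB_{v,w,>0} = \overline{\JCB_{v,{}^Jw,>0}} \cap \OJCB_{v,w}$, and the analogous identity holds on the $\CB^\spade$-side because $({}^Jw)^\sharp(s_0x)^\sharp \in {}^{(I^!)^\flat} W^\spade$. A commutative diagram of closures and stratified embeddings then reduces the general case to $w \in {}^J W$.

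For $w \in {}^J W$ the verification is a one-liner rather than anything parallel to \eqref{eq:star1}--\eqref{eq:star4}: concatenating reduced words for $w$ and $s_0 x$ gives a reduced word for $w^\sharp(s_0x)^\sharp$, and $g \mapsto g^\sharp(\dot s_0 \dot x)^\sharp$ carries ${}^J G_{v,{\bf w},>0}$ exactly onto ${}^{(I^!)^\flat} G_{v^\sharp(s_0x)^\sharp, {\bf w}^\sharp {\bf s_0}^\sharp {\bf x}^\sharp, >0}$; then Proposition~\ref{prop:typeII}(3) applied on each side gives the isomorphism of totally positive cells directly, with no connected-component argument needed. Your first route, via Proposition~\ref{prop:compatible} applied to $G^!$, is not taken in the paper; the step of matching ${}^J U^-_{\succeq 0}$ with ordinary total positivity on a partial flag of $G^!$ involves the sign mixture in the definition of $J$-total positivity and is more than bookkeeping.
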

 
\begin{proof} 
We first consider the case where $w \in {}^J W$. Fix a reduced expression ${\bf w}$ of $w$. 
Fix a reduced expression of ${\bf s_0 x }$ of $s_0 x$. Then ${\bf w^\sharp s^\sharp_0 x^\sharp}$ is a reduced expression of $w^\sharp (s_0 x)^\sharp$. In particular, we have 
\[
({}^J G_{v, {\bf w}, >0})^\sharp \dot{s}^\sharp_0 \dot{x}^\sharp= {}^{{I^!}^\flat}\!\!G_{v^\sharp (s_0 x)^\sharp, {\bf w^\sharp s^\sharp_0 x^\sharp}, >0}.
\]
Now the statement follows from Proposition~\ref{prop:typeII} (3).

We then consider the general case. Set $v'=v^\sharp (s_0 x)^\sharp, w'=w^\sharp (s_0 x)^\sharp$ and ${}^J w'=({}^J w)^\sharp (s_0 x)^\sharp$. By Proposition \ref{prop:typeII} (2), we have $\overline{\JCB_{v, {}^J w, >0}} \bigcap \OJCB_{v, w}=\JCB_{v, w, >0}$ and $\overline{{}^{{I^!}^\flat}\!\! \CB^\spade_{v', {}^J w', >0}} \bigcap {}^{{I^!}^\flat} \!\!\OCB^\spade_{v', w'}={}^{{I^!}^\flat} \!\!\CB^\spade_{v', w', >0}$. Now the statement follows from the following commutative diagram:
\[
\xymatrix{
\overline{\JCB_{v, {}^J w, >0}} \ar[d]^-{\cong} & \overline{\JCB_{v, {}^J w, >0}} \bigcap \OJCB_{v, w} \ar[d]^-{\cong} \ar[r] \ar[l] & \OJCB_{v, w} \ar[d]^-{\cong} \\
\overline{{}^{{I^!}^\flat}\!\! \CB^\spade_{v', {}^J w', >0}} \bigcap \text{Im}(i^{\spade}_x) & \overline{{}^{{I^!}^\flat}\!\!\CB^\spade_{v', {}^J w', >0}} \bigcap {}^{{I^!}^\flat}\!\! \OCB^\spade_{v', w'} \ar[r] \ar[l] & {}^{{I^!}^\flat}\!\!\OCB^\spade_{v', w'}.
}
\]
\end{proof}


\subsection{Matrix coefficients and admissible functions}\label{sec:delta} 

\subsubsection{Admissible functions}\label{sec:adm} Recall \cite[\S 1.2]{Lu-2} that a function $f: \BR_{> 0}^{m} \times \BR^{n}_{> 0} \rightarrow \BR_{\ge 0}$ is called admissible if $f = f'/f''$, where $f', f'' \in \BZ_{\ge 0}[t_1, \dots, t_m, t'_{1}, \dots, t'_n]$ with $f'' \neq 0$. Note that 

(a) the value of an admissible function is either always $0$ or never $0$.

The following result proved in \cite[Lemma~5.9]{GKL} is useful to prove admissibility. 

(b) Suppose that $f: \BR_{>0}^{m} \times \BR^{n}_{\ge 0} \rightarrow \BR_{\ge0}$ is continuous and the restriction $f: \BR_{>0}^{m} \times \BR^{n}_{>0} \rightarrow \BR_{\ge 0}$ is an admissible function.  Then the restriction $f: \BR_{>0}^{m} \rightarrow \BR_{\ge0}$ is also admissible. 

Let $v \le w$ and ${\bf w}$ be a reduced expression of $w$. Then we have an isomorphism $\BR_{>0}^{\ell(w) - \ell(v)} \rightarrow G_{{\bf v_+}, {\bf w}, >0} $. Let $v' \le w'$ and ${\bf w'}$ be a reduced expression of $w'$. We call  a map $G_{{\bf v_+}, {\bf w}, >0} \rightarrow  G_{{\bf v'_+}, {\bf w'}, >0} $ admissible if the composition $\BR_{>0}^{\ell(w) - \ell(v)} \rightarrow G_{{\bf v_+}, {\bf w}, >0}  \rightarrow G_{{\bf v'_+}, {\bf w'}, >0} \rightarrow \BR_{>0}^{\ell(w') - \ell(v')} \to \Rp$ is admissible, where the last map is the projection to any coordinate. Thanks to \cite[\S 6.1]{BH20}, admissibility is independent of the choice of reduced expressions. We define the admissibility for  maps $G_{{\bf v_+}, {\bf w}, >0} \to T_{>0}$ in the same way. 

Let $w \in {}^JW$ and $\mathbf w$ be a reduced expression of $w$. Let $ v \in W$ with ${}^Jv \le w$ and let ${\bf v_J} $ be a reduced expression of $v_J$. A map $ {}^J G_{v, \bf w, >0} \to \BR_{\ge 0}$ is called admissible if its composition with the map $$\b: \BR^{\Jell(w) - \Jell(v)} \to U^-_{{\bf v_J}, >0} \times G_{{}^J v, {\bf w}, >0} \xrightarrow{(\id, \iota)} U^-_{{\bf v_J}, >0} \times G_{{}^J v, {\bf w}, <0} \to {}^J G_{v, \bf w, >0}$$ is admissible.  It follows from the previous discussion that the admissibility of a map  ${}^J G_{v, \bf w, >0} \to \BR_{\ge 0}$ is independent of the reduced expressions of $w$ and $v_J$.

\subsubsection{Matrix coefficients}\label{sec:symmetric}
We keep the notation in \S\ref{sec:link}. For $\l \in X^{++}$, we consider the matrix coefficient  
\[
 \Delta_{\lambda}:   {G} \longrightarrow \BC 
\]
mapping any $g \in {G}$ to the coefficient of $g {\eta}_{\lambda}$ in ${\eta}_{\lambda}$. We denote by $\{0, \ast\} = \BC / \BC^\times$ the set-theoretical quotient. The composition $\Delta_{\lambda}: G \rightarrow \BC \rightarrow \{0, \ast\}$ factors through $\CB$, which we still denote by $\Delta_{\lambda}$. For any $u \in W$, we further define $\Delta_{\lambda, u}: \CB \rightarrow \{0, \ast\} $, $g  {B}^+/{B}^+ \mapsto  \Delta_{\lambda} (\dot{u}^{-1} g  {B}^+/{B}^+)$. Note that the image is independent of the choice of the representative of $u$ in $G$. We then have $g  {B}^+/{B}^+ \in \dot{u}  {U}^- {B}^+ /{B}^+$ if and only if $\Delta_{\lambda, u} (g  {B}^+/{B}^+)=\{\ast\}$. 

Our next goal is to show that $\JCB_{v, w, >0} \subset \dot u U^- B^+/B^+$ for $v \leJ u \leJ w$. Our strategy is as follows. By Corollary \ref{cor:dense}, $\JCB_{v, w, >0}$ is Zariski dense in $\OJCB_{v, w}$. By \S\ref{sec:Jcr} (a), $\OJCB_{v, w} \bigcap \dot u U^- B^+/B^+ \neq \emptyset$. Thus $\JCB_{v, w, >0} \bigcap \dot u U^- B^+/B^+ \neq \emptyset$. In other words, $\ast$ is contained in $\Delta_{\lambda, u} (g  {B}^+/{B}^+)$. We shall then construct an admissible map $\a: \Rp^n \to \BR_{\ge 0}$ so that the following diagram commutes
\[
\xymatrix{
	 	\BR^{\Jell(w)-\Jell(v)}_{>0} \ar[d]^-\a \ar@{->>}[r]^-{\b} & \JCB_{v, w, >0} \ar[d]^-{\D_{\l, u}} \\
		\BR_{\ge 0} \ar[r] & \{0, \ast\}.
}
\]
By \S\ref{sec:adm} (a), $\D_{\l, u}: \JCB_{v, w, >0} \to \{0, \ast\}$ is constant. So $\Delta_{\lambda, u} (g  {B}^+/{B}^+)=\{\ast\}$ and $\JCB_{v, w, >0} \subset \dot u U^- B^+/B^+$. 

\subsection{Some admissible functions}
In this subsection, we consider some admissible functions arising from the group $G$.
\begin{lem}\label{lem:w-v}
Let $v \le w$. Define $f_{v, +}: U^-_{w, >0} \to U^-_{v, >0}$ by $c_{v, +}(g B^+/B^+)=f_{v, +}(g) B^+/B^+$ for any $g \in U^-_{w, >0}$. Then $f_{v, +}$ is admissible. 

\end{lem}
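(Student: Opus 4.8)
\textbf{Proof proposal for Lemma~\ref{lem:w-v}.}

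The plan is to reduce the statement to a one-step recursion on $\ell(w)$, mimicking the inductive scheme already used in Lemma~\ref{lem-in-u} and in the proof of Theorem~\ref{thm:CB}. Fix a reduced expression ${\bf w}=s_{i_1}{\bf w'}$ with ${\bf w'}=s_{i_2}\cdots s_{i_n}$ a reduced expression of $w'=s_{i_1}w$, and set $v'=s_{i_1}\circ_l v$. A general element of $U^-_{w,>0}$ is $g=y_{i_1}(a)\,g'$ with $a\in\Rp$ and $g'\in U^-_{w',>0}$, which gives an explicit coordinate identification $U^-_{w,>0}\cong\Rp\times U^-_{w',>0}$. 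The recursion then splits into the same cases as in Lemma~\ref{lem-in-u}: either $v\le s_{i_1}v$ (so $v'=v$, $v\le w'$, and $y_{i_1}(a)$ commutes past $\dot v^{-1}$ up to conjugation into $U^-$), or $v\ge s_{i_1}v$, where one uses $\dot s_{i_1}^{-1}y_{i_1}(a)=\alpha_{i_1}^\vee(a^{-1})y_{i_1}(-a)x_{i_1}(a^{-1})$ together with \cite[Proposition~5.2]{BH20} to push $x_{i_1}(a^{-1})$ through $G_{{\bf v_+},{\bf w'},>0}$ into $G_{{\bf v'_+},{\bf w'},>0}B^+$ at the cost of introducing extra rational-with-nonnegative-coefficients factors in the coordinates. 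In each case $f_{v,+}(g)$ is expressed, in coordinates, as a composition of $f_{v',+}$ (or $f_{v,+}$ at $w'$) with maps given by subtraction-free rational functions in $a$ and the coordinates of $g'$, and the claim follows by the inductive hypothesis together with the closure of admissible functions under composition (\S\ref{sec:adm}).

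The technical heart is bookkeeping the positivity of the rational expressions produced when moving $x_{i_1}(a^{-1})$ across $G_{{\bf v_+},{\bf w'},>0}$: one must check that the new parameters landing in $G_{{\bf v'_+},{\bf w'},>0}$, as well as the torus factor absorbed into $B^+$, are given by admissible (i.e.\ subtraction-free) functions of the original parameters. This is exactly the content that \cite[Proposition~5.2]{BH20} and the commutation relations \eqref{eq:ad} are designed to supply; the relations \eqref{eq:ad} show that each elementary move $x_i\cdot y_i$ has subtraction-free output, and iterating along the reduced word keeps the whole composition subtraction-free. I would also invoke \cite[\S6.1]{BH20} to note that admissibility is independent of the chosen reduced expression, so the choice of ${\bf w}$ above is harmless.

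The main obstacle I anticipate is not any single computation but the organization: handling the map $c_{v,+}$ on the flag variety rather than on the group requires tracking what gets absorbed into $B^+$ at each step and verifying that the $B^+$-part never contributes a sign. A clean way around this is to work entirely with the parametrization $G_{{\bf v_+},{\bf w},>0}\cong\Rp^{\ell(w)-\ell(v)}$ and observe that $c_{v,+}$ corresponds, under \eqref{eq:atlasJ} for $J=\emptyset$, to the explicit change-of-parametrization map on $\Rp^{\ell(w)}$ induced by the reduced-word surgery, whose coordinate functions are manifestly subtraction-free by the above. One then only needs \S\ref{sec:adm}(b) if one wants to restrict from an open dense torus of parameters, but here the parametrization is already by $\Rp^{\ell(w)-\ell(v)}$, so a direct appeal to closure under composition suffices and no boundary-limit argument is needed.
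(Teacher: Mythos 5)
Your proof takes a genuinely different route from the paper's. The paper gives a short, non-inductive argument: it applies \eqref{eq:GKL} (i.e.\ \cite[Lemma~5.6]{GKL}) once to get $\dot v^{-1} h = u^{-} g t$ with $g \in U^+_{v^{-1},>0}$ and $t \in T_{>0}$, then applies \eqref{eq:GKL} a second time to $\dot v g = g_1 h_1 t_1$, and reads off $f_{v,+}(h) = h_1$ directly; admissibility then falls out because each GKL step is admissible. Your proposal instead re-derives this inductively along a reduced word for $w$, mimicking Lemma~\ref{lem-in-u}. Both routes ultimately rest on the same subtraction-free calculus (\eqref{eq:ad} together with the GKL/\cite{BH20} toolkit), so the question is whether your recursion actually closes the way you claim.

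Case~1 ($v \le s_{i_1}v$, so $v'=v$) is fine: $y_{i_1}(a)$ lands in $\dot v U^-\dot v^{-1}\cap U^-$, so $\sigma_{v,+}$ is insensitive to it and $f_{v,+}(y_{i_1}(a)g')=f_{v,+}(g')$; the lifting property gives $v\le w'$ so the inductive hypothesis applies. Case~2 ($v>s_{i_1}v$, $v'=s_{i_1}v$) is where I see a gap in the write-up. After using $\dot s_{i_1}^{-1}y_{i_1}(a)=\alpha_{i_1}^\vee(a^{-1})y_{i_1}(-a)x_{i_1}(a^{-1})$ and pushing $x_{i_1}(a^{-1})$ through $g'\in U^-_{w',>0}$ (noting $g'$ sits in $G_{{\bf 1_+},{\bf w'},>0}$, not $G_{{\bf v_+},{\bf w'},>0}$ as written), you are left with $(\dot v')^{-1}\,(\text{torus})\,y_{i_1}(-a)\,g''\,(\text{positive }B^+\text{-part})$. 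You then need to (i) move the torus factor and the $y_{i_1}(-a)$ factor past $(\dot v')^{-1}$ in a way compatible with the Iwasawa-type decomposition used to define $\sigma_{v,+}$, and (ii) show that the resulting $\sigma_{v,+}$-component is an admissible function of $a$ and $f_{v',+}(g'')$ — in particular, identify the ``extra'' $\Rp$-coordinate (presumably a $y_{i_1}(\cdot)$-factor filling the jump $\ell(v')\to\ell(v)$) and check it is subtraction-free. Your text asserts this reduction but doesn't carry it out, and the factor $y_{i_1}(-a)$ with its negative argument is exactly the point where positivity bookkeeping is delicate; note also that you cannot fall back on ``$f_{v,+}$ at $w'$'' in this case, since $v\le w'$ can fail. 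In short: the inductive strategy is reasonable and would likely work, but in case~2 you are in effect re-proving the positivity content of \cite[Lemma~5.6]{GKL} rather than merely invoking it, and that step needs to be spelled out before the argument is complete. The paper's direct two-step application of \eqref{eq:GKL} sidesteps all of this.
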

\begin{proof}
Let $h \in U^-_{w, >0}$. By \eqref{eq:GKL}, we have $\dot{v}^{-1} h \in U^- gt$ for some $g \in U^+_{v^{-1}, > 0}$ and $t \in T_{>0}$. By \eqref{eq:GKL} again, we have $\dot{v} g = g_1 h_1 t_1$ for some $g_1 \in U^+ \bigcap  \dot{v} U^-  \dot{v}^{-1}, h_1 \in U^-_{v, >0}$ and $t_1 \in T_{>0}$. Hence $h  t^{-1}  t^{-1}_1 h_1^{-1} g_1^{-1} =h t^{-1} g^{-1} \dot{v}^{-1} \in   \dot{v} U^-  \dot{v}^{-1}$. So $t = t_1^{-1}$ and $ g_1^{-1} = \sigma_{v,+}(h  t^{-1} g^{-1} \dot{v}^{-1})$. Hence we conclude that 
\begin{align*}
	 c_{v,+}(h B^+/B^+) &= \sigma_{v,+}(h  t^{-1} g^{-1} \dot{v}^{-1}) \dot{v} B^+/B^+ = g_1^{-1}   \dot{v} B^+/B^+ \\
	 & = h_1 t_1g^{-1}   B^+/B^+ =  h_1 B^+/B^+.
\end{align*}
Thus $f_{v, +}(h)=h_1$. It is clear from the construction that $h \mapsto g \mapsto h_1$ is admissible. 
\end{proof}

\begin{lem}\label{lem:ad}
Let $w_1, w_2 \in {}^J W$ and $v_1, v_2 \in W_J$ with $w_1 \le w_2$ and $v_1 \le v_2$. Fix a reduced expression ${\bf w_2}$ of $w_2$. Then ${}^J G_{v_2 w_1, \bf w_2, >0} \subset \dot w_1 \i \dot v_1 \i U^- B^+/B^+$ and we have the following commutative diagram
\[
\xymatrix{
U^-_{v_2, >0} \times G_{w_1, w_2, <0} \ar[r]^-{\cong} \ar[d]_-{(f_{v_1, +}, \text{id})} & {}^J G_{v_2 w_1, \bf w_2, >0} \ar[r]^-{\cong} & \JCB_{v_2 w_1, w_2, >0} \ar[d]^-{{}^J c_{v_1 w_1, -}} \\
U^-_{v_1, >0} \times G_{w_1, w_2, <0} \ar[r]^-{\cong} & {}^J G_{v_1 w_1, \bf w_2, >0} \ar[r]^-{\cong} & \JCB_{v_1 w_1, w_2, >0}.
}
\]
\end{lem}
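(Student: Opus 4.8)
The plan is to unwind the commutative diagram from the outside in, reducing everything to the already-proven explicit parametrizations and to the admissibility of $f_{v_1,+}$ from Lemma~\ref{lem:w-v}. First I would establish the claimed containment ${}^J G_{v_2 w_1, \mathbf w_2, >0} \subset \dot w_1^{-1} \dot v_1^{-1} U^- B^+/B^+$. By definition an element of ${}^J G_{v_2 w_1, \mathbf w_2, >0}$ has the form $h_1 \pi_J(h_2 \,{}^J\!\dot u^{-1})^{-1} h_2 \cdot B^+$ where $u = v_2 w_1$, so $u_J = v_2$ and ${}^J u = w_1$, with $h_1 \in U^-_{v_2, >0} \subset U^-_J$ and $h_2 \in G_{w_1, \mathbf w_2, <0}$ (note $h_2 B^+/B^+ \in \OJCB^{w_1}$, more precisely in $\CB_{w_1, w_2, <0}$). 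Since $v_1 \le v_2$ we have $v_2 w_1 \leJ v_1 w_1 \leJ w_2$, i.e. the element $\dot w_1^{-1}\dot v_1^{-1}$ is an admissible translation index, and the argument of Lemma~\ref{lem:typeIinU} applies verbatim after replacing $u$ there by $v_1 w_1$: one writes $\dot v_1^{-1} h_1 = h_1' b_1' t_1'$ using \S\ref{sec:GKL}(b) (valid since $v_1 \le v_2$), pushes $b_1' t_1'$ past $\pi_J(h_2\,{}^J\!\dot u^{-1})^{-1} h_2$ using \S\ref{sec:GKL}(a), and finally uses the variation of \S\ref{sec:GKL}(b) for $U^-_{\le 0}$ to absorb $\dot w_1^{-1}$ into $U^- B^+$. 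Hence ${}^J c_{v_1 w_1, -}$ is defined on ${}^J G_{v_2 w_1, \mathbf w_2, >0}$, and the whole diagram makes sense.

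Next I would identify the horizontal isomorphisms. The left-hand and right-hand horizontal maps are exactly the parametrizations of Proposition~\ref{prop:typeII}(3) and Lemma~\ref{lem:typeII} (for the rows indexed by $v_2 w_1$ and by $v_1 w_1$ respectively, both with $w_2 \in {}^J W$): the map $U^-_{v_i, >0} \times G_{w_1, \mathbf w_2, <0} \cong {}^J G_{v_i w_1, \mathbf w_2, >0} \cong \JCB_{v_i w_1, w_2, >0}$ sending $(h_1, h_2)$ to $h_1 \pi_J(h_2\,{}^J\!\dot u_i^{-1})^{-1} h_2 \cdot B^+$. So the content of the lemma is the commutativity of the square with vertical maps $(f_{v_1,+}, \mathrm{id})$ on the left and ${}^J c_{v_1 w_1, -}$ on the right, plus the assertion that ${}^J c_{v_1 w_1, -}$ really does land in $\JCB_{v_1 w_1, w_2, >0}$. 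The latter follows from Lemma~\ref{lem:key2} combined with the containment proven in the first step (this is precisely how Lemma~\ref{lem:typeII} was deduced). For commutativity, take $(h_1, h_2)$ in the top-left corner, let $z = h_1 \pi_J(h_2 \,{}^J\!\dot u^{-1})^{-1} h_2 \cdot B^+ \in \JCB_{v_2 w_1, w_2, >0}$, and compute ${}^J c_{v_1 w_1, -}(z)$ by the recipe in \S\ref{sec:Jcr}: write $(\dot v_1 \dot w_1)^{-1}\cdot(\text{a representative of }z)$ as a product in $\dot r U^- \dot r^{-1}$ with $r = v_1 w_1$ and apply $\Jsigma_{r,-}$. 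The point is that the $h_2$-part, being in $U^-_{\le 0}$ conjugated appropriately, survives unchanged through $\Jsigma_{r,-}$ while the $h_1 \in U^-_J$ part gets transported by the same group-theoretic manipulation that defines $c_{v_1,+}$, i.e. it becomes $f_{v_1,+}(h_1)$; this is exactly the computation in the proof of Lemma~\ref{lem:w-v} carried out inside $L_J$, using that $\Jsigma_{r,-}$ restricted to the $L_J$-factor agrees with $\sigma_{v_1, -}$ (equivalently with $c_{v_1,+}$ on the image in $\CB$). Tracking coordinates gives ${}^J c_{v_1 w_1, -}(z) = f_{v_1,+}(h_1)\,\pi_J(h_2\,{}^J\!\dot u'^{-1})^{-1} h_2 \cdot B^+$ with $u' = v_1 w_1$, which is the image of $(f_{v_1,+}(h_1), h_2)$ under the bottom row; commutativity follows. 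Admissibility of $(f_{v_1,+}, \mathrm{id})$ is then immediate from Lemma~\ref{lem:w-v} and the definition of admissibility for maps out of ${}^J G_{v, \mathbf w, >0}$ in \S\ref{sec:adm}.

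The main obstacle I anticipate is the bookkeeping in the commutativity computation: one must verify carefully that conjugating the mixed element $h_1 \pi_J(h_2\,{}^J\!\dot u^{-1})^{-1} h_2$ by $(\dot v_1 \dot w_1)^{-1}$ and extracting the $\Jsigma_{r,-}$-component genuinely decouples into the $L_J$-manipulation (yielding $f_{v_1,+}$) and the untouched $U_{P^-_J}$-part. The cleanest way to handle this is to factor ${}^J c_{v_1 w_1}$ through the maps $\phi_{w_1, J}$, $\psi^{\,\cdot\,, J}$ and the projection $\pi$ of \S\ref{sec:conn}: since $\JCB_{\ge 0}$ is $U^-_{J,\ge0}$-stable and the $\sharp$/$\flat$-type decomposition $U^-_J B^+ \dot w_1 B^+/B^+ \cong U^-_J \times B^+\dot w_1 B^+/B^+$ (valid because $w_1 \in {}^J W$) is respected by all these maps, the $h_2$-coordinate is manifestly preserved and only the $U^-_J$-coordinate transforms, reducing the identity to Lemma~\ref{lem:w-v} applied inside $L_J$. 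I expect no genuinely new input is needed beyond what is already in \S\S\ref{sec:Jcr}--\ref{sec:GKL} and \S\ref{sec:6}, so the proof should be a matter of assembling these pieces in the right order.
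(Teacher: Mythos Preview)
Your approach would eventually work, but it diverges from the paper's and is considerably more laborious. You plan to establish the containment by rerunning the Lemma~\ref{lem:typeIinU} computation and then to compute ${}^J c_{v_1 w_1,-}(z)$ by explicitly evaluating $\Jsigma_{r,-}$, arguing that the $U^-_J$-factor and the $U_{P^-_J}$-factor decouple. As you yourself flag, that decoupling is the real content, and your write-up leaves it heuristic; the parenthetical equating ``$\sigma_{v_1,-}$'' with ``$c_{v_1,+}$'' is a sign slip (though your intended statement---that $\Jc_{r,-}$ restricted to the $L_J$-direction is the Schubert-side projection in $L_J$---is correct).

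The paper bypasses the computation with a single observation: by the definition of $c_{v_1,+}$ via $\sigma_{v_1,+}$, one has $f_{v_1,+}(g)\,g^{-1} \in \dot v_1 U^-_J \dot v_1^{-1} \cap U^-_J$. Setting $p = g\,\pi_J(h\dot w_1^{-1})^{-1} h\cdot B^+$ and $p' = f_{v_1,+}(g)\,\pi_J(h\dot w_1^{-1})^{-1} h\cdot B^+$, this gives $p \in (\dot v_1 U^-_J \dot v_1^{-1}\cap U^-_J)\,p'$. Because $w_1\in{}^J W$ forces $U^-_J\subset \dot w_1 U^-\dot w_1^{-1}$, and $U^-_J\subset {}^J U^+$, this subgroup sits inside $\dot r U^-\dot r^{-1}\cap {}^J U^+$ for $r=v_1w_1$. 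Two things fall out immediately: a short chain of inclusions from $p'\in\OJCB_{v_1w_1,w_2}$ yields $p\in \dot r U^- B^+/B^+$ (the containment), and since $\Jc_{r,-}(p)$ is \emph{characterized} as the unique point of $\OJCB^{r}$ in the coset $(\dot r U^-\dot r^{-1}\cap {}^J U^+)\,p$, and $p'$ visibly lies there, one gets $\Jc_{r,-}(p)=p'$ at once. No factorization of the mixed element, no tracking of $\Jsigma_{r,-}$ through Levi decompositions. What the paper's argument buys is precisely the avoidance of your ``main obstacle'': instead of computing the projection, it verifies a uniqueness property for the already-known candidate $p'$.
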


\begin{proof}
Let $g \in U^-_{v_2, >0}$ and $h \in G_{w_1, w_2, <0}$. By Lemma \ref{lem:w-v}, $f_{v_1, +}(g) \in U^-_{v_1, >0}$.  By definition, $f_{v_1, +}(g) \in (\dot v_1 U_J^- \dot v_1 \i \bigcap U_J^-) g$. Set $p=g \pi_J(h \dot w_1 \i) \i h B^+/B^+ \in \JCB_{v_2 w_1, \bf w_2, >0}$ and $p'=c_{v_1, +}(g) \pi_J(h \dot w_1 \i) \i h B^+/B^+ \in \JCB_{v_1 w_1, \bf w_2, >0}$. Then 
\begin{align*} p & \in (\dot v_1 U_J^- \dot v_1 \i \bigcap U_J^-) p' \subset (\dot v_1 U_J^- \dot v_1 \i) \OJCB_{v_1 w_1, w_2} \\ & \subset (\dot v_1 U_J^- \dot v_1 \i) \, \JB^- \dot v_1 \dot w_1 B^+/B^+ \subset (\dot v_1 U_J^- \dot v_1 \i) \dot v_1 \dot w_1 U^- B^+/B^+ \\ & \subset \dot v_1 U^-_J \dot w_1 U^- B^+/B^+ \subset \dot v_1 \dot w_1 U^- B^+/B^+.
\end{align*}

Hence ${}^J c_{v_1 w_1, -}(p)$ is defined. 

By definition, ${}^J c_{v_1 w_1, -}(p)$ is the unique element in $\OJCB_{v_1 w_1, w_2} \bigcap (\dot v_1 \dot w_1 U^- (\dot v_1 \dot w_1) \i \bigcap \JB^+) p$. Note that $\dot v_1 U_J^- \dot v_1 \i \subset \dot v_1 \dot w_1 U^- (\dot v_1 \dot w_1) \i$. Thus $\dot v_1 U_J^- \dot v_1 \i \bigcap U_J^- \subset \dot v_1 \dot w_1 U^- (\dot v_1 \dot w_1) \i \bigcap \JB^+$ and ${}^J c_{v_1 w_1, -}(p)=p'$. 
\end{proof}

\subsection{Further admissible functions related with $\tilde{G}$}
In this section, we consider admissible functions arising from the group $\tilde{G}$ that are related with the morphism $\tilde{c}_r$ in \S\ref{sec:BBatlas}.

For $\tilde{w} \in \tilde{W}$, we define
$$\tilde{f}_{\tilde{w}}: \dot{\tilde{w}} \tilde{U}^{-} \tilde{B}^+  \rightarrow \tilde{G}, \quad \tilde{g}  \dot{\tilde{w}} \tilde{b} \mapsto {}^{I^{\flat}}\!\!\sigma_{\dot{w}, -} (\tilde{g}) \text{ for } \tilde{g} \in  \dot{\tilde{w}} \tilde{U}^{-}  \dot{\tilde{w}}^{-1} \text{ and } \tilde b \in \tilde B^+.
$$
For $(s,r) \in Q_K$, we define the map
\[
	 f_{(s,r)} = \tilde{f}_{\tilde{\nu}(s,r)} \circ  f_r: \Big(\dot r U^- P^+_K \Big) \bigcap f_r ^{-1} \Big(\dot{\tilde{\nu}}(s,r)   \tilde{U}^{-} \tilde{B}^+ \Big) \rightarrow \tilde{G}.
\]
Here the map $f_r$ is defined in \S\ref{sec:BBatlas}. Note that $f_{(r,r)}  = f_r$, since $f_r \Big(\dot r U^- P^+_K  \Big) \subset \dot{\tilde{\nu}}(r,r)  \tilde{U}^{-} \tilde{B}^+$.

\begin{lem}\label{lem:1}
Let $r \in W^K$ and $w \in W$ with $\ell(r \i w)=\ell(w)-\ell(r)$. Then the map $$U^-_{u, >0} \times U^-_{w, >0} \to \BR, \qquad (g, h) \mapsto   \Delta_{\lambda}(\dot{t}^{\sharp} \dot{u}^{\flat, -1} g f_{(s,r)}(h))$$ is admissible for any $(u,t) \in Q_K$.
\end{lem}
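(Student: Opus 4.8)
The plan is to reduce this to the admissibility results already established for the groups $G$ and $\tilde G$ in the preceding subsections, by tracking the map $f_{(s,r)}$ explicitly through the parametrization. First I would recall from the proof of Proposition~\ref{prop:compatible} the key identity: for $h \in U^-_{w, >0}$ with $\ell(r\i w) = \ell(w) - \ell(r)$, writing $u = r\i w$, one has $\dot r\i h \in (U^- \bigcap \dot r\i U^+ \dot r) h_1 b_1$ with $h_1 \in U^-_{u\i, >0}$ and $b_1 \in B^+_{\ge 0}$, and the further factorizations \eqref{eq:star2}, \eqref{eq:star3}. Crucially, each of these factorizations is realized by admissible maps: the passage $h \mapsto h_1$ is admissible by \eqref{eq:GKL} together with \S\ref{sec:adm}~(a)--(b); the passages $h_1 b_1 = b_2 h_2$ and $h_1 \pi_K(b_1) = b_4 h_4$ are admissible because they are instances of the commutation relations \eqref{eq:ad} (or, more conceptually, of Lemma~\ref{lem:w-v} applied in the Levi $L_K$ and using \cite[\S 6.1]{BH20}); and the passage $g \mapsto h_3 = \s_{r,+}(g)\dot r \bmod B^+$ is admissible by Theorem~\ref{thm:CB}~(3) and Lemma~\ref{lem:w-v}. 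Chaining these, the computation \eqref{eq:star4} shows $\tilde c_r(h \cdot P^+_K/P^+_K) = h_3^\flat (\pi_K(h_4) h_4\i \dot r\i)^\sharp \tilde B^+$, and the coordinates of $h_3 \in U^-_{r, >0}$ and $h_4 \in U^-_{u\i, >0}$ depend admissibly on the coordinates of $h$.

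Next I would incorporate the map $\tilde f_{\tilde\nu(s,r)}$ and the element $g \in U^-_{u, >0}$. Once we know $f_r(h\cdot \dot r)$ lies in $\dot{\tilde\nu}(r,r)\tilde U^- \tilde B^+$ (which always holds, as noted after the definition of $f_{(s,r)}$) and, by hypothesis, in $\dot{\tilde\nu}(s,r)\tilde U^- \tilde B^+$, the map $\tilde f_{\tilde\nu(s,r)}$ extracts the factor ${}^{I^\flat}\!\s_{\dot{\tilde\nu}(s,r), -}$ of the $\tilde U^-$-part. This is precisely a map of the form handled by Lemma~\ref{lem:w-v} applied inside $\tilde G$ (the $\tilde c$ and $\s$ maps for $\tilde G$ are defined over $\BR$ and compatible with the positive parametrizations, as in \S\ref{sec:Jcr} and Theorem~\ref{thm:CB}~(3) for $\tilde G$), hence admissible. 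Then I would translate $\Delta_\lambda$ into coordinates: the matrix coefficient $\Delta_\lambda$ is a polynomial with nonnegative integer coefficients in the Chevalley generator entries (this is the standard positivity of matrix coefficients on canonical bases, underlying the very notion of admissibility in \S\ref{sec:adm}), so $g f_{(s,r)}(h) \mapsto \Delta_\lambda(\dot t^\sharp \dot u^{\flat,-1} g f_{(s,r)}(h))$ is, after the admissible change of coordinates above, a ratio of polynomials with nonnegative coefficients in the $\Rp$-parameters of $g$ and $h$. The left multiplication by $\dot t^\sharp \dot u^{\flat,-1}$ contributes only fixed Weyl-group representatives, which is harmless since $\Delta_\lambda$ composed with a fixed $\dot t^\sharp \dot u^{\flat,-1}$ is still a nonnegative combination of matrix coefficients (indeed it computes a coordinate in a Demazure-type submodule).

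Assembling: the composite $U^-_{u,>0} \times U^-_{w,>0} \to \Rp^{\dim} \to \BR_{\ge 0}$ is a chain of admissible maps followed by $\Delta_\lambda$ in admissible coordinates, hence admissible by the closure of admissible functions under composition (clear from $f = f'/f''$) and under restriction of extra positive variables (\S\ref{sec:adm}~(b)). The special case $f_{(r,r)} = f_r$ is already contained in the computation \eqref{eq:star4} of the proof of Proposition~\ref{prop:compatible}.

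The main obstacle I anticipate is bookkeeping rather than conceptual: one must verify that the \emph{entire} chain $h \mapsto (h_1, h_3, h_4) \mapsto \tilde f_{\tilde\nu(s,r)}(\cdots)$ stays within the domains where all the factorizations \eqref{eq:GKL}, \eqref{eq:ad}, \eqref{eq:star2}--\eqref{eq:star4} are valid and admissible, and in particular that the hypothesis $f_r(h \cdot \dot r) \in \dot{\tilde\nu}(s,r)\tilde U^- \tilde B^+$ does not create a nonempty locus where some denominator vanishes — but \S\ref{sec:adm}~(a) guarantees that an admissible function is either identically zero or nowhere zero, so once a single point is in the domain the whole cell is, and the computation \eqref{eq:star4} exhibits such a point. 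A secondary point requiring care is that $\tilde f_{\tilde\nu(s,r)}$ involves $\s_{\dot{\tilde\nu}(s,r),-}$ rather than $\s_{\cdot,+}$, so one uses the mirror version of Lemma~\ref{lem:w-v} (with the roles of $U^+$ and $U^-$, and of $<0$ and $>0$, interchanged), which follows by the same argument using the second line of \eqref{eq:GKL} and the automorphism $\iota$ of \S\ref{sec:negativeg}.
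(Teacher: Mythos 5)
Your proposal correctly identifies the starting point — the factorization chain \eqref{eq:star1}--\eqref{eq:star4} from the proof of Proposition~\ref{prop:compatible}, and the admissibility of the maps $h \mapsto h_3, h_4$ — but it diverges from the paper in two places, the second of which is a genuine gap.

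First, the treatment of $\tilde f_{\tilde\nu(s,r)}$. You propose to handle the passage from $f_r$ to $f_{(s,r)}$ via a ``mirror version of Lemma~\ref{lem:w-v} applied inside $\tilde G$,'' claiming it follows by the same argument using $\iota$ and \eqref{eq:GKL}. The paper instead uses Lemma~\ref{lem:ad}, which is specifically tailored to this step: it shows that ${}^J c_{v_1 w_1, -}$ on a totally positive $J$-Richardson cell acts, in the parametrization $U^-_{v_2,>0}\times G_{w_1,w_2,<0}$, as $(f_{v_1,+},\id)$ — that is, the ``minus'' projection in the $J$-setting becomes a ``plus'' projection on the $U^-_J$-component. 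This allows the paper to stay entirely in the world of $f_{v,+}$-type maps and never needs a mirror of Lemma~\ref{lem:w-v}. Your proposed mirror lemma would likely work out, but you would actually have to prove it; it is not a ``secondary point requiring care.''

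Second, and this is the real gap: your treatment of the final step, the admissibility of $z \mapsto \Delta_\lambda(\dot t^\sharp \dot u^{\flat,-1} z)$. You assert that $\Delta_\lambda$ composed with a fixed $\dot t^\sharp \dot u^{\flat,-1}$ is ``still a nonnegative combination of matrix coefficients'' and hence admissible. This is not substantiated and is not true in the form stated — the value $\Delta_\lambda(\dot t^\sharp h_5^{\sharp,-1} z_5^\sharp \dot r^{-1,\sharp})$ involves the inverse $h_5^{-1}$ of a totally positive element, and there is no direct positivity statement available for such expressions without further work. The paper resolves this by applying the automorphism $\iota$ (which fixes $\Delta_\lambda$), which converts $h_5^{-1}$ into $\iota(h_5^{-1}) = \iota(h_5)^{-1} \in U^-_{\ge 0}$ and the representatives $\dot t^\sharp, \dot r^{-1,\sharp}$ into inverse-type representatives $\dot x^{-1}, \dot y$; only after this conversion does the expression fall into the scope of \cite[Proposition~5.13]{GKL}, which is the result that actually establishes admissibility of $\Delta_\lambda(\dot x^{-1}\, \cdot\, \dot y)$ as a function of a totally nonnegative argument. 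Without this $\iota$-trick and the citation to \cite[Proposition~5.13]{GKL}, your argument does not close; the hand-wavy ``Demazure-type submodule coordinate'' remark does not supply a proof of nonnegativity. You also omit the observation that $t \in W^K$ forces $t^\sharp \in \tilde W^{I^\flat}$, which is what lets $\dot t^\sharp$ pass through the $\flat$-components to produce the reduced expression $\dot t^\sharp h_5^{\sharp,-1} z_5^\sharp \dot r^{-1,\sharp}$ in the first place.
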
  

\begin{remark}
By Proposition~\ref{prop:closure-p} and Lemma~\ref{lem:ad}, $U^-_{w, >0} \subset (\dot r U^- P^+_K) \bigcap f_r ^{-1} (\dot{\tilde{\nu}}(s,r)   \tilde{U}^{-} \tilde{B}^+)$. So the map $f_{(s, r)}$ is defined on $U^-_{w, >0}$. 
\end{remark}

\begin{proof}
By the proof of Proposition~\ref{prop:compatible} (in particular \eqref{eq:star4}), we have 
$$f_r(h) \in h_3^\flat ( \pi_K(h_4) h_4 \i b_4 \i  \dot r \i)^\sharp \tilde U^+=h_3^\flat ( \pi_K(h_4) h_4 \i z_4  \dot r \i)^\sharp \tilde U^+$$ for some $h_3 \in U^-_{r, >0}$, $h_4 \in U^-_{r \i w, >0}$ and $b_4 \in B^+_{K, \ge 0}$. Here $z_4 \in T_{>0}$ with $b_4 \i \in z_4 U^+_K$. Moreover, by \eqref{eq:star1}--\eqref{eq:star4} in the proof of Proposition~\ref{prop:compatible}, all maps $h \mapsto h_3$, $h \mapsto h_4$, $h \mapsto z_4$ are admissible. By Lemma~\ref{lem:ad}, $f_{(s, r)}(h) \in (c_{s,+}(h_3))^\flat ( \pi_K(h_4) h_4 \i z_4  \dot r \i)^\sharp \tilde U^+$. By Lemma~\ref{lem:w-v}, the map $h_3 \mapsto f_{s,+}(h_3)$ is admissible. By \eqref{eq:GKL}, we have $ \dot{u}^{-1} (g\, \,f_{s,+}(h_3) ) \in U^{-} c$ for some  $c_1 \in B_{\ge 0}$. By \S\ref{sec:GKL} (a), we have $c^\flat( \pi_K(h_4) h_4 \i z_4  \dot r \i)^\sharp  \tilde{U}^+= ( \pi_K(h_5) h_5 \i z_5  \dot r \i)^\sharp  \tilde{U}^+$ for some $h_5  \in U^-_{u^{-1},>0} \text{ and } z_5 \in  {T}_{>0}$.

Moreover, the maps $(g, f_{s,+}(h_3) ) \mapsto c$, $(c, h_4) \mapsto h_5$ and $(c, h_4, z_4) \to z_5$ are all admissible. Since $t \in W^K$, we have $t^\sharp \in \tilde W^{I^\flat}$. Thus 
$$
\dot{t}^{\sharp} \dot{u}^{\flat, -1} \,\, g^\flat (c_{s,+}(h_3) )^\flat ( \pi_K(h_4) h_4 \i z_4  \dot r \i)^\sharp \tilde U^+ \in \tilde{U}^- \dot{t}^\sharp  h_5^{\sharp, -1} z_5^\sharp \dot{r}^{-1, \sharp}  \tilde{U}^+.
$$
So \begin{align*}
\Delta_{\lambda}(\dot{t}^{\sharp} \dot{u}^{\flat, -1} g f_{(s,r)}(h)) & = \Delta_{\lambda}(\dot{t}^\sharp   h_5^{\sharp, -1} z_5^\sharp \dot{r}^{-1, \sharp})= \Delta_{\lambda}( \iota(\dot{t}^\sharp   h_5^{\sharp, -1} z_5^\sharp \dot{r}^{-1, \sharp})) \\ &= \Delta_{\lambda}( \iota(\dot{t}^\sharp   )  \iota(h_5^{\sharp, -1}) z_5^\sharp \iota(\dot{r}^{-1, \sharp})).
\end{align*} 
Note that $h_5 \mapsto \iota(h_5^{-1})$ is admissible, $\iota(\dot{t}) = \dot{x}^{-1}$ for $ x = t^{-1} \in W$ and  $\iota(\dot{r}^{-1}) = \dot{y}$ for $ y = r^{-1}\in W$. By \cite[Proposition 5.13]{GKL}, the map $(h_5, z_5) \mapsto \Delta_{\lambda}( \iota(\dot{t}^\sharp   )  \iota(h_5^{\sharp, -1}) z_5^\sharp \iota(\dot{r}^{-1, \sharp}))$ is admissible. The lemmas follows now. 
\end{proof}
\begin{lem}\label{lem:2}
Let $(s, r) \in Q_K$ and $w \in W$ with $r \le w$. Then the map $$U^-_{w, >0} \to \BR, \qquad h \mapsto  \Delta_{\lambda}(\dot{t}^{\sharp} \dot{u}^{\flat,-1} f_{(s,r)}(h))$$ is admissible  for any $(u,t) \in Q_K$.
\end{lem}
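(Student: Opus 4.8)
The statement is a special case of Lemma~\ref{lem:1} obtained by setting $g = 1$ in the map there: the map $h \mapsto \Delta_\lambda(\dot t^\sharp \dot u^{\flat,-1} f_{(s,r)}(h))$ is the restriction of $(g,h) \mapsto \Delta_\lambda(\dot t^\sharp \dot u^{\flat,-1} g f_{(s,r)}(h))$ to the slice $g = 1$. So the natural route is to deduce admissibility of the restricted map from admissibility of the two-variable map. The obstacle is that ``setting a variable to $1$'' is not the same as ``setting it to $0$'': admissibility is defined for functions on products of copies of $\BR_{>0}$ (with possibly some $\BR_{\ge 0}$ factors), and the coordinate we want to freeze ranges over $U^-_{u,>0} \cong \Rp^{\ell(u)}$, i.e. over strictly positive parameters, whereas the identity element corresponds to the boundary value where all these parameters are $0$. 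Thus I cannot literally substitute; I must instead argue that the two-variable admissible function extends continuously to the boundary $\{g = 1\}$ and then invoke \S\ref{sec:adm}~(b), the result quoted from \cite[Lemma~5.9]{GKL}, to conclude that the boundary restriction is itself admissible.

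Concretely, first I would recall from Lemma~\ref{lem:ad} and Proposition~\ref{prop:closure-p} (as noted in the remark after Lemma~\ref{lem:1}) that $f_{(s,r)}$ is defined on all of $U^-_{w,>0}$, so the map in question makes sense. Next I would revisit the proof of Lemma~\ref{lem:1}: that proof parametrizes $U^-_{u,>0} \times U^-_{w,>0}$ by $\BR_{>0}^{\ell(u)} \times \BR_{>0}^{\ell(w)}$ and exhibits the composite $(g,h) \mapsto \Delta_\lambda(\dot t^\sharp \dot u^{\flat,-1} g\, f_{(s,r)}(h))$ as an admissible function, i.e. a ratio of polynomials with nonnegative integer coefficients in these parameters. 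I would observe that this rational expression is actually defined and continuous on the larger domain $\BR_{\ge 0}^{\ell(u)} \times \BR_{>0}^{\ell(w)}$: the various intermediate maps in that proof ($h \mapsto h_3, h_4, z_4$, then $(g, f_{s,+}(h_3)) \mapsto c$, then $(c, h_4) \mapsto h_5$, etc.) are given by admissible, hence continuous, formulas, and the only place the $g$-parameters enter is through $\dot u^{\flat,-1} g$, which is perfectly well-behaved as the $g$-parameters tend to $0$ (where $g \to 1$). In particular the denominator polynomial does not vanish at $g = 1$ — this follows because by \S\ref{sec:adm}~(a) an admissible function is either identically zero or nowhere zero, and Lemma~\ref{lem:ad} together with the nonemptiness statements in \S\ref{sec:Jcr}~(a) show the relevant matrix coefficient is nonzero somewhere, hence everywhere.

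Having established that the two-variable admissible function $f \colon \BR_{>0}^{\ell(u)} \times \BR_{>0}^{\ell(w)} \to \BR_{\ge 0}$ extends to a continuous function on $\BR_{\ge 0}^{\ell(u)} \times \BR_{>0}^{\ell(w)}$ whose restriction to the open part is admissible, I apply \S\ref{sec:adm}~(b) with the roles of the variable blocks arranged so that the $\BR_{\ge 0}$-block is the $g$-parameters and the $\BR_{>0}$-block is the $h$-parameters: the conclusion is that the restriction to $\BR_{>0}^{\ell(w)}$ — which sets all $g$-parameters to $0$, i.e. $g = 1$ — is admissible. Since that restriction is exactly the map $h \mapsto \Delta_\lambda(\dot t^\sharp \dot u^{\flat,-1} f_{(s,r)}(h))$, the lemma follows. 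I expect the main technical point to be the verification that the explicit formula from the proof of Lemma~\ref{lem:1} genuinely extends continuously across $\{g = 1\}$ — this requires tracing that none of the intermediate constructions introduces a pole there, but it should be routine given that every step is built from admissible maps and the nonvanishing of denominators is guaranteed by \S\ref{sec:adm}~(a).
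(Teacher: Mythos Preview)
Your overall strategy---restrict the two-variable admissible map of Lemma~\ref{lem:1} to the slice $g=1$ via the continuity-and-limit principle of \S\ref{sec:adm}~(b)---is exactly the paper's idea, but you have overlooked a mismatch of hypotheses. Lemma~\ref{lem:1} does \emph{not} assume merely $r \le w$; it requires the stronger length condition $\ell(r^{-1}w)=\ell(w)-\ell(r)$ (i.e.\ $r$ is a left weak-order prefix of $w$). The entire explicit computation of $f_r(h)$ in the proof of Proposition~\ref{prop:compatible} (equations \eqref{eq:star1}--\eqref{eq:star4}), on which the proof of Lemma~\ref{lem:1} rests, uses that condition to apply \eqref{eq:GKL}. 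In Lemma~\ref{lem:2} only the Bruhat inequality $r \le w$ is assumed, so you cannot invoke Lemma~\ref{lem:1} directly for the given $w$, and your starting point (admissibility on $U^-_{u,>0}\times U^-_{w,>0}$) is unjustified.

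The paper repairs this by choosing an auxiliary $w_1 \ge w$ with $\ell(r^{-1}w_1)=\ell(w_1)-\ell(r)$, applying Lemma~\ref{lem:1} on $U^-_{u,>0}\times U^-_{w_1,>0}$, and then degenerating \emph{two} blocks of parameters at once: the $\ell(u)$ parameters of $g$ (to reach $g=1$) and the $\ell(w_1)-\ell(w)$ ``extra'' parameters in $w_1$ (to land in $U^-_{w,>0}$). The key observation making the continuity step work is that for every $w'$ with $w \le w' \le w_1$ one still has $r \le w'$, so $U^-_{w',>0}\subset \dot r U^- P_K^+$ and the intermediate map $f_{(s,r)}$ remains defined throughout the closure. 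A single application of \S\ref{sec:adm}~(b) then gives admissibility on $\BR_{>0}^{\ell(w)}$. Your argument is essentially correct once you insert this auxiliary $w_1$; as written it is incomplete. (Your aside about the denominator not vanishing is unnecessary and somewhat muddled: continuity of the extended map is what matters, and it comes from continuity of the group-theoretic constructions, not from nonvanishing of a matrix coefficient.)
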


\begin{proof}
We simply write $\D$ for $  \Delta_{\lambda}(\dot{t}^{\sharp} \dot{u}^{\flat,-1} \cdot -)$. Let $w_1 \in W$ with $\ell(r^{-1} w_1) = \ell(w_1) - \ell(r)$ and $w \le w_1$. We fix reduced expressions of $u$ and $w_1$ (and thus the positive subexpression for $w$). The statement is proved using the following commutative diagram. 
\[
\xymatrix{
\BR_{>0}^{\ell(u)} \times \BR_{>0}^{\ell(w_1)} \ar[r] \ar[d] & \BR_{\ge 0}^{\ell(u)} \times (\BR_{\ge 0}^{\ell(w_1)-\ell(w)} \times \BR_{>0}^{\ell(w)}) \ar[d] & \BR_{>0}^{\ell(w)} \ar[d] \ar[l] \\
U^-_{u, >0} \times U^-_{w_1, >0} \ar[r] \ar[dr]_-{\Delta \circ m \circ (id, f_{(s,r)})} & (\bigsqcup_{1 \le u' \le u} U^-_{u', >0}) \times (\bigsqcup_{w \le w' \le w_1} U^-_{w', >0})  \ar[d] & U^-_{w, >0} \ar[l] \ar[ld]^-{\D \circ f_{(s,r)}} \\
& \BR &
}
\]

Now let us explain how the maps are defined. Let $\mathbf u=s_{i_1} \cdots s_{i_n}$ be the reduced expression we fixed in the beginning. The map $\BR_{\ge 0}^{\ell(u)} \to U^-$ is defined by $(a_1, \ldots, a_n) \mapsto y_{i_1}(a_1) \cdots y_{i_n}(a_n)$. It is easy to see that the image is $\bigsqcup_{1 \le u' \le u} U^-_{u', >0}$. By a similar argument to \cite[Proposition~4.2]{Lus-1}, $\bigsqcup_{1 \le u' \le u} U^-_{u', >0}$ is a closed subspace of $G$. Similarly, we have a continuous map $\BR_{\ge 0}^{\ell(w_1)} \to \bigsqcup_{1 \le w' \le w_1} U^-_{w', >0}$. If we further require that all the coordinate associated to the positive subexpression of $w$ are positive, then we obtain a continuous map $\BR_{\ge 0}^{\ell(w_1)-\ell(w)} \times \BR_{>0}^{\ell(w)} \to G$ and the image of this map is the locally closed subspace $\bigsqcup_{w \le w' \le w_1} U^-_{w', >0}$ of $G$. 

Note that for any $w' \in W$ with $w \le w'$, we have $r \le w'$. Thus $U^-_{w', >0} \subset \dot r U^- P^+_K$. Therefore we have a continuous map $$\BR_{\ge 0}^{\ell(u)} \times (\BR_{\ge 0}^{\ell(w_1)-\ell(w)} \times \BR_{>0}^{\ell(w)}) \to (\bigsqcup_{1 \le u' \le u} U^-_{u', >0}) \times (\bigsqcup_{w \le w' \le w_1} U^-_{w', >0}) \xrightarrow{\D \circ m \circ (id, f_{(s,r)})} \BR.$$

By Lemma \ref{lem:1}, the restriction to $\BR_{>0}^{\ell(u)} \times \BR_{>0}^{\ell(w_1)}$ is admissible. Hence by \S\ref{sec:adm} (b), the map $\D \circ f_{(s,r)}: U^-_{w, >0} \to \BR$ is admissible. 
\end{proof}

\begin{lem}\label{lem:3}
Let $r \in W^K$ and $v, w \in W$ with $v \le r \le w$. We fix a reduced expression ${\mathbf w}$ of $w$.  Then the map $$G_{{\bf v_+},{\bf w}, >0} \to \BR, \qquad h \mapsto   \Delta_{\lambda}(\dot{t}^{\sharp} \dot{u}^{\flat,-1} f_{(r,r)}(h))$$ is admissible  for any $(u,t) \in Q_K$.
\end{lem}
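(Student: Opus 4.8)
\textbf{Proof proposal for Lemma~\ref{lem:3}.}

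The plan is to deduce this statement from Lemma~\ref{lem:2} together with the product structure. The point is that $G_{{\bf v_+}, {\bf w}, >0}$ with $v \le r \le w$ should decompose, via the atlas map $c_r$, into a product $G_{{\bf v_+}, {\bf w'}, >0} \times U^-_{r, >0} \times (\text{piece along } \mathbf{w})$ --- more precisely, by Theorem~\ref{thm:CB}(1) the map $c_r$ gives $\CB_{v, w, >0} \cong \CB_{v, r, >0} \times \CB_{r, w, >0}$, and on the $U^-$-side this corresponds to the factorization from \eqref{eq:GKL}: every $h \in G_{{\bf v_+}, {\bf w}, >0} \subset \dot r U^- P^+_K$ can be written (modulo $P^+_K$, after suitable manipulation) as $h_r \cdot h_{\ge r}$ with $h_r$ in a $U^-_{r, >0}$-type factor and $h_{\ge r}$ of the form appearing in Lemma~\ref{lem:2}.

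First I would fix reduced expressions: write $\mathbf{r}$ for a reduced word of $r$ obtained as an initial segment (after reordering, using that $v \le r \le w$) or, more carefully, pass to the positive-subexpression description so that $G_{{\bf v_+}, {\bf w}, >0}$ is the image of $\Rp^{\ell(w)-\ell(v)}$. Then I would factor this cube of coordinates into a block corresponding to $r$ and a block corresponding to the rest, and track how $f_{(r,r)} = f_r$ interacts with this factorization. The key input is that $f_r$ is built from $\sigma_{r, \pm}$, which by Theorem~\ref{thm:CB}(3) and the computations in \S\ref{sec:GKL} (formulas \eqref{eq:GKL}) sends positive pieces to positive pieces admissibly; combined with Lemma~\ref{lem:w-v} (admissibility of $f_{v, +}$) this should show that the composite $G_{{\bf v_+}, {\bf w}, >0} \to G_{{\bf v'_+}, {\bf w''}, >0}$ landing in the domain of Lemma~\ref{lem:2} is admissible. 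Then Lemma~\ref{lem:2} (applied with $(s, r)$ there taken to be $(r, r)$, or more generally matching the projected data) finishes: composition of admissible maps is admissible, and admissibility of $\Delta_\lambda(\dot{t}^\sharp \dot{u}^{\flat,-1} \cdot -) \circ f_{(r,r)}$ on the sub-cube follows.

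Alternatively --- and this may be cleaner --- I would use \S\ref{sec:adm}(b): build a continuous map on a closed/locally-closed stratum $\bigsqcup_{v \le w' \le w'_1}$ or similar (using that $G_{{\bf v_+}, {\bf w}, >0}$ sits inside $\bigsqcup U^-_{w', >0} \cdot (\text{something})$ for a large $w_1$ with $r \le w_1$), whose restriction to the open part is covered by Lemma~\ref{lem:2}, and then invoke \S\ref{sec:adm}(b) to descend admissibility to the smaller cube $\Rp^{\ell(w)-\ell(v)} \to G_{{\bf v_+},{\bf w}, >0}$. This mirrors exactly the diagram-chase used in the proof of Lemma~\ref{lem:2}, just with $U^-_{w, >0}$ replaced by the partial-flag-compatible piece $G_{{\bf v_+}, {\bf w}, >0}$; the condition $v \le r \le w$ guarantees $G_{{\bf v_+}, {\bf w}, >0} \subset \dot r U^- P^+_K$ (via Lemma~\ref{lem-in-u}, since $\CB_{v,w,>0} \subset \dot r B^- B^+/B^+$ and hence its preimage lies in $\dot r U^- P^+_K$), so that $f_{(r,r)}$ is indeed defined on the whole cube.

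\textbf{Main obstacle.} The technical heart is keeping track of admissibility through the rewriting $h \mapsto (\text{$r$-part}, \text{rest})$: the formulas \eqref{eq:GKL} produce auxiliary torus factors $t \in T_{>0}$ and unipotent correction terms, and one must check that these, together with the $\pi_K$-projections entering $f_r$, remain polynomial-with-nonnegative-coefficients in the coordinates --- i.e.\ that no cancellation or division by a possibly-vanishing quantity is introduced. This is exactly the kind of bookkeeping done in the proof of Proposition~\ref{prop:compatible} (equations \eqref{eq:star1}--\eqref{eq:star4}) and Lemma~\ref{lem:1}, so I expect it to go through by the same method; the only genuinely new point is handling the positive subexpression ${\bf v_+}$ of a non-identity $v$, which is why invoking Lemma~\ref{lem:2} (where the left end is already a general $(u,t) \in Q_K$) as a black box is the efficient route rather than redoing the $\Delta_\lambda$-admissibility computation from scratch.
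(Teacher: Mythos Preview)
Your proposal has the right instinct (use \S\ref{sec:adm}(b) to degenerate from a situation covered by Lemma~\ref{lem:2}), but you have misidentified \emph{which} parameters to degenerate, and your first approach via the $c_r$-factorization does not interact cleanly with $f_r$.

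The difficulty in passing from Lemma~\ref{lem:2} to Lemma~\ref{lem:3} is not the $w$-side --- Lemma~\ref{lem:2} already allows arbitrary $w$ with $r \le w$ --- but the fact that now $v \neq 1$, so $G_{{\bf v_+},{\bf w},>0}$ contains $\dot s_{i_j}$ factors and is \emph{not} contained in any $U^-_{w',>0}$ or any union $\bigsqcup U^-_{w',>0}$. Thus your second approach (``$G_{{\bf v_+},{\bf w},>0}$ sits inside $\bigsqcup U^-_{w',>0}\cdot(\text{something})$ for a large $w_1$'') does not get off the ground: there is no such inclusion, and enlarging $w$ is not what is needed. Likewise in your first approach, the product decomposition $c_r:\CB_{v,w,>0}\cong\CB_{v,r,>0}\times\CB_{r,w,>0}$ does not straightforwardly track $f_r$, because $f_r$ is built from $\sigma_{r,\pm}$ restricted to $\dot r U_{P^-_K}\dot r^{-1}$ (the parabolic unipotent radical), not to the full $\dot r U^- \dot r^{-1}$, and involves a mixture of $(-)^\flat$ and $(-)^\sharp$ that does not respect the $c_r$-splitting.

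The missing idea is to act by $U^+_{v^{-1},>0}$ on the \emph{left}: by \cite[Proposition~6.2]{BH20} one has $U^+_{v^{-1},>0}\cdot\CB_{v,w,>0}=\CB_{1,w,>0}$, and the resulting map $U^+_{v^{-1},>0}\times G_{{\bf v_+},{\bf w},>0}\to U^-_{w,>0}$ is admissible. Composing with Lemma~\ref{lem:2} gives an admissible map on $\BR_{>0}^{\ell(v)}\times\BR_{>0}^{\ell(w)-\ell(v)}$; one then checks that this extends continuously to $\BR_{\ge 0}^{\ell(v)}\times\BR_{>0}^{\ell(w)-\ell(v)}$ (because $U^+_{(v')^{-1},>0}\cdot\CB_{v,w,>0}=\CB_{(v')^{-1}\circ_l v,\,w,>0}$ still lies in $\dot r U^- P^+_K$ for every $v'\le v$), and finally restricts to $\{0\}^{\ell(v)}\times\BR_{>0}^{\ell(w)-\ell(v)}$ via \S\ref{sec:adm}(b). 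This is the exact shape of the diagram in Lemma~\ref{lem:2}, but with the extra $\BR^{\ell(v)}$ factor coming from $U^+$ rather than from enlarging $w$.
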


\begin{remark}
By Proposition~\ref{prop:closure-p} that $G_{{\bf v_+},{\bf w}, >0}  \subset   \dot r U^- P^+_K$. So $f_{(r, r)}(G_{{\bf v_+},{\bf w}, >0})$ is defined. However, we have not proved
$G_{{\bf v_+},{\bf w}, >0}  \subset  \Big(\dot r U^- P^+_K \Big) \bigcap f_r ^{-1} \Big(\dot{\tilde{\nu}}(s,r) \tilde{U}^{-} \tilde{B}^+ \Big)$ yet and thus we can not apply the general map $f_{(s, r)}$ to $G_{{\bf v_+},{\bf w}, >0}$. The general case will be handled after Lemma \ref{lem:3} and Corollary \ref{cor:rrinU} are established.  
\end{remark}

\begin{proof} We simply write $\D$ for $\Delta_{\lambda}(\dot{t}^{\sharp} \dot{u}^{\flat,-1} \cdot -)$. By \cite[Proposition~6.2]{BH20}, $U^+_{v \i, >0} \CB_{v, w, >0}=\CB_{1, w, >0}$ and the induced map $U^+_{v \i, >0} \times G_{{\bf v_+},{\bf w}, >0} \to U^-_{w, >0}$ is admissible. By Lemma \ref{lem:2}, the map $$U^+_{v \i, >0} \times G_{{\bf v_+},{\bf w}, >0} \to U^-_{w, >0} \xrightarrow{\D} \BR$$ is admissible. Moreover, for any $v' \le v$, $U^+_{(v') \i, >0} \CB_{v, w, >0}=\CB_{(v') \i \circ_l v, w, >0}$ and $(v') \i \circ_l v \le v \le r$. Thus $U^+_{(v') \i, >0} G_{{\bf v_+},{\bf w}, >0} \subset \dot r U^- P^+_K$. Therefore the continuous map $\BR_{\ge 0}^{\ell(u)} \times G_{{\bf v_+},{\bf w}, >0} \to (\bigsqcup_{1 \le v' \le v} U^-_{(v') \i, >0}) \times G_{{\bf v_+},{\bf w}, >0} \xrightarrow{m} G$ has image inside $\dot r U^- P^+_K$. Thus we have a continuous map $\BR_{\ge 0}^{\ell(u)} \times G_{{\bf v_+},{\bf w}, >0} \to \BR$.

Now the statement follows from the following commutative diagram using the similar proof of Lemma \ref{lem:2}:

\[
\xymatrix{
\BR_{>0}^{\ell(v)} \times \BR_{>0}^{\ell(w)-\ell(v)} \ar[r] \ar[d] & \BR_{\ge 0}^{\ell(v)} \times \BR_{>0}^{\ell(w)-\ell(v)} \ar[d] & \BR_{>0}^{\ell(w)-\ell(v)} \ar[d] \ar[l] \\
U^+_{v \i, >0} \times G_{{\bf v_+},{\bf w}, >0} \ar[r] \ar[dr]_-{\Delta \circ  f_{(r,r)} \circ m} & (\bigsqcup_{1 \le v' \le v} U^+_{(v') \i, >0}) \times G_{{\bf v_+},{\bf w}, >0}  \ar[d] & G_{{\bf v_+},{\bf w}, >0} \ar[l] \ar[ld]^-{\D \circ f_{(r,r)}} \\
& \BR &
.}
\]
\end{proof}

\begin{cor}\label{cor:rrinU}
Let $(r,r) \le (s,r) \le (v,w) \in Q_K$. We have 
\begin{enumerate} 
\item
${}^{I^\flat}\!\! \mathring{\tilde{\CB}}_{\tilde{\nu}(r,r), \tilde{\nu}(v,w), >0} \subset \dot{\tilde{\nu}}(s, r)  \tilde U^- \tilde B^+/\tilde B^+$;
\item 
$G_{{\bf v_+},{\bf w}, >0}  \subset  \Big(\dot r U^- P^+_K \Big) \bigcap f_r ^{-1} \Big(\dot{\tilde{\nu}}(s,r)   \tilde{U}^{-} \tilde{B}^+ \Big)$.
\end{enumerate}
\end{cor}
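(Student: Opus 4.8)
\textbf{Proof strategy for Corollary~\ref{cor:rrinU}.}
The two statements are equivalent via the isomorphism $\tilde{c}_r$ of \S\ref{sec:BBatlas}, so the plan is to prove them simultaneously, bootstrapping from the matrix-coefficient criterion set up in \S\ref{sec:symmetric} together with the admissibility of the function constructed in Lemma~\ref{lem:3}. Concretely, write $\tilde u = \tilde{\nu}(v,w)$ and $\tilde s = \tilde{\nu}(s,r)$; by \S\ref{sec:symmetric} a point $z \in {}^{I^\flat}\!\!\tilde{\CB}$ lies in $\dot{\tilde s}\,\tilde U^- \tilde B^+/\tilde B^+$ if and only if $\Delta_{\lambda, \tilde s}(z) = \{\ast\}$, i.e. the matrix coefficient is nonzero; and since by \cite[Proposition 4.2]{BH21} we have $\tilde{\nu}(s,r) \,{}^{I^\flat}\!\!\le \tilde{\nu}(v,w)$, the open $I^\flat$-Richardson variety ${}^{I^\flat}\!\!\mathring{\tilde{\CB}}_{\tilde{\nu}(r,r), \tilde{\nu}(v,w)}$ does meet $\dot{\tilde s}\,\tilde U^- \tilde B^+/\tilde B^+$ by \S\ref{sec:Jcr}(a). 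So the matrix coefficient $\Delta_{\lambda, \tilde s}$ is not identically zero on the cell ${}^{I^\flat}\!\!\mathring{\tilde{\CB}}_{\tilde{\nu}(r,r), \tilde{\nu}(v,w), >0}$.

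Next I would transport this to the parametrizing cell $G_{{\bf v_+},{\bf w}, >0}$. By Proposition~\ref{prop:compatible} the isomorphism $\tilde{c}_r$ identifies $\CP_{K,(v,w),>0}$ with ${}^{I^\flat}\!\!\tilde{\CB}_{\tilde{\nu}(r,r),\tilde{\nu}(v,w),>0}$, and $\CP_{K,(v,w),>0} = pr_K(\CB_{v,w,>0}) = pr_K(G_{{\bf v_+},{\bf w},>0}\cdot B^+)$ by Proposition~\ref{prop:closure-p}(1); moreover $G_{{\bf v_+},{\bf w},>0} \subset \dot r U^- P^+_K$ by Lemma~\ref{lem-in-u}, so $f_{(r,r)} = f_r$ is defined on $G_{{\bf v_+},{\bf w},>0}$ and realizes $\tilde c_r$. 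Hence the composite $h \mapsto \Delta_\lambda(\dot{t}^\sharp \dot{u}^{\flat,-1} f_{(r,r)}(h))$ — with $(u,t) = (v,w)$ rewritten appropriately, or more precisely the function appearing in Lemma~\ref{lem:3} with $\tilde{\nu}(s,r) = \tilde t^\sharp \tilde u^{\flat,-1}\cdots$ — computes (a representative of) the matrix coefficient $\Delta_{\lambda,\tilde s}$ of $\tilde c_r(h\cdot P^+_K/P^+_K)$. By Lemma~\ref{lem:3} this function $G_{{\bf v_+},{\bf w},>0} \to \BR$ is admissible, and by \S\ref{sec:adm}(a) an admissible function is either identically zero or nowhere zero on its (connected, positive) domain. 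We have just seen it is not identically zero, hence it is nowhere zero; therefore $\tilde c_r(h\cdot P^+_K/P^+_K) \in \dot{\tilde s}\,\tilde U^- \tilde B^+/\tilde B^+$ for every $h$, which is statement (1). Statement (2) then follows immediately: $f_r$ maps $G_{{\bf v_+},{\bf w},>0}$ into $\dot{\tilde{\nu}}(s,r)\tilde U^-\tilde B^+$, i.e. $G_{{\bf v_+},{\bf w},>0} \subset f_r^{-1}(\dot{\tilde{\nu}}(s,r)\tilde U^-\tilde B^+)$, and the inclusion into $\dot r U^- P^+_K$ is Lemma~\ref{lem-in-u}.

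One bookkeeping point to get right is the precise form of the ``base point'' $\dot{\tilde{\nu}}(s,r)$: I need to check that $\Delta_{\lambda,\tilde{\nu}(s,r)}$ is, up to the $\BC^\times$-ambiguity and the automorphism $\iota$ absorbed in Lemma~\ref{lem:3}, exactly the function $h \mapsto \Delta_\lambda(\dot t^\sharp \dot u^{\flat,-1} f_{(r,r)}(h))$ there with the substitution $(u,t)$ corresponding to $\tilde{\nu}(s,r) = (\dot r)^\flat((\dot s)^{-1})^\sharp$ wait — more carefully, $\tilde{\nu}(s,r) = r^\flat (s^{-1})^\sharp$, and $\dot{u}^{\flat,-1}$ with $u = r$, $\dot t^\sharp$ with $t = s^{-1}$ matches this up to $\tilde U^+$ on the right, which does not affect membership in $\dot{\tilde{\nu}}(s,r)\tilde U^-\tilde B^+/\tilde B^+$. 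The main obstacle is not any single deep step but rather assembling these compatibilities correctly — in particular making sure that the nonvanishing at \emph{one} point (guaranteed abstractly by \S\ref{sec:Jcr}(a) applied to the $I^\flat$-Richardson variety upstairs) genuinely corresponds to a point in the image of the positive cell $G_{{\bf v_+},{\bf w},>0}$, which is exactly where Corollary~\ref{cor:dense} (Zariski density of the totally positive part) or, more directly, the surjectivity statements in Propositions~\ref{prop:closure-p} and~\ref{prop:compatible} are needed.
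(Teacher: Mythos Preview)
Your approach is essentially the same as the paper's: set up the commutative square with $\Delta_{\lambda,\tilde\nu(s,r)}$ on the right, invoke Lemma~\ref{lem:3} (with $(u,t)=(s,r)$) to see the pulled-back matrix coefficient on $G_{{\bf v_+},{\bf w},>0}$ is admissible, apply the zero/nonzero dichotomy of \S\ref{sec:adm}(a), and use the strategy of \S\ref{sec:symmetric} (nonempty intersection from \S\ref{sec:Jcr}(a) plus Zariski density of the positive part, Corollary~\ref{cor:dense}) to rule out the identically-zero case; then (2) follows from (1) via $f_r$. The paper does exactly this, only compressing the nonvanishing step into a reference to \S\ref{sec:symmetric}; your final paragraph correctly identifies the one substantive thing to verify, namely that the admissible function of Lemma~\ref{lem:3} really represents $\Delta_{\lambda,\tilde\nu(s,r)}\circ\tilde c_r$ up to $\BC^\times$---just take $(u,t)=(s,r)$ there and track the $\flat/\sharp$ labels carefully.
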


\begin{proof}We have a commutative diagram 
\[
\xymatrix{
G_{{\bf v_+},{\bf w}, >0} \ar[r] \ar[d] & \CP_{K, (v, w), >0} \ar[r] &
{}^{I^\flat}\!\! \mathring{\tilde{\CB}}_{\tilde{\nu}(r,r), \tilde{\nu}(v,w), >0} \ar[d]^{\Delta_{\lambda,   {\tilde{\nu}}(s,r)}} \\
\BR \ar[rr] & & \{0, \ast\}.
}
\]

By Lemma \ref{lem:3}, the map $G_{{\bf v_+},{\bf w}, >0} \to \BR$, $h \mapsto   \Delta_{\lambda}(\dot{r}^{\sharp} \dot{s}^{\flat,-1} f_{(r,r)}(h))$ is admissible. Hence the composition $G_{{\bf v_+},{\bf w}, >0} \to  \{0, \ast \}$ is constant. By Proposition~\ref{prop:compatible}, all maps in the first row of the commutative diagram are surjective. Then the map $\Delta_{\lambda,   {\tilde{\nu}}(s,r)}: {}^{I^\flat}\!\! \mathring{\tilde{\CB}}_{\tilde{\nu}(r,r), \tilde{\nu}(v,w), >0} \to \{0, \ast \}$ is constant. Hence by \S\ref{sec:delta} (a), we have 
${}^{I^\flat}\!\! \mathring{\tilde{\CB}}_{\tilde{\nu}(r,r), \tilde{\nu}(v,w), >0} \subset \dot \nu(s, r) \tilde U^- \tilde B^+/\tilde B^+$. Now the statements follow from \S\ref{sec:symmetric}. 
\end{proof}

The following lemma can be proved entirely similar to Lemma~\ref{lem:3}, thanks to Corollary~\ref{cor:rrinU}.
\begin{lem}\label{lem:4}
Let $(r,r) \preceq (s,r) \preceq (v,w) \in Q_K$. Fix a reduced expression ${\mathbf w}$ of $w$. Then the map $$G_{{\bf v_+},{\bf w}, >0} \to \BR, \qquad h \mapsto  \Delta_{\lambda}(\dot{t}^{\sharp} \dot{u}^{\flat,-1} f_{(s,r)}(h))$$ is admissible  for any $(u,t) \in Q_K$.
\end{lem}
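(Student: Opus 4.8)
The statement asserts the admissibility of the map $G_{{\bf v_+},{\bf w}, >0} \to \BR$ given by $h \mapsto \Delta_{\lambda}(\dot{t}^{\sharp} \dot{u}^{\flat,-1} f_{(s,r)}(h))$ for $(r,r) \preceq (s,r) \preceq (v,w) \in Q_K$, and the remark already tells us the key input: it ``can be proved entirely similar to Lemma~\ref{lem:3}, thanks to Corollary~\ref{cor:rrinU}.'' So the plan is to repeat the argument of Lemma~\ref{lem:3} verbatim, with the only change being that $f_{(r,r)}$ is replaced by the more general $f_{(s,r)}$, which is now legitimate on the relevant domains because Corollary~\ref{cor:rrinU}(2) guarantees $G_{{\bf v_+},{\bf w}, >0}\subset \big(\dot r U^- P^+_K\big) \cap f_r^{-1}\big(\dot{\tilde\nu}(s,r)\,\tilde U^-\tilde B^+\big)$, i.e.\ $f_{(s,r)}$ is defined on $G_{{\bf v_+},{\bf w}, >0}$ and, more importantly (via the admissibility bookkeeping), on all the intermediate strata $U^+_{(v')^{-1},>0}G_{{\bf v_+},{\bf w},>0}$ that appear.

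Concretely, I would first recall from \cite[Proposition~6.2]{BH20} that $U^+_{v^{-1},>0}\,\CB_{v,w,>0}=\CB_{1,w,>0}$ with the induced map $U^+_{v^{-1},>0}\times G_{{\bf v_+},{\bf w},>0}\to U^-_{w,>0}$ admissible; composing with Lemma~\ref{lem:2} (using $f_{(s,r)}$, which applies to $U^-_{w',>0}$ for $w'$ with $w\le w'$ since $r\le w\le w'$) gives admissibility of $U^+_{v^{-1},>0}\times G_{{\bf v_+},{\bf w},>0}\to \BR$. Then, exactly as in Lemma~\ref{lem:3}, for any $v'\le v$ we have $U^+_{(v')^{-1},>0}\,\CB_{v,w,>0}=\CB_{(v')^{-1}\circ_l v,\,w,>0}$ with $(v')^{-1}\circ_l v\le v\preceq^{Q_K}\!r$, hence $U^+_{(v')^{-1},>0}\,G_{{\bf v_+},{\bf w},>0}\subset \dot r U^- P^+_K$; moreover by Corollary~\ref{cor:rrinU}(2) these pieces land in $f_r^{-1}\big(\dot{\tilde\nu}(s,r)\,\tilde U^-\tilde B^+\big)$ too, so $f_{(s,r)}$ is defined there. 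This yields a continuous extension $\BR_{\ge 0}^{\ell(v)}\times G_{{\bf v_+},{\bf w},>0}\to \bigsqcup_{1\le v'\le v}U^+_{(v')^{-1},>0}\times G_{{\bf v_+},{\bf w},>0}\xrightarrow{\Delta\circ f_{(s,r)}\circ m}\BR$ whose restriction to the open stratum $\BR_{>0}^{\ell(v)}\times G_{{\bf v_+},{\bf w},>0}$ is admissible. Feeding this into the same commutative diagram as in Lemma~\ref{lem:3} and invoking \S\ref{sec:adm}~(b) (continuity plus admissibility on the open dense stratum implies admissibility of the restriction) produces the claimed admissibility of $h\mapsto \Delta_{\lambda}(\dot{t}^{\sharp}\dot{u}^{\flat,-1}f_{(s,r)}(h))$ on $G_{{\bf v_+},{\bf w},>0}$.

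The only genuine point to verify — and the place where one must be careful rather than merely copy — is that replacing $f_{(r,r)}$ by $f_{(s,r)}$ does not break the domain considerations in any of the intermediate steps: one needs $f_{(s,r)}$ to be defined not just on $G_{{\bf v_+},{\bf w},>0}$ but on every larger stratum $U^+_{(v')^{-1},>0}G_{{\bf v_+},{\bf w},>0}$ and on the $U^-_{w',>0}$ with $w\le w'$ entering Lemma~\ref{lem:2}. Each such stratum is of the form $G_{{\bf v''_+},{\bf w''}, >0}$ for appropriate $(v'',w'')\in Q_K$ with $(r,r)\preceq (s,r)\preceq(v'',w'')$, so Corollary~\ref{cor:rrinU}(2) applies directly; this is exactly why the corollary had to be established first. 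With that in hand the rest is a routine repetition of the Lemma~\ref{lem:3} argument, so there is no substantial new obstacle.
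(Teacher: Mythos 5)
Your proposal is correct and is precisely the argument the paper intends: the paper's own proof is just the one-line remark that Lemma~\ref{lem:4} "can be proved entirely similar to Lemma~\ref{lem:3}, thanks to Corollary~\ref{cor:rrinU}," and you have correctly identified that the only new ingredient is that Corollary~\ref{cor:rrinU}(2) makes $f_{(s,r)}$ well defined on each $G_{{\bf v''_+},{\bf w''},>0}$ with $(r,r)\preceq(s,r)\preceq(v'',w'')$ arising in the intermediate strata of the Lemma~\ref{lem:3} diagram. One small notational fix: the phrase "$(v')^{-1}\circ_l v\le v\preceq^{Q_K}\!r$" is not well formed, since $\preceq$ compares pairs in $Q_K$, not elements of $W$; what you actually need, and what does hold, is $(s,r)\preceq\bigl((v')^{-1}\circ_l v,\,w\bigr)$ in $Q_K$, which follows from $(v')^{-1}\circ_l v\le v$ together with $(s,r)\preceq(v,w)$, and this is exactly what lets Corollary~\ref{cor:rrinU}(2) (and Proposition~\ref{prop:closure-p}(4)) apply to each stratum.
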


\subsection{Proof of Proposition \ref{prop:J}} \label{sec:pf1}A special case of part (1) has already been proved in Corollary~\ref{cor:II}. The general case follows by Theorem~\ref{thm:product} and Corollary~\ref{cor:product} once we have verified the assumptions in Theorem~\ref{thm:product}, that is, Proposition \ref{prop:J} part (2) \& (3).

By Proposition \ref{prop:compatible2}, it suffices to prove part (2) \& (3) for basic $J$-Richardson varieties. Namely, 

(a) for $(s, r) \preceq (v, w)$ in $Q_K$, ${}^{I^\flat} \! \! \CB_{\nu(s, r), \nu(v, w), >0}$ is a connected component of ${}^{I^\flat}  \! \! \OCB_{\nu(s, r), \nu(v, w)}(\BR)$;

(b) for $(s, r) \preceq (u,t) \preceq (v, w)$ in $Q_K$, ${}^{I^\flat} \! \! \CB_{\nu(s, r), \nu(v, w), >0} \subset  \dot{\tilde{\nu}}(u,t) \tilde U^- \tilde B^+/\tilde B^+$.

We first show (a). The case where $s=r$ follows from Proposition \ref{prop:closure-p} (3) and Proposition \ref{prop:compatible}. The rest follows from Lemma \ref{lem:key2} (3) thanks to Corollary~\ref{cor:rrinU}. 
We now show part (b). We have a commutative diagram 
 \[
\xymatrix{
G_{{\bf v_+},{\bf w}, >0} \ar[r] \ar[d] & \CP_{K, (v, w), >0} \ar[r] &
{}^{I^\flat}\!\! \mathring{\tilde{\CB}}_{\tilde{\nu}(r,r), \tilde{\nu}(v,w), >0} \ar[r] & {}^{I^\flat}\!\! \mathring{\tilde{\CB}}_{\tilde{\nu}(s,r), \tilde{\nu}(v,w), >0} \ar[d]^{\Delta_{\lambda,   {\tilde{\nu}}(u,t)}} \\
\BR \ar[rrr] & & & \{0, \ast \}.
}
\]

By Lemma~\ref{lem:4}, the map $G_{{\bf v_+},{\bf w}, >0} \rightarrow \BR$, $h \mapsto \Delta_{\lambda}(\dot{t}^{\sharp} \dot{u}^{\flat,-1} f_{(s,r)}(h))$, is admissible. So the map $G_{{\bf v_+},{\bf w}, >0} \rightarrow \{0, \ast \}$ is constant. 

By \eqref{eq:5.1} and Proposition \ref{prop:closure-p} (1), the map $G_{{\bf v_+},{\bf w}, >0} \to \CP_{K, (v, w), >0}$ is surjective. By Proposition \ref{prop:compatible}, the map $\CP_{K, (v, w), >0} \to
{}^{I^\flat}\!\! \mathring{\tilde{\CB}}_{\tilde{\nu}(r,r), \tilde{\nu}(v,w), >0}$ is surjective. By part (a) and Lemma \ref{lem:key2}, the map ${}^{I^\flat}\!\! \mathring{\tilde{\CB}}_{\tilde{\nu}(r,r), \tilde{\nu}(v,w), >0} \to {}^{I^\flat}\!\! \mathring{\tilde{\CB}}_{\tilde{\nu}(s,r), \tilde{\nu}(v,w), >0}$ is surjective. Hence the map $\Delta_{\lambda, {\tilde{\nu}}(u,t)}: {}^{I^\flat}\!\! \mathring{\tilde{\CB}}_{\tilde{\nu}(s,r), \tilde{\nu}(v,w), >0} \to \{0, \ast \}$ is constant. By \S\ref{sec:symmetric}, we have 
$
{}^{I^\flat}\!\! \mathring{\tilde{\CB}}_{\tilde{\nu}(s,r), \tilde{\nu}(v,w), >0} \subset  \dot{\tilde{\nu}}(u,t) \tilde U^- \tilde B^+/\tilde B^+$.



\subsection{Proof of Proposition~\ref{prop:CPmanifold}}\label{sec:pf2} 
For any $r \in W^K$ with $(r, r) \preceq (v, w)$, we set $$Y_r=\overline{\CP_{K, (v, w)}, >0} \bigcap \dot r U^- P^+_K/P^+_K.$$ By Proposition \ref{prop:closure-p} (4), $\CP_{K, (v', w'), >0} \subset \dot r U^- P^+_K/P^+_K$ for any $(r,r ) \preceq (v',w') \preceq (v,w)$. Recall that the embedding $\tilde{c}_r$ in \S\ref{sec:BBatlas} is stratified. Then by 
 Proposition \ref{prop:compatible}, the embedding $\tilde{c}_r $ maps $\CP_{K, (v', w'), >0}$ to ${}^{I^\flat}\!\! {\tilde{\CB}}_{\tilde{\nu}(r,r), \tilde{\nu}(v',w'), >0}$. 
 
Note that Theorem~\ref{thm:J} has been fully established now. It follows that $\overline{{}^{I^\flat}\! {\tilde{\CB}}_{\tilde{\nu}(r,r), \tilde{\nu}(v,w), >0}}$ is a topological manifold with boundary 
 \[
 	 \partial \Big( \overline{{}^{I^\flat}\! {\tilde{\CB}}_{\tilde{\nu}(r,r), \tilde{\nu}(v,w), >0}} \Big)= \bigsqcup_{\stackrel{\tilde \nu(r,r) {}^{I^\flat}\!\!\le \tilde w_1 {}^{I^\flat}\!\!\le \tilde w_2 {}^{I^\flat}\!\!\le \tilde \nu (v,w),}{(\tilde w_1, \tilde w_2) \neq (\tilde \nu (r,r), \tilde \nu (v,w))}} {}^{I^\flat}\!\! \mathring{\tilde{\CB}}_{\tilde w_1, \tilde w_2, >0}.
 \] 
 
Hence the boundary of the Hausdorff closure of ${}^{I^\flat}\! {\tilde{\CB}}_{\tilde{\nu}(r,r), \tilde{\nu}(v,w), >0}$ in ${}^{I^\flat}\! \mathring{\tilde{\CB}}^{\tilde{\nu}(r,r)}$ is $$\bigsqcup_{\tilde \nu(r,r) {}^{I^\flat}\!\!\le \tilde w_1 {}^{I^\flat}\!\!< \tilde \nu (v,w)} {}^{I^\flat}\!\! \mathring{\tilde{\CB}}_{\tilde \nu(r, r), \tilde w_1, >0}.$$
 
By \cite[Proposition 4.2 (1) \& (3)]{BH21}, the map $\tilde \nu$ gives a bijection $$(\{(u, t) \in Q_K; (r, r) \preceq (u, t) \preceq (v, w)\}, \preceq) \longleftrightarrow \{\tilde w \in \tilde W; \tilde \nu(r, r)\,  {}^{I^\flat}\!\!\!\!\le \tilde w \, {}^{I^\flat}\!\!\!\!\le \tilde \nu(v, w), {}^{I^\flat}\!\!\!\!\le).$$

   
   Hence $Y_r$ is a topological manifold with boundary $$\partial Y_r=\bigsqcup_{(v', w') \in Q_K; (r, r) \preceq (v', w') \precneq (v, w)} \CP_{K, (v', w'), >0}.$$ 
 
 By Proposition \ref{prop:closure-p} (4), $\overline{\CP_{K, (v, w), >0}}=\bigcup_{r \in W^K; (r, r) \preceq (v, w)} Y_r$ is an open covering. In particular, $\overline{\CP_{K, (v, w), >0}}$ is a topological manifold with boundary $$\bigcup_{r \in W^K; (r, r) \preceq (v, w)} Y_r=\bigsqcup_{(v', w') \in Q_K; (v', w') \preceq (v, w) \text{ and } (r, r) \preceq (v', w') \text{ for some } r \in W^K} \CP_{K, (v', w'), >0}.$$ By definition, $(w', w') \preceq (v', w')$ for any $(v', w') \in Q_K$. In other words, there always exists $r \in W^K$ with $(r, r) \preceq (v', w')$. This finishes the proof. 



\end{document}